\numberwithin{equation}{section}
\theoremstyle{plain} 
\newtheorem{thm}{Theorem}[section]
\newtheorem{lem}[thm]{Lemma}
\newtheorem{pro}[thm]{Proposition}
\newtheorem{assum}[thm]{Assumption}
\newtheorem{defn}[thm]{Definition}
\theoremstyle{remark}
\newtheorem{rem}[thm]{Remark}
\newcommand{\Tr}{\mathrm{Tr}}
\newcommand{\E}{{\mathbb E }}
\newcommand{\R}{{\mathbb R }}
\newcommand{\N}{{\mathbb N}}
\newcommand{\ii}{\mathrm{i}}
\newcommand{\bs}{\boldsymbol}
\renewcommand{\mathbf}[1]{\bs{#1}}
\begin{document}

\begin{frontmatter}

\title{Singular vector and singular subspace distribution for the matrix denoising model}
\runtitle{Singular vector and singular subspace distribution}

\begin{aug}
\author{\fnms{Zhigang} \snm{Bao}\thanksref{t1}\ead[label=e1]{mazgbao@ust.hk}},
\author{\fnms{Xiucai} \snm{Ding}\thanksref{t4}\ead[label=e4]{xiucai.ding@mail.toronto.edu}},
\author{\fnms{Ke} \snm{Wang}\thanksref{t2}\ead[label=e2]{kewang@ust.hk}}

\thankstext{t1}{Z.G. Bao was  partially supported by Hong Kong RGC grant ECS 26301517, GRF  16300618,  GRF 16301519, and NSFC 11871425.}
\thankstext{t4}{X.C. Ding  was partially supported by NSERC of Canada (Jeremy Quastel).}
\thankstext{t2}{Ke Wang was partially supported by Hong Kong RGC grant GRF 16301618 and GRF 16308219.}
\runauthor{Z.G. Bao, X.C.Ding, and K. Wang.}

\affiliation{Hong Kong University of Science and Technology \\ University of Toronto and Duke University \\ Hong Kong University of Science and Technology}

\address{Department of Mathematics,  \\ Hong Kong University of Science and Technology, \\ Hong Kong\\
\printead{e1}\\
\phantom{E-mail:\ }}

\address{Department of Statistical Sciences \\ University of Toronto \\Canada\\
\printead{e4}\\
\phantom{E-mail:\ }}

\address{Department of Mathematics,  \\ Hong Kong University of Science and Technology, \\ Hong Kong\\
\printead{e2}\\
\phantom{E-mail:\ }}

\end{aug}

\runauthor{Zhigang Bao, Xiucai Ding, Ke Wang}

\begin{abstract}
In this paper, we study the  matrix denoising model $Y=S+X$, where $S$ is a low rank deterministic signal matrix and $X$ is a random noise matrix, and both are $M\times n$. In the scenario that $M$ and $n$ are comparably large and the signals are supercritical, we study the  fluctuation of the outlier singular vectors of $Y$,  under fully general assumptions on the structure of $S$ and the distribution of $X$.  More specifically, we derive the limiting distribution of angles between the principal singular vectors of $Y$ and their deterministic counterparts, the  singular vectors of $S$.  Further, we also derive the distribution of the distance between the subspace spanned by the principal singular vectors of $Y$ and that spanned by the singular vectors of $S$.  It turns out that the limiting distributions  depend on the structure of  the singular vectors of $S$ and the distribution of $X$, and thus they are non-universal.  Statistical applications of our results to singular vector and singular subspace inferences are also discussed.
\end{abstract}

\begin{keyword}[class=MSC]
\kwd[Primary ]{60B20, 62G10}
\kwd[; secondary ]{62H10, 15B52, 62H25}
\end{keyword}

\begin{keyword}
\kwd{random matrix, singular vector, singular subspace, matrix denoising model, signal-plus-noise model, non-universality}
\end{keyword}

\end{frontmatter}

\section{Introduction}
Consider an $M \times n$ noisy matrix $Y$ modeled as
\begin{align}\label{defn_m1}
Y= S+X,
\end{align}
where $S$ is a low-rank deterministic matrix with fixed rank $r$ and $X$ is a real random noise matrix. We assume that $S$ admits the  singular value decomposition 
\begin{equation}\label{eq_ssvd}
S=U D V^*=\sum_{i=1}^r d_i \mathbf{u}_i\mathbf{v}_i^*,
\end{equation}
where $D=\mathrm{diag}(d_1,\ldots,d_r)$ consists of  the singular values of $S$ and we assume $d_1>\ldots>d_r>0$;  $U=(\mathbf{u}_1,\ldots,\mathbf{u}_r) \in \mathbb{R}^{ M \times r}$ and $V=(\mathbf{v}_1,\ldots,\mathbf{v}_r) \in \mathbb{R}^{n \times r}$ are the matrices consisting of the $\ell^2$-normalized left and right singular vectors. 
For the noise matrix $X=(x_{ij})$ in (\ref{defn_m1}), we assume
that the entries $x_{ij}$'s are i.i.d. real  random variables with
\begin{equation}\label{eq_meanva}
\mathbb{E} x_{ij}=0,  \quad \mathbb{E}|x_{ij}|^2=\frac{1}{n}.
\end{equation}
 For simplicity, we also assume the existence of all moments, i.e., for every integer $q \ge 3,$ there is some constant $C_q>0,$ such that
 \begin{equation}\label{eq_moment}
 \mathbb{E}|\sqrt{n} x_{ij}|^q \leq C_q<\infty.
 \end{equation}
 This condition can be  weakened to the existence of  some sufficiently high order moment. But we do not pursue this direction here.  We remark that although we are primarily interested in the real case, our method  also applies to the case when $X$ is a complex noise matrix. 
 
  In practice, $S$ is often called the {\it signal} matrix which contains the information of interest.  In  the high-dimensional setup,  when $M$ and $n$ are comparably large, we are interested in the inference of $S$ or its left and right singular spaces, which are the subspaces spanned by $\mathbf{u}_i$'s or $\mathbf{v}_i$'s, respectively. Such a problem arises in many scientific applications such as  
matrix denoising \cite{Ding,donoho2014}, multiple signal classification (MUSIC) \cite{HACHEM2013427,6095424} and multidimensional scaling \cite{FSZZ2018, ds2018}. We call the model in (\ref{defn_m1}) the matrix denoising model, which is also known as the signal-plus-noise model in the literature.   We refer to Section \ref{subsec:application} for more introduction on  the application aspects.
 
 We denote the  singular value decomposition of $Y$ by 
\begin{align}\label{eq_ysvd}
Y= \widehat{U} \Lambda \widehat{V}^*=\sum_{i=1}^{M \wedge n} \sqrt{\mu}_i  \widehat{\mathbf{u}}_i \widehat{\mathbf{v}}_i^*,
\end{align}
where $\mu_1\geq \cdots\geq \mu_{M\wedge n}$ are the squares of the non-trivial singular values, and $\widehat{\mathbf{u}}_i$'s and $\widehat{\mathbf{v}}_i$'s are the $\ell^2$-normalized sample singular vectors. 
Here  $\widehat{U}=(\widehat{\mathbf{u}}_1, \ldots, \widehat{\mathbf{u}}_{M} ) $  and $\widehat{V}=(\widehat{\mathbf{v}}_1, \ldots, \widehat{\mathbf{v}}_{n} ) $ and $\Lambda$ is $M\times n$ with singular values on its main diagonal. 

In this paper, we are interested in the distributions of the principal left and right singular vectors of $Y$ and the subspaces spanned by them.  On singular vectors, a natural quantity to look into is the projection of a sample principal singular vector onto its deterministic counterpart, i.e.,  $|\langle\widehat{\mathbf{u}}_i,\mathbf{u}_i\rangle|$ and  $ |\langle\widehat{\mathbf{v}}_i,\mathbf{v}_i\rangle|$, which characterizes  the deviation of an original signal from the noisy one. On singular spaces, the natural estimators for $U$ and $V$ are their noisy counterparts
\begin{align*}
\widehat{U}_r = (\widehat{\mathbf{u}}_1,\ldots,\widehat{\mathbf{u}}_r) \quad \text{and}\quad \widehat{V}_r=(\widehat{\mathbf{v}}_1,\ldots,\widehat{\mathbf{v}}_r),
\end{align*}
respectively, i.e., the matrices consisting of  the first $r$ left and right singular vectors of $Y$, respectively.  To measure the distance between $\widehat{U}_r$ and $U$, or $\widehat{V}_r$ and $V$, we consider the following matrix of the cosine principal angles between two subspaces (see \cite[Section 6.4.3]{GVL12} for instance):
\begin{align*}
\cos \Theta(\widehat{V}_r, V) = \text{diag}(
\sigma_1^{V},\ldots, \sigma_r^{V}),\qquad  \cos \Theta(\widehat{U}_r, U) = \text{diag}(
\sigma_1^{U},\ldots, \sigma_r^{U}),
\end{align*}
where $\sigma_i^V$'s and $\sigma_i^U$'s are the singular values of the matrices $\widehat{V}_r^* V$ and $\widehat{U}_r^* U$, respectively. 
Therefore, an appropriate measure of the distance between the subspaces is  $L:=\| \cos \Theta (U, \widehat{U}_r) \|_F^2$ for the left singular subspace or $R:=\| \cos \Theta (V, \widehat{V}_r) \|_F^2$ for the right singular subspace, where $\|\cdot\|_F^2$ stands for the Frobenius norm. Note that $L$ and $R$ can also be written as  
{ \begin{align}
&L:= \sum_{i,j=1}^r |\langle\widehat{\mathbf{u}}_i,\mathbf{u}_j\rangle|^2= \frac12 \Big(2r-\|\widehat{U}_r\widehat{U}_r^*-UU^*\|_F^2\Big), \label{eq_leftsubspace}\\
 &R:=\sum_{i,j=1}^r |\langle\widehat{\mathbf{v}}_i,\mathbf{v}_j\rangle|^2=\frac12 \Big(2r-\|\widehat{V}_r\widehat{V}_r^*-VV^*\|_F^2\Big). \label{18012401}
\end{align}
}

In this paper, we are interested in the following high-dimensional regime: for some small constant $\tau\in (0, 1)$ we have 
\begin{equation}\label{eq_ratioassumption}
M\equiv M(n),\qquad  \ y \equiv y_n:=\frac{M}{n}\to c\in [\tau, \tau^{-1}], \quad \text{as  } n\to \infty.
\end{equation}
Our main results are on the limiting distributions of individual $|\langle\widehat{\mathbf{v}}_i,\mathbf{v}_i\rangle|^2$ (resp. $ |\langle\widehat{\mathbf{u}}_i,\mathbf{u}_i\rangle|^2$)  and $R$ (resp. $L$) when the signal strength, $d_i$'s, are supercritical (c.f. Assumption \ref{assum_main}).  They are detailed in  Theorems \ref{thm_mainthm}, \ref{thm. right subspace}, after necessary notations are introduced.  In the rest of this section, we review some related literature from both theoretical and applied perspectives.

\subsection{On finite-rank deformation of random  matrices} \label{subsec:previous} From the theoretical perspective, 
our model in (\ref{defn_m1}) is in the category of the fixed-rank deformation of the random matrix models in the Random Matrix Theory, which also includes the deformed Wigner matrix and the spiked sample covariance matrix as  typical examples. There are a vast of work devoted to this topic and the primary interest is to investigate the limiting behavior of the extreme eigenvalues and the associated eigenvectors of the deformed models.  Since the seminal work of Baik, Ben Arous and P\'{e}ch\'{e} \cite{BBP}, it is now well-understood that the extreme eigenvalues undergo a so-called BBP transition along with the change of the strength of the deformation.  Roughly speaking, there is a critical value such that the extreme eigenvalue of the deformed matrix will stick to the right end point of the limiting spectral distribution of the undeformed random matrix if the strength of the deformation is less than or equal to the critical value, and will otherwise jump out of the support of the limiting spectral distribution. In the latter case, we call the extreme eigenvalue as an outlier, and the associated eigenvector as an outlier eigenvector. Moreover, the fluctuation of the extreme eigenvalues in different regimes (subcritical, critical and supercritical) are also identified in \cite{BBP} for the complex spiked covariance matrix.  We also refer to \cite{ BaiS, BGN, BGN1, BaiY,  CDM2016, Ding, DY, KY13, DP2007} and the reference therein  for the first-order limit of the extreme eigenvalue of various fixed-rank deformation models. The fluctuation of the extreme eigenvalues of various models have been considered in \cite{bai2008, BaiY, bao2017, bao2015, benaych-georges2011,CDF,  CDF12,  elkaroui2007, BV131, BV132,  KY13,  DP2007, FP07, P2006, PRS13, KY14AOP}. Especially, the fluctuations of the outliers are shown to be non-universal for the deformed Wigner matrices, first  in \cite{CDF} under certain special assumptions on the structure of the deformation and the distribution of the matrix entries, and then in \cite{KY13} in full generality. 

The study on the behavior of the extreme eigenvectors has been mainly focused on the level of the first order limit \cite{BGN, BGN1,  CM2017,  Ding, FJS, DP2007}. In parallel to the results of the extreme eigenvalues, it is known that the  eigenvectors  are delocalized  in the subcritical case and have a bias on the direction of the deformation in the supercritical case. It is  recently observed  in \cite{BEKYY2016} that a deformation close to the critical regime will cause a bias even for the non-outlier eigenvectors.  On the level of the fluctuation, the limiting behavior of the extreme eigenvectors has  not been  fully studied yet.  By establishing a general universality result of the eigenvectors of the sample covariance matrix in the null case, the authors of \cite{BEKYY2016} are able to show that the law of the  eigenvectors of the spiked covariance matrices are asymptotically Gaussian in the subcritical regime. More specifically, the generalized components of the eigenvectors (i.e. $\langle \widehat{\bm{v}}_i, \bm{w} \rangle$ for any deterministic vector $\bm{w}$) are $\chi^2$ distributed. For spiked  Gaussian sample covariance matrices, in the supercritical regime,  the fluctuation of a fixed-dimensional normalized  subvector of the outlier eigenvector is proved to be Gaussian in \cite{DP2007}, but this result cannot tell the distribution of $\langle \widehat{\bm{v}}_i, \bm{v}_i \rangle$. Under some special assumptions on the structure of the  deformation and the distribution of the random matrix entries, it is shown in \cite{CDM2018} that the eigenvector distribution of a generalized deformed Wigner matrix model is non-universal in the supercritical regime.  In the current work, we aim at establishing the non-universality for the outlier singular vectors  for the matrix  denoising model under fully general assumptions on the structure of the deformation $S$ and the distribution of the random matrix $X$. This can be regarded as an eigenvector counterpart of the result on the outlying eigenvalue distribution in \cite{KY13}.

\subsection{On singular subspace inference}\label{subsec:application} From the applied perspective, 
our model (\ref{defn_m1}) appears prominently in the study of signal processing \cite{KSM1998, bjohn2011}, machine learning \cite{wang2011, yang2016} and statistics \cite{CZ2018, 2018arXiv180200381C,  donoho2014, FZ2018}. For instance, in the study of image denoising, $S$ is treated as the true image \cite{levin2011} and in the problem of classification, $S$ contains the  underlying true mean vectors of samples \cite{CZ2018}. In both situations, we need to understand the asymptotics of the singular vectors and subspace of $S$, given the observation $Y.$ In addition, the statistics $R$ and $L$ defined in (\ref{18012401}) can be used for the inference of the structure of the singular subspace of $S.$ We remark that these statistics have been used extensively to explore the properties of singular subspace.  To name a few,  in \cite{HQC}, the authors studied the problem of testing whether the sample singular subspace is equal to some given subspace; in \cite{CTP}, the authors studied the eigenvector inference problems for the correlated stochastic block model; in \cite{HTB}, the authors analyzed the impact of dimensionality reduction for subspace clustering algorithms; and in \cite{CZ2018}, the authors studied the  high-dimensional clustering problem and the canonical correlation analysis. In the high-dimensional regime (\ref{eq_ratioassumption}), to the best of our knowledge, the distributions of  $R$ and $L$ have not been studied yet in the literature.

In the situation when $M$ is fixed, the sample eigenvectors of  $XX^*$ are normally distributed \cite{AT}. When $M$ diverges with $n,$ many interesting results have been proposed under various assumptions. One line of the work is to derive the perturbation bounds for the perturbed singular vectors based on Davis-Kahan's theorem. For instance, in \cite{OVW}, the authors improve the perturbation bounds of Davis-Kahan theorem to be nearly optimal. In \cite{CZ2018}, the authors study similar problems and their related statistical applications. Most recently, in the papers \cite{FWZ2018,FZ2018, ZB2017}, the authors derive  the $\ell^{\infty}$ pertubation bounds  assuming that the population vectors were delocalized (i.e. incoherent).  The other line of the work is to study the asymptotic normality of the spectral projection under various regularity conditions.  In such cases,  the singular vectors of $S$ can be estimated using those of $Y$ and some  Gaussian approximation technique can be employed.  Considering the Gaussian data samples $\mathbf{x}_i \simeq \mathcal{N}(\mathbf{0}, \Sigma), i=1,2,\cdots,n$ and $X=(\mathbf{x}_i),$ under the assumption that the order of $\frac{\text{Tr} \Sigma}{\| \Sigma \|}$ is much smaller than $n,$  in \cite{KLN,KL, KLAOS}, the authors prove that  the eigenvectors of $X X^*$  are asymptotically normally distributed, whose variance depends the eigenvectors of $\Sigma$. Furthermore, in \cite{Xia}, assuming that $m$ such random matrices $X_i, i=1,2,\cdots,m$ are available, the author shows that  the singular vectors  of $S$ can be estimated via trace regression using  matrix nuclear
norm penalized least squares estimation (NNPLS). Under the assumption that $r^4 K \log^3 m=o(m), \ K=\max\{M,n\},$ the author shows that the principal angles of the subspace estimated using NNPLS are asymptotically normal.

\subsection{Organization}   The rest of the  paper is organized as follows. In Section \ref{s.result and method}, we state our main results and summarize our method for the proofs. In Section \ref{sec:simuandstat}, we design Monte Carlo simulations to demonstrate the accuracy of our main results and briefly illustrate their applications through a  hypothesis testing problem. In Section  \ref{s. 18092602}, we introduce some main technical results including the isotropic local law and also derive the Green function representation for our statistics.  In Section \ref{sec_mainthm}, we prove Theorems \ref{thm_mainthm}, based on the recursive estimate in Proposition \ref{prop_iteration}.  
We state more  simulation results, further discussions of statistical applications, the proofs of Theorem \ref{thm. right subspace}  and some technical lemmas in the supplementary material \cite{BDW_suppl}.

\section{Main results and methodology} \label{s.result and method}
In this section, we state our main results, and briefly summarize our proof strategy.

 \subsection{Main results} \label{main results}
In this paper,  the singular values of $S$ are assumed to satisfy the following {\it supercritical } condition.

{
\begin{assum}[Supercritical condition]\label{assum_main} There exist a constant $C>0$ and a (small) constant $\delta>0,$ such that 
\begin{equation*}
{y^{1/4}+\delta\leq d_r<\cdots<d_2<d_1\leq C }, \quad \quad \min_{1 \leq j \neq i \leq r}|d_i-d_j| \geq \delta.
\end{equation*}
\end{assum} 
}
\begin{rem}
The first inequality above ensures that the first $r$ singular values of $Y$ are outliers, and the threshold $y^{1/4}$ is the analogous BBP transition point in \cite{BBP}. The second inequality guarantees that the outliers of $Y$ are well separated from each other. We also assume that $d_1, \cdots, d_r$ are bounded by some constant $C$.  All these conditions can be weakened. For instance, we do allow the existence of the subcritical and critical $d_i$'s if we only focus on the outlier singular vectors.  Also, the separation of $d_i$'s by an order $1$ distance $\delta$ is not necessary. In  \cite{BEKYY2016}, a much weaker  separation of  order $n^{-1/2+\epsilon}$ is enough for the discussion of the eigenvalues. Moreover, we can also extend our results to the case when  $d_1, \cdots, d_r$ diverge with $n$. But we do not pursue these directions in the current paper.  
\end{rem}

In the sequel, we will only state the results for the right singular vectors and the right singular subspace. The results for the left ones can be obtained from the right ones by simply considering  the transpose (with a rescaling) of our matrix model in (\ref{defn_m1}). To state our results, we need more notations. First, we define
\begin{align}
p(d):=\frac{(d^2+1)(d^2+y)}{d^2}.  \label{18012901}
\end{align}
For each $i\in [r]$, we will write  $p_i\equiv p(d_i)$ for short. Recall (\ref{eq_ysvd}). In \cite[Theorem 3.4]{Ding}, it has been shown that $p_i$ is the  limit of $\mu_i$. Further, we set
\begin{equation} \label{eq_defna1a2}
 a(d):=\frac{d^4-y}{d^2(d^2+1)}.
\end{equation}
It has been proved in \cite{Ding} that  $a(d_i)$ are the  limits of   $|\langle \mathbf{v}_i, \widehat{\mathbf{v}}_i \rangle |^2$ respectively (see Lemma \ref{lem:Ding} in \cite{BDW_suppl}).   
We also denote by $\kappa_l$   the $l$-th cumulant of the random variables $\sqrt{n} x_{ij}$. For a vector $\mathbf{w}=(w(1),\ldots, w(m))^T$  and $l\in \N$, we introduce the notation 
\begin{align*}
\mathbf{s}_l (\mathbf{w}) := \sum_{i=1}^m w(i)^l.
\end{align*}
Set
\begin{align}\label{eq_defnthed}
\theta(d):=\frac{d^4 + 2 y d^2 + y}{d^3 (d^2+1)^2}, \quad\quad
\psi(d):=\frac{d^6 - 3 y d^2 - 2 y}{d^3 (d^2+1)^2}, 
\end{align}
and
\begin{align}
\mathcal{V}^E(d):=&\frac{2}{d^4 - y} \bigg(  2 y (y +1) \theta(d)^2 - \frac{y(y-1)(5y+1)}{d (d^2+1)^2}\theta(d)  \nonumber \\
&+  \frac{(d^4+y)(d^2 + y)^2 }{d^3 (d^2+1)^2} \psi(d)+ \frac{2 y^{2} (y-1)^2}{d^2 (d^2+1)^4} \bigg). \label{eq_ved}
\end{align}
For the right singular vectors, we have the following theorem. 
\begin{thm}[Right singular vectors]\label{thm_mainthm} Assume that  (\ref{eq_meanva}), (\ref{eq_moment}), (\ref{eq_ratioassumption}) and Assumption \ref{assum_main} hold. For  $i\in [r]$, define the random variable
\begin{align}\label{def:Upsilon}
\Delta_i :=- 2\sqrt{n} \theta(d_i) \mathbf{u}_i^* X \mathbf v_i - \frac{2\psi(d_i)}{d_i^2} \big(\frac{\kappa_3}{n} \mathbf{s}_1(\mathbf u_i) \mathbf{s}_{1}(\mathbf v_i) \big),
\end{align}
and let
$\mathcal{Z}_i\sim \mathcal{N}\left(0, \mathcal{V}_{i} \right)$ be a random variable independent of $\Delta_i$,  where
\begin{align*}
\mathcal{V}_i&:= \mathcal{V}^E(d_i) -\frac{4}{d_i}\theta(d_i) \psi(d_i) \big( \frac{\kappa_3}{ \sqrt n} \mathbf{s}_3(\mathbf u_i) \mathbf{s}_{1}(\mathbf v_i) \big) + \frac{4}{d_i} \theta(d_i)^2 \big( \frac{\kappa_3}{ \sqrt n} \mathbf{s}_1(\mathbf u_i) \mathbf{s}_{3}(\mathbf v_i) \big)\\
&\qquad+ \frac{\psi(d_i)^2}{d_i^2} \kappa_4  \mathbf{s}_4(\mathbf u_i) + \frac{y \theta(d_i)^2}{d_i^2} \kappa_4  \mathbf{s}_4(\mathbf v_i).
\end{align*}
Then for any $i\in [r]$ and any bounded continuous function $f$, we have 
\begin{equation*}
\lim_{n \rightarrow \infty} \Big( \mathbb{E} f \left(\sqrt{n} \left(  |\langle\mathbf{v}_i, \widehat{\mathbf{v}}_i\rangle  |^2-a(d_i)  \right) \right)-\mathbb{E} f(\Delta_i + \mathcal{Z}_{i})  \Big)=0.
\end{equation*}
\end{thm}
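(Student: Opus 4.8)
The plan is to express the statistic $|\langle \mathbf{v}_i,\widehat{\mathbf{v}}_i\rangle|^2$ as a contour integral of resolvent entries, exploit the finite rank of $S$ to reduce it to bilinear forms and traces of resolvents of $X$, and then peel off its $n^{-1/2}$-scale fluctuation. \emph{Step 1 (Green function representation and reduction).} By (\ref{eq_ysvd}) and Cauchy's formula, $|\langle \mathbf{v}_i,\widehat{\mathbf{v}}_i\rangle|^2 = \mathbf{v}_i^*\big(\widehat{\mathbf{v}}_i\widehat{\mathbf{v}}_i^*\big)\mathbf{v}_i = -\frac{1}{2\pi\ii}\oint_{\Gamma_i}\langle \mathbf{v}_i, (Y^*Y-z)^{-1}\mathbf{v}_i\rangle\,\dd z$, where $\Gamma_i$ is a small positively oriented loop enclosing $\mu_i$ and no other eigenvalue of $Y^*Y$; by Assumption \ref{assum_main} and the rigidity of $\mu_i$ supplied by the local law, $\Gamma_i$ may be kept at distance of order one from $\{p_j\}_{j\ne i}$ and from the bulk, so that $z$ stays in the region where the isotropic law holds. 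It is cleaner to run this in the $(M+n)$-dimensional linearization used in Section \ref{s. 18092602}, via the Green function representation for our statistics derived there. Writing $S=UDV^*$ and applying the Woodbury/Schur-complement identity, $\langle \mathbf{v}_i,(Y^*Y-z)^{-1}\mathbf{v}_i\rangle$ becomes a rational function of $z$, of $d_1,\dots,d_r$, and of the bilinear forms $\mathbf{u}_j^*\mathcal R_1\mathbf{u}_k$, $\mathbf{v}_j^*\mathcal R_2\mathbf{v}_k$, $\mathbf{u}_j^* X\mathcal R_2\mathbf{v}_k$ with $\mathcal R_1=(XX^*-z)^{-1}$, $\mathcal R_2=(X^*X-z)^{-1}$, while $\mu_i$ is the zero inside $\Gamma_i$ of a $2r\times 2r$ determinant built from the same forms (the master equation). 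The isotropic local law replaces every such bilinear form by its deterministic limit (a multiple of the corresponding inner product involving the Stieltjes transform $m(z)$ of the Marchenko--Pastur law) up to a fluctuating error of order $n^{-1/2}$, uniformly on $\Gamma_i$.

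\emph{Step 2 (extracting the fluctuation).} Inserting the deterministic limits and performing the $z$-integral reproduces $a(d_i)$ --- this is the first-order computation already behind \cite{Ding}. For the $\sqrt n$-scale correction one must keep the subleading terms: the master equation gives $\mu_i=p_i+O(n^{-1/2})$, and a first-order Taylor expansion of the rational expression in the fluctuations of the bilinear forms (plus the shift of $\mu_i$) yields $\sqrt n\big(|\langle \mathbf{v}_i,\widehat{\mathbf{v}}_i\rangle|^2-a(d_i)\big)=\mathcal L_i+o_{\mathbb P}(1)$, where $\mathcal L_i$ is an explicit linear combination --- with coefficients equal to residues at $z=p_i$ of rational functions of $m(z)$, which is how $\theta(d_i)$ and $\psi(d_i)$ in (\ref{eq_defnthed}) arise --- of: the rank-one quantity $\sqrt n\,\mathbf{u}_i^* X\mathbf{v}_i$; the centred quadratic forms $\sqrt n(\mathbf{v}_i^*\mathcal R_2\mathbf{v}_i-\mathrm{limit})$, $\sqrt n(\mathbf{u}_i^*\mathcal R_1\mathbf{u}_i-\mathrm{limit})$, $\sqrt n(\mathbf{u}_i^* X\mathcal R_2\mathbf{v}_i-\mathrm{limit})$; and the centred normalized traces $\sqrt n(\tfrac1n\Tr\mathcal R_a-m)$ together with their $z$-derivatives. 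Proposition \ref{prop_iteration} is exactly the recursive device that makes this rigorous: starting from the crude a priori bound $n^{-1/2}$ on all fluctuations it bootstraps to the identity above with a quantitative bound on the remainder. Now $\mathcal L_i$ splits as $\mathcal L_i=\Delta_i+W_n$: the piece $\Delta_i$ of (\ref{def:Upsilon}) collects the contribution of $\sqrt n\,\mathbf{u}_i^* X\mathbf{v}_i$ --- which is genuinely non-universal, since if $\mathbf{u}_i$ or $\mathbf{v}_i$ carries a component of order one the law of $\sqrt n\,\mathbf{u}_i^* X\mathbf{v}_i$ depends on that of the $x_{jk}$'s --- together with the $\kappa_3$-correction produced by $\mathbb E[(\sqrt n x_{jk})^3]$ in the cumulant expansion, while $W_n$ collects the remaining quadratic-form and trace fluctuations.

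\emph{Step 3 (joint CLT and asymptotic independence).} It remains to show that $W_n$ is asymptotically $\mathcal N(0,\mathcal V_i)$ and asymptotically independent of $\Delta_i$. This follows from a joint central limit theorem for $(\sqrt n\,\mathbf{u}_i^* X\mathbf{v}_i,\,W_n)$, proved by expanding mixed cumulants (equivalently by Stein's method or characteristic functions): the joint cumulants of order $\ge 3$ tend to $0$; the covariance between $\sqrt n\,\mathbf{u}_i^* X\mathbf{v}_i$ and $W_n$ tends to $0$; and $\operatorname{Var}(W_n)\to\mathcal V_i$, where the purely Gaussian part of the covariance kernel of the resolvent entries integrates in $z$ to $\mathcal V^E(d_i)$, the fourth-cumulant contributions are carried by $\mathbf{s}_4(\mathbf{u}_i)$ and $\mathbf{s}_4(\mathbf{v}_i)$ (from $\sum_j u_i(j)^4$, $\sum_k v_i(k)^4$), and the third-cumulant cross terms by $\mathbf{s}_3(\mathbf{u}_i)\mathbf{s}_1(\mathbf{v}_i)$ and $\mathbf{s}_1(\mathbf{u}_i)\mathbf{s}_3(\mathbf{v}_i)$. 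Combining with Step 2, $\mathbb E f\big(\sqrt n(|\langle\mathbf v_i,\widehat{\mathbf v}_i\rangle|^2-a(d_i))\big)=\mathbb E f(\Delta_i+W_n)+o(1)=\mathbb E f(\Delta_i+\mathcal Z_i)+o(1)$ with $\mathcal Z_i\sim\mathcal N(0,\mathcal V_i)$ independent of $\Delta_i$, which is the assertion.

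\emph{Main obstacle.} The crux is the bookkeeping in Steps 2--3: correctly propagating all $\kappa_3$ and $\kappa_4$ contributions through the cumulant expansion and through the $z$-integration --- this is what produces the intricate forms $\mathcal V^E(d)$ and $\mathcal V_i$ --- and, above all, controlling uniformly on $\Gamma_i$ the error terms in the recursive scheme of Proposition \ref{prop_iteration} so that the $o_{\mathbb P}(1)$ remainder in Step 2 is legitimate; the isotropic local law and the order-one separation of the outliers (Assumption \ref{assum_main}) are what make this possible. The asymptotic independence of $\Delta_i$ and the Gaussian part in Step 3 is conceptually the most delicate point, but it reduces to the vanishing of the relevant mixed cumulants.
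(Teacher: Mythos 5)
Steps 1 and 2 of your proposal track the paper's strategy faithfully: the Green function representation and contour integration are exactly Lemma \ref{lem:key} (the paper even integrates out the contour first so that everything lives at the single point $z=p_i$, a choice they contrast with \cite{CDM2018}), and you correctly identify Proposition \ref{prop_iteration} as the engine. The genuine gap is in Step 3, where you assert that the residual $W_n:=\mathcal{L}_i-\Delta_i$ is asymptotically $\mathcal{N}(0,\mathcal{V}_i)$ \emph{and independent of} $\Delta_i$, to be proved by showing that the mixed cumulants of $(\sqrt n\,\mathbf{u}_i^*X\mathbf{v}_i,\,W_n)$ of order $\ge 3$ vanish and that $\mathrm{Cov}(\Delta_i,W_n)\to 0$. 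None of this is what the theorem asserts, and it is generally false. The conclusion ``$\mathcal{Z}_i\sim\mathcal{N}(0,\mathcal{V}_i)$ independent of $\Delta_i$'' is a purely distributional statement in the sense of Definition \ref{defn:replace}: a \emph{fresh} Gaussian independent of a fresh copy of $\Delta_i$ is constructed, not the literal residual of the statistic. In fact the literal residual $W_n$ is correlated with $\Delta_i$: a variance bookkeeping gives $\mathcal{V}_i=\mathrm{Var}(W_n)+2\,\mathrm{Cov}(\Delta_i,W_n)$, and the $\kappa_3$ cross terms $\mathbf{s}_3(\mathbf{u}_i)\mathbf{s}_1(\mathbf{v}_i)$, $\mathbf{s}_1(\mathbf{u}_i)\mathbf{s}_3(\mathbf{v}_i)$ in $\mathcal{V}_i$ are precisely these covariance contributions, not the variance of $W_n$ alone. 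Moreover, when $\mathbf{u}_i,\mathbf{v}_i$ are localized the piece $\sqrt n\,\mathbf{u}_i^*X\mathbf{v}_i$ is not asymptotically Gaussian (Remark \ref{rem_singu4}), so any assertion that ``joint cumulants of order $\ge 3$ tend to $0$'' for the pair $(\sqrt n\,\mathbf{u}_i^*X\mathbf{v}_i,W_n)$ cannot be right; non-Gaussianity of $\Delta_i$ is the whole point of the theorem.

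What makes the paper's argument go through, and what your proposal omits, is the $\mathcal{B}(\nu)/\mathcal{S}(\nu)$ truncation: $\sqrt n\,\mathbf{u}_i^*X\mathbf{v}_i$ is split into the large-component-index part $\Delta_r$ (supported on $O(n^{4\nu})$ entries) and the small-component-index part, and only $\Delta_r$ is separated out. The paper's $Q=\mathcal{Q}-\Delta_d-\Delta_r$ explicitly retains the small-index linear piece $\sqrt{nz}\sum_{(i,j)\in\mathcal{S}(\nu)}c_{ij}x_{ij}$ (Lemma \ref{lem.rewrite of Q}), so that $Q$ can be shown Gaussian by a classical CLT mechanism, and the asymptotic independence is proved between $\Delta_r$ and $Q$, not between $\Delta_i$ and $W_n$. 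The device that accomplishes both the normality of $Q$ (at $t=0$) and its independence from $\Delta_r$ (for arbitrary $t$) in one sweep is the recursive characteristic-function estimate \eqref{eq_qdeltarecusie}, i.e.\ $\mathbb{E}\,Q^k e^{\ii t\Delta}=(k-1)V\,\mathbb{E}\,Q^{k-2}e^{\ii t\Delta}+O_{\prec}(n^{-1/2+4\nu})$, which the paper highlights as new. Only afterwards is the $\mathcal{S}(\nu)$-linear piece re-absorbed by two applications of Lemma \ref{lem_convergenindist} into the full $\sqrt{nz}\Tr(X^*C)$, producing $\Delta_i$ and a leftover variance $\mathcal{V}_i=V-z\sum_{\mathcal{S}(\nu)}c_{ij}^2$. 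If you want to keep your Step 3 in the form of a ``joint CLT,'' you should replace the pair $(\sqrt n\,\mathbf{u}_i^*X\mathbf{v}_i,\,W_n)$ by $(\Delta_r,\,Q)$ and prove the factorization of $\mathbb{E}\,e^{\ii sQ+\ii t\Delta_r}$, which is what Proposition \ref{lem:main} actually delivers.
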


\begin{rem}
In \cite{KY13}, the authors obtain the non-universality  for the limiting distributions of the outliers (outlying eigenvalues) of the deformed Wigner matrices. The limiting distributions admit similar forms as the limiting distribution for the outlier singular vectors for our models. One might notice that the third or the fourth cumulants of the entries of the Wigner matrices are allowed to be different in \cite{KY13}. An extension along this direction is also straightforward for our result. 
\end{rem}

We discuss a few special cases of interest. For simplicity, we assume that $S$ has rank $r = 1$ and drop all the subindices. 
\begin{rem}\label{rem_singu1}  If the entries of $\sqrt n X$ are standard Gaussian random variables (i.e. $\kappa_3=\kappa_4=0$), then $\Delta \simeq \mathcal N(0,  4\theta(d)^2)${ (see Definition \ref{defn_asymptotic} for the meaning of $\simeq$).  Hence, we find} $\Delta+\mathcal Z$ is asymptotically distributed as
\begin{align*}
\mathcal N \left(0, 4 \theta(d)^2 + \mathcal V^E(d)\right).
\end{align*}
\end{rem}
\begin{rem}\label{rem_singu2} If both $\mathbf u$ and $\mathbf v$ are delocalized in the sense that { $\|\mathbf u\|_{\infty}=o(1)$ and $\|\mathbf v\|_{\infty}=o(1)$.}  Then $\mathbf s_l(\mathbf u) = o(1)$ and $\mathbf s_l(\mathbf v) = o(1)$ for $l=3,4$. {By (\ref{eq_meanva}), (\ref{eq_moment}) and the fact $\| \bm{u} \|_2= \| \bm{v} \|_2=1$, we find that $\mathbb{E}(\bm{u}^*X \bm{v})=0$ and $\mathbb{E}(\bm{u}^*X \bm{v})^2=n^{-1}.$ Then} we conclude from Lyapunov's  CLT for triangular array that 
\begin{align}\label{eq:limitofLambda}
\Delta \simeq \mathcal N \left( - \frac{2\psi(d)}{d^2} \big(\frac{\kappa_3}{n} \mathbf{s}_1(\mathbf{u}) \mathbf{s}_1(\mathbf{v}) \big), 4 \theta(d)^2 \right),
\end{align}
 and therefore $\Delta + \mathcal Z$ has asymptotically the same distribution as 
\begin{align*}
\mathcal N \left(- \frac{2\psi(d)}{d^2} \big(\frac{\kappa_3}{n} \mathbf{s}_1(\mathbf{u}) \mathbf{s}_1(\mathbf{v}) \big), 4 \theta(d)^2 + \mathcal{V}^E(d)\right).
\end{align*}
The only difference from the Gaussian case is a shift caused by the non-vanishing third cumulant.
\end{rem}
\begin{rem}\label{rem_singu3} If one of $\mathbf u$ and $\mathbf v$ is delocalized, say $\| \mathbf u\|_\infty=o(1)$, then $\Delta$ still has the limiting distribution in \eqref{eq:limitofLambda}. Therefore $\Delta + \mathcal Z$ has asymptotically the same distribution as a Gaussian
random variable with mean $$- \frac{2\psi(d)}{d^2} \big(\frac{\kappa_3}{n} \mathbf{s}_1(\mathbf{u}) \mathbf{s}_1(\mathbf{v}) \big)$$ and variance
$$4 \theta(d)^2 + \mathcal{V}^E(d) + \frac{4}{d}\theta(d)^2 \big( \frac{\kappa_3}{ \sqrt n} \mathbf{s}_1(\mathbf{u}) \mathbf{s}_{3}(\mathbf{v}) \big) +y \frac{\theta(d)^2}{d^2} \kappa_4  \mathbf{s}_4(\mathbf{v}). $$
\end{rem}

\begin{rem}\label{rem_singu4} If neither $\mathbf u$ nor $\mathbf v$ is delocalized, then $\Delta + \mathcal Z$ is no longer Gaussian in general. For example, if $\mathbf u=\mathbf e_1$ and $\mathbf v= \mathbf f_1$ where $\mathbf e_1$ and $\mathbf f_1$ are the canonical basis vectors in $\R^M$ and $\R^n$ respectively, then $\Delta + \mathcal Z$ is asymptotically distributed as 
$$-2\theta(d) \sqrt n X_{11} + \mathcal N \left(0, \mathcal{V}^E(d) + \kappa_4 \frac{\psi(d)^2 + y \theta(d)^2}{d^2} \right),$$
which depends on the distribution of $X_{11}$ and thus is non-universal. 
\end{rem}

If the assumptions of Theorem 2.3 hold,  we conclude from Remarks 2.6--2.9 that $|\langle \bm{v}_i, \widehat{\bm{v}}_i \rangle|^2$ always has a Gaussian fluctuation if either the entries of $X$ are Gaussian or one of $\bm{u}_i$ and $\bm{v}_i$ is delocalized in the sense $\|\bm{u}_i\|_\infty=o(1)$ or $\|\bm{v}_i\|_\infty=o(1)$. In the general setting when the noise matrix is non-Gaussian, the detailed distribution will rely on both the structure of the singular vectors and the noise matrix $X.$  

Next,  we study the distributions of the right  singular space.
{  For two vectors $\mathbf w_a=(w_a(1),\ldots,w_a(m))^T, a=1,2$, we denote
\begin{align*}
\mathbf s_{k,l} (\mathbf w_1, \mathbf w_2):=\sum_{i=1}^m w_1(i)^k w_2(i)^l.
\end{align*}
}
 Recall $R$ from \eqref{18012401}. We have the following theorem.
\begin{thm}[Right singular subspace] \label{thm. right subspace} Assume that  (\ref{eq_meanva}), (\ref{eq_moment}), (\ref{eq_ratioassumption}) and Assumption \ref{assum_main} hold. Let $\Delta = \sum_{i=1}^r \Delta_i$,  where $\Delta_i$ is defined in \eqref{def:Upsilon}. Let $\mathcal{Z}$ be a random variable independent of $\Delta$ with law $ \mathcal{Z} \sim \mathcal N(0,  \mathcal{V})$, where
\begin{align*}
 \mathcal V &:= \sum_{i=1}^r \mathcal{V}^E(d_i) +\kappa_4\sum_{i,j=1}^r \left(  \frac{\psi(d_i) \psi(d_j)}{d_i d_j} \mathbf{s}_{2,2}(\mathbf u_i, \mathbf u_j) + y \frac{\theta(d_i) \theta(d_j)}{d_i d_j} \mathbf{s}_{2,2}(\mathbf v_i, \mathbf v_j) \right)\\
&\qquad+ \frac{\kappa_3}{\sqrt n}\sum_{i,j=1}^r \frac{4}{d_i}\theta(d_j) \Big(\theta(d_i) \mathbf{s}_{2,1}(\mathbf v_i, \mathbf v_j) \mathbf{s}_1(\mathbf u_j) -\psi(d_i) \mathbf{s}_{2,1}(\mathbf u_i, \mathbf u_j) \mathbf{s}_1(\mathbf v_j)  \Big).
\end{align*}
Then  for any bounded continuous function $f$, we have that 
\begin{equation*}
\lim_{n \rightarrow \infty} \Big( \mathbb{E} f \big(\sqrt{n} \big( R- \sum_{i=1}^r a(d_i)  \big) \big)-\mathbb{E} f(\Delta +\mathcal{Z})  \Big)=0.
\end{equation*}
\end{thm}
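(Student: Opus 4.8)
The plan is to pass to a resolvent (spectral-projector) representation of $R$ and then run the same Green-function expansion used for Theorem~\ref{thm_mainthm}, but now keeping all $r$ outlier directions simultaneously. Write $R=\sum_{j=1}^r\mathbf v_j^*\widehat V_r\widehat V_r^*\mathbf v_j=\Tr\big(\widehat V_r\widehat V_r^*VV^*\big)$. Since $\widehat{\mathbf v}_1,\dots,\widehat{\mathbf v}_r$ are eigenvectors of $Y^*Y$ with eigenvalues $\mu_1,\dots,\mu_r$,
\[
\widehat V_r\widehat V_r^*=-\frac{1}{2\pi\ii}\oint_{\Gamma}(Y^*Y-z)^{-1}\,\dd z,
\]
where $\Gamma$ is a \emph{deterministic} contour encircling only the limiting outlier locations $p_1,\dots,p_r$ (recall \eqref{18012901}); the eigenvalue-location bounds accompanying the isotropic local law of Section~\ref{s. 18092602} guarantee that, with overwhelming probability, $\Gamma$ separates $\mu_1,\dots,\mu_r$ from the bulk spectrum, so that the fluctuation of each $\mu_i$ around $p_i$ is absorbed automatically into the $z$-integral. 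Hence $R=-\frac{1}{2\pi\ii}\oint_\Gamma\sum_{j=1}^r\mathbf v_j^*(Y^*Y-z)^{-1}\mathbf v_j\,\dd z$, and it remains to expand the integrand. (The off-diagonal angles $|\langle\widehat{\mathbf v}_i,\mathbf v_j\rangle|^2$, $i\neq j$, are $O_\prec(n^{-1})$, hence contribute $O_\prec(n^{-1/2})$ to $\sqrt n\,(R-\sum_i a(d_i))$; they appear in, but do no harm to, this representation.)

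Because $S=UDV^*$ has rank $r$, a Schur-complement/Woodbury identity rewrites $(Y^*Y-z)^{-1}$ through the resolvent $\mathcal G(z)$ of the pure noise matrix, the scalar $\ntr\mathcal G(z)$, and the finitely many bilinear forms $\mathbf u_a^*\mathcal G\mathbf u_b$, $\mathbf v_a^*\mathcal G\mathbf v_b$, $\mathbf u_a^*\mathcal G\mathbf v_b$ with $a,b\in[r]$; the outlier positions are encoded by a master determinantal equation built from these. Inserting the isotropic local law replaces each bilinear form by its deterministic equivalent plus an $O_\prec(n^{-1/2})$ fluctuation, and carrying out the $z$-integration---a residue computation at the simple zeros $p_i$ of the master equation---produces the deterministic value $\sum_{i=1}^r a(d_i)$, in agreement with \cite{Ding}, together with a fluctuation term. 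Since the integrand depends on $j$ only through the bilinear forms $\mathbf v_j^*(\cdot)\mathbf v_j$, this reduces to the scalar computation of the proof of Theorem~\ref{thm_mainthm} carried out at each $p_i$, and it gives $\sqrt n\big(R-\sum_{i=1}^r a(d_i)\big)=\sum_{i=1}^r\big(\Delta_i+W_i\big)+o_\prec(1)$, where $\Delta_i$ is the explicit near-linear functional of $X$ from \eqref{def:Upsilon} and $W_i$ is the quadratic-in-$X$ remainder that, marginally, converges to $\mathcal Z_i\sim\mathcal N(0,\mathcal V_i)$.

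The genuinely new work is the \emph{joint} behaviour of $(W_1,\dots,W_r)$. Using the recursive estimate of Proposition~\ref{prop_iteration} in a vectorized form one shows that $\sum_{i=1}^r W_i$ is, up to $o_\prec(1)$, a sum obeying a CLT, so it suffices to identify $\lim_n\mathrm{Var}\big(\sum_i W_i\big)=\sum_i\lim_n\mathrm{Var}(W_i)+\sum_{i\neq j}\lim_n\mathrm{Cov}(W_i,W_j)$. The diagonal part equals $\sum_i\mathcal V_i$ by Theorem~\ref{thm_mainthm}. The cross-covariances are computed by a cumulant expansion in the entries of $X$: the Gaussian contributions carried by $\mathcal V^E$ factor through $\langle\mathbf u_i,\mathbf u_j\rangle$ or $\langle\mathbf v_i,\mathbf v_j\rangle$ and hence vanish for $i\neq j$ (so $\mathcal V^E$ contributes no cross term to $\mathcal V$), whereas the entrywise fourth- and third-cumulant pieces survive and give exactly $\kappa_4\big(\tfrac{\psi(d_i)\psi(d_j)}{d_id_j}\mathbf s_{2,2}(\mathbf u_i,\mathbf u_j)+y\tfrac{\theta(d_i)\theta(d_j)}{d_id_j}\mathbf s_{2,2}(\mathbf v_i,\mathbf v_j)\big)$ and $\tfrac{\kappa_3}{\sqrt n}\tfrac{4}{d_i}\theta(d_j)\big(\theta(d_i)\mathbf s_{2,1}(\mathbf v_i,\mathbf v_j)\mathbf s_1(\mathbf u_j)-\psi(d_i)\mathbf s_{2,1}(\mathbf u_i,\mathbf u_j)\mathbf s_1(\mathbf v_j)\big)$, which sum to $\mathcal V$. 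As in the scalar case one also checks $\mathrm{Cov}(W_i,\Delta_j)\to0$ for all $i,j$, so the limit of $\sum_i W_i$ is a Gaussian $\mathcal Z\sim\mathcal N(0,\mathcal V)$ independent of $\Delta=\sum_i\Delta_i$, and one concludes, as for Theorem~\ref{thm_mainthm}, that $\E f\big(\sqrt n(R-\sum_i a(d_i))\big)-\E f(\Delta+\mathcal Z)\to 0$ for every bounded continuous $f$.

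The main obstacle is this last step: the joint CLT for the quadratic remainder $\sum_i W_i$ with the \emph{exact} non-universal covariance. It requires running the iteration scheme of Proposition~\ref{prop_iteration} through all $r$ directions simultaneously and then tracking, across the residue computation, how the third- and fourth-order cumulant contributions of $X$ recombine into the cross sums $\mathbf s_{2,2}(\mathbf u_i,\mathbf u_j)$, $\mathbf s_{2,2}(\mathbf v_i,\mathbf v_j)$ and $\mathbf s_{2,1}(\cdot,\cdot)$, while confirming that the Gaussian part of the covariance stays diagonal. A secondary but delicate point is error control under the $z$-integration: several remainders that are only $O_\prec(n^{-1/2})$ pointwise in $z$ must be shown to integrate to $o_\prec(n^{-1/2})$, for which one exploits the local behaviour of $\mathcal G(z)$ and the simplicity of the zeros $p_i$ of the master equation.
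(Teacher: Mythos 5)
Your proposal is correct and takes essentially the same route as the paper. You reduce $R$ to a contour/Green-function representation, recognize that the result is a sum $\sum_{i=1}^r(\Delta_i+W_i)+o_\prec(1)$ over the $r$ outlier directions, and then run the recursive cumulant-expansion estimate simultaneously across all directions; the paper packages exactly this as Lemma~\ref{lem:key}, Propositions~\ref{lem:subspace}--\ref{prop_landr}, and Lemmas~\ref{lem_rewriteqr}--\ref{lem_rankrestimate} in the supplement. Your key observation---that the Gaussian-driven cross-covariances in $\mathcal V^E$ vanish for $i\neq j$ via orthogonality of the $\mathbf u_i$'s and $\mathbf v_i$'s, while the third- and fourth-cumulant pieces survive and produce the $\mathbf s_{2,2}$ and $\mathbf s_{2,1}$ cross sums---is precisely the mechanism the paper uses, via the identities $A_{\beta_1}^R A_{\beta_2}^R=B_{\beta_1}^R B_{\beta_2}^R=A_{\beta_1}^R B_{\beta_2}^R=0$ for $\beta_1\neq\beta_2$ in the derivative estimates of Lemma~\ref{lem_rankrestimate}. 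The only cosmetic difference is that the paper works directly with the aggregate $Q=\sum_\beta Q_\beta$ and lets the recursion at $\mathbf z_0=(p_1,\dots,p_r)$ pick up the cross-covariances automatically, rather than first establishing a vector CLT for $(W_1,\dots,W_r)$ and then summing, but the two bookkeepings are equivalent.
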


\subsection{Proof strategy} 
In this subsection, we briefly describe our proof strategy.  We first  review the method used in a related work \cite{KY13}, and then we highlight the novelty of our strategy.

As we  mentioned, in \cite{KY13}, the authors derive the distribution of outliers (outlying eigenvalues) of the fixed-rank deformation of Wigner matrices.  The main technical input is the isotropic local law for Wigner matrices, which provides a precise large deviation estimate for the quadratic form $\langle\mathbf{u}, (W-z)^{-1}\mathbf{v}\rangle$ for any deterministic vectors $\mathbf{u}, \mathbf{v}$. Here  $W$ is a  Wigner matrix. It turns out that an outlier of the deformed Wigner matrix can also be  approximated by  a quadratic form of the Green function, of the form $\langle\mathbf{u}, (W-z)^{-1}\mathbf{u}\rangle$. So one can turn to establish the law of the quadratic form  of the Green function instead. In \cite{KY13}, the authors decompose the proof into three steps. First, the law is  established for the GOE/GUE, the Gaussian Wigner matrix, for which orthogonal/unitary invariance of the matrix can be used to facilitate the proof. In the second step of going beyond Gaussian matrix, in order to capture the independence of the Gaussian part and the non-Gaussian part of the limiting distribution of the outliers, the authors construct an intermediate matrix in which  most of the matrix entries are replaced by the Gaussian ones while those with coordinates corresponding to the large components of $\mathbf{u}$ are kept as generally distributed.  The intermediate matrix allows one to use the nice properties of the Gaussian ensembles such as orthogonal/unitary invariance for the major part of the matrix, and meanwhile keeps the non-Gaussianity induced by the small amount of generally distributed entries. In the last step, the authors of \cite{KY13} derive the law for the fully generally distributed Wigner matrix by further conducting a Green function comparison with the intermediate matrix.

For our problem, similarly, we will use the isotropic law of the sample covariance matrix in \cite{BEKYY2014, KY14} as a main technical input. It turns out that for the singular vectors, we can approximately represent $\sqrt{n}|\langle \widehat{\mathbf{u}}_i, \mathbf{u}_i\rangle |$ (after appropriate {centering}) in terms of a quantity of the form 
\begin{align}
\mathcal{Q}_i=\sqrt{n}\Big(\text{Tr} (G(p_i))-\Pi_1(p_i))A_i+ \text{Tr}(G'(p_i)-\Pi_1'(p_i))B_i \Big), \label{19091901}
\end{align}
where $G$ is the Green function of the linearization of the sample covariance matrix and $\Pi_1$ is the deterministic approximation of $G$; see (\ref{green2}) and (\ref{eq_piz}) for the definitions.  Here both $A_i$ and $B_i$ are deterministic fixed-rank matrices. Hence, differently from the outlying eigenvalues or singular values, the Green function representation of the singular vectors also contains the derivative of the Green function.  More importantly, instead of the three step strategy in \cite{KY13}, here we derive the law of the above $\mathcal{Q}_i$ directly for generally distributed matrix. Recall $\Delta_i$ defined in (\ref{def:Upsilon}), whose random part is proportional to $\mathbf{u}_i^*X\mathbf{v}_i$, which is simply a linear combination of the entries of $X$.   Inspired by \cite{KY13}, we decompose $\Delta_i$ into two parts, say $\widetilde{\Delta}_{i}$ and $\widehat{\Delta}_i$. The former contains the linear combination of $x_{k\ell}$'s for those indices $k,\ell$ corresponding to the large components $u_{ik}$ and $v_{i\ell}$ in $\mathbf{u}_i$ and $\mathbf{v}_i$. The latter  contains the linear combinations of the rest of $x_{k\ell}$'s. Note that $\widehat{\Delta}_i$ is asymptotically normal by CLT since the coefficients of $x_{k\ell}$'s are small. However, $\widetilde{\Delta}_{i}$ may not be normal. 
The key idea of our strategy is to  show the following recursive estimate: For any fixed $k\in \mathbb{N}$, we have 
\begin{align}
\mathbb{E} (\mathcal{Q}_i-\widetilde{\Delta}_i)^{k} \mathrm{e}^{\ii t \widetilde{\Delta}_i}= (k-1) \widetilde{\mathcal{V}}_i \mathbb{E}  (\mathcal{Q}_i-\widetilde{\Delta}_i)^{k-2} \mathrm{e}^{\ii t \widetilde{\Delta}_i}+o(1), \label{180924100}
\end{align}
for some positive number $\widetilde{\mathcal{V}}_i$. 
Choosing $t=0$, we can derive the asymptotic normality of $\mathcal{Q}_i-\widetilde{\Delta}_i$ for (\ref{180924100}) by the recursive moment estimate. Choosing $t$ to be arbitrary, we can further deduce from (\ref{180924100})  that
\begin{align*}
\mathbb{E}\mathrm{e}^{\ii s(\mathcal{Q}_i-\widetilde{\Delta}_i)+\ii t\widetilde{\Delta}_i}= \mathbb{E}\mathrm{e}^{\ii s(\mathcal{Q}_i-\widetilde{\Delta}_i)}\mathbb{E}\mathrm{e}^{\ii t\widetilde{\Delta}_i}+o(1).
\end{align*}
Then asymptotic independence between $\mathcal{Q}_i-\widetilde{\Delta}_i$ and $\widetilde{\Delta}_i$ follows.  Hence, we prove both the asymptotic normality and asymptotic independence from (\ref{180924100}). The method of  using the recursive estimate to get the large deviation bounds for Green function or  some functional of the Green functions has been previously used in the context of the Random Matrix Theory. For instance, we refer to \cite{2016arXiv160508767O}. However, as far as we know, it is the first time to use the recursive estimate to show the normality and the independence simultaneously for the functionals of the Green functions.

 Moreover, we remark that 
 the approach in this paper can also be applied to derive the distribution of the outlier eigenvectors of the spiked sample covariance matrix \cite{BDWW} and the deformed Wigner matrix.

Finally, we briefly compare the methods used in this paper and  the  related work \cite{CDM2018}.  In \cite{CDM2018}, the authors study the distribution of $|\langle \widehat{\bm{v}}, \bm{e}_1 \rangle|^2$ of a deformed Wigner matrix whose deformation is a block diagonal  deterministic Hermitian matrix containing one large spike $\theta\mathbf{e}_1\mathbf{e}_1^*$ which creates one outlier of the deformed Wigner matrix. Here $\widehat{\bm{v}}$ is the random outlier eigenvector. By Helffer-Sj{\" o}strand formula, they represent $|\langle\widehat{\bm{v}}, \bm{e}_1  \rangle|^2$ in terms of an integral (over $z$) of  $\bm{e}_1^*(W-z)^{-1} \bm{e}_1$. In contrast to our work, the major difference in \cite{CDM2018} is that they establish the limiting distribution for the whole process $\bm{e}_1^*(W-z)^{-1} \bm{e}_1$ in $z$, and then use functional limit theorem to conclude the limit of the integral. In our work, relying on the isotropic law, we first integrate out  the contour integral approximately. This results in the linear combination in (\ref{19091901}), and then we only need to consider the joint distribution of the quadratic form of $G$ and $G'$ at a single point $p(d_i)$. 
Moreover, in \cite{CDM2018}, the authors decompose the quadratic form $\bm{e}_1^*(W-z)^{-1} \bm{e}_1$ into two parts using Schur's complement, where one of them can be proved to be Gaussian using an extension of the CLT for quadratic forms as in the previous work \cite{CDF}. It is worth noticing that the independence between the Gaussian and non-Gaussian parts follows directly from the special structure of the model in \cite{CDM2018}. However, in \cite{KY13} and our work, since we do not have structural assumptions on $S,$ we need to make more dedicated efforts for the independence (see \cite[Proposition 7.12]{KY13} and Proposition \ref{lem:main}).

\vspace{3mm} 

\section{Simulations and statistical applications} \label{sec:simuandstat}
\subsection{Numerical simulations}\label{s.numerical}  In this section, we present some numerical simulations for  our results stated in Section \ref{main results}.  For the simulations, we consider two specific distributions for our noise matrix. We assume  that $\sqrt{n} x_{ij}$'s are i.i.d. $\mathcal{N}(0,1)$ or i.i.d. with  the distribution $\frac13\delta_{\sqrt{2}}+\frac{2}{3}\delta_{-\frac{1}{\sqrt{2}}}$.  
We call these two types of noise as Gaussian noise and Two-Point noise, respectively.  It is easy to check that the 3rd and 4th cumulants of the distribution $\frac13\delta_{\sqrt{2}}+\frac{2}{3}\delta_{-\frac{1}{\sqrt{2}}}$ are $\kappa_3=\frac{1}{\sqrt{2}}$ and $\kappa_4=-\frac{3}{2}.$ In the sequel, let $\{\mathbf e_i\}_{i=1}^M$ and $\{\mathbf f_j\}_{j=1}^n$  be the canonical basis of  $\R^M$ and $\R^n$, respectively. Denote by $\mathbf 1_m$ the all-one vector in $\R^m$.  

Assume that $S$ has rank $r=1$ and admits the  singular value decomposition $S=d \mathbf u^T \mathbf v$. Set the dimension ratio $y=M/n=0.5$.  We present the simulations corresponding to the special cases discussed in Remarks \ref{rem_singu1} - \ref{rem_singu4}.  Specifically, we consider following four cases: 1. Gaussian noise, $\bm{u}=\bm{e}_1$ and $\bm{v}=\bm{f}_1$; 2. Two-point noise, $\bm{u}=\bm{1}_{M}/\sqrt{M}$ and $\bm{v}=\bm{1}/\sqrt{n}$; 3. Two-point noise, $\mathbf{u}=\bm{1}_M /\sqrt{M}$ and $\bm{v}=\bm{f}_1$;  4. Two-point noise, $\bm{u}=\bm{e}_1$ and $\bm{v}=\bm{f}_1.$ The normalization of $\sqrt{n} (|\langle \widehat{\mathbf{v}}, \mathbf{v}  \rangle|^2-a(d))$ listed in the above cases are chosen according to the calculations in Remarks \ref{rem_singu1} - \ref{rem_singu4}. For case 4, we further subtract the non-Gaussian part $-2\theta(d)\sqrt{n}X_{11}$ from the statistic. Hence, in all four cases, we expect  that the asymptotic distributions are  normal.  We denote the normalized statistics of the above four cases as $\mathcal{R}_g, \mathcal{R}_{dt}, \mathcal{R}_{pt}$ and $\mathcal{R}_{st},$, respectively, and we refer to the supplementary material  \cite[Section A]{BDW_suppl} for more details on the definitions.

In Figure \ref{fig_fig1} of \cite{BDW_suppl}, we plot the ECDFs of of $\mathcal{R}_g, \mathcal{R}_{dt}, \mathcal{R}_{pt}, \mathcal{R}_{st}$ in subfigures (A), (B), (C), (D) respectively, for $n=500$ and various values of $d=2,3,5,10$. The distributions of these quantities are fairly close to the standard normal distribution.  In \cite[Section A]{BDW_suppl}, we also record the probabilities for different quantiles of the empirical cumulative distributions (ECDFs) of the above statistics, they are fairly close to standard Gaussian even for a small sample size $n=200.$

\vspace{2mm}

\subsection{Statistical applications}\label{sec:statapp} In this section, we will briefly discuss the applications of our main results  to  the singular vector and singular subspace estimation and inference, and leave more details to the supplementary material \cite{BDW_suppl}.  

We start with the estimation part and focus on the right singular vector and subspace.   The estimation of singular vector and  subspace is important in the recovery of low-rank matrix based on noisy observations (see for instance \cite{CZ2018, CTP,  donoho2014} and reference therein). It is clear that (see Lemma \ref{lem:Ding} in \cite{BDW_suppl}) the sample singular vector is concentrated on a cone with axis 
parallel to the true singular vector. The aperture of the cone is determined by the deterministic function $a(d)$ defined in (\ref{eq_defna1a2}).  Further, when $d$ increases, the sample singular vector will get closer to the true singular vector in  $\ell^2$ norm.  It can be seen from the result in Theorem \ref{thm_mainthm} that the variance of the fluctuation also decays when $d$ increases. This phenomenon is recorded in Figure  \ref{fig_fig11} in the supplementary material \cite{BDW_suppl}.

Empirically, it can be seen from Figure \ref{fig_fig11} in \cite{BDW_suppl} that for a sequence of $y\in [\frac{1}{10},10]$, when $d>5,$ the variance part is already very small and hence the fluctuation can be ignored. Further, when $d>7.5,$ we can use the sample singular vector to estimate the true singular vector since their inner product is rather close to $1$.  Finally, note that the noise type will affect the variance of the fluctuation. Especially when the noise has negative $\kappa_3$ and $\kappa_4,$ we can ignore the fluctuation for a smaller value of $d.$ Once the singular vectors  are estimated, the estimation of the singular subspace follows.

Next, we consider the inference of the singular vectors and subspace of $S.$ Recall the decomposition in (\ref{eq_ssvd}).  For brevity, here we focus our discussion on the inference of $V$,  assuming that $U,D$ and the necessary parameters of $X$ (e.g. cumulants of the entries of X) are known. In the supplementary material \cite{BDW_suppl}, we will also briefly discuss the possible extension of our results to adapt to the situation when $D$ and the parameters of $X$ are not known. 
Especially, using Theorem \ref{thm_mainthm} we can test whether a singular vector $\mathbf{v}_i$ is equal to a given vector $\mathbf{v}_{i0}$, which can be formulated as 
\begin{align}\label{eq_test0}
\mathbf{H}_0: \mathbf{v}_i= \mathbf{v}_{i0}, \qquad \mathbf{H}_a: \mathbf{v}_i\neq  \mathbf{v}_{i0},  \tag{T0}
\end{align} 
and we can choose the testing statistic to be
\begin{align*}
\bm{S}_0:= \sqrt{n}(|\langle \widehat{\mathbf{v}}_i, \mathbf{v}_{i0}\rangle|^2-a(d_i)).
\end{align*}
 Further, using Theorem \ref{thm. right subspace}, one can test if the matrix  $V$ is equal to a given matrix, 
which  can be formulated as 
\begin{align}\label{test_1}
\mathbf{H}_0: V=V_0, \quad \mathbf{H}_a: V \neq V_0,  \tag{T1}
\end{align}
where $V_0=(\mathbf{v}_{10}, \ldots, \mathbf{v}_{i0})$ is a given matrix consisting of orthonormal columns.
We can choose the testing statistic to be 
\begin{align}\label{eq_s1_defn}
\bm{S}_1 &= \sqrt{n}\Big(\sum_{i,j=1}^r |\langle\widehat{\mathbf{v}}_i,\mathbf{v}_{j0}\rangle|^2-\sum_{i=1}^r a(d_i)\Big)\nonumber\\
&=\sqrt{n}\Big(\frac12 \Big(2r-\|\widehat{V}_r\widehat{V}_r^*-V_0V_0^*\|_F^2\Big)-\sum_{i=1}^r a(d_i)\Big). 
\end{align} 
We remark here that in some cases like $X$ is Gaussian, we can see  from Theorem \ref{thm. right subspace} that $\bm{S}_1$ is not a good statistic to distinguish $V_0$ from $V_0O$ for some deterministic $r\times r$ orthogonal matrix $O$. Specifically, one cannot tell if $\widehat{V}_r$ is the matrix of the  singular vectors of the model $X+UDV_0^*$ or  $X+UD(V_0O)^*$, since $V_0V_0^*=(V_0O)(V_0O)^*$ in (\ref{eq_s1_defn}) and the limiting distribution of $\bm{S}_1$ does not depend on $V$ when $X$ is Gaussian. Hence, we do not expect the statistic $\bm{S}_1$ to be powerful for the test (T1) when the alternative is of the form $V_0O$ in some cases like Gaussian noise. In other words, in this case, what one can test is if $VV^*=V_0V_0^*$. Nevertheless, one can still do the test (T1) by using the testing statistic of the diagonal parts of $\bm{S}_1$ only, i.e., $\bm{S}_{1d}=\sqrt{n}\Big(\sum_{i}^r |\langle\widehat{\mathbf{v}}_i,\mathbf{v}_{i0}\rangle|^2-\sum_{i=1}^r a(d_i)\Big)$.  Under the null hypothesis, $\mathbf{S}_{1d}$ has the same distribution as $\bm{S}_1$ since it will be clear that $ |\langle\widehat{\mathbf{v}}_i,\mathbf{v}_{j0}\rangle|^2$ is negligible if $i\neq j$, in the null case.  But note that the limiting distribution of $\mathbf{S}_{1d}$ is no longer invariant under taking right orthogonal transformation for $V_0$.   Hence, it can be used to test if $V=V_0$.

We mention that both (\ref{eq_test0}) and (\ref{test_1}) could be useful in many scientific disciplines, especially when the singular vectors of $S$ are sparse and have practical meanings. For instance, an important goal of the study of gene expression data for cancer is to simultaneously identify related genes and subjects grouped together according to the cancer types \cite[Section 2]{LSHM}. For this purpose, the right singular vectors are used to visualize the gene grouping (see Figure 1 of \cite{LSHM}) and the left singular vectors are used to represent the subject grouping (see Figure 2 of \cite{LSHM}). Other examples include the study of the nutrition content data of different foods \cite{LSHM} and the mortality rate data after expanding on suitable basis functions \cite[Section 3]{YMB}.  In the literature, various algorithms have been proposed to estimate the sparse singular vectors, for instance see \cite{Ding, LSHM, YMB, yang2016}. From the statistical perspective, with the above estimates, it is natural to do inference on the singular vectors.  For instance, for the gene expression data of lung caner,  researchers may be interested in testing whether a certain type of cancer is determined by a subset of genes and this is related to doing inference on the right singular vectors and right singular subspace.

Since we assume that $U,D$ and the necessary parameters of $X$ (e.g. cumulants of the entries of X) are known, we can carry out the $z$-score test to test $\mathbf{H}_0$ in both (\ref{eq_test0}) and (\ref{test_1}). Due the similarity of (\ref{eq_test0}) and (\ref{test_1}), we focus on (\ref{test_1}) and leave the detailed discussions and simulations to the supplementary material \cite{BDW_suppl}.

\section{Techincal tools and Green function representations} \label{s. 18092602}  
This section is devoted to providing some basic notions and technical tools, which will be needed often in our proofs for the  theorems. The basic notions are given in Section \ref{subsec:pre}. A main technical input for our proof is the isotropic local law for the sample covariance matrix obtained in \cite{BEKYY2014, KY14}. It will be stated in Section \ref{subsec:local}. In subsection \ref{subsec:key}, we represent (asymptotically) $|\langle \widehat{\mathbf{v}}_i, \mathbf{v}_i \rangle |^2$'s and  $R$ (c.f.  \eqref{18012401}) in terms of  the Green function. The discussion is based on  \cite{Ding}, where the limits for $| \langle \widehat{\mathbf{u}}_i, \mathbf{u}_j \rangle |^2$ and $|\langle \widehat{\mathbf{v}}_i, \mathbf{v}_j \rangle |^2$ are studied. We then collect a few auxiliary definitions in Section \ref{subsec:aux}.

\subsection{Basic notions}\label{subsec:pre}

For a positive integer $n$, we denote by  $[n]$ the set $\{1,\cdots,n\}$.  
Let $\mathbb{C}^+$ be the complex upper-half plane. Further,  we define the following  linearization for our model
\begin{align}
 \mathcal{Y}(z):=\mathcal{U}\mathcal{D}(z)\mathcal{U}^*+H(z), \quad\quad z=E+\ii\eta \in \mathbb{C}^+,  \label{180130110}
\end{align}
where
\begin{align}
\mathcal{U}:= \left( 
\begin{array}{ccc}
U  &~\\
~ & V
\end{array}
\right),\quad  \mathcal{D}(z):= \sqrt{z}\left( 
\begin{array}{ccc}
~  &D\\
D & ~
\end{array}
\right), \quad  H(z):=\sqrt{z}\left( 
\begin{array}{ccc}
~  &X\\
X^* & ~
\end{array}
\right).  \label{18072301}
\end{align}
In the sequel, we will often omit $z$  and simply write $\mathcal{Y}\equiv \mathcal{Y}(z), \mathcal{D}\equiv\mathcal{D}(z)$ and $H\equiv H(z)$ when there is no confusion.

We denote the empirical spectral distributions (ESD) of the matrices $XX^*$ and $X^*X$ by 
\begin{equation*}
F_1(x):=\frac{1}{M} \sum_{i=1}^M \mathbf{1}_{\{\lambda_i(XX^*) \leq x\}}, \qquad  F_2(x):=\frac{1}{n} \sum_{i=1}^n \mathbf{1}_{\{\lambda_i(X^*X) \leq x\}}.
\end{equation*}
$F_1(x)$ and $F_2(x)$  are known to satisfy the Marchenko-Pastur (MP) law \cite{MP1967}. More precisely, almost surely, $F_1(x)$ converges  weakly  to a non-random limit $F_{1y}(x)$ which has a density function  given by  
\begin{equation*}
\rho_{1}(x):=
\begin{cases}
\frac{1}{2  \pi x y} \sqrt{(\lambda_{+}-x)(x-\lambda_{-})}, \ &\text{if } \lambda_- \le x \le \lambda_+,\\
0, \ &\text{otherwise},
\end{cases}
\end{equation*}
and has a point mass $1-1/y$ at the origin if $y>1$, where $\lambda_{+}=(1 + \sqrt y)^2$ and $\lambda_{-}=(1 - \sqrt y)^2$. Further, {the Stieltjes's transform of $F_{1y}$ } is given by
\begin{equation}\label{eq_mp1}
m_{1}(z):={\int \frac{1}{x-z} \, {\rm d} F_{1y}(x)}=\frac{1-y-z+\ii \sqrt{(\lambda_{+}-z)(z-\lambda_{-})}}{2zy} \quad \text{for } z \in \mathbb{C}^+,
\end{equation}
where the square root denotes the complex square root with a branch cut on the negative real axis. Similarly,  almost surely, $F_2(x)$ converges  weakly  to a non-random limit $F_{2y}(x)$ which has a  density function given by
\begin{equation*}
\rho_{2}(x):=
\begin{cases}
\frac{1}{2 \pi x} \sqrt{(\lambda_{+}-x)(x-\lambda_{-})}, \ &\text{if } \lambda_- \le x \le \lambda_+,\\
0, \ &\text{otherwise},
\end{cases}
\end{equation*}
and a point mass $1-y$ at the origin if $y<1$. The corresponding Stieltjes's transform is 
\begin{equation}\label{eq_mp2}
m_{2}(z):={ \int \frac{1}{x-z} \, {\rm d} F_{2y}(x)}=\frac{y-1-z+\ii \sqrt{(\lambda_{+}-z)(z-\lambda_{-})}}{2z}.
\end{equation}

Our estimation relies on  the local MP law \cite{PY14} and its isotropic version  \cite{BEKYY2014, KY14}, which provide sharp large deviation  estimates for the  Green functions
\begin{equation*}
G(z)=(H-z)^{-1}, \quad \mathcal{G}_1(z)=(XX^*-z)^{-1}, \qquad \mathcal{G}_2(z)=(X^*X-z)^{-1}.
\end{equation*} 
Here we recall the definition in (\ref{18072301}). By Schur complement, one can derive
\begin{equation} \label{green2}
G(z) = \left( {\begin{array}{*{20}c}
   { \mathcal{G}_1(z)} &  z^{-1/2}\mathcal{G}_1(z)X \\
   z^{-1/2}X^{*} \mathcal{G}_1(z) & \mathcal{G}_2(z) \\
\end{array}} \right).
\end{equation}
The Stieltjes transforms for the ESD of $XX^*$ and  $X^*X$ are defined by
\begin{equation}
m_{1n}(z)=\frac{1}{M} \text{Tr} \mathcal{G}_1(z)=\frac{1}{M} \sum_{i=1}^M G_{ii}(z), \quad m_{2n}(z)=\frac{1}{n} \text{Tr} \mathcal{G}_2(z)=\frac{1}{n} \sum_{\mu=M+1}^{M+n} G_{\mu \mu}(z). \label{18072401}
\end{equation} 
It is well-known that $m_{1n}(z)$ and $m_{2n}(z)$ have nonrandom approximates $m_1(z)$ and $m_2(z)$, which are the Stieltjes transforms for the MP laws defined in (\ref{eq_mp1}) and (\ref{eq_mp2}). Specifically, for any fixed $z\in \mathbb{C}^+$, the following hold,   
\begin{equation*}
m_{1n}(z)-m_{1}(z)\stackrel{a.s.}\longrightarrow 0, \quad m_{2n}(z)-m_{2}(z) \stackrel{a.s.}\longrightarrow 0.
\end{equation*}
Furthermore, one can easily check that $m_{1}(z)$ and $m_{2}(z)$ satisfy  the following self-consistent equations (see \cite{bsbook} for instance)
\begin{align}\label{eq_self1}
m_{1}(z)+\frac{1}{z-(1-y)+zym_{1}(z)}=0,
\end{align}
\begin{align} \label{eq_self2}
m_{2}(z)+\frac{1}{z+(1-y)+zm_{2}(z)}=0.
\end{align} 
We can also derive the following simple relation from the definitions 
\begin{align}\label{eq:m12c}
m_{1}(z) = \frac{y^{-1}-1}{z} + y^{-1} m_{2}(z).
\end{align}

Next we summarize some basic identities   in the following lemma without proof. They can be checked from (\ref{eq_mp1}) and (\ref{eq_mp2}) via elementary calculations.  

\begin{lem}\label{lem_m1m2quanty} Denote $p \equiv p(x)$ in \eqref{18012901}. For  any $x>y^{1/4},$  we have 
\begin{align*}
&m_{1}(p)=\frac{-1}{x^2+y}, \qquad m_{2}(p)=\frac{-1}{x^2+1}, \\
&m_{1}'(p)=\frac{x^4}{ (x^2+y)^2 (x^4-y)}, \qquad m_{2}' (p)=\frac{x^4}{(x^2+1)^2 (x^4-y)}.
\end{align*}
Furthermore, denote by $\mathcal{T}(t)=t m_{1}(t)m_{2}(t)$. We have 
\begin{equation*}
\mathcal{T}(p)=x^{-2}, \quad\quad \mathcal{T}^{\prime}(p)=(y-x^4)^{-1}.
\end{equation*}
\end{lem}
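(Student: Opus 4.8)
The plan is to verify each identity by direct substitution of $z=p(x)$ into the closed-form expressions for $m_1$, $m_2$ and their derivatives, using the self-consistent equations \eqref{eq_self1}--\eqref{eq_self2} rather than the explicit square-root formulas \eqref{eq_mp1}--\eqref{eq_mp2}, which are messier to differentiate. First I would compute $m_1(p)$ and $m_2(p)$. The cleanest route is to guess-and-check: setting $m_2 = -1/(x^2+1)$ and substituting into \eqref{eq_self2}, one finds $p + (1-y) + p m_2(p) = p\,(x^2/(x^2+1)) + (1-y)$; plugging in $p = (x^2+1)(x^2+y)/x^2$ gives $(x^2+y) + (1-y) = x^2+1$, so the self-consistent equation reads $m_2(p) + 1/(x^2+1) = 0$, confirming the claim (one also checks the sign/branch by continuity from large real $z$, where $m_2$ is small and negative). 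The identity $m_1(p) = -1/(x^2+y)$ then follows either by the symmetric computation with \eqref{eq_self1}, or immediately from the relation \eqref{eq:m12c}: $m_1(p) = (y^{-1}-1)/p + y^{-1}m_2(p)$, and a short simplification of $(y^{-1}-1)x^2/((x^2+1)(x^2+y)) - y^{-1}/(x^2+1)$ collapses to $-1/(x^2+y)$.

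For the derivatives $m_1'(p)$ and $m_2'(p)$, I would differentiate the self-consistent equations implicitly. Writing \eqref{eq_self2} as $m_2(z)\bigl(z + (1-y) + z m_2(z)\bigr) = -1$ and differentiating in $z$, then evaluating at $z=p$ using the already-established values $m_2(p) = -1/(x^2+1)$ and $p = (x^2+1)(x^2+y)/x^2$, yields a linear equation for $m_2'(p)$ whose solution is $x^4/((x^2+1)^2(x^4-y))$. The same procedure applied to \eqref{eq_self1} gives $m_1'(p)$; alternatively, differentiate \eqref{eq:m12c} to get $m_1'(p) = -(y^{-1}-1)/p^2 + y^{-1}m_2'(p)$ and simplify. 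Finally, for $\mathcal{T}(t) = t\,m_1(t)m_2(t)$, the value $\mathcal{T}(p) = x^{-2}$ is immediate from $p \cdot (-1/(x^2+y)) \cdot (-1/(x^2+1)) = p/((x^2+y)(x^2+1)) = x^{-2}$; and $\mathcal{T}'(p) = m_1(p)m_2(p) + p\,m_1'(p)m_2(p) + p\,m_1(p)m_2'(p)$, into which I substitute all four previously computed quantities and simplify to $(y-x^4)^{-1}$.

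None of the steps poses a genuine conceptual obstacle — this is a computational lemma, which is precisely why the authors state it "without proof." The only mild care needed is (i) choosing the branch of the square root correctly so that the guessed values of $m_1(p)$, $m_2(p)$ are the right roots of the quadratic self-consistent equations (resolved by analytic continuation from $z \to +\infty$ along the real axis, where $m_i(z) \sim -1/z$), and (ii) keeping track of the algebra in the derivative computations, where the common denominator $x^4 - y$ appears after combining terms — this factor is nonzero and in fact positive under the standing assumption $x > y^{1/4}$, which is what makes $p(x)$ lie outside the bulk and guarantees $\mathcal{T}'(p) \neq 0$. I expect the bookkeeping in the $m_1'(p)$, $m_2'(p)$ step to be the most error-prone part, but it is entirely routine.
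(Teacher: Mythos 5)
Your computations are correct and self-contained; I verified each identity. The paper gives no proof and merely remarks that the lemma "can be checked from (\ref{eq_mp1}) and (\ref{eq_mp2}) via elementary calculations," i.e.\ by substituting $z=p(x)$ directly into the closed-form expressions for $m_1$ and $m_2$ (and their derivatives) involving the complex square root $\sqrt{(\lambda_+-z)(z-\lambda_-)}$. You instead work with the self-consistent equations \eqref{eq_self1}--\eqref{eq_self2} together with the linear relation \eqref{eq:m12c}. The two routes are of comparable length for the values $m_1(p),m_2(p)$, but yours is noticeably cleaner for the derivatives: differentiating $m_2(z)(z+(1-y)+zm_2(z))=-1$ implicitly and substituting the known values reduces the computation to solving one linear equation and produces the factor $x^4-y$ transparently, whereas differentiating the explicit square-root formula requires more bookkeeping (the factor $\sqrt{(\lambda_+-p)(p-\lambda_-)}$ has to be simplified at $z=p$, where it equals $(x^4-y)/x^2$ up to sign, and one must track the branch throughout). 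The one point where the self-consistent-equation route needs a word of care — selecting the correct root of the quadratic — you handle correctly by matching the $-1/z$ asymptotics as $z\to+\infty$; the explicit formulas sidestep this by construction. Either way the lemma is routine, but your route is the one I would recommend to a reader who actually wants to reproduce the derivative identities.
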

In the sequel, we also need the following notion on high probability events.
\begin{defn}[High probability event] We say that an $n$-dependent event $E\equiv E(n)$ holds with high probability if, for any large $\varphi>0,$  
\begin{equation*}
\mathbb{P}(E) \geq 1- n^{-\varphi},
\end{equation*}
for sufficiently large $n \geq n_0( \varphi).$
\end{defn} 

We also adopt the notion of {\it stochastic domination}  introduced in \cite{EKY2013}. 
 \begin{defn} [Stochastic domination] Let
 \begin{equation*}
 \mathsf{X}=(\mathsf{X}^{(n)}(u):  n \in \mathbb{N}, \ u \in \mathsf{U}^{(n)}), \   \mathsf{Y}=(\mathsf{Y}^{(n)}(u):  n \in \mathbb{N}, \ u \in \mathsf{U}^{(n)}),
 \end{equation*}
be two families of nonnegative random variables, where $\mathsf{U}^{(n)}$ is a possibly $n$-dependent parameter set. We say that $\mathsf{X}$ is stochastically dominated by $\mathsf{Y},$ uniformly in $u,$ if for all small $\epsilon$ and large $ \varphi,$ we have 
\begin{equation*}
\sup_{u \in \mathsf{U}^{(n)}} \mathbb{P} \Big( \mathsf{X}^{(n)}(u)>n^{\epsilon}\mathsf{Y}^{(n)}(u) \Big) \leq n^{- \varphi},
\end{equation*}   
for large enough $n \geq  n_0(\epsilon, \varphi).$ In addition,  we use the notation $\mathsf{X}=O_{\prec}(\mathsf{Y})$ if $|\mathsf{X}|$ is stochastically dominated by $\mathsf{Y},$ uniformly in $u.$ Throughout this paper, the stochastic domination will always be uniform in
all parameters (mostly are matrix indices and the spectral parameter $z$) that are not explicitly fixed.
 \end{defn}

\subsection{Isotropic local laws} \label{subsec:local} The key ingredient in our estimation is a special case of the anisotropic local law derived in \cite{KY14}, which is essentially the isotropic local law previously derived in \cite{BEKYY2014}.  Let $\oplus$ be the direct sum of two matrices. Set 
\begin{equation}\label{eq_piz}
\Pi_1(z):=m_{1}(z)I_M \oplus m_{2}(z) I_n.
\end{equation}

We will need the isotropic local law outside the spectrum of the MP law. For $\lambda_+=(1+y^{1/2})^2,$ define the spectral domain 
\begin{equation}\label{def:S0}
\mathbf{S}_o \equiv \mathbf{S}_o(\tau):=\{z=E+\ii\eta \in \mathbb{C}^+: \lambda_++ \tau \leq E \leq \tau^{-1}, \ 0\leq \eta \leq \tau^{-1}\},
\end{equation}
 where $\tau>0$ is a fixed small constant. Recall $m_{1n}$ and $m_{2n}$ defined in (\ref{18072401}). 

\begin{lem}[Theorem 3.7 of \cite{KY14}, Theorem 3.12 of \cite{BEKYY2014} and Theorem 3.1 of \cite{PY14}]\label{lem_localoutside}Fix $\tau>0,$ for  any unit deterministic  $\mathbf{u}, \mathbf{v} \in \mathbb{R}^{M+n},$ we have 
\begin{align}
&\langle \mathbf{u}, (G(z)-\Pi_1(z)) \mathbf{v} \rangle =O_{\prec} \Big( \sqrt{\frac{\operatorname{Im} m_2(z)}{n\eta}} \Big), \label{eq_orginallocal}\\
&|m_{1n}(z)-m_{1}(z)|=O_{\prec}(\frac{1}{n}), \quad\quad \ |m_{2n}(z)-m_{2}(z)| =O_{\prec}(\frac{1}{n}), \label{18091401}
\end{align}
uniformly in $z \in \mathbf{S}_o$.
\end{lem}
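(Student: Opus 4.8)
The plan is to obtain this lemma directly from the isotropic local Marchenko--Pastur laws of \cite{BEKYY2014, KY14} and the averaged local law of \cite{PY14}; the only genuine work is to pass from those statements, phrased for the resolvents $\mathcal G_1(z)=(XX^*-z)^{-1}$ and $\mathcal G_2(z)=(X^*X-z)^{-1}$, to the linearized Green function $G(z)$ in \eqref{green2}, and to verify that no error rate is degraded on the off-spectrum domain $\mathbf S_o$ of \eqref{def:S0}.

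For \eqref{eq_orginallocal} I would use the block form \eqref{green2}. Write the unit test vectors as $\mathbf u=(\mathbf a,\mathbf b)$, $\mathbf v=(\mathbf c,\mathbf d)$ with $\mathbf a,\mathbf c\in\mathbb R^M$, $\mathbf b,\mathbf d\in\mathbb R^n$; recalling $\Pi_1$ from \eqref{eq_piz} and the identities $\mathcal G_1 X=X\mathcal G_2$ and $X^*\mathcal G_1=\mathcal G_2 X^*$ (valid off the spectrum, where the inverses exist with high probability), one gets
\begin{align*}
\langle\mathbf u,(G-\Pi_1)\mathbf v\rangle=\langle\mathbf a,(\mathcal G_1-m_1 I_M)\mathbf c\rangle+\langle\mathbf b,(\mathcal G_2-m_2 I_n)\mathbf d\rangle+z^{-1/2}\langle\mathbf a,X\mathcal G_2\mathbf d\rangle+z^{-1/2}\langle\mathbf b,\mathcal G_2 X^*\mathbf c\rangle.
\end{align*}
The first two terms are controlled by \cite[Theorem 3.12]{BEKYY2014}/\cite[Theorem 3.7]{KY14}, whose off-spectrum rate is $O_\prec(\sqrt{\operatorname{Im} m_2(z)/(n\eta)})$ — using that $\operatorname{Im} m_1(z)\asymp\operatorname{Im} m_2(z)$ on $\mathbf S_o$ (both being $\asymp\eta$ since $m_1,m_2$ extend analytically past $\lambda_+$ with $O(1)$ derivative there), and that the isotropic bound on $\mathbf S_o$ reduces to this form because $\operatorname{dist}(z,[\lambda_-,\lambda_+])\gtrsim\tau$. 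The last two terms are the off-diagonal version of the same isotropic law, with the harmless prefactor $z^{-1/2}$ bounded on $\mathbf S_o$. Linearity in $\mathbf u,\mathbf v$ then yields \eqref{eq_orginallocal} for all unit $\mathbf u,\mathbf v\in\mathbb R^{M+n}$. For \eqref{18091401}, recall from \eqref{18072401} that $m_{1n}$ and $m_{2n}$ are exactly the normalized traces of $\mathcal G_1$ and $\mathcal G_2$, so the $O_\prec(1/n)$ rate is the averaged local MP law of \cite[Theorem 3.1]{PY14}.

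If one instead wanted a self-contained derivation, the engine is the standard one: Schur-complement expansion of the entries of $\mathcal G_1,\mathcal G_2$ to obtain the vector Dyson (self-consistent) equation; large-deviation estimates for linear and quadratic forms in the i.i.d.\ entries of $\sqrt n X$ (using \eqref{eq_meanva} and \eqref{eq_moment}) showing the equation holds up to error $\sqrt{\operatorname{Im} m_2(z)/(n\eta)}$ on $\mathbf S_o$; stability of the self-consistent equation, which on $\mathbf S_o$ holds with an $O(1)$ stability constant since $z$ keeps a fixed distance from $\operatorname{supp}\rho_2$; a continuity argument in $\eta$ to remove the a priori bound; and the polarization identity together with the fluctuation-averaging lemma to reach arbitrary deterministic test vectors and the $1/n$ rate for the trace.

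The main obstacle in such a ground-up proof is the fluctuation-averaging estimate — the high-moment/graphical expansion that yields both the $1/n$ rate in \eqref{18091401} and the isotropic (rather than merely entrywise) bound; this is the technical heart of \cite{BEKYY2014, KY14}. Since those references state precisely the bounds needed here, it suffices in the paper to cite them; the substantive content is then the two elementary reductions above — the identity \eqref{green2} passing from $G$ to $\mathcal G_1,\mathcal G_2$, and the boundedness of $z^{-1/2}$ and of the stability constant on $\mathbf S_o$, which is what guarantees the stated rates hold uniformly on the off-spectrum region.
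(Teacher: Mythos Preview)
Your treatment of \eqref{eq_orginallocal} is fine and essentially matches the paper's approach: the isotropic laws in \cite{BEKYY2014, KY14} are stated for (or immediately apply to) the linearized resolvent $G(z)$, and on $\mathbf{S}_o$ the rate $\sqrt{\operatorname{Im} m_2(z)/(n\eta)}$ stays bounded by $Cn^{-1/2}$ because $\operatorname{Im} m_2(z)\sim\eta/\sqrt{\kappa+\eta}$ with $\kappa\ge\tau$, so the $\eta$'s cancel. Your block decomposition via \eqref{green2} is a correct alternative route, though a bit more work than simply citing those references.

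There is, however, a genuine gap in your argument for \eqref{18091401}. You write that ``the $O_\prec(1/n)$ rate is the averaged local MP law of \cite[Theorem~3.1]{PY14}'', but that theorem (and likewise the averaged laws in \cite{BEKYY2014, KY14}) only gives $|m_{an}(z)-m_a(z)|=O_\prec((n\eta)^{-1})$. The domain $\mathbf{S}_o$ in \eqref{def:S0} allows $\eta=0$, so $(n\eta)^{-1}$ is useless here; this is exactly the point the paper flags in the remark following the lemma. The paper's actual argument does not rely on fluctuation averaging at all for this part. Instead it invokes the rigidity estimates from \cite{PY14}: one has $\sup_t|F_a(t)-F_{ay}(t)|\prec n^{-1}$ and, with high probability, $F_a-F_{ay}$ vanishes identically to the right of $\lambda_++n^{-2/3+\varepsilon}$. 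Writing
\[
m_{an}(z)-m_a(z)=\int (t-z)^{-1}\,\mathrm d\bigl(F_a(t)-F_{ay}(t)\bigr)
\]
and integrating by parts, the boundary term vanishes and the remaining integral is bounded by $\sup_t|F_a-F_{ay}|\cdot\int|t-z|^{-2}\,\mathrm dt$; since $E\ge\lambda_++\tau$ keeps $z$ at distance $\gtrsim\tau$ from the support of $F_a$ (with high probability), the integral is $O(1)$ uniformly in $\eta\in[0,\tau^{-1}]$, yielding the claimed $O_\prec(n^{-1})$. So the missing ingredient is rigidity plus integration by parts, not the averaged local law.
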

\begin{rem} The bounds in (\ref{18091401}) cannot be directly read from  any of Theorem 3.7 of \cite{KY14}, Theorem 3.12 of \cite{BEKYY2014} or Theorem 3.1 of \cite{PY14}. In all these theorems, a weaker bound $O_\prec(\frac{1}{n\eta})$ is stated  for $z$ both inside and outside of the support of the limiting spectral distribution. Here since our parameter $z$ can be real, we use the stronger bound $\frac{1}{n}$ instead of $\frac{1}{n\eta}$. For $z\in \mathbf{S}_o$, such a bound follows from the rigidity estimates of eigenvalues in \cite{PY14} and the definition of the Stieltjes transform easily. Specifically, by (3.7) in \cite{PY14}, we know that for $a=1,2$,  $\sup_{t\in \mathbb{R}}|F_a(t)-F_{ay}(t)|\prec \frac{1}{n}$, and further by (3.6) of \cite{PY14} we know that $\sup_{t\in\mathbb{R}: |t|\geq 2+n^{-\frac23+\varepsilon}}|F_{a}(t)-F_{ay}(t)|=0$ with high probability. Then using the integration by parts to $m_{an}(z)-m_{a}(z)= \int(t-z)^{-1}{\rm d} (F_{a}(t)-F_{ay}(t))$, one can easily conclude the bounds in (\ref{18091401}).
\end{rem}

Following from Lemma \ref{lem_localoutside},  by further using Cauchy's integral formula for derivatives, we have the following uniformly in $z\in \mathbf{S}_o$, for any given $l\in \mathbb{N}$,
\begin{equation} \label{eq_consequenceoflcoal}
\langle \mathbf{u}, (G^{(l)}(z)-\Pi^{(l)}_1(z)) \mathbf{v} \rangle=O_{\prec} \Big( \sqrt{\frac{\operatorname{Im} m_2(z)}{n\eta}} \Big). 
\end{equation} 

Denote by $\kappa=|E-\lambda_+|$.  We summarize some basic estimates of $m_{1,2}(z)$ without proof. For any two  numbers $a_n$ and $b_n$ (might be $n$-dependent), we write $a_n \sim b_n$ if there exist two positive constants $C_1$ and $C_2$ (independent of $n$) such that $C_1|b_n| \leq |a_n| \leq C_2 |b_n|.$
\begin{lem}\label{lem_m1m2d} The following estimates hold uniformly in  $z\in \mathbf{S}_o$,
\begin{align} 
&|m'_{1,2}(z)| \sim |m_{1,2}(z)| \sim 1,  \label{eq_boundm1m2}\\
&\operatorname{Im} m_{1}(z) \sim\operatorname{Im} m_{2}(z) \sim
\frac{\eta}{\sqrt{\kappa+\eta}}.  \label{eq_imest}
\end{align}
\end{lem}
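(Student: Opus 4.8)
The plan is to derive all three estimates directly from the integral representations $m_a(z)=\int (x-z)^{-1}\,\mathrm{d}F_{ay}(x)$, $a=1,2$, from (\ref{eq_mp1})--(\ref{eq_mp2}), using only the geometry of the domain $\mathbf{S}_o$. First I would record the elementary domain facts. By (\ref{eq_ratioassumption}), $y=y_n\to c\in[\tau,\tau^{-1}]$, so for $n$ large $y$ is bounded above and bounded away from $0$; hence $\lambda_\pm=(1\pm\sqrt y)^2$ are bounded by a constant $C_\tau$ depending only on $\tau$, and the supports of $F_{1y}$ and $F_{2y}$ lie in $\{0\}\cup[\lambda_-,\lambda_+]$. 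For $z=E+\ii\eta\in\mathbf{S}_o$ one has $E\ge\lambda_++\tau$, hence $\kappa=|E-\lambda_+|=E-\lambda_+\in[\tau,C_\tau]$, i.e.\ $\kappa\sim1$; since also $0\le\eta\le\tau^{-1}$, this gives $\sqrt{\kappa+\eta}\sim1$, so the target (\ref{eq_imest}) is equivalent to the simpler claim $\operatorname{Im}m_{1,2}(z)\sim\eta$. Moreover, for every $x$ in the support of $F_{ay}$ we have $\tau\le|x-z|\le C_\tau$, and in addition $x\le\lambda_+<E$, so $x-E$ keeps a fixed negative sign throughout the support. These are the only inputs to the rest of the argument.

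For (\ref{eq_boundm1m2}) I would work through real and imaginary parts. Differentiating under the integral (legitimate since $z$ stays at distance $\ge\tau$ from the support) gives $m_a^{(k)}(z)=k!\int(x-z)^{-(k+1)}\,\mathrm{d}F_{ay}(x)$, so the upper bounds $|m_a(z)|\le\tau^{-1}$ and $|m_a'(z)|\le\tau^{-2}$ are immediate from $|x-z|\ge\tau$ and total mass $1$. For the lower bound on $|m_a(z)|$, note that $\operatorname{Re}(x-z)^{-1}=(x-E)/|x-z|^2$ is negative for every $x$ in the support, with absolute value at least $\tau/C_\tau^2$; integrating against the probability measure $F_{ay}$ gives $|m_a(z)|\ge|\operatorname{Re}m_a(z)|\ge\tau/C_\tau^2\gtrsim1$. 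For the lower bound on $|m_a'(z)|$ I would split on the size of $\eta$: if $\eta\le\tau/2$ then $\operatorname{Re}(x-z)^{-2}=((E-x)^2-\eta^2)/|x-z|^4\ge\tfrac34\tau^2/C_\tau^4$ uniformly on the support, so $\operatorname{Re}m_a'(z)\gtrsim1$; if $\eta\ge\tau/2$ then $\operatorname{Im}(x-z)^{-2}=2\eta(x-E)/|x-z|^4$ keeps a fixed sign with absolute value at least $2\eta\tau/C_\tau^4$, so $|\operatorname{Im}m_a'(z)|\gtrsim\eta\gtrsim1$. In both cases $|m_a'(z)|\gtrsim1$, which together with the upper bound yields $|m_a'(z)|\sim|m_a(z)|\sim1$.

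For the imaginary part I would use $\operatorname{Im}m_a(z)=\int\eta\,|x-z|^{-2}\,\mathrm{d}F_{ay}(x)$; since $\tau^2\le|x-z|^2\le C_\tau^2$ on the support, the integral lies between $\eta/C_\tau^2$ and $\eta/\tau^2$, so $\operatorname{Im}m_a(z)\sim\eta\sim\eta/\sqrt{\kappa+\eta}$ by the first paragraph, which is (\ref{eq_imest}). (An alternative route would be to insert $z\in\mathbf{S}_o$ into the closed forms (\ref{eq_mp1})--(\ref{eq_mp2}) and track the branch of $\sqrt{(\lambda_+-z)(z-\lambda_-)}$, but the integral representation keeps the sign bookkeeping simplest.) I do not expect a genuine obstacle in this lemma; the step needing the most care is the lower bound on $|m_a'(z)|$, because neither $\operatorname{Re}(x-z)^{-2}$ nor $\operatorname{Im}(x-z)^{-2}$ has a definite sign for all $\eta\in[0,\tau^{-1}]$, which forces the harmless case split above. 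The only other point to verify is that all implicit constants are uniform in $n$, which holds because $y_n$ is bounded above and away from $0$ for large $n$; the whole argument hinges on the single structural feature of $\mathbf{S}_o$ that the spectral parameter sits strictly to the right of the edge $\lambda_+$, so that $x-z$ is never close to $0$ and $x-E$ never changes sign on the support.
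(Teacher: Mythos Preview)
Your argument is correct. The paper actually states this lemma without proof, treating the estimates as standard; your derivation via the integral representation $m_a(z)=\int(x-z)^{-1}\,\mathrm{d}F_{ay}(x)$ and the geometry of $\mathbf{S}_o$ (namely $|x-z|\ge\tau$ and $x-E<0$ on the support) is a clean way to supply the details, including the case split for the lower bound on $|m_a'(z)|$.
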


Given any deterministic  bounded Hermitian matrix $A$ with fixed rank, it is easy to see from Lemma \ref{lem_localoutside} and Lemma  \ref{lem_m1m2d}, the spectral decomposition and (\ref{eq_consequenceoflcoal}) that the following estimates hold uniformly in $z \in \mathbf{S}_o$:  For any fixed $k,\ell\in \mathbb{N}$, 
\begin{align}
&\max_{\mu,\nu}\Big|(G^{(l)}(z)A)_{\mu \nu}  -(\Pi^{(l)}_1(z) A)_{\mu \nu}\Big|=O_{\prec}\Big(\frac{1}{\sqrt{n}} \Big), \nonumber\\
 &\text{Tr} G^{(l)}(z)A- \text{Tr} \Pi^{(l)}_1(z) A=O_{\prec}\Big(\frac{1}{\sqrt{n}} \Big), \nonumber \\
& \max_{\mu,\nu}\Big|(G^{(k)}(z)AG^{(l)}(z))_{\mu \nu}-(\Pi^{(k)}_1 A \Pi^{(l)}_1)_{\mu \nu}\Big|=O_{\prec} (\frac{1}{\sqrt{n}}). \label{eq_localoutside}
\end{align}

In our proof, we will rely on the estimates of powers of $G,$ i.e $G^l, l=2,3,4.$ We have the following lemma whose proof is stated in \cite{BDW_suppl}.  
\begin{lem}\label{lem.18091410} We have the following recursive relation
\begin{equation} \label{18072530}
G^2=2G'+\frac{G}{z}, \ G^3=(G^2)'+\frac{G^2}{z}, \ G^4=\frac{2}{3} (G^3)'+\frac{G^3}{z}.
\end{equation}
\end{lem}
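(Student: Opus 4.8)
The statement to prove is Lemma~\ref{lem.18091410}, the recursive relations $G^2 = 2G' + G/z$, $G^3 = (G^2)' + G^2/z$, and $G^4 = \frac{2}{3}(G^3)' + G^3/z$ for $G(z) = (H(z) - z)^{-1}$, where $H(z) = \sqrt z\bigl(\begin{smallmatrix} & X \\ X^* & \end{smallmatrix}\bigr)$.

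\textbf{Proof proposal.} The plan is to differentiate the resolvent identity $G(z)(H(z) - z) = I$ in $z$ and exploit the very specific $\sqrt z$-scaling built into the linearization $H(z) = \sqrt z\,\mathcal H$, where $\mathcal H := \bigl(\begin{smallmatrix} & X \\ X^* & \end{smallmatrix}\bigr)$ is $z$-independent. First I would write $G = (\sqrt z\,\mathcal H - z)^{-1}$ and differentiate: from $G^{-1} = \sqrt z\,\mathcal H - z$ we get $(G^{-1})' = \frac{1}{2\sqrt z}\mathcal H - 1$, hence $G' = -G (G^{-1})' G = -G\bigl(\tfrac{1}{2\sqrt z}\mathcal H - 1\bigr)G$. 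The key algebraic trick is to re-express $\tfrac{1}{2\sqrt z}\mathcal H - 1$ in terms of $G^{-1}$ itself: since $\sqrt z\,\mathcal H = G^{-1} + z$, we have $\tfrac{1}{2\sqrt z}\mathcal H = \tfrac{1}{2z}(G^{-1} + z)$, so $\tfrac{1}{2\sqrt z}\mathcal H - 1 = \tfrac{1}{2z}G^{-1} - \tfrac12$. Substituting gives $G' = -G\bigl(\tfrac{1}{2z}G^{-1} - \tfrac12\bigr)G = -\tfrac{1}{2z}G + \tfrac12 G^2$, which rearranges exactly to $G^2 = 2G' + G/z$. This establishes the first identity.

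For the second and third identities I would proceed inductively, differentiating the first identity and using it repeatedly to eliminate the spurious derivative terms. Differentiating $G^2 = 2G' + G/z$ yields $(G^2)' = 2G'' + G'/z - G/z^2$. Separately, $(G^2)' = G'G + GG' = 2GG'$ (the factors commute since everything is a function of the single matrix $\mathcal H$), and using $G' = \tfrac12 G^2 - \tfrac{1}{2z}G$ we get $(G^2)' = 2G\bigl(\tfrac12 G^2 - \tfrac{1}{2z}G\bigr) = G^3 - \tfrac{1}{z}G^2$, which is precisely $G^3 = (G^2)' + G^2/z$. The same mechanism gives the $G^4$ relation: from $(G^3)' = 3G^2 G' = 3G^2\bigl(\tfrac12 G^2 - \tfrac{1}{2z}G\bigr) = \tfrac32 G^4 - \tfrac{3}{2z}G^3$ we solve for $G^4 = \tfrac23(G^3)' + \tfrac{1}{z}G^3$. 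More generally $(G^k)' = kG^{k-1}G' = \tfrac{k}{2}G^{k+1} - \tfrac{k}{2z}G^k$, so $G^{k+1} = \tfrac{2}{k}(G^k)' + \tfrac{1}{z}G^k$, which specializes to all three claimed cases.

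There is essentially no serious obstacle here; the only point requiring a little care is the bookkeeping of the $z$-dependence — one must remember that $H$ itself depends on $z$ through the $\sqrt z$ prefactor, so $G'$ is \emph{not} simply $G^2$ as it would be for a $z$-independent matrix, and the extra $G/z$-type terms are exactly the correction coming from that prefactor. Once the identity $G' = \tfrac12 G^2 - \tfrac{1}{2z}G$ is in hand, everything else is a two-line computation using commutativity of powers of $G$. I would present the derivation of $G' = \tfrac12 G^2 - \tfrac{1}{2z}G$ carefully and then simply note that differentiating and substituting yields the three stated relations.
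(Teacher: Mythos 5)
Your proposal is correct and follows essentially the same route as the paper: both differentiate the resolvent identity exploiting the $\sqrt z$-scaling of $H(z)$ to obtain $G' = \tfrac12 G^2 - \tfrac1{2z}G$, which is exactly the first claimed relation. For $G^3$ and $G^4$ the paper differentiates $G^2(H-z)=G$ and $G^3(H-z)=G^2$ and simplifies, whereas you instead compute $(G^k)' = kG^{k-1}G'$ directly (valid since the first identity shows $G'$ is a polynomial in $G$, so $G$ and $G'$ commute) and substitute the expression for $G'$ — a slightly more streamlined bookkeeping of the same underlying algebra.
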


 Recall $\Pi_1$ defined in (\ref{eq_piz}) and further define
\begin{align}
\Pi_2:=2\Pi_1'+\frac{1}{z} \Pi_1, \qquad \Pi_3:= \Pi_2'+\frac{1}{z}\Pi_2, \qquad \Pi_4:=\frac{2}{3} \Pi_3'+\frac{1}{z}\Pi_3.  \label{18072531}
\end{align}
With Lemma \ref{lem.18091410}, similarly to (\ref{eq_orginallocal}) and (\ref{eq_consequenceoflcoal}), we can get the following estimates for $l=1, 2,3,4,$
\begin{equation}\label{eq_genrealcontrol}
\langle \mathbf{u}, (G^l-\Pi_l)\mathbf{v} \rangle=O_{\prec}\Big(\frac{1}{\sqrt{n}} \Big),
\end{equation}
uniformly in $z\in \mathbf{S}_o$. For brevity, in the sequel, we will use the notation
\begin{align}
\Xi_l\equiv \Xi_l(z):= G^l(z)-\Pi_l(z), \qquad l\in \mathbb{N}.  \label{centered Green function}
\end{align}

\subsection{Green function representation}\label{subsec:key}

In this section, we represent (asymptotically)   $|\langle \widehat{\mathbf{v}}_i, \mathbf{v}_i \rangle |^2$'s and   $R$  (c.f \eqref{18012401}) in terms of the Green function. The derivation relies  on the results obtained in \cite{Ding}. Recall $p(d)$ in \eqref{18012901} and $a(d)$ in (\ref{eq_defna1a2}). For $i\in [r],$ define 
\begin{equation}\label{defn_h}
h_i(x)=\frac{x^4 p^{\prime}(x) p(x)}{(x+d_i)^2},
\end{equation}
and we use the shorthand notation $ \bar{i}=i+r.$
To state results for the right singular vectors,  we introduce a $2r \times 2r$ matrix function $W_i(x)$ for $x>0$, which has  only four non-zero entries given by
\begin{align}\label{eq_definw}
&\big( W_i(x)\big)_{ii}=m^2_{2}(x),\qquad \big(W_i (x) \big)_{\bar{i} \bar{i}}=\frac{1}{d_i^2x}, \nonumber\\
&\big(W_i(x) \big)_{i \bar{i}}=\big(W_i (x)\big)_{\bar{i}i}=- \frac{m_{2}(x)}{d_i\sqrt{x}} .
\end{align}  
We further denote the matrix function
\begin{align} 
M_{i}(x)=\mathcal{U}W_i(x)\mathcal{U}^*. \label{18072540}
\end{align}
With the above notations, we further introduce two  $(M+n) \times (M+n)$ matrices  
\begin{align} 
&A_{i}^R=-d_i^2\Big( h^{\prime}_i(d_i)M_{i}(p_i)+ h_i(d_i)p^{\prime}(d_i) M_{i}' (p_i) \Big),\nonumber \\
 &B_i^R= -d_i^2 h(d_i) p^{\prime}(d_i) M_{i}(p_i). \label{eq_ab}
\end{align}  
In light of the definition of $\mathcal{U}$ in (\ref{18072301}), we have 
\begin{align}\label{eq:new-ab}
A_i^R = \begin{pmatrix} \omega_{i1} \mathbf u_i \mathbf u_i^T & \omega_{i2} \mathbf u_i \mathbf v_i^T \\ \omega_{i3} \mathbf v_i \mathbf u_i^T & \omega_{i4} \mathbf v_i \mathbf v_i^T \end{pmatrix} ,
\quad \quad 
B_i^R = \begin{pmatrix} \varpi_{i1} \mathbf u_i \mathbf u_i^T & \varpi_{i2} \mathbf u_i \mathbf v_i^T \\ \varpi_{i3} \mathbf v_i \mathbf u_i^T & \varpi_{i4} \mathbf v_i \mathbf v_i^T \end{pmatrix}.
\end{align}
Here we used the notations
\begin{align*}
&\omega_{i1}:= -d_i^2 \big( h_i'(d_i) (W_i(p_i))_{ii} + h_i (d_i) p'(d_i) (W_i'(p_i))_{ii}\big), \\
& \omega_{i4}:= -d_i^2 \big( h_i'(d_i) (W_i(p_i))_{\bar i \bar i} + h_i (d_i) p'(d_i) (W_i'(p_i))_{\bar i \bar i}\big),\\
&\omega_{i2}=\omega_{i3}:= -d_i^2 \big( h_i'(d_i) (W_i(p_i))_{i\bar i} + h_i (d_i) p'(d_i) (W_i'(p_i))_{i \bar i}\big), \\
&\varpi_{i1}:= -d_i^2 h_i (d_i) p'(d_i) (W_i(p_i))_{ii}, \nonumber\\
& \varpi_{i4}:= -d_i^2 h_i (d_i) p'(d_i) (W_i(p_i))_{\bar i \bar i}, \nonumber\\
&\varpi_{i2}= \varpi_{i3}:= -d_i^2 h_i (d_i) p'(d_i) (W_i(p_i))_{i \bar i}.
\end{align*}

Recall the notation introduced in (\ref{centered Green function}). We have the following lemma whose proof is stated in \cite{BDW_suppl}. 
\begin{lem}\label{lem:key}Under assumptions of (\ref{eq_meanva}), (\ref{eq_moment}), (\ref{eq_ratioassumption}) and Assumption \ref{assum_main},  we have
\begin{equation*}
|\langle \mathbf{v}_i, \widehat{\mathbf{v}}_i \rangle|^2=a(d_i)+\operatorname{Tr} \big(\Xi_1(p_i) A_i^R \big)+\operatorname{Tr} \big(\Xi_1^{\prime}(p_i) B_i^R \big)+O_{\prec}(\frac{1}{n}),
\end{equation*}
Furthermore, we have 
\begin{align}
&R=\sum_{i=1}^r a(d_i)+\sum_{i=1}^r \left( \Tr  \big(\Xi_1(p_i) A_i^R\big)+\Tr \big(\Xi_1'(p_i)B_i^R\big) \right) +O_{\prec}(\frac{1}{n}). 
\label{eq_lv} 
\end{align}
\end{lem}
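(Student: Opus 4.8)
\textbf{Proof strategy for Lemma \ref{lem:key}.}
The plan is to reduce everything to the first-order analysis of \cite{Ding}, but carried out one order more precisely so that the $O_\prec(n^{-1})$ error is retained rather than the $o(1)$ error. The starting point is the standard contour-integral representation of the spectral projector: for each $i\in[r]$ one writes
\begin{align*}
|\langle \mathbf v_i, \widehat{\mathbf v}_i\rangle|^2 = -\frac{1}{2\pi\ii}\oint_{\Gamma_i} \big\langle \mathbf v_i^{(e)}, \mathcal G(z)\,\mathbf v_i^{(e)}\big\rangle \,\dd z,
\end{align*}
where $\mathbf v_i^{(e)}$ is the embedding of $\mathbf v_i$ into the lower block of $\R^{M+n}$, $\mathcal G$ is the resolvent of the \emph{deformed} linearization $\mathcal Y(z)$, and $\Gamma_i$ is a small contour enclosing only the $i$-th outlier $\mu_i$ of $Y$ (equivalently, $p_i$ after the change of variables coming from the linearization). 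Since the outliers are well separated (Assumption \ref{assum_main}) and, by \cite{Ding}, $\mu_i\to p_i$ with the $p_i$ distinct and lying in $\mathbf S_o$, such contours can be taken of fixed size with high probability and the contour may then be deformed to a fixed circle around $p_i$ independent of $n$.

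Next I would expand the deformed resolvent $\mathcal G(z)$ in terms of the undeformed Green function $G(z)$ via the resolvent/Woodbury identity applied to $\mathcal Y(z)=\mathcal U\mathcal D(z)\mathcal U^*+H(z)$. Because $\mathcal D$ has fixed rank $2r$, $\langle \mathbf v_i^{(e)},\mathcal G(z)\mathbf v_i^{(e)}\rangle$ becomes a rational expression in the $O(1)\times O(1)$ matrices $\mathcal U^* G^{(l)}(z)\mathcal U$; using the isotropic local law (Lemma \ref{lem_localoutside}) together with \eqref{eq_genrealcontrol}, each such block is $\mathcal U^*\Pi_l(z)\mathcal U + O_\prec(n^{-1/2})$, and moreover the pole of the integrand inside $\Gamma_i$ is, to leading order, the solution of the deterministic equation $\det(\cdots)=0$ whose root is exactly $p_i$ (this is the computation that produces $h_i$, $p(d_i)$ in \eqref{defn_h}, \eqref{18012901}). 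The key point is that one must push the expansion to \emph{second} order: the fluctuation of the outlier location $\mu_i$ around $p_i$ is itself $O_\prec(n^{-1/2})$, so in evaluating the residue one gets a leading deterministic term $a(d_i)$, a genuine fluctuation term linear in $\Xi_1(p_i)=G(p_i)-\Pi_1(p_i)$ — whose coefficient, after the residue calculus and the change of variables, is precisely $A_i^R$ in \eqref{eq_ab}/\eqref{eq:new-ab} — a second fluctuation term coming from differentiating the integrand at the pole, which produces $\Tr(\Xi_1'(p_i)B_i^R)$, and a remainder. The remainder is controlled by combining the $O_\prec(n^{-1/2})$ bounds with the observation that the product of two such centered quantities is $O_\prec(n^{-1})$, using \eqref{eq_localoutside} for the two-resolvent terms; the cross terms $\Pi_l\cdot\Xi_m$ are handled because the deterministic factor is $O(1)$. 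This is where the bookkeeping lives, but it is essentially the same residue expansion as in \cite[Section 3]{Ding} carried one order further, so I would cite that computation and only record the new second-order terms.

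The subspace statement \eqref{eq_lv} then follows with no extra work: by \eqref{18012401}, $R=\sum_{i,j=1}^r|\langle\widehat{\mathbf v}_i,\mathbf v_j\rangle|^2$, and for $i\ne j$ the off-diagonal inner products $|\langle\widehat{\mathbf v}_i,\mathbf v_j\rangle|^2$ are $O_\prec(n^{-1})$ — indeed their first-order limit is $0$ by \cite{Ding}, and the same contour argument (now with $\mathbf v_j^{(e)}$ on one side and the contour still around $\mu_i$) shows the leading term vanishes while the fluctuation is a product of two $O_\prec(n^{-1/2})$ factors. Hence only the diagonal $i=j$ terms survive at order $n^{-1/2}$, and summing the per-vector expansion over $i\in[r]$ gives \eqref{eq_lv}.

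\textbf{Main obstacle.} The delicate step is the second-order residue expansion: one must show that the random shift of the pole from $p_i$ to the true outlier location contributes only through the explicitly identified terms $\Tr(\Xi_1 A_i^R)$ and $\Tr(\Xi_1' B_i^R)$ and that all remaining contributions — in particular those involving products like $\Xi_1(p_i)\Xi_1(p_i)$ or $\Xi_1$ evaluated at a random point near $p_i$ — are genuinely $O_\prec(n^{-1})$ uniformly, which requires the strengthened $O_\prec(n^{-1})$ (rather than $O_\prec(n^{-1}\eta^{-1})$) bounds noted in the remark after Lemma \ref{lem_localoutside} and the multi-resolvent bound \eqref{eq_localoutside}. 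Keeping the error at $n^{-1}$ rather than $o(1)$ — which is what makes the subsequent $\sqrt n$-scaling CLT possible — is the whole content of the lemma.
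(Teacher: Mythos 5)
Your proposal is correct and follows essentially the same route as the paper's proof in the supplement: a contour integral around the outlier, the Woodbury identity reducing $\mathcal U^*\widehat G\mathcal U$ to a rational function of $\mathcal D^{-1}+\mathcal U^*G\mathcal U$, a resolvent expansion in $\Psi=\mathcal U^*\Xi_1\mathcal U=O_\prec(n^{-1/2})$ truncated after the linear term (with the quadratic remainder absorbed into $O_\prec(n^{-1})$), and residue calculus at the resulting double pole — after the change of variables $z=p(\zeta)$ this is a double zero of $(d_i-\zeta)^2$ in the denominator — which is precisely what produces both $\Tr(\Xi_1 A_i^R)$ and the derivative term $\Tr(\Xi_1' B_i^R)$. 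The one small paraphrase worth noting is that the paper does not track the random pole shift explicitly; it works directly with the deterministic contour $p(\Gamma_i)$ and the double pole arises purely algebraically from squaring $(\mathcal D^{-1}+\mathcal U^*\Pi_1\mathcal U)^{-1}$ in the linear-in-$\Psi$ term of the resolvent expansion, so your "shifting pole" heuristic and the paper's bookkeeping are two descriptions of the same residue computation.
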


\subsection{Auxiliary definitions}\label{subsec:aux}

It is convenient to introduce the following notion of convergence in distribution.
\begin{defn}[{ \cite[Definition 7.3]{KY13}}] \label{defn_asymptotic}
Two sequences of random variables, $\{\mathsf X_n\}$ and $\{\mathsf Y_n\}$, are \emph{asymptotically equal in distribution}, denoted as $\mathsf X_n \simeq \mathsf Y_n,$ if they are tight and satisfy
\begin{equation*}
\lim_{n \rightarrow \infty} \big( \mathbb{E}f(\mathsf X_n)-\mathbb{E}f(\mathsf Y_n) \big)=0
\end{equation*}   
for any bounded continuous function $f$. 
\end{defn}

We also collect some basic results on convergence and equivalence in distribution in the supplementary material \cite{BDW_suppl}, Lemma \ref{lem_convergenindist}. 

The following notation from \cite[Definition 7.11]{KY13} will be convenient for us when we  replace random variables with their i.i.d copies. 
\begin{defn}\label{defn:replace} Let $\{\sigma_n\}$ be a sequence of bounded positive numbers. If $\mathsf{X}_n$ and $\mathsf{Y}_n$ are independent random variables with $\mathsf{Y}_n \simeq \mathcal{N}(0,\sigma_n^2),$ and if $\mathsf{S}_n \simeq \mathsf{X}_n+\mathsf{Y}_n,$  we write
$
\mathsf{S}_n \simeq \mathsf{X}_n+\mathcal{N}(0, \sigma_n^2).
$
\end{defn}

\section{Proof of  Theorems \ref{thm_mainthm}} \label{sec_mainthm}  

 For brevity, in this section, we omit the subindices of $d_i, \mathbf{u}_i, \mathbf{v}_i, \widehat{\mathbf{u}}_i, \widehat{\mathbf{v}}_i$ and write $d, \mathbf{u}, \mathbf{v}, \widehat{\mathbf{u}}, \widehat{\mathbf{v}}$ instead. Similarly, we write the matrices  $A_i^R$ and $B_i^R$ (c.f. \eqref{eq_ab}) as $A$ and $B$, respectively. We also write $m_{1,2}(z)$ as $m_{1,2}$ for brevity.

By Lemma \ref{lem:key},  we can reduce the problem to study 
\begin{align}\label{defn_q1}
\mathcal Q \equiv \mathcal Q(z):=\sqrt{n} \Big( \Tr \big(\Xi_1(z)A\big)+ \Tr \big(\Xi'_1(z)B\big) \Big),
\end{align}
at $z=p(d)$ (c.f.\eqref{18012901}).

 In the sequel, we will  prove the limiting distribution of $ \mathcal{Q}(z)$ at $z=p(d)$. The key task is to prove Proposition \ref{lem:main} below. In this section, we will show that Theorem \ref{thm_mainthm} follows from Proposition \ref{lem:main}. Let index $i\in [M]$ and $j\in [n]$. Denote 
 the shorthand notation 
\begin{align}
j'=j+M. \label{18091450}
\end{align}
For short, we also write  
 $ \sum_{i,j} = \sum_{i=1}^M \sum_{j=1}^n.$
 
 In order to state Proposition \ref{lem:main}, we first introduce some notations.  For a fixed small constant $\nu>0$, denote by
\begin{equation*}
\mathcal{B}(\nu):=\Big\{ (i,j)\in [M]\times[n] : |\mathbf{u}(i)|>n^{-\nu}, \ |\mathbf{v}(j)|>n^{-\nu} \Big\},
\end{equation*}
the set of the indices of those compoents with large magnitude. Since $\mathbf{u}$ and $\mathbf{v}$ are unit vectors, we have  $|\mathcal{B}(\nu)| \leq Cn^{4\nu}$ for some constant $C>0.$ Let $\mathcal{S}(\nu)$ be the complement of $\mathcal{B}(\nu)$, i.e.,
\begin{align}
\mathcal{S}(\nu)=([M]\times[n] )\setminus \mathcal{B}(\nu). \label{18072542}
\end{align}
For brevity, we introduce the notation 
\begin{equation}\label{eq_permu} 
 \mathcal{P}(\alpha_1,\ldots, \alpha_m),
 \end{equation}
to represent  the set of all the permutations of $(\alpha_1,\ldots, \alpha_m)$, where $\alpha_i$'s  can be alike.  Recall (\ref{eq_piz}) and (\ref{18072531}). We set the deterministic quantity 
\begin{align} \label{defn_detministic1}
\Delta_d \equiv \Delta_d(z):= & -\frac{\kappa_3 z^{3/2}}{n} \sum_{i,j} \Big(  (\Pi_1)_{ii} (\Pi_1)_{j' j'} \big( 2(\Pi_1 A \Pi_1)_{i j'}  +  (\Pi_1 B \Pi_1')_{i j'} + (\Pi_1' B \Pi_1)_{i j'} \big) \nonumber \\
&\quad + \frac{1}{2} \sum_{(a_1,a_2,a_3) \in \mathcal{P}(2,1,1)} (\Pi_{a_1})_{ii} (\Pi_{a_2})_{j' j'} \big( (\Pi_1 B \Pi_{a_3})_{i j'} +  (\Pi_{a_3} B \Pi_1)_{i j'}\big) \Big),
\end{align}
and the random variable 
\begin{align}
\Delta_r\equiv \Delta_r(z):= \sqrt{nz}\sum_{(i,j)\in \mathcal{B}(\nu) } x_{ij} c_{ij}, \label{18072410}
\end{align}
 where 
 \begin{align}\label{defn_cij1}
c_{ij}\equiv c_{ij}(z):= &-\sum_{\substack{ l_1,l_2\in \{ i, j'\} \\ l_1\neq l_2 }}\Big( (\Pi_1 A \Pi_1)_{l_1 l_2} -\frac{1}{2z} (\Pi_1 B \Pi_1)_{l_1 l_2}  \nonumber\\
&\qquad\qquad+ \frac{1}{2} (\Pi_{1} B \Pi_{2})_{l_1 l_2} +\frac{1}{2} (\Pi_{2} B \Pi_{1})_{l_1 l_2} \Big).
\end{align}
 Define the $M \times n $ matrix function $S\equiv S(z)=(s_{ij})$ with
\begin{align} \label{defn_sij1}
s_{ij}\equiv s_{ij}(z)&:=\sum_{\substack{ l_1,\cdots,l_4\in \{ i, j'\}  \\ l_1\neq l_4, l_2 \neq l_3 }}  \Big( (\Pi_1 A \Pi_1)_{l_1 l_2} (\Pi_1)_{l_3 l_4} -\frac{1}{2z} (\Pi_1 B \Pi_1)_{l_1 l_2} (\Pi_1)_{l_3 l_4} \nonumber\\
& \qquad\qquad\qquad\qquad + \frac{1}{2} \sum_{(a_1,a_2,a_3) \in \mathcal{P}(2,1,1)} (\Pi_{a_1} B \Pi_{a_2})_{l_1 l_2} (\Pi_{a_3})_{l_3 l_4} \Big).
\end{align}

Further, we define the function 
\begin{align}\label{def:variance}
V \equiv V(z):=\mathcal V^E(z)+ 2\frac{ \kappa_3 z^{\frac32}}{\sqrt{n}} \sum_{(i,j) \in \mathcal{S}(\nu)} c_{ij} s_{ij} + \frac{\kappa_4 z^{2} }{n} \sum_{ i,j} s_{ij}^2+z \sum_{ (i,j)\in \mathcal{S}(\nu) } c_{ij}^2,
\end{align}
where 
\begin{align}\label{eq_indivialve}
\mathcal V^E \equiv \mathcal V^E(z) := -\sqrt{z} \sum_{\alpha=1,2} \big( m_\alpha \mathfrak{a}_{1\alpha}+  \frac{m_\alpha}{2} \tilde{\mathfrak{b}}_{1\alpha}+m_\alpha' \mathfrak{b}_{1\alpha}\big). 
\end{align}
Here we refer to \eqref{def:missvar} in \cite{BDW_suppl}  for the definitions of $\mathfrak{a}_{1\alpha}$, $\mathfrak{b}_{1\alpha}$ and $ \tilde{\mathfrak{b}}_{1\alpha}$ for $\alpha=1,2$.

With $\Delta_d$ and $\Delta_r$ defined in (\ref{defn_detministic1}) and (\ref{18072410}), we  introduce the notation 
\begin{align}
\Delta\equiv \Delta(z):= \Delta_r(z)+\Delta_d(z) \label{18072502}
\end{align}
and define 
\begin{align}
Q\equiv Q(z):= \mathcal{Q}(z)-\Delta(z). \label{18072501}
\end{align}
\begin{pro}\label{lem:main}
 Under the assumptions of Theorem \ref{thm_mainthm}, we have that $Q(p_i)$ and $\Delta(p_i)$  are asymptotically independent. Furthermore, 
\begin{equation}\label{eq_qnormal}
Q(p_i) \simeq \mathcal{N}(0, V(p_i)).
\end{equation}
\end{pro}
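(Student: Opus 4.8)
The plan is to establish \eqref{eq_qnormal} together with the asymptotic independence of $Q(p_i)$ and $\Delta(p_i)$ by proving the recursive moment estimate \eqref{180924100}, i.e., for every fixed $k\in\mathbb{N}$ and every $t\in\mathbb{R}$,
\begin{equation}\label{eq:plan-recursion}
\mathbb{E}\, Q(p_i)^{k}\,\mathrm{e}^{\ii t \Delta_r(p_i)} = (k-1)\,V(p_i)\,\mathbb{E}\, Q(p_i)^{k-2}\,\mathrm{e}^{\ii t \Delta_r(p_i)} + o(1),
\end{equation}
where $Q = \mathcal{Q}-\Delta$ with $\Delta=\Delta_r+\Delta_d$. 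Granting \eqref{eq:plan-recursion}, taking $t=0$ and inducting on $k$ gives that all moments of $Q(p_i)$ converge to those of $\mathcal{N}(0,V(p_i))$, hence $Q(p_i)\simeq\mathcal{N}(0,V(p_i))$ by the moment method (after verifying tightness, which follows from the $k=2$ case and Markov). Taking general $t$, expanding $\mathrm{e}^{\ii s Q}$ in its Taylor series and summing the recursion over $k$ yields $\mathbb{E}\,\mathrm{e}^{\ii s Q(p_i) + \ii t \Delta_r(p_i)} = \mathrm{e}^{-s^2 V(p_i)/2}\,\mathbb{E}\,\mathrm{e}^{\ii t\Delta_r(p_i)} + o(1) = \mathbb{E}\,\mathrm{e}^{\ii s Q(p_i)}\,\mathbb{E}\,\mathrm{e}^{\ii t\Delta_r(p_i)} + o(1)$, which is exactly the asymptotic independence of $Q(p_i)$ and $\Delta_r(p_i)$; since $\Delta_d(p_i)$ is deterministic, independence of $Q(p_i)$ and $\Delta(p_i)$ follows immediately.

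The bulk of the work is proving \eqref{eq:plan-recursion}. I would start from the definition $\mathcal{Q}(z)=\sqrt{n}(\operatorname{Tr}(\Xi_1(z)A)+\operatorname{Tr}(\Xi_1'(z)B))$ and write $\mathbb{E}\,Q^k\mathrm{e}^{\ii t\Delta_r}$ as a sum of terms $\mathbb{E}[\,(\text{entry of }\sqrt{n}\,\Xi_1\text{ or }\sqrt{n}\,\Xi_1')\cdot Q^{k-1}\mathrm{e}^{\ii t\Delta_r}\,]$, then apply the cumulant expansion (generalized Stein/integration-by-parts identity) in each independent entry $x_{ij}$, using the resolvent identities $\partial_{x_{ij}} G = -\sqrt{z}\,G(\mathbf{e}_i\mathbf{e}_{j'}^*+\mathbf{e}_{j'}\mathbf{e}_i^*)G$ and the corresponding formulas for $G'$ from Lemma \ref{lem.18091410}. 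Each derivative hitting a $G$-factor produces an extra off-diagonal resolvent entry, which by the isotropic local law \eqref{eq_localoutside}, \eqref{eq_genrealcontrol} is $O_\prec(n^{-1/2})$ and, more importantly, can be replaced by its deterministic approximation $\Pi_l$ up to admissible error; derivatives hitting $\mathrm{e}^{\ii t\Delta_r}$ bring down a factor $\ii t\sqrt{nz}\,c_{ij}\mathbf 1_{(i,j)\in\mathcal B(\nu)}$. The second-cumulant ($\kappa_2=1/n$) terms are the ones that, after summing over $(i,j)$ and organizing into the four Wick-type pairings, reproduce the leading $(k-1)V\,\mathbb{E}\,Q^{k-2}\mathrm{e}^{\ii t\Delta_r}$ together with the variance bookkeeping: the $\mathcal V^E$ piece from $G$--$G$ pairings (this is where the explicit Lemma \ref{lem_m1m2quanty}/Lemma \ref{lem_m1m2d} identities are used to evaluate the deterministic traces at $z=p_i$), the $\kappa_3$- and $\kappa_4$-dependent pieces from pairings that involve $\Delta_r$ or the quartic expansion, and the splitting of the sum according to $\mathcal B(\nu)$ versus $\mathcal S(\nu)$ in \eqref{def:variance}. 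The third- and fourth-cumulant terms with all indices in $\mathcal S(\nu)$ are small because the coefficients $c_{ij}, s_{ij}$ carry powers of $\mathbf u(i),\mathbf v(j)$ below the threshold $n^{-\nu}$; those supported on $\mathcal B(\nu)$ either vanish in the limit by $|\mathcal B(\nu)|\le Cn^{4\nu}$ with $\nu$ small, or combine with the $\Delta_r,\Delta_d$ terms so that $Q=\mathcal Q-\Delta$ has precisely these offending contributions subtracted off. Higher-order cumulant terms ($\kappa_\ell$, $\ell\ge5$) are negligible by the moment bound \eqref{eq_moment} and power counting, so the expansion truncates after finitely many steps.

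Two technical points need care. First, one must argue that $Q$, $\Delta_r$, and products thereof have all moments bounded uniformly in $n$ (an a priori bound), so that the cumulant expansion is legitimate and the $o(1)$ remainders are genuinely negligible; this is a standard bootstrap: a crude bound on $\mathbb{E}|Q|^{2k}$ from the isotropic law feeds back through \eqref{eq:plan-recursion} to the sharp one. Second, the derivatives of $\mathrm{e}^{\ii t\Delta_r}$ and the replacement of resolvent entries by $\Pi_l$ must be done simultaneously and consistently, tracking that every error term is either $O_\prec(n^{-1/2})$ times something with bounded expectation or has a summable number of large-coefficient indices; the precise choice of $\nu$ (small enough that $n^{4\nu}$ times an $n^{-1/2}$-type error is still $o(1)$, large enough that sub-threshold coefficients are genuinely small) is what makes the $\mathcal B(\nu)/\mathcal S(\nu)$ dichotomy work. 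I expect the main obstacle to be the combinatorial bookkeeping in the second-cumulant terms: showing that the many pairings of $G$, $G'$, $A$, $B$ (with the resolvent-power identities of Lemma \ref{lem.18091410} converting $G^2, G^3, G^4$ back into derivatives) collapse, after evaluation via Lemma \ref{lem_m1m2quanty} at $z=p_i$, to exactly the coefficient $V(p_i)$ in \eqref{def:variance} and to exactly the definitions of $\Delta_d$, $\Delta_r$ in \eqref{defn_detministic1}, \eqref{18072410} — in other words, that the somewhat opaque explicit formulas for $\Delta_d$, $c_{ij}$, $s_{ij}$, $\mathcal V^E$ are the ones forced by the expansion. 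This is the step where I would expect to spend most of the effort, and I would isolate the purely deterministic identities (analogues of \cite[Lemma 7.10]{KY13}) into separate lemmas, deferring their elementary but lengthy verification to the supplement \cite{BDW_suppl}.
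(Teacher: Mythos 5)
Your proposal follows the same route as the paper: establish a recursive moment estimate
\begin{equation*}
\mathbb{E}\,Q^{k}\,\mathrm{e}^{\ii t\Delta} = (k-1)\,V\,\mathbb{E}\,Q^{k-2}\,\mathrm{e}^{\ii t\Delta} + o(1)
\end{equation*}
by cumulant expansion in the entries of $X$, then read off Gaussianity of $Q(p_i)$ (from $t=0$) and asymptotic independence of $Q(p_i)$ and $\Delta(p_i)$ (from the joint characteristic function identity) as corollaries. Your description of the bulk of the work — the $\mathcal{B}(\nu)/\mathcal{S}(\nu)$ split, replacing resolvent entries by $\Pi_l$ via the isotropic law at $z=p_i$, the third- and fourth-cumulant terms, the fact that the explicit formulas for $\Delta_d$, $c_{ij}$, $s_{ij}$, $\mathcal{V}^E$ are those forced by the second-cumulant bookkeeping, and that the $\Delta$-terms subtracted from $\mathcal Q$ cancel the offending $\mathcal B(\nu)$-supported contributions — matches the paper's Proposition~\ref{prop_iteration} and the machinery in the supplement.

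One concrete device is worth naming precisely, since your ``a priori moment bound via bootstrap'' paragraph gestures at the issue but does not give the fix the paper actually uses. The isotropic local law \eqref{eq_orginallocal} holds only with high probability, so it does not by itself control expectations, and at real $z=p_i$ there is no deterministic bound on $\|G(z)\|$: on the bad event an eigenvalue of $H$ can sit arbitrarily close to $p_i$ and the cumulant-expansion remainders in Lemma~\ref{lem_cumu} then involve unbounded quantities. The paper regularizes to $z = p(d) + \ii n^{-C}$ for a large constant $C$, so $\|G(z)\|\le n^C$ deterministically, and this crude bound times the tiny probability $n^{-\varphi}$ of the bad event makes the remainders negligible. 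Working at complex $z$ then forces the decomposition $Q(z)=Q_R(z)+\ii Q_I(z)$ with $|Q_I|\le n^{-C'}$ on the good event, the transfer of the recursion from $Q$ to $Q_R$, and a final continuity argument to move back to $z_0=p_i$. This is not really a bootstrap; it is the standard regularize-and-truncate step, and it is exactly what legitimates the uniform moment bounds your plan presupposes.
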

We first show how Proposition \ref{lem:main} implies Theorem  \ref{thm_mainthm}. 
\begin{proof}[Proof of Theorem \ref{thm_mainthm}]
By Lemma \ref{lem:key} and  \eqref{defn_q1}, 
\begin{align*}
\sqrt{n} \big(|\langle \mathbf v_i, \widehat{\mathbf v}_i \rangle |^2 -a(d_i) \big) = \mathcal{Q}(p_i) + O_{\prec}(n^{-\frac12}).
\end{align*}
Here $\mathcal{Q}(p_i)$ is defined in \eqref{defn_q1} with  $(A,B)=(A^R_i, B^R_i)$  (c.f.(\ref{eq_ab})).  By Proposition \ref{lem:main}, we have that at $z=p_i$,
\begin{align*}
\mathcal Q &=  \Delta_d+ \Delta_r + Q\simeq \Delta_d + \sqrt{nz}\sum_{(i,j)\in \mathcal{B}(\nu) } x_{ij} c_{ij} + \mathcal N\big(0, V \big).
\end{align*}
Next, by Central Limit Theorem  and Lemma \ref{lem_convergenindist} in \cite{BDW_suppl}, one has
\begin{align*}
\sqrt{nz}\sum_{i,j } x_{ij} c_{ij} \simeq \sqrt{nz}\sum_{(i,j)\in \mathcal{B}(\nu) } x_{ij} c_{ij} + \mathcal N \big(0, z\sum_{(i,j) \in \mathcal{S}(\nu)} (c_{ij})^2 \big).
\end{align*}
Furthermore, by the definition of $\mathcal{S}(\nu)$, we notice that 
$$n^{-1/2}\sum_{(i,j)\in \mathcal S(\nu)} c_{ij} s_{ij} = n^{-1/2}\sum_{i,j} c_{ij} s_{ij} + O(n^{-\frac12+4\nu}).$$
Let  $C(z)=(c_{ij}(z))$ with $c_{ij}(z)$ defined  in \eqref{defn_cij1} and recall $S(z)$ from \eqref{defn_sij1}. Using Lemma \ref{lem_convergenindist} in \cite{BDW_suppl}, we conclude that
\begin{align*}
\mathcal Q(p_i) \simeq \Delta_d(p_i) + \sqrt{n p_i }\Tr (X^* C(p_i)) + \mathcal N(0,\mathcal V(p_i)),
\end{align*}
where
\begin{align*}
\mathcal V(p_i) = \mathcal V^E(p_i) + 2\frac{ \kappa_3 {p_i}^{3/2}}{\sqrt{n}} \Tr\big( C(p_i)^*S(p_i)\big) + \frac{\kappa_4 {p_i}^{2} }{n} \Tr \big( S(p_i)^* S(p_i) \big).
\end{align*}

Denote
\begin{align*}
\Delta_i =  \sqrt{n p_i }\Tr \big(X^* C(p_i)\big) +  \Delta_d(p_i)
\end{align*}
and $ \mathcal Z_i \sim \mathcal N(0,\mathcal V(p_i)),$ which is independent of $\Delta_i$. 
Next, plugging $z=p_i$ into \eqref{defn_detministic1}, \eqref{defn_cij1}, \eqref{defn_sij1}, using Lemma \ref{lem_m1m2quanty} and taking into account the definitions of 
$A^R_i, B^R_i$ in (\ref{eq_ab}), we find that
\begin{align*}
\Delta_i =- \sqrt{n} \frac{2(d_i^4 + 2y d_i^2 + y)}{d_i^3 (d_i^2 + 1)^2} \mathbf u_i^* X \mathbf v_i -\frac{2( d_i^6 -  3 y d_i^2 - 2 y) }{ d_i^5 (d_i^2 +1)^2} \Big(\frac{\kappa_3}{n} \sum_{k,l} \mathbf u_i(k)\mathbf v_i(l) \Big). 
\end{align*}
The variance $\mathcal V(p_i)$ is the sum of 
\begin{align*}
& 2 \frac{\kappa_3}{\sqrt n} p_i^{3/2}\Tr \big( C(p_i)^* S(p_i)) + \frac{\kappa_4}{n}  p_i^2 \Tr \big( S(p_i)^* S(p_i) \big) \\
 &= -\frac{ 4(d_i^4 + 2 y d_i^2 + y)(d_i^6 - 3y d_i^2 - 2y)}{d_i^7 (d_i^2+1)^4}\Big(  \frac{\kappa_3}{\sqrt n}  \sum_{k,l} \mathbf u_i(k)^3 \mathbf v_i(l)\Big)\\
 &\quad +\frac{ 4 (d_i^4 + 2 y d_i^2 +y )^2}{d_i^7 (d_i^2+1)^4}\Big( \frac{\kappa_3}{\sqrt n} \sum_{k,l} \mathbf u_i(k) \mathbf v_i(l)^3 \Big)\\
 &\quad+ \frac{(d_i^6 - 3y d_i^2 - 2y)^2 }{d_i^8 (d_i^2+1)^4}\Big(\kappa_4 \sum_k \mathbf u_i(k)^4 \Big)+\frac{(d_i^4 + 2y d_i^2 + y)^2}{d_i^8 (d_i^2+1)^4} \Big( {\kappa_4}{y_n}\sum_l \mathbf v_i(l)^4 \Big)
\end{align*}
and
\begin{align*}
\mathcal V^E(p_i)=\frac{2}{d_i^4 - y} \Big( & 2 y (y +1) \big(\frac{d^4 + 2 y d^2 + y}{d^3 (d^2+1)^2} \big)^2 - \frac{y(y-1)(5y+1)}{d_i (d_i^2+1)^2}\big(\frac{d^4 + 2 y d^2 + y}{d^3 (d^2+1)^2} \big) \\
&+  \frac{(d_i^4+y)(d_i^2 + y)^2 }{d_i^3 (d_i^2+1)^2}\big( \frac{d^6 - 3 y d^2 - 2 y}{d^3 (d^2+1)^2}\big)+ \frac{2y^{2} (y-1)^2}{d_i^2 (d_i^2+1)^4} \Big).
\end{align*}
The last expression is obtained by using the definitions of $\mathfrak{a}_{1\alpha}$, $\mathfrak{b}_{1\alpha}$ and $ \tilde{\mathfrak{b}}_{1\alpha}$ for $\alpha=1,2$  in \eqref{def:missvar} of \cite{BDW_suppl} and performing tedious yet elementary calculations.
Recall (\ref{eq_defnthed}). 
The conclusion of Theorem \ref{thm_mainthm} follows immediately by rewriting $\Delta_i$ and $\mathcal V(p_i)$ in terms of $\theta(d_i)$ and $\psi(d_i)$.
\end{proof}

The rest of this section is devoted to the proof of Proposition \ref{lem:main}.   Our proof relies on the cumulant expansion in Lemma \ref{lem_cumu} of \cite{BDW_suppl}, where we need to control the expectation.
 Throughout the proof, we will frequently use the estimates in (\ref{eq_localoutside}). These estimates hold with high probability, which do not yield bounds for the expectations directly. In order to translate the high probability bounds into those for the expectations, one needs a crude deterministic bound for the Green function on the bad event with tiny probability.   To this end,  we will work with a slight modification of the real $z=p(d)$ for Green function. Specifically, in the  proof of the following Proposition \ref{prop_iteration},  we will also use the parameter
 \begin{equation}\label{eq_zdefn} 
 z=p(d)+\ii n^{-C},
 \end{equation}
  for a large constant $C$. On the bad event, we will use the naive bound of the Green function $\|G\|\leq N^C$, which will be compensated by the tiny probability of the bad event. At the end, by the continuity of $G(\tilde{z})$ at $\tilde{z}$ away from the support of the MP law, it is (asymptotically) equivalent to work with (\ref{eq_zdefn}),  for the proof of Proposition \ref{lem:main}.  We first claim that it suffices to establish  the following recursive estimate.  
\begin{pro}\label{prop_iteration} Suppose that the assumptions of Theorem \ref{thm_mainthm} hold. Let $z_0=p(d)$ and $z$ be defined in (\ref{eq_zdefn}). We have
\begin{align}
\E Q(z) e^{\ii t \Delta(z_0)} = O_{\prec}(n^{-\frac12+4\nu}), \label{18092701}
\end{align}
and for any fixed integer $k \ge 2$, 
\begin{equation}\label{eq_qdeltarecusie}
\mathbb{E} Q^k(z) e^{\ii t \Delta(z_0)}=(k-1) V \mathbb{E} Q^{k-2}(z) e^{\ii t \Delta(z_0)}+O_{\prec}(n^{-\frac12+4\nu}).
\end{equation}
\end{pro}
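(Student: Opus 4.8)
The plan is to establish the recursive estimate \eqref{eq_qdeltarecusie} (with \eqref{18092701} being the $k=1$ variant, treated along the same lines) by expanding $\mathbb{E} Q^k(z) e^{\ii t \Delta(z_0)}$ via the cumulant expansion of Lemma \ref{lem_cumu} in \cite{BDW_suppl}. The starting point is to write $Q = \mathcal{Q} - \Delta$ where $\mathcal{Q} = \sqrt{n}\,(\Tr(\Xi_1 A) + \Tr(\Xi_1' B))$, and to use the resolvent identity $\Xi_l = G^l - \Pi_l$ together with $HG = I + zG$ (and its differentiated versions) to produce one explicit factor $x_{ij}$ inside $\mathbb{E}[\,\cdot\,]$. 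Concretely, one factor of $\mathcal{Q}$ gets rewritten so that it carries a sum $\sum_{i,j} x_{ij}(\cdots)$; then applying the cumulant expansion in the variable $x_{ij}$ generates a series whose $\ell$-th term is $\frac{\kappa_{\ell+1}}{\ell! \, n^{(\ell+1)/2}} \sum_{i,j} \mathbb{E}\,\partial_{ij}^{\ell}\big[(\cdots) Q^{k-1} e^{\ii t\Delta}\big]$. The $\ell=1$ Gaussian term is where the main structure appears: differentiating the remaining $Q^{k-1}$ once produces the factor $(k-1)$ times a two-point function that, after applying the isotropic local law \eqref{eq_localoutside} to replace $G^{(l)}$ by $\Pi_l$, collapses to the deterministic quantity $(k-1) V \, \mathbb{E} Q^{k-2} e^{\ii t\Delta}$; differentiating instead the $e^{\ii t\Delta}$ factor or the explicit $(\cdots)$ factor produces terms that must be shown to be either $O_\prec(n^{-1/2+4\nu})$ or to cancel against pieces absorbed into the definitions of $\Delta_d$, $\Delta_r$, $V$.

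The key steps, in order, are: (i) algebraic preparation — use \eqref{green2}, Lemma \ref{lem.18091410} and the identities $G^{(l)} = $ (combinations of $G^{l}$) to express everything in terms of $G, G^2, G^3, G^4$ and their deterministic counterparts $\Pi_1,\dots,\Pi_4$, and extract one $x_{ij}$; (ii) cumulant expansion in each $x_{ij}$, truncating the series at a fixed high order (the tail is $O_\prec(n^{-D})$ by the moment assumption \eqref{eq_moment} and the naive bound $\|G\|\le n^{C}$ on the low-probability bad event, which is why one works with $z = p(d) + \ii n^{-C}$ as in \eqref{eq_zdefn}); (iii) organize the resulting terms by which factor the derivatives hit — the ``self'' terms (derivatives staying on the extracted factor) reconstruct, via the local law, the leading deterministic term plus the $\kappa_3,\kappa_4$ contributions bundled into $V$ and the shift $\Delta_d$; the ``cross'' terms hitting $Q^{k-1}$ give the recursion coefficient $(k-1)V$; the terms hitting $e^{\ii t\Delta}$ reproduce the $\Delta_r$ defined in \eqref{18072410} precisely because the coefficients $c_{ij}$ in \eqref{defn_cij1} are chosen to match these derivatives (this is the mechanism that forces $Q$ and $\Delta$ to be asymptotically independent, as each derivative $\partial_{ij} e^{\ii t\Delta}$ over $(i,j)\in\mathcal{B}(\nu)$ is itself $O(1)$ in number); (iv) power-counting: each term left over carries enough negative powers of $n$ — using $|\mathcal{B}(\nu)| \le Cn^{4\nu}$, the entrywise bounds $O_\prec(n^{-1/2})$ from \eqref{eq_localoutside}, and $\|\mathbf u\|_\infty,\|\mathbf v\|_\infty$ bounds for the $\mathcal{S}(\nu)$ sums — to be $O_\prec(n^{-1/2+4\nu})$; (v) iterate: bounds on $\mathbb{E} Q^k e^{\ii t\Delta}$ for all $k$ combined with the Carleman/moment-determinacy argument give $Q \simeq \mathcal{N}(0,V)$ at $t=0$, and the full $t$ gives $\mathbb{E} e^{\ii s Q + \ii t\Delta} = \mathbb{E} e^{\ii sQ}\,\mathbb{E} e^{\ii t\Delta} + o(1)$, hence asymptotic independence.

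I would present \eqref{prop_iteration} as the technical heart and then, in a short separate argument (which I would sketch immediately after stating the proposition), show that \eqref{prop_iteration} $\Rightarrow$ \eqref{lem:main}: inductively controlling $\mathbb{E} Q^k$ (set $t=0$) shows $\{Q^k\}$ has the moments of $\mathcal{N}(0,V)$ with $V$ bounded, so tightness plus moment convergence gives \eqref{eq_qnormal}; then for general $t$, writing $g_k(t) := \mathbb{E} Q^k(z) e^{\ii t\Delta(z_0)}$, the recursion \eqref{eq_qdeltarecusie} says $g_k(t) = (k-1)V g_{k-2}(t) + o(1)$ with $g_1(t)=o(1)$, which forces $g_{2m}(t) = (2m-1)!!\,V^m\,\mathbb{E} e^{\ii t\Delta} + o(1)$ and $g_{2m+1}(t)=o(1)$; summing the Taylor series of $e^{\ii s Q}$ (justified by the uniform moment bounds, i.e.\ sub-Gaussian tails of $Q$) yields $\mathbb{E} e^{\ii s Q + \ii t \Delta} \to e^{-s^2 V/2}\,\mathbb{E} e^{\ii t\Delta}$, which is exactly asymptotic independence together with the Gaussian limit.

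The main obstacle I anticipate is step (iii): correctly bookkeeping the $\kappa_3$ (and $\kappa_4$) terms so that the non-Gaussian contributions split cleanly into (a) the deterministic shift $\Delta_d$, (b) the covariance-type correction inside $V$ (the $\kappa_3 c_{ij}s_{ij}$ and $\kappa_4 s_{ij}^2$ pieces), and (c) the part that reassembles into $\Delta_r = \sqrt{nz}\sum_{(i,j)\in\mathcal{B}(\nu)} x_{ij} c_{ij}$ — and verifying that nothing cross-contaminates, i.e.\ that the residual is genuinely $O_\prec(n^{-1/2+4\nu})$ uniformly. This is delicate because the coefficients $c_{ij}$ and $s_{ij}$ in \eqref{defn_cij1}--\eqref{defn_sij1} are reverse-engineered precisely to absorb the derivative terms, so the proof must check that the expansion really does produce exactly those combinations, including the symmetrizations over $\mathcal{P}(2,1,1)$ coming from the chain rule on $\Xi_1' B$. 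A secondary difficulty, as the authors note in the excerpt, is that this independence argument (the analogue of \cite[Proposition 7.12]{KY13}) cannot exploit any structural assumption on $S$, so the matching of $\Delta_r$ with the derivative-on-$e^{\ii t\Delta}$ terms has to be done at the level of the general deterministic matrices $A_i^R, B_i^R$ in \eqref{eq:new-ab}.
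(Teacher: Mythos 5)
Your plan reproduces the paper's own argument almost step for step: the paper's Lemma \ref{lem.rewrite of Q} (using $zG=HG-I$ and $zG^2 = HG^2-G$ to extract the explicit $x_{ij}$-linear terms $f_\alpha,g_\alpha$ and the residual sum over $\mathcal{S}(\nu)$, with $\Delta_d$ and $\Delta_r$ already subtracted) is your step (i); Lemma \ref{lem. recursive estimate for each term} carries out your steps (ii)--(iv), applying the cumulant expansion to each extracted $x_{ij}$ and bookkeeping via $\hslash(l_1,l_2,l_3)$ the contributions according to whether the derivatives hit $h_1$, $Q^{k-1}$, or $e^{\ii t\Delta}$; and the iteration in your step (v) is exactly the paper's derivation of Proposition~\ref{lem:main} from Proposition~\ref{prop_iteration}, via the characteristic-function recursion $g_k(t)=(k-1)Vg_{k-2}(t)+o(1)$. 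One small inaccuracy worth flagging: the derivatives of $e^{\ii t\Delta}$ do not ``reassemble'' $\Delta_r$; $\Delta_r$ is subtracted once and for all in the definition of $Q$, and the reason those derivative terms are harmless is simply that they are supported on the index set $\mathcal{B}(\nu)$ of size $O(n^{4\nu})$ (not $O(1)$), which is what gives the $O_{\prec}(n^{-1/2+4\nu})$ error in $\hslash(0,0,1)$ and its higher-order analogues. Likewise, the full $V$ is not produced by the $\ell=1$ Gaussian term alone — the $\kappa_3$ and $\kappa_4$ pieces of $V$ arise from the mixed derivatives $\hslash(1,1,0)$ and $\hslash(1,2,0)$ — but your proposal does anticipate this in the bookkeeping caveat at the end.
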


The proof of Proposition \ref{prop_iteration} is our main technical task, which will be stated in Section \ref{s. proof of recursive estimate} of \cite{BDW_suppl}.
Now we first show the proof of Proposition \ref{lem:main} based on Proposition \ref{prop_iteration}. 

\begin{proof}[Proof of Proposition \ref{lem:main}] { Recall the following elementary  bound,} for any $x \in \mathbb{R}$ and  sufficiently large $N\in \mathbb{N}$,  we have 
\begin{equation}\label{eq_charbound}
\bigg|e^{\ii x}-\sum_{k=0}^N \frac{(\ii x)^k}{k!} \bigg| \leq \min \left\{\frac{|x|^{N+1}}{(N+1)!}, \frac{2|x|^N}{N!}\right\}.
\end{equation}
First, we write 
$
Q(z)=Q_R(z)+\ii Q_I(z),
$
where $Q_R(z)$ and $Q_I(z)$ stand for the real and imaginary parts of $Q(z)$ respectively. According to the choice of $z$ in (\ref{eq_zdefn}), we have the deterministic bound $|Q_I(z)|\leq N^C$  for some large positive constant $C$. Moreover, by continuity of the Green function and the Stieltjes transform, one can easily check that $|Q_I(z)|\leq N^{-C'}, $ for some large positive constant $C'$ with high probability. Using the small bound $N^{-C'}$ on the high probability event and the large deterministic bound  $ N^C$ on the tiny probability event, one can easily derive from (\ref{18092701}) and (\ref{eq_qdeltarecusie}) that 
\begin{align}
&\E Q_R(z) e^{\ii t \Delta(z_0)} =O_{\prec}(n^{-\frac12+4\nu}), \label{18092701R}\\
&\mathbb{E} Q_R^k(z) e^{\ii t \Delta(z_0)}=(k-1) V \mathbb{E} Q_R^{k-2}(z) e^{\ii t \Delta(z_0)}+O_{\prec}(n^{-\frac12+4\nu}) \label{eq_qdeltarecusieR}.
\end{align}
For any $s,t\in \R$, by (\ref{eq_charbound}),  we have
\begin{align}\label{eq_edetla1}
\mathbb{E}e^{ \ii s Q_R(z)+ \ii t\Delta(z_0)}=\sum_{k=0}^{2N-1} \frac{ (\ii s)^k }{k!} \E Q_R^k(z) e^{\ii t \Delta(z_0)} + O\left( \frac{s^{2N}}{(2N)!} \E Q^{2N}_{R}(z) \right).
\end{align}

For the error term on the right side of (\ref{eq_edetla1}), using (\ref{eq_qdeltarecusieR}) recursively for $t=0$, we first find  
\begin{equation*}
\E Q^{2N}_R(z) = (2N-1)!! V^N +O_{\prec}(n^{-\frac12+4\nu}). 
\end{equation*}
Thus, for arbitrarily small $\epsilon>0,$  by taking $N$ sufficiently large, we have
$
 \frac{(2N-1)!! V^N }{(2N)!}<\epsilon
$
and it follows that 
\begin{align}\label{eq:powerseries}
\left| \mathbb{E}e^{ \ii s Q_R(z)+ \ii t\Delta(z_0)} - \sum_{k=0}^{2N-1} \frac{ (\ii s)^k }{k!} \E Q^k_R(z) e^{\ii t \Delta(z_0)} \right| <\epsilon + O_{\prec}(n^{-\frac12+4\nu}). 
\end{align}

Using (\ref{eq_qdeltarecusieR}), we get the following estimate 
\begin{equation}\label{eq_con1}
\sum_{k=0}^{2N-1} \frac{ (\ii s)^k }{k!} \E Q^k_R(z) e^{\ii t \Delta(z_0)}= \sum_{k=0}^{N-1} \frac{(\ii s)^{2k} }{ (2k)!!} V^k \E e^{\ii t\Delta(z_0)}+O_{\prec}(n^{-\frac12+4\nu}). 
\end{equation}
Next, combing \eqref{eq_con1} with the fact 
\begin{equation*}
\exp(\frac{x^2}{2})=\sum_{k=0}^{\infty}  \frac{x^{2k}}{(2k)!!},
\end{equation*}
together with \eqref{eq:powerseries}, we conclude that 
\begin{equation}\label{eq_keyeq}
\left| \mathbb{E} e^{ \ii s Q_R(z)+ \ii t\Delta(z_0)}-e^{-\frac{1}{2} V s^2} \mathbb{E} e^{\ii t \Delta(z_0)}\right| < 2\epsilon + O_{\prec}(n^{-\frac12+4\nu}).
\end{equation}
The asymptotic independence of $Q_R(z)$ and $\Delta(z_0)$ is a consequence of (\ref{eq_keyeq}) and the fact $\epsilon$ is arbitrarily small. (\ref{eq_qnormal}) can be proved by setting $s=0.$   Although Proposition \ref{prop_iteration} is proved under the choice (\ref{eq_zdefn}), by continuity of $G$ outside of the support of MP law, we know $Q(z_0)=Q_R(z)+O(N^{-C'})$ with high probability for some positive constant $C'$. 
This concludes the proof of Proposition \ref{lem:main}.
\end{proof}

\noindent{\bf Acknowledgements.} X.C. Ding would like to thank Wei Q. Deng for many helpful discussions on the applications in statistical genetics. The authors also would like to thank two anonymous referees, the associated editor and editor for their suggestions and comments, which have significantly improved the paper. The authors also want to thank Jiang Hu for many helpful discussions.

\pagebreak
\begin{center}
\textbf{\Large Supplementary material to ``Singular vector and singular subspace distribution for the matrix denoising model"}
\end{center}
\medskip

This file contains detailed simulation results, further discussions on statistical applications,  auxiliary lemmas,  the proofs of Theorem \ref{thm. right subspace}  and some technical lemmas of the paper \cite{BDW}. 
\setcounter{equation}{0}
\setcounter{figure}{0}
\setcounter{table}{0}
\setcounter{page}{1}
\makeatletter
\renewcommand{\theequation}{S\arabic{equation}}
\renewcommand{\thefigure}{S\arabic{figure}}
\renewcommand{\bibnumfmt}[1]{[S#1]}
\renewcommand{\citenumfont}[1]{S#1}
\renewcommand{\thesection}{\Alph{section}}
\setcounter{section}{0}

\section{Detailed simulation results}
In this section, we state detailed simulation results for Section \ref{s.numerical} of \cite{BDW}. 

\begin{figure}[ht]
\begin{subfigure}{0.4\textwidth}
\includegraphics[width=6cm,height=4cm]{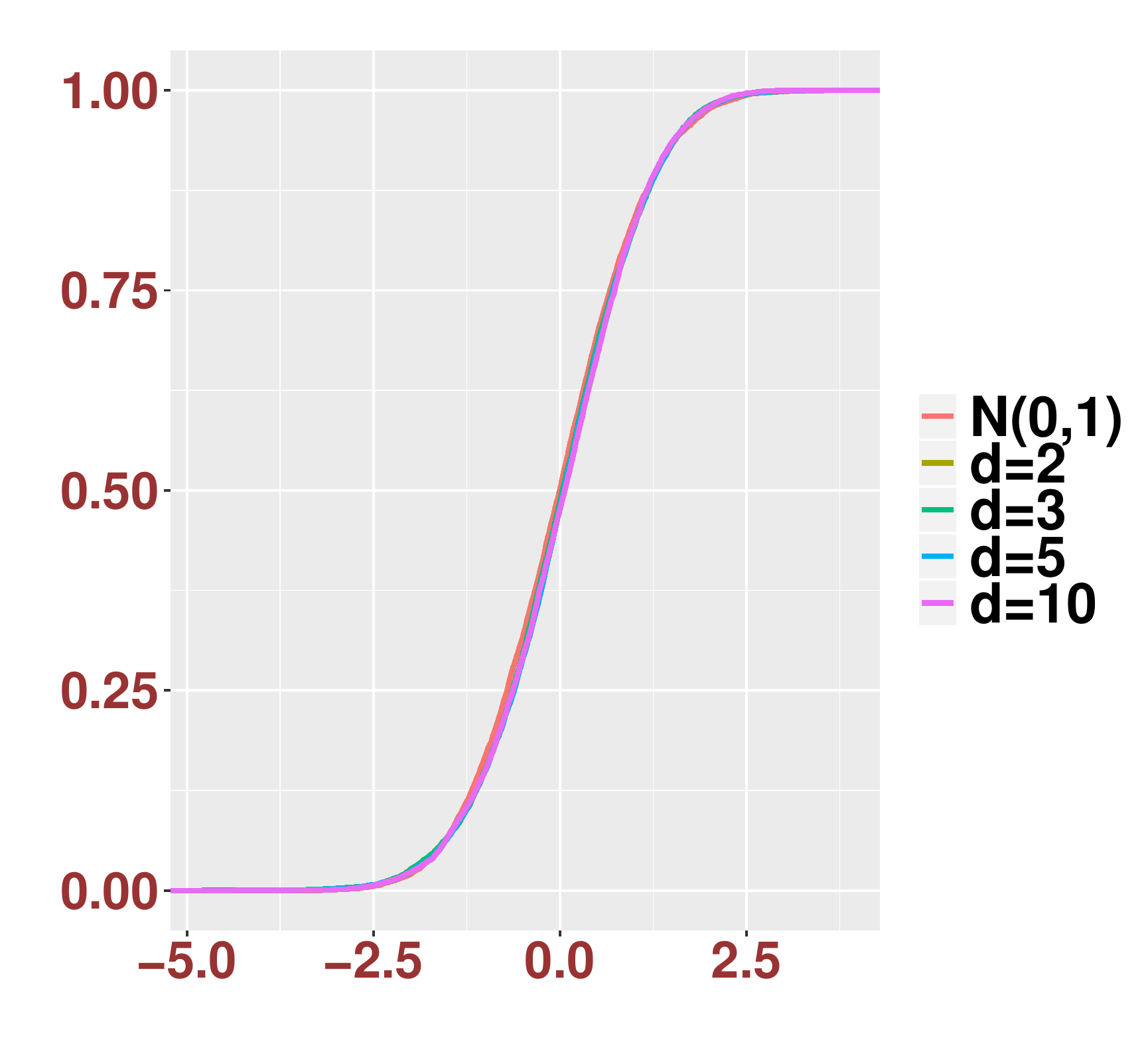}
\subcaption{ECDF of $\mathcal R_{g}$ with Gaussian noise.\\~~\\~~}
\end{subfigure}
\hspace{1cm}
\begin{subfigure}{0.4\textwidth}
\includegraphics[width=6cm,height=4cm]{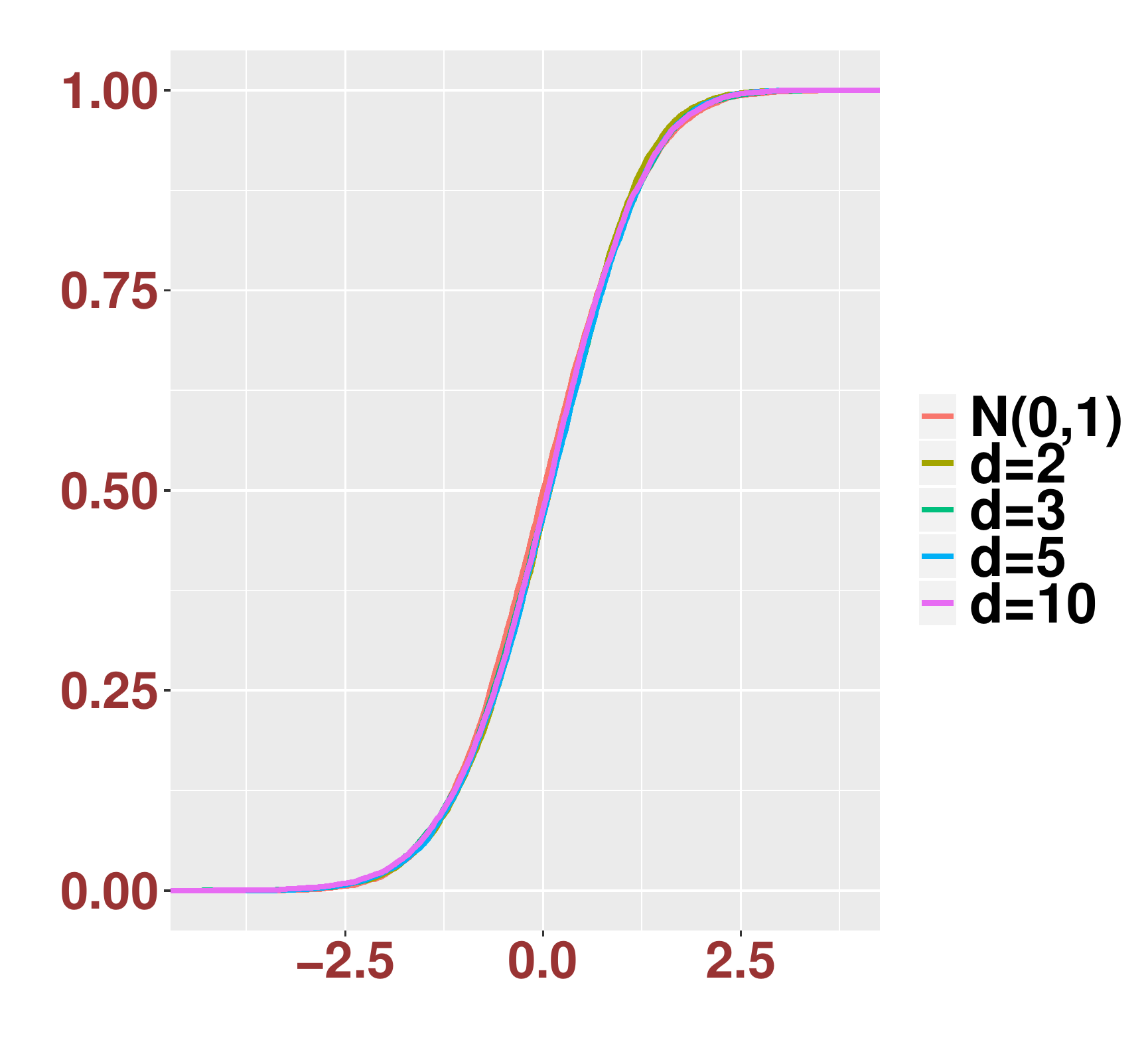}
\subcaption{ECDF of $\mathcal{R}_{dt}$ with Two-Point noise and delocalized singular vectors. }
\end{subfigure}
\begin{subfigure}{0.4\textwidth}
\includegraphics[width=6cm,height=4cm]{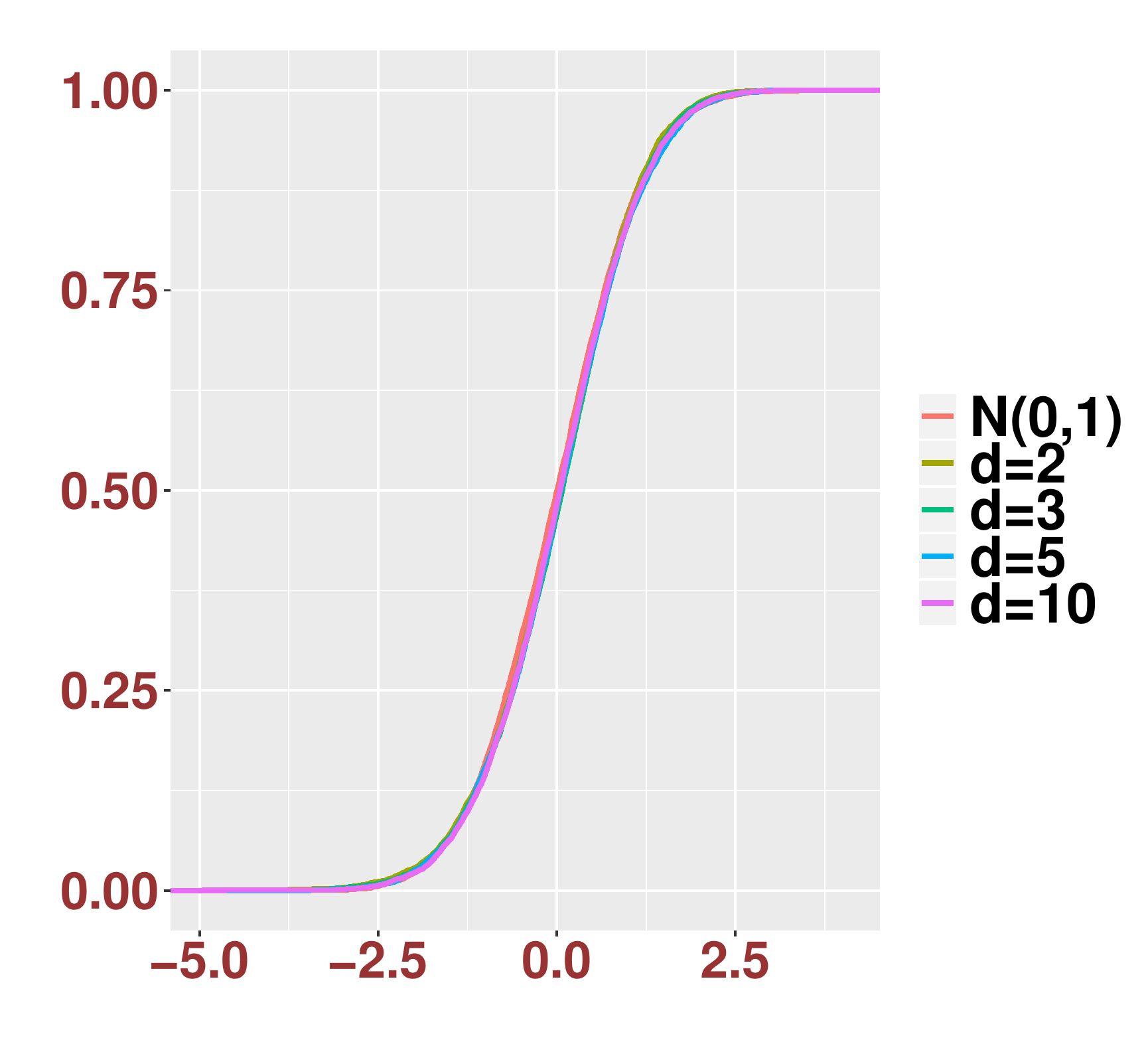}
\subcaption{ECDF of $\mathcal{R}_{pt}$ with Two-Point noise and delocalized left singular vector and sparse right singular vector.}
\end{subfigure}
\hspace{1cm}
\begin{subfigure}{0.4\textwidth}
\includegraphics[width=6cm,height=4cm]{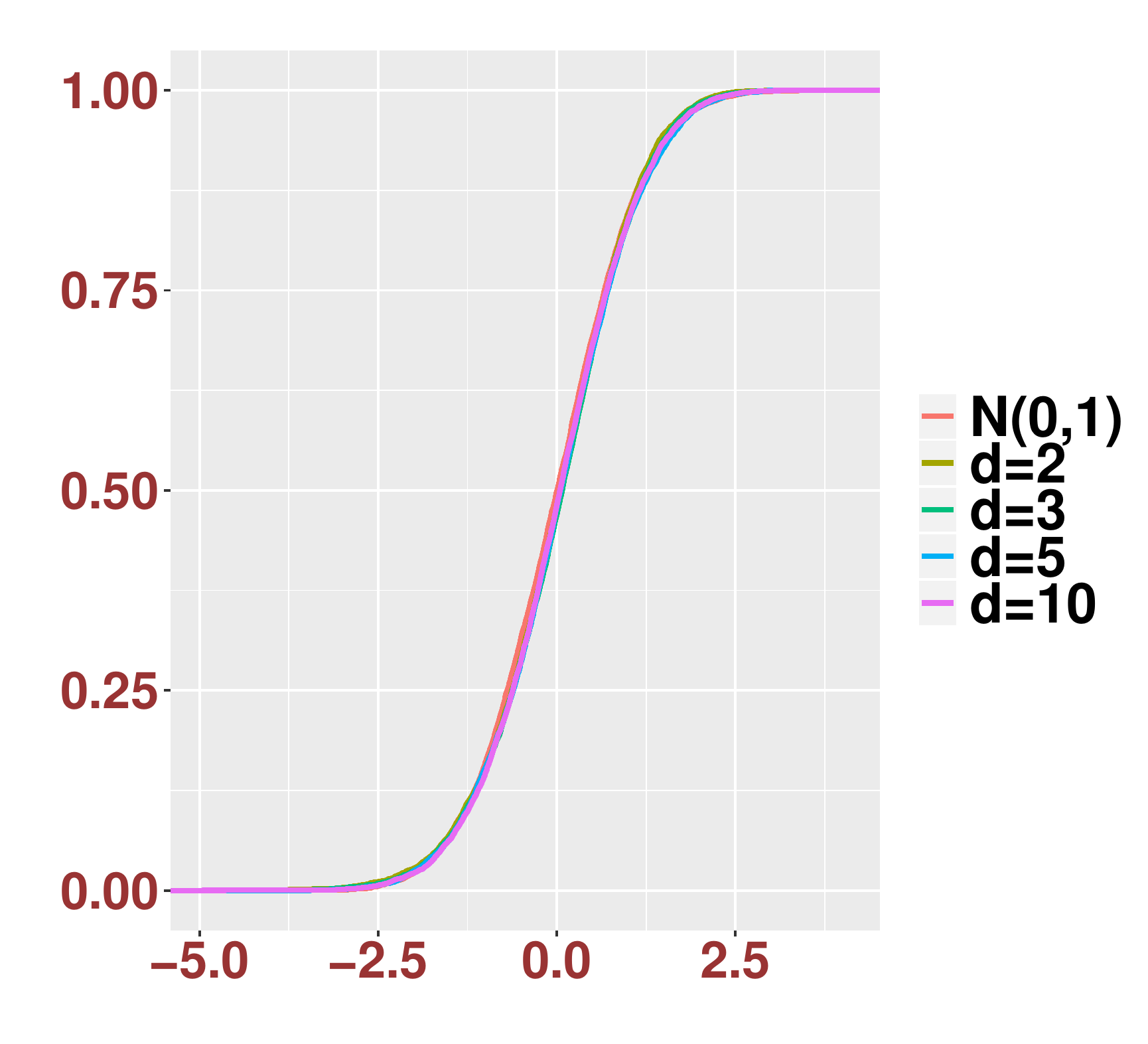}
\subcaption{ECDF of $\mathcal R_{st}$ with Two-Point noise and sparse singular vectors.\\~~\\~~}
\end{subfigure}
\caption{Plots of the ECDFs of $\mathcal{R}_g, \mathcal{R}_{dt}, \mathcal{R}_{pt}, \mathcal{R}_{st}$.}
\label{fig_fig1}
\end{figure}

\noindent{\it Case 1.} Gaussian noise. Recall the discussion in Remark \ref{rem_singu1}. In this case, the structure of the singular vectors does not play a role. We choose $\mathbf u= \mathbf e_1$ and $\mathbf v= \mathbf f_1$. Denote by
\begin{equation*}
\mathcal{R}_g:=\frac{\sqrt{n}}{\sigma}\left(|\langle \widehat{\mathbf{v}}, \mathbf{v}  \rangle|^2-a(d) \right), 
\end{equation*}
where 
\begin{equation}\label{defn_sigma}
\sigma^2=(8d^{12}+24d^{10}+26d^8+20d^6+15d^4+8d^2+2)/(2d^4(2d^4-1)(d^2+1)^4).
\end{equation}
The conclusion is that $\mathcal{R}_g$ is asymptotically $\mathcal{N}(0,1)$.  

\noindent{\it Case 2.} Two-point noise and both singular vectors of $S$ are delocalized. In the presence of Two-Point noise, the structure of the singular vectors will influence the distributions. We consider  the case that both $\mathbf u$ and $\mathbf v$ are delocalized, corresponding  to the discussion in Remark \ref{rem_singu2}. Let $\mathbf{u}=\mathbf{1}_M/\sqrt{M}$ and  $\mathbf{v}=\mathbf{1}_n/\sqrt{n}$. Then 
$$\mathcal{R}_{dt}:=\frac{1}{\sigma}\left(\sqrt{n}(|\langle \widehat{\mathbf{v}}, \mathbf{v} \rangle|^2-a(d))+\frac{d^6-1.5d^2-1}{d^5 (d^2+1)^2} \right)$$ 
is asymptotically $\mathcal{N}(0,1),$ where $\sigma$ is defined in (\ref{defn_sigma}). 

\noindent{\it Case 3.} Two-point noise and one of the singular vectors of $S$ is delocalized. We set $\mathbf{u}=\mathbf{1}_M/\sqrt{M}$ and $\mathbf{v}=\mathbf f_1$. From  Remark \ref{rem_singu3}, we know that the random variable $$ \mathcal{R}_{pt}:=\frac{\sqrt{n}}{\sigma_t} \left(|\langle \widehat{\mathbf{v}}, \mathbf{v}  \rangle|^2-a(d) \right) $$ is asymptotically $\mathcal{N}(0,1),$ where $$ \sigma_t^2= \sigma^2+2(d^4+d^2+0.5)^2/(d^7 (d^2+1)^4)-0.75(d^4+d^2+0.5)^2/(d^8 (d^2+1)^4).$$

\noindent{\it Case 4.} Two-point noise and both singular vectors of $S$ are sparse (localized). Let $\mathbf u= \mathbf e_1$ and $\mathbf{v}=\mathbf f_1$. From the proof of Proposition \ref{lem:main}, especially  the decomposition in (\ref{18072502}), 
by setting $$\mathcal{R}_{st}:=\frac{1}{\sigma_s} \left(\sqrt{n} \left(|\langle \widehat{\mathbf{v}}, \mathbf{v} \rangle|^2-a(d)\right)-\frac{2\sqrt{n}}{d^3} X_{11} \right),$$ 
with $$\sigma_s^2=(d^{16}+4d^{14}+6d^{12}+d^{10}-6d^8-2d^6+6.5d^4+6.25d^2+1.6875)/(d^8(d^2+1)^4(2d^4-1)),$$ we have that $\mathcal{R}_{st}$ is asymptotically $\mathcal N(0,1)$.

In Table \ref{table_singu}-\ref{table_singu4}, we record the probabilities for different quantiles of the empirical cumulative distributions (ECDF) of $\mathcal{R}_g, \mathcal{R}_{dt}, \mathcal{R}_{pt}, \mathcal{R}_{st}$ respectively. We choose $n=200$ or $500$. For each choice of $n$, we take $d=2,3,5,10$.  The first column corresponds to the theoretical quantile probabilities for a standard normal distribution. Each simulation is obtained with $10,000$ repetitions. From Table \ref{table_singu}, we observe that $\mathcal{R}_g$ is fairly close to standard Gaussian even for a small sample size $n=200$. (The same is also observed for $\mathcal{R}_{dt}, \mathcal{R}_{pt}, \mathcal{R}_{st}$.)

\begin{table}[H]
\setlength\arrayrulewidth{0.5pt}
\renewcommand{\arraystretch}{0.8}
\begin{center}
\begin{tabular}{clclclclclclc|c|c|c|c}
\toprule
 & \multicolumn{5}{c}{$n=200$}                                                                                                                       & \multicolumn{5}{c}{$n=500$}                                                                                                                       \\ \hline \\
Normal   & $d=2$& $d=3$  & $d=5$  & $d=10$  & $ \text{SE}$  & $d=2$ & $d=3$ & $d=5$ & $d=10$ & $\text{SE}$ \\ \hline\\
0.01 & 0.012  & 0.0134  &   0.0106 &  0.0128& 0.003 & 0.0128 & 0.0115 & 0.012  & 0.0115 & 0.002 \\
0.05 &  0.0536 & 0.0499  & 0.0466  & 0.0495  & 0.002 & 0.0525 & 0.0474  & 0.0496 & 0.0498  & 0.0014 \\
0.10& 0.0969  & 0.095  &  0.0909 & 0.0909 & 0.0066 & 0.0968 &  0.0975 & 0.0976 &  0.0961 & 0.003  \\
0.30 &  0.281 &  0.280 & 0.273  &  0.268 & 0.025  & 0.292  & 0.294 & 0.275 & 0.284 & 0.014  \\
0.50& 0.477  & 0.472  & 0.462  & 0.463 & 0.032 &  0.486 & 0.483 & 0.480  & 0.477  & 0.020   \\
0.70& 0.684 & 0.679 & 0.674  & 0.670 & 0.023   & 0.691 & 0.691 & 0.683 &  0.682 & 0.013 \\
0.90	&  0.899 & 0.899  & 0.896 & 0.901 &  0.002 & 0.898  & 0.901 & 0.898  & 0.896 & 0.002 \\
0.95 &  0.955  &  0.955   &  0.953 &  0.953 & 0.004  & 0.953 & 0.951 & 0.952 & 0.949 & 0.002\\
0.99 & 0.994  & 0.993 &  0.993 &  0.992 & 0.003  & 0.991  & 0.991 & 0.992 & 0.994  & 0.002 \\ 
 \bottomrule
\end{tabular}
\end{center}
\caption{Distribution of $\mathcal{R}_g$: Gaussian noise.  
} \label{table_singu}
\end{table}

\begin{table}[H]
\setlength\arrayrulewidth{0.5pt}
\renewcommand{\arraystretch}{0.8}
\begin{center}
\begin{tabular}{clclclclclclc|c|c|c|c}
\toprule
 & \multicolumn{5}{c}{$n=200$}                                                                                                                       & \multicolumn{5}{c}{$n=500$}                                                                                                                       \\ \hline \\
Normal   & $d=2$& $d=3$  & $d=5$  & $d=10$  & $ \text{SE}$  & $d=2$ & $d=3$ & $d=5$ & $d=10$ & $\text{SE}$ \\ \hline \\
0.01 & 0.011  &  0.011 & 0.013  & 0.013 & 0.002 & 0.0106 &  0.012 & 0.012  & 0.0106   & 0.001 \\
0.05 & 0.0455  & 0.0499 & 0.049  & 0.05  & 0.001 & 0.0473 & 0.053  &  0.0486 & 0.0496 & 0.002  \\
0.10& 0.0873  & 0.0923 & 0.0925  & 0.096 & 0.008 & 0.0905 & 0.099 & 0.0938 & 0.0945 & 0.006 \\
0.30 &  0.26 & 0.273  & 0.268 & 0.273 & 0.03 & 0.2645 & 0.28  & 0.274  & 0.276 & 0.03 \\
0.50& 0.462   & 0.469  &  0.461  & 0.466 & 0.04 & 0.46 & 0.478 & 0.47 & 0.474 & 0.03  \\
0.70& 0.668 &  0.665 & 0.67  & 0.68  & 0.03 & 0.6755 &  0.682 & 0.679 &  0.675 & 0.02   \\
0.90	& 0.892  &  0.887 & 0.887  &  0.897 & 0.009 & 0.899 &  0.898 & 0.892 & 0.895 & 0.004  \\
0.95 &  0.95 &  0.949  & 0.947 & 0.954 & 0.002 & 0.954 &  0.952 & 0.947  & 0.949 & 0.003 \\
0.99 & 0.9914 & 0.993  &  0.9914 &  0.99 & 0.001 & 0.992 & 0.992  &  0.992 & 0.992 & 0.002 \\  
 \bottomrule
\end{tabular}
\end{center}
\caption{Distribution of $\mathcal{R}_{dt}:$  Two-Point noise and delocalized singular vectors.  } \label{table_singu2}
\end{table}

\begin{table}[H]
\setlength\arrayrulewidth{0.5pt}
\renewcommand{\arraystretch}{0.8}
\captionsetup{width=1\linewidth}
\begin{center}
\begin{tabular}{clclclclclclc|c|c|c|c}
\toprule
 & \multicolumn{5}{c}{$n=200$}                                                                                                                       & \multicolumn{5}{c}{$n=500$}                                                                                                                       \\ \hline \\
Normal   & $d=2$& $d=3$  & $d=5$  & $d=10$  & $ \text{SE}$  & $d=2$ & $d=3$ & $d=5$ & $d=10$ & $\text{SE}$ \\ \hline \\
0.01 & 0.016  & 0.0151  & 0.011  & 0.0123 & 0.004  &0.011 &  0.011  & 0.011 &  0.011& 0.001 \\
0.05 & 0.053  & 0.0513 &  0.051 & 0.0464 & 0.002 & 0.051  & 0.0505 & 0.0478   & 0.0536 & 0.002  \\
0.10& 0.0976  & 0.0968 &  0.0955 &  0.0953 & 0.004 & 0.094 & 0.0959 & 0.0934  & 0.1 &  0.004 \\
0.30 & 0.273   & 0.275 & 0.279  & 0.268 & 0.03 &  0.277&  0.283 & 0.274  & 0.282 & 0.02  \\
0.50&  0.468  & 0.473 &   0.469 & 0.463 & 0.03 & 0.479 & 0.481 & 0.469 & 0.47 & 0.03 \\
0.70&  0.686 &  0.68 & 0.677   &  0.672 & 0.02 & 0.68 & 0.68 &  0.676 &   0.674 & 0.02 \\
0.90	&  0.9035 & 0.9025  & 0.895 &  0.897 & 0.004 & 0.908 & 0.897 & 0.892 & 0.891 & 0.007  \\
0.95 & 0.959 &  0.957  & 0.954 & 0.95 & 0.005 & 0.955 & 0.952 & 0.95 & 0.949 & 0.002 \\
0.99 & 0.995 & 0.991 &   0.994 & 0.993   & 0.003 & 0.993 &  0.992 & 0.993 & 0.991 & 0.002 \\  
 \bottomrule
\end{tabular}
\end{center}
\caption{ Distribution of $\mathcal{R}_{pt}:$ Two-Point noise and delocalized left singular vector and sparse right singular vector.  } \label{table_singu3}
\end{table}

\begin{table}[H]
\setlength\arrayrulewidth{0.5pt}
\renewcommand{\arraystretch}{0.8}
\begin{center}
\begin{tabular}{clclclclclclc|c|c|c|c}
\toprule
 & \multicolumn{5}{c}{$n=200$}                                                                                                                       & \multicolumn{5}{c}{$n=500$}                                                                                                                       \\ \hline \\
Normal   & $d=2$& $d=3$  & $d=5$  & $d=10$  & $ \text{SE}$  & $d=2$ & $d=3$ & $d=5$ & $d=10$ & $\text{SE}$ \\ \hline\\
0.01 & 0.0115  &  0.009&  0.008 & 0.0825 & 0.002 &  0.0099&   0.009 & 0.0098 & 0.0088 & 0.001 \\
0.05 &   0.0454 & 0.0448  & 0.042  & 0.0443 & 0.006 &  0.0469   &  0.0468& 0.045 & 0.044  & 0.004 \\
0.10&  0.0873 & 0.0886 &  0.081 & 0.0864 & 0.004 & 0.0908 & 0.095  & 0.091 &  0.0896 & 0.005   \\
0.30 & 0.266  & 0.270  & 0.269  & 0.275 & 0.030 &  0.280 &  0.278 & 0.282 & 0.270  & 0.02 \\
0.50&  0.460 & 0.463 & 0.460  & 0.453 & 0.042  &  0.467 & 0.473 &  0.478 & 0.463  & 0.03  \\
0.70& 0.666 & 0.670  & 0.660  & 0.656 &  0.037 &  0.673 & 0.680 & 0.673 & 0.663  & 0.03 \\
0.90	& 0.885 &  0.883 &  0.884 & 0.879 &  0.017  & 0.890 &  0.890 & 0.894 & 0.889 &  0.009 \\
0.95 & 0.944   & 0.940    & 0.940 &  0.939 &  0.009 & 0.943 &  0.943 & 0.948 & 0.948 &  0.005 \\
0.99 & 0.989 & 0.987 &  0.988 & 0.989  &  0.002 & 0.989  & 0.989 & 0.99 & 0.989 & 0.001\\ 
 \bottomrule
\end{tabular}
\end{center}
\caption{Distribution of $\mathcal{R}_{st}$: Two-Point noise and both singular vectors sparse.} \label{table_singu4}
\end{table}

Further, in Figure \ref{fig_fig1},  we plot the ECDFs of of $\mathcal{R}_g, \mathcal{R}_{dt}, \mathcal{R}_{pt}, \mathcal{R}_{st}$ in subfigures (A), (B), (C), (D) respectively, for $n=500$ and various values of $d=2,3,5,10$.

\section{Discussions on statistical applications}
\begin{figure}[ht]
\begin{subfigure}{0.4\textwidth}
\includegraphics[width=6cm,height=8cm]{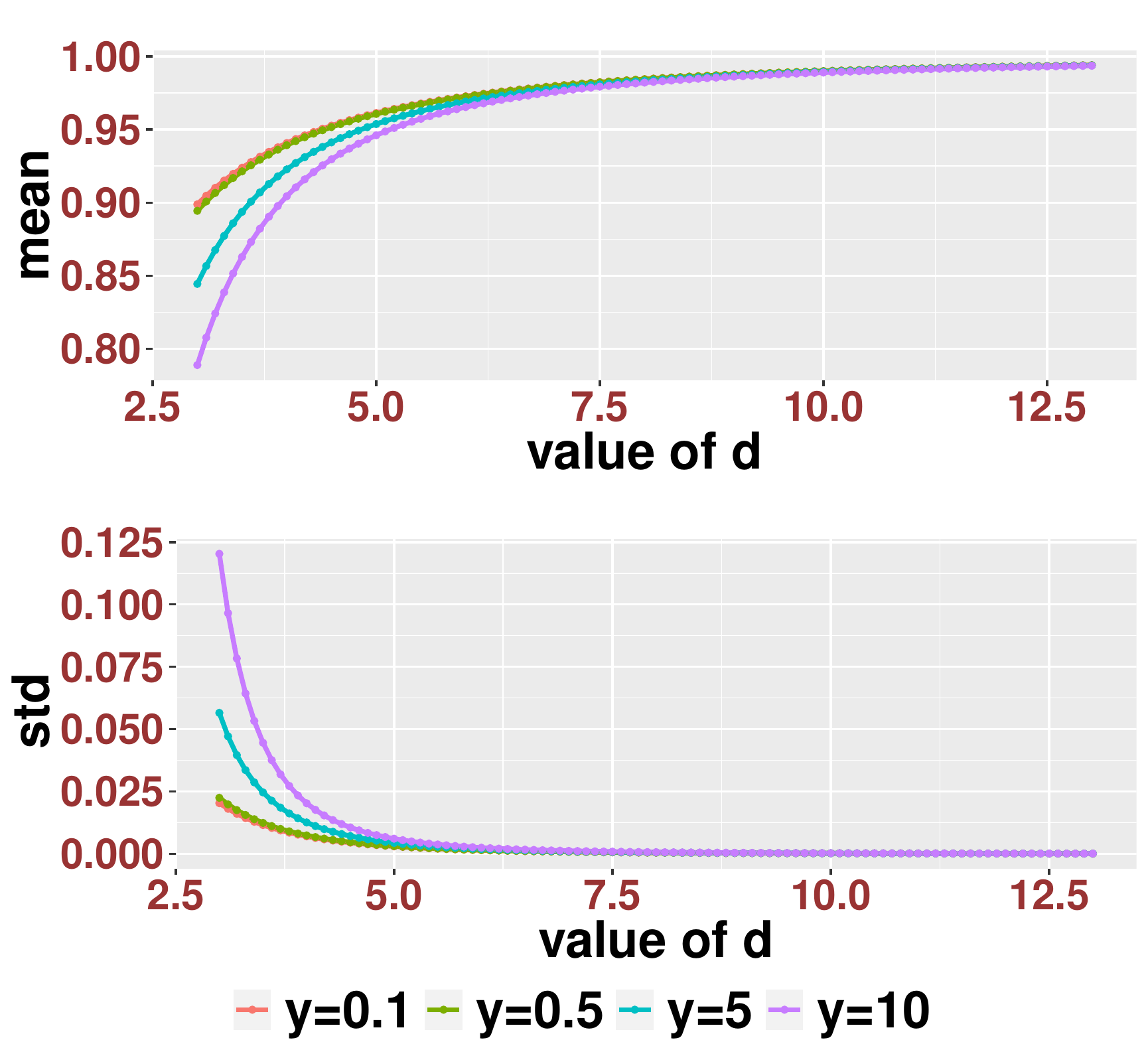}
\subcaption{Gaussian noise}
\end{subfigure}
\hspace{0.5cm}
\begin{subfigure}{0.4\textwidth}
\includegraphics[width=6cm,height=8cm]{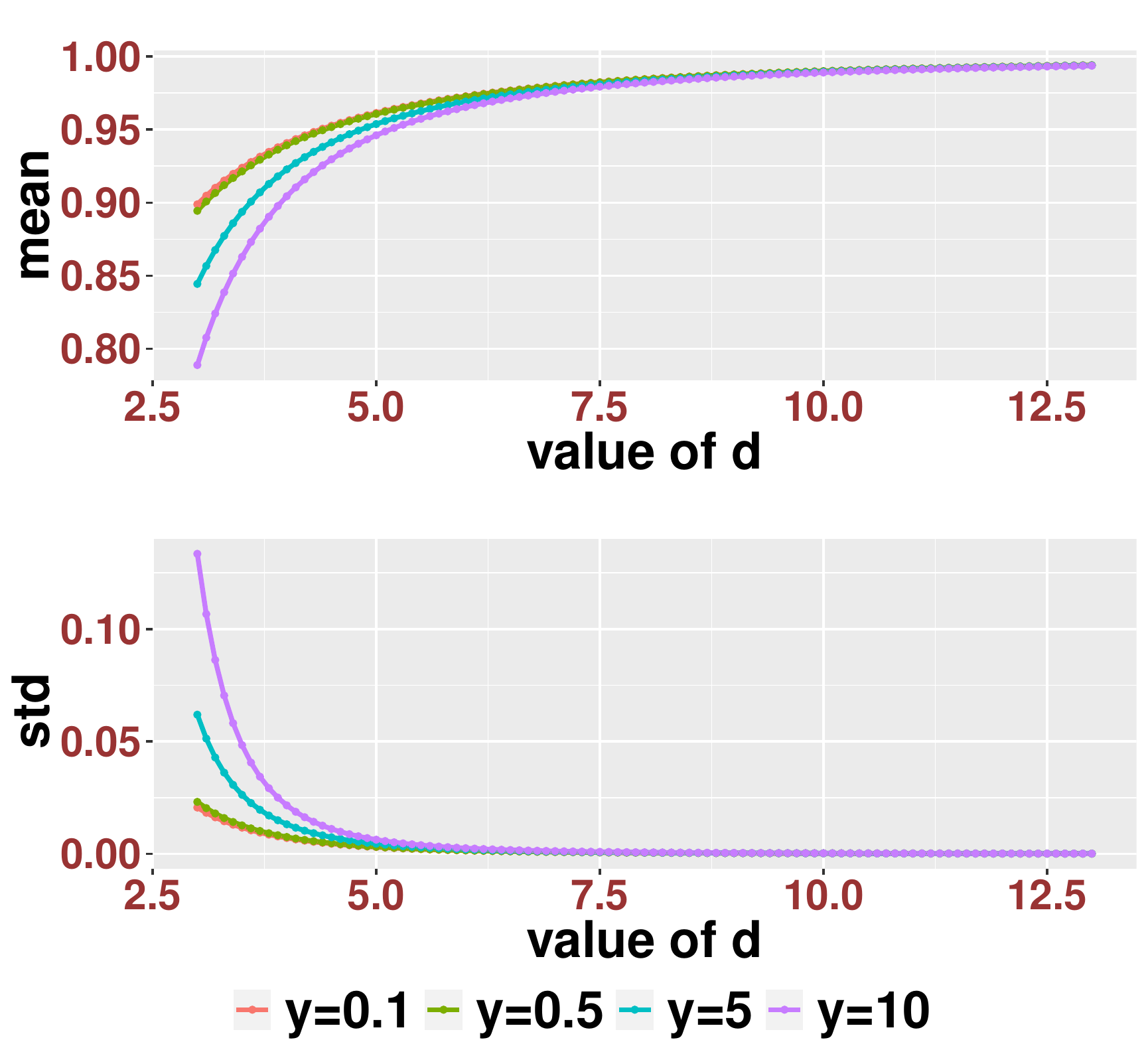}
\subcaption{Two-Point noise}
\end{subfigure}
\caption{Mean-Variance Discussion. In both of the figures, we plot the mean function $a(d)$ in the upper panel for $y=0.1, 0.5, 5, 10$ respectively for a sequence of values of $d$ lie between 3 and 13. In the lower panel, we plot the standard deviation of the fluctuation correspondingly.  Recall the definitions in (\ref{eq_defnthed}) and (\ref{eq_ved}). The standard deviation is $\sqrt{4 \theta(d)^2+\mathcal{V}^E(d)}$ for the Gaussian noise and  $\sqrt{4 \theta(d)^2+\mathcal{V}^E(d)+4 \sqrt{y} \theta(d)^2/(\sqrt{2} d) -3y \theta(d)^2/(2d^2)}$ for the Two-Point noise.  We choose the true right singular vector to be $\mathbf{f}_1$ and left singular vector to be $\mathbf{1}_M/\sqrt{M}$. Hence for the Two-Point noise, we need to add a part depending on $\kappa_3=1/\sqrt{2}$ and $\kappa_4=-3/2$. }
\label{fig_fig11}
\end{figure}
In this section, we provide simulation results of Section \ref{sec:statapp} and some further discussions on the statistical applications. We first provide the results of the mean-variance discussion of the estimation of singular vectors, which is illustrated in Figure \ref{fig_fig11}. In the following simulations, we consider the setting that the signal matrix $S$ has rank $r=2$ with the singular values $d_1=5$ and $d_2=3$. Assume $M$ is even. Assume the left singular vectors of $S$ are $\mathbf{u}_1=\frac{1}{\sqrt{M}} \mathbf{1}_M$ and $\mathbf{u}_2=\frac{1}{\sqrt{M}} (\mathbf{1}_{M/2}^T, -\mathbf{1}_{M/2}^T)^T$, a vector with the first half entries $1/\sqrt M$ and remaining entries $-1/\sqrt M$. Set $V_0=(\mathbf{f}_1, \mathbf{f}_2)$. 

Recall the definitions in  (\ref{eq_defnthed}) and (\ref{eq_ved}). When the noise is Gaussian, we use the statistic 
\begin{equation}\label{eq_t1g}
\bm{T}_{1g}=\frac{\sqrt{n}}{\sigma} \Big( \sum_{i,j=1}^2 |\langle \widehat{\mathbf{v}}_i, \mathbf{v}_j \rangle|^2-a(d_1)-a(d_2)\Big),
\end{equation}
where 
{
\begin{equation*}
\sigma^2=\sum_{i=1}^2 (4 \theta(d_i)^2+\mathcal{V}^E(d_i)). 
\end{equation*}
}

Note that $\bm{T}_{1g}$ is a scaled version of the proposed statistic $\bm{S}_1$ in (\ref{eq_s1_defn}), i.e. $\bm{T}_{1g}=\bm{S}_1/\sigma.$
When the noise is Two-point type, we use the statistic 
{
\begin{equation}\label{eq_t1t}
\mathbf{T}_{1t}:=\frac{\sqrt{n}}{\sigma_t}\Big(  \sum_{i,j=1}^2 |\langle \widehat{\mathbf{v}}_i, \mathbf{v}_j \rangle|^2-a(d_1)-a(d_2) \Big),
\end{equation} 
where 
\begin{equation*}
\sigma_t^2=\sum_{i=1}^2 \left( 4 \theta(d_i)^2 + \mathcal{V}^E(d_i) \right) -\frac{3y}{2} \sum_{i=1}^2 \frac{\theta(d_i)^2 }{d_i^2}+\frac{4\sqrt{y}}{\sqrt{2}}  \frac{\theta(d_1)^2}{d_1}.  
\end{equation*}
}
$\bm{T}_{1t}$ is also a scaled version of $\bm{S}_1.$

Under the nominal level $\alpha$, we will reject $\mathbf{H}_0$ when 
\begin{equation*}
|\mathbf{T}_{1g(t)}|>z_{1-\alpha/2},
\end{equation*}
where $z_{1-\alpha/2}$ is the $1-\alpha/2$ quantitle of a standard Gaussian random variable. In Table \ref{table_stat_single_typeone}, we record the type I error rates which show the accuracy of our proposed $z$-score test for different values of $y$ based on $10,000$ simulations. 
\begin{table}[ht]
\setlength\arrayrulewidth{1pt}
\renewcommand{\arraystretch}{1.1}
\captionsetup{width=1\linewidth}
\begin{center}
\begin{tabular}{cclclclclclclc|c}
\toprule
 & \multicolumn{4}{c}{Gaussian noise}                                                                                                                       & \multicolumn{4}{c}{Two-point noise}                                                                                                                       \\ 
 \cmidrule(lr){2-5}\cmidrule(lr){6-9}
 & \multicolumn{2}{c}{$\alpha=0.05$}                                                                                                                       & \multicolumn{2}{c}{$\alpha=0.1$}                                                                                                                      
  & \multicolumn{2}{c}{$\alpha=0.05$}                                                                                                                       & \multicolumn{2}{c}{$\alpha=0.1$}                                                                                                                       \\
  & $n=200$& $n=500$  & $n=200$& $n=500$ &  $n=200$& $n=500$  & $n=200$& $n=500$ \\ 
   \cmidrule(lr){2-5}\cmidrule(lr){6-9}
$y=0.5$ & 0.047   &  0.0482  &  0.098 & 0.0967 & 0.0501 & 0.0496 & 0.105 & 0.0945   \\
$y=1$ &  0.057 & 0.046  & 0.092  & 0.096   & 0.0488 & 0.0491 & 0.097 & 0.099   \\
 $y=2$ & 0.0494  & 0.052 & 0.0984  & 0.0955 & 0.0474 & 0.049 & 0.091 & 0.094   \\
 \bottomrule
\end{tabular}
\end{center}
\caption{ Type I error under $\mathbf{H}_0$ for (\ref{test_1}) using $z$-score test.  } \label{table_stat_single_typeone}
\end{table}

 Finally, to study the power of our test against the alternatives,  we consider the matrix  $V_a=(\mathbf{f}_1, \sqrt{1-\delta^2} \mathbf{f}_2+ \delta \mathbf{f}_3)$ for a parameter $\delta \in (0,1).$ In Figure \ref{fig_power}, we record the simulated power for different values of $\delta$ under the nominal level $\alpha=0.05$ when $X$ is a Two-point noise matrix.   We find that the power of our tests increases  when $\delta$ increases. Furthermore, at the same level of $\delta,$ the power is improved when $n$ increases.

\begin{figure}[H]
 \captionsetup{width=1\linewidth}
\includegraphics[width=8cm,height=6cm]{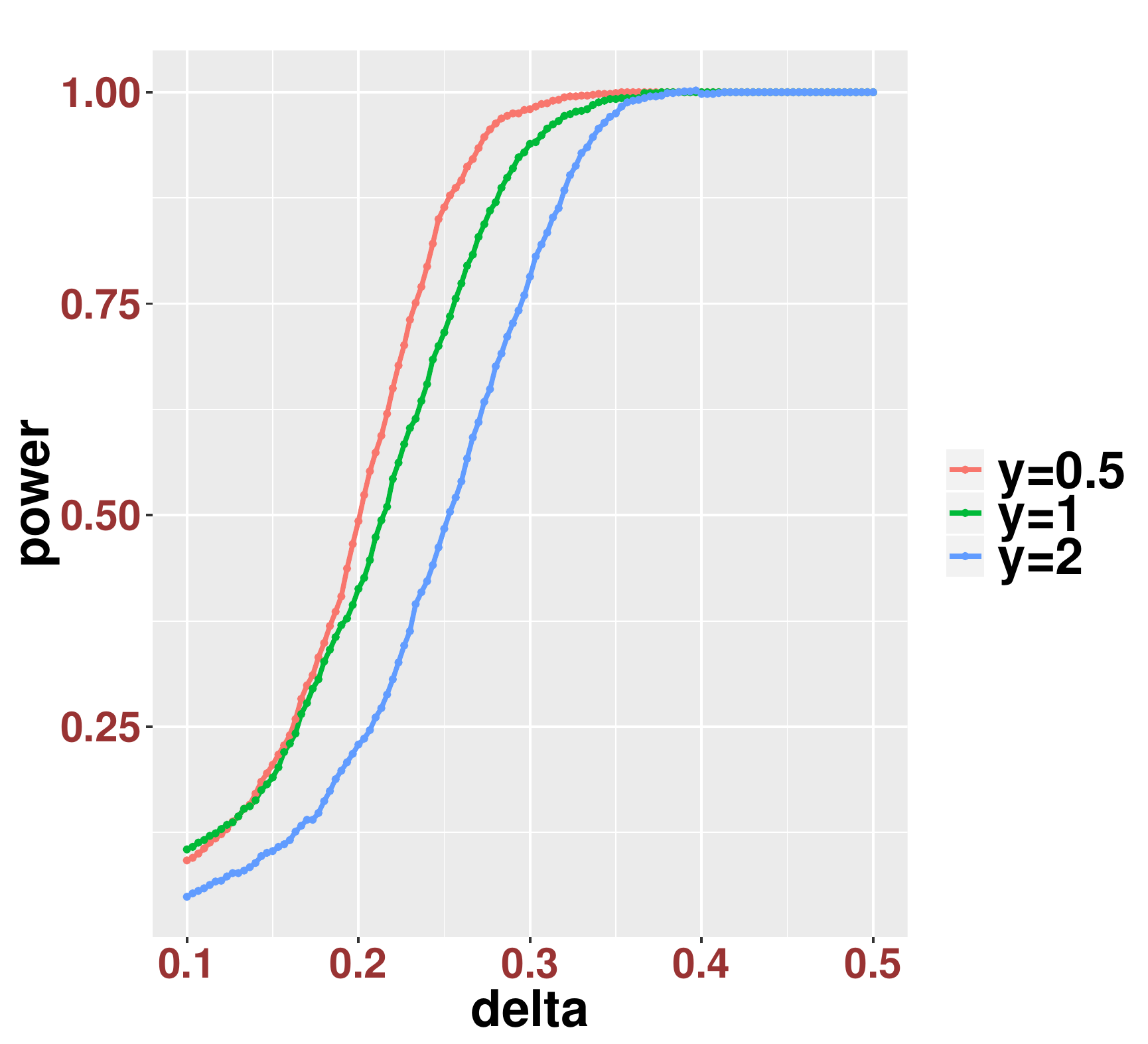}
\caption{ Power vs $\delta$ under the nominal level $\alpha=0.05$ for $y=0.5, 1, 2$ respectively. }
\label{fig_power}
\end{figure}

As we mentioned in the article \cite{BDW}, we assume that $D$, $U$ and the necessary parameters of $X$ are known and we do the hypothesis testing about $V$. Although in general we cannot drop all the a priori information about $D$, $U$, and $X$, some efforts can be made along this direction. In the sequel, for instance, 
 we discuss some possible extension to the case when $D$ and the necessary parameters of $X$ are unknown.
More specifically, recall that $d_i$ and $\sqrt{\mu_i}$ are the singular values of $S$ and $Y$, respectively. We know that $\sqrt{\mu_i}$ converges to $p(d_i)$ in probability. In our theorems for singular vectors, we can use $p^{-1}(\sqrt{\mu_i})$ to replace $d_i$. Such a replacement will change the distribution of our statistics. 
For instance, if we change $a(d_i)$ by $a(p^{-1}(\sqrt{\mu_i}))$ in Theorem \ref{thm_mainthm}, such a change will bring additional fluctuation of the statistic. However, one can still use our method to derive the limiting distribution for such a modified statistic where $d_i$'s are replaced by their estimates, i.e.,  $p^{-1}(\sqrt{\mu_i})$'s. It is simply because the fluctuation of $\sqrt{\mu_i}$ can also be written as a quadratic form of the Green function. We anyway have the joint distribution of the quadratic forms of the Green function and its derivative. So we can also derive the joint distribution of the singular vectors  and the singular values. Replacing $d_i$ in quantities like $\Delta_i$ and $\mathcal{V}_i$ by the estimator $p^{-1}(\sqrt{\mu_i})$ is completely harmless since the error  is of order $\frac{1}{\sqrt{n}}$.   
Further, in some simple case, we can also estimate the cumulants of the noise of $X.$ For instance, suppose we use the $p^{-1}(\sqrt{\mu}_i)$ to replace $d_i$ in our theorems, and further we assume that the  left singular vectors are known and we want to test whether the right singular subspace $V$ is identical to some given matrix $V_0$. In this case, we can estimate the parameters of $X$ by considering  $\widehat{X}:=Y-\widehat{S}$. Here $\widehat{S}= U\widehat{D}V^*$ with $\widehat{D}=\text{diag}(p^{-1}(\sqrt{\mu_i}))_{i=1}^r$.  Since the entries of $X$ are assumed to be i.i.d., we can estimate the the second moment of $\sqrt{n}x_{ij}$'s as the following 
\begin{align*}
\frac{1}{M}\sum_{i,j} |x_{ij}|^2= \frac{1}{M} \text{Tr} XX^*=& \frac{1}{M}\text{Tr}\widehat{X}\widehat{X}^*-\frac{1}{M}\text{Tr}(S-\widehat{S})\widehat{X}^*-\frac{1}{M}\text{Tr} \widehat{X}(S-\widehat{S})^*\nonumber\\
&+\frac{1}{M}\text{Tr} (S-\widehat{S})(S-\widehat{S})^*. 
\end{align*} 
Using the facts $\|\widehat{X}\|=O(1)$, $\|S-\widehat{S}\|=O(n^{-\frac12})$ and $\text{rank} (S-\widehat{S})=r$ which is fixed, it is easy to see that the last three terms are of order  $n^{-1}$ in probability. Further, it is easy to see  that $ \frac{1}{M} \text{Tr} XX^*$ can estimate $\mathbb{E}(\sqrt{n}x_{ij})^2$ up to an error of order $\frac{1}{n}$ in probability. Hence, we can estimate $\mathbb{E}(\sqrt{n}x_{ij})^2$ by $ \frac{1}{M}\text{Tr}\widehat{X}\widehat{X}^*$ which can be computed from the data, if the null hypothesis holds.  The other cumulants can be estimated in a similar way. 

In practice, some of the extension above could be quite important. For instance, the joint distribution of the singular values and  vectors allow us to consider the inference on statistics involving both of them. In \cite{LL}, to test whether the community memberships of the two networks are the
same in the stochastic block model, the authors proposed a statistic involving the scaled principal angles, where the scalings are the singular values (see \cite[Section 4.2]{LL} for details). Further, in \cite{DM}, the authors derived the formulas for the optimal shrinkers of the singular values under various norms. These shrinkers are essentially combinations of products of the singular values and inner products of $|\langle \mathbf{u}_i, \widehat{\mathbf{u}}_i \rangle|, |\langle \mathbf{v}_i, \widehat{\mathbf{v}}_i \rangle|$. Finally, there exist a lot works on estimating the low rank matrix $S$, to name but a few \cite{JW,N, RSV}. With the results on the joint distribution, it is possible for us to do inference on the estimation of the low-rank matrix $S$.  Nevertheless, we leave all the extensions to the future work.

\section{Preliminary results}
In this section, we list some preliminary results which will be used in the technical proof.  
\subsection{Auxiliary lemmas}
A key tool for our computation is the following cumulant expansion formula, whose proof can be found in \cite[Proposition 3.1]{LP09} and \cite[Section II]{KKP96}, for instance.  
\begin{lem}\label{lem_cumu}
 Let $\ell\in \mathbb{N}$ be fixed and let $f\in C^{\ell+1}(\mathbb{R})$. Let $\xi$ be a centered random variable with finite first $\ell+2$ moments. Let $\kappa_k(\xi)$ be the $k$-th cumulant of $\xi$. Then we have the expansion
\begin{align}
\mathbb{E}(\xi f(\xi))= \sum_{k=1}^\ell \frac{\kappa_{k+1} (\xi)}{k!} \mathbb{E} (f^{(k)}(\xi))+\mathbb{E}(\epsilon_{\ell}(\xi f(\xi))), \label{18012980}
\end{align} 
where $\epsilon_{\ell}(\xi f(\xi))$ satisfies 
\begin{align*}
|\mathbb{E}(\epsilon_{\ell}(\xi f(\xi)))| \leq C_\ell \mathbb{E}(|\xi|^{\ell+2})\sup_{|t|\leq \chi}|f^{(\ell+1)}(t)|+ C_\ell\mathbb{E}(|\xi|^{\ell+2}\mathbf{1}(|\xi|>\chi))\sup_{t\in \mathbb{R}}|f^{(\ell+1)}(t)|
\end{align*} 
for any $\chi>0$. 
\end{lem}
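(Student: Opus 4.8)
The plan is to reduce the general statement to the case of polynomials via Taylor's theorem, and to treat the polynomial case by the elementary moment--cumulant recursion, with the remainder controlled by crude bounds on $f^{(\ell+1)}$.

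\emph{Step 1: the polynomial case.} Since $\xi$ has $\ell+2$ moments, the characteristic function $\varphi(t)=\mathbb E e^{\mathrm i t\xi}$ lies in $C^{\ell+2}$ and satisfies $\varphi(t)=\exp\big(\sum_{k\ge1}\kappa_k(\xi)\tfrac{(\mathrm i t)^k}{k!}+o(t^{\ell+2})\big)$ near $t=0$. Differentiating $\varphi=e^{\log\varphi}$ and comparing Taylor coefficients at $t=0$ gives the moment--cumulant recursion $m_{p+1}=\sum_{k=1}^{p}\binom{p}{k}\kappa_{k+1}(\xi)\,m_{p-k}$, where $m_j=\mathbb E\xi^j$ and we used $\kappa_1=m_1=0$. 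Rewriting the binomial factor, this is precisely the assertion of the lemma, with zero error term, for every monomial $f(x)=x^{q}$ with $q\le\ell+1$, namely $\mathbb E(\xi f(\xi))=\sum_{k=1}^{q}\tfrac{\kappa_{k+1}(\xi)}{k!}\mathbb E(f^{(k)}(\xi))$. By linearity the identity holds for every polynomial $P$ of degree at most $\ell$, and for such $P$ the sum automatically terminates at $k\le\ell$, so one may write it as $\sum_{k=1}^{\ell}$.

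\emph{Step 2: Taylor reduction.} For general $f\in C^{\ell+1}$ write $f=P+R$, where $P(x)=\sum_{j=0}^{\ell}\tfrac{f^{(j)}(0)}{j!}x^j$ is the degree-$\ell$ Taylor polynomial and $R(x)=\tfrac{1}{\ell!}\int_0^x(x-s)^\ell f^{(\ell+1)}(s)\,\mathrm{d}s$ is the integral remainder; then $R^{(k)}(x)=\tfrac{1}{(\ell-k)!}\int_0^x(x-s)^{\ell-k}f^{(\ell+1)}(s)\,\mathrm{d}s$ for $0\le k\le\ell$. Applying Step 1 to $P$ cancels the main terms of the claimed identity, leaving
\begin{equation*}
\mathbb E\big(\epsilon_\ell(\xi f(\xi))\big)=\mathbb E\big(\xi R(\xi)\big)-\sum_{k=1}^{\ell}\frac{\kappa_{k+1}(\xi)}{k!}\,\mathbb E\big(R^{(k)}(\xi)\big).
\end{equation*}
From the remainder formula $|R^{(k)}(x)|\le \tfrac{|x|^{\ell+1-k}}{(\ell+1-k)!}\sup_{|s|\le|x|}|f^{(\ell+1)}(s)|$ for $0\le k\le\ell$; multiplying the $k=0$ estimate by $|\xi|$ and splitting $\mathbb E$ over $\{|\xi|\le\chi\}$ and $\{|\xi|>\chi\}$ gives $|\mathbb E(\xi R(\xi))|\le \tfrac{1}{(\ell+1)!}\big(\mathbb E|\xi|^{\ell+2}\sup_{|s|\le\chi}|f^{(\ell+1)}|+\mathbb E(|\xi|^{\ell+2}\mathbf 1(|\xi|>\chi))\sup_{s}|f^{(\ell+1)}|\big)$, which is already of the stated form. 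The terms with $1\le k\le\ell$ are bounded the same way, estimating the surplus powers crudely ($|\xi|^{\ell+1-k}\le1+|\xi|^{\ell+2}$, and $|\xi|^{\ell+1-k}\mathbf 1(|\xi|>\chi)\le|\xi|^{\ell+2}\mathbf 1(|\xi|>\chi)$ once $\chi\ge1$) and absorbing $\sum_{k=1}^{\ell}|\kappa_{k+1}(\xi)|/k!$ into the constant $C_\ell$; collecting everything yields the bound on $\mathbb E(\epsilon_\ell(\xi f(\xi)))$.

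\emph{Main obstacle.} There is no conceptual difficulty; the care lies entirely in the bookkeeping. The crucial point is that only $\ell+2$ moments of $\xi$ are available, which is exactly why one Taylor-expands to degree precisely $\ell$: then $\xi P(\xi)$ involves at most $\mathbb E|\xi|^{\ell+2}$, while $\xi R(\xi)$ and the $R^{(k)}(\xi)$, $1\le k\le\ell$, involve at most $\mathbb E|\xi|^{\ell+2}$ and $\mathbb E|\xi|^{\ell}$ respectively. The second point is arranging the error in the two-part form of the statement --- isolating $\{|\xi|\le\chi\}$, where $\sup_{|s|\le|\xi|}|f^{(\ell+1)}|\le\sup_{|s|\le\chi}|f^{(\ell+1)}|$ is the quantity that is small in applications, from the tail $\{|\xi|>\chi\}$, where only the global bound on $f^{(\ell+1)}$ is used but it is multiplied by the small truncated moment. (An alternative proof writes $\mathbb E(\xi f(\xi))=-\mathrm i\int\widehat f(t)\varphi'(t)\,\mathrm{d}t$ and expands $\varphi'=\varphi\cdot(\log\varphi)'$; this is slightly more delicate since $\varphi$ may vanish and $\log\varphi$ is only defined locally near $t=0$, forcing a separate tail analysis, so the reduction-to-polynomials route above is cleaner.)
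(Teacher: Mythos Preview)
The paper does not give its own proof of this lemma; it simply cites \cite[Proposition~3.1]{LP09} and \cite[Section~II]{KKP96} for the argument. Your approach --- reducing to polynomials via Taylor's theorem with integral remainder, handling the polynomial case by the moment--cumulant recursion, and splitting the remainder on $\{|\xi|\le\chi\}\cup\{|\xi|>\chi\}$ --- is the standard route and is essentially correct.

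Two minor points of bookkeeping are worth tightening. First, you write ``absorbing $\sum_{k=1}^{\ell}|\kappa_{k+1}(\xi)|/k!$ into the constant $C_\ell$,'' which as stated makes $C_\ell$ depend on the law of $\xi$. If a universal constant is intended (as in the cited references), bound $|\kappa_{k+1}(\xi)|\le C_k\,\mathbb E|\xi|^{k+1}$ via the moment--cumulant formula and then use H\"older, $\mathbb E|\xi|^{k+1}\cdot\mathbb E|\xi|^{\ell+1-k}\le(\mathbb E|\xi|^{\ell+2})^{(k+1)/(\ell+2)}(\mathbb E|\xi|^{\ell+2})^{(\ell+1-k)/(\ell+2)}=\mathbb E|\xi|^{\ell+2}$, to merge the lower-order moments into $\mathbb E|\xi|^{\ell+2}$; this gives the $\sup_{|t|\le\chi}$ part with $C_\ell$ depending only on $\ell$. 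Second, your restriction ``once $\chi\ge1$'' for the tail inequality $|\xi|^{\ell+1-k}\mathbf 1(|\xi|>\chi)\le|\xi|^{\ell+2}\mathbf 1(|\xi|>\chi)$ is a loose end for small $\chi$; a further split on $\{\chi<|\xi|\le1\}$ and $\{|\xi|>1\}$ together with the same H\"older trick cleans this up. In the paper's application the variables $\xi=x_{ij}$ are i.i.d.\ with a fixed law and $\chi=n^{-1/2+\epsilon}$, so a $\xi$-dependent constant would already suffice; these points are cosmetic rather than substantive gaps.
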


Note that when $\xi$ is a standard Gaussian random variable (i.e. $\kappa_i=0, i \geq 3$), \eqref{18012980} boils down to the celebrated Stein's lemma \cite{Stein1981}.  Next we introduce the identities on the derivatives of the Green functions in (\ref{green2}). These can be verified by elementary calculus so we omit the proofs. For $i\in [M]$ and $j\in [n]$, denote by  $E_{ij'}$ the $(M+n) \times (M+n)$ matrix with entry 1 on the $(i,j')$ position and 0 elsewhere. 
\begin{lem} \label{lem_greenderivative}
Let $\mathcal{E}_{ij}=E_{ij^{\prime}}+E_{j^{\prime}i}$ and $k\in \N$. We have 
\begin{align*}
&\frac{\partial^k G}{\partial x_{ij}^k}=(-1)^k k! z^{\frac{k}{2}} (G\mathcal{E}_{ij})^k G,\nonumber\\
&\frac{\partial^k (G^2)}{\partial x_{ij}^k}=(-1)^k k! z^{\frac{k}{2}}\sum_{s=0}^k (G \mathcal{E}_{ij})^s G (G \mathcal{E}_{ij})^{k-s} G. 
\end{align*}
\end{lem}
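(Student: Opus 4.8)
The plan is to exploit the fact that the Green function $G=(H-z)^{-1}$ depends on the entry $x_{ij}$ only through $H(z)$, which is an affine function of $x_{ij}$. From the block structure $H(z)=\sqrt z\left(\begin{smallmatrix} 0 & X \\ X^{*} & 0\end{smallmatrix}\right)$ and the fact that $X$ is real (so $X^{*}$ is the transpose of $X$), varying $x_{ij}$ changes exactly the $(i,j')$ and $(j',i)$ entries of $H$, each by $\sqrt z$; hence $\partial_{x_{ij}} H=\sqrt z\,\mathcal E_{ij}$ with $\mathcal E_{ij}=E_{ij'}+E_{j'i}$. Feeding this into the standard resolvent differentiation formula $\partial_{x_{ij}} G=-G(\partial_{x_{ij}} H)G$ gives the base case $\partial_{x_{ij}} G=-\sqrt z\,G\mathcal E_{ij}G$, which is the $k=1$ instance of the first identity.

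For general $k$ I would induct. Assuming $\partial^{k}_{x_{ij}} G=(-1)^{k}k!\,z^{k/2}(G\mathcal E_{ij})^{k}G$, I differentiate the right-hand side once more. Since $\mathcal E_{ij}$ does not depend on $x_{ij}$, the product $(G\mathcal E_{ij})^{k}G$ is just an alternating product of $k+1$ copies of $G$ separated by the constant matrix $\mathcal E_{ij}$, so by the Leibniz rule its derivative is a sum of $k+1$ terms, in each of which one copy of $G$ is replaced by $\partial_{x_{ij}} G=-\sqrt z\,G\mathcal E_{ij}G$. Each such replacement produces exactly $-\sqrt z\,(G\mathcal E_{ij})^{k+1}G$, irrespective of which copy of $G$ was hit, so the derivative equals $-(k+1)\sqrt z\,(G\mathcal E_{ij})^{k+1}G$; multiplying by the inductive prefactor yields the $(k+1)$-st identity.

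The second identity is handled the same way. The base case follows from $\partial_{x_{ij}}(G^{2})=(\partial_{x_{ij}}G)G+G(\partial_{x_{ij}}G)=-\sqrt z\bigl(G\mathcal E_{ij}G\cdot G+G\cdot G\mathcal E_{ij}G\bigr)$, which is the $k=1$ form of the stated sum. For the inductive step one differentiates $\sum_{s=0}^{k}(G\mathcal E_{ij})^{s}G(G\mathcal E_{ij})^{k-s}G$ term by term and copy by copy of $G$: differentiating a $G$ inside the $s$-block or the middle $G$ raises $s$ to $s+1$, while differentiating a $G$ inside the $(k-s)$-block or the final $G$ raises $k-s$ to $k-s+1$. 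The only genuinely delicate point — and the main bookkeeping obstacle in the whole argument — is to re-index the two resulting sums so that they merge: after the shift $s\mapsto s+1$ in one of them, the coefficient of $(G\mathcal E_{ij})^{t}G(G\mathcal E_{ij})^{k+1-t}G$ becomes $t+(k+1-t)=k+1$ for every $t\in\{0,\dots,k+1\}$, so the derivative collapses to $-(k+1)\sqrt z$ times the sum of the required shape. Combining with the inductive prefactor closes the induction and completes the proof of the lemma.
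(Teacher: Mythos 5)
Your proof is correct. The paper omits the argument entirely, remarking only that the identities ``can be verified by elementary calculus,'' and your inductive spelling-out — the resolvent identity $\partial_{x_{ij}}G=-G(\partial_{x_{ij}}H)G$ with $\partial_{x_{ij}}H=\sqrt{z}\,\mathcal{E}_{ij}$, followed by the Leibniz-plus-reindexing step — is precisely that elementary calculation, executed cleanly.
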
 

Below we also collect some basic results on convergence and equivalence in distribution for sum of random variables.  They can be found in \cite[Lemma 7.7, 7.8 and 7.10]{KY13}.  
\begin{lem} \label{lem_convergenindist}
(1). Let $\mathsf{X}_n \simeq \mathsf{Y}_n$ and $\mathsf{R}_n$ satisfy $\lim_{n \rightarrow \infty} \mathbb{P}\Big(|\mathsf{R}_n| \leq \epsilon_n \Big)=1,$ where $\{\epsilon_n\}$ is a positive null sequence. Then
\begin{equation*}
\mathsf{X}_n \simeq \mathsf{Y}_n+\mathsf{R}_n. 
\end{equation*} 
(2). Let $\{\mathsf{X}_n\}, \{\mathsf{X}_n^{\prime}\}, \{\mathsf{Y}_n\}$ and $\{\mathsf{Y}_n^{\prime}\}$ be sequences of random variables. Suppose $\mathsf{X}_n \simeq \mathsf{X}_n^{\prime},$ $\mathsf{Y}_n \simeq \mathsf{Y}_n^{\prime},$ $\mathsf{X}_n$ and $\mathsf{Y}_n$ are independent, and $\mathsf{X}_n^{\prime}$ and $\mathsf{Y}_n^{\prime}$ are independent. Then 
\begin{equation*}
\mathsf{X}_n+\mathsf{Y}_n \simeq \mathsf{X}_n^{\prime}+\mathsf{Y}_n^{\prime}.
\end{equation*} 
(3). Let $\{\mathsf{Z}_n\}$ be a bounded deterministic sequence. Let $\{\mathsf{X}_n\}$ be random variables such that $\mathsf{X}_n$ converges weakly to $\mathsf{X}.$ Then for any bounded continuous function $f,$  as $n \rightarrow \infty,$  we have
\begin{equation*}
\mathbb{E}f (\mathsf{Z}_n \mathsf{X}_n)-\mathbb{E} f(\mathsf{Z}_n \mathsf{X}) \rightarrow 0.
\end{equation*}
\end{lem}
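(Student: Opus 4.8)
\medskip

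\noindent\textbf{Proof strategy.} The plan is to treat the three parts in turn, working directly from Definition~\ref{defn_asymptotic} (tightness of both sequences together with convergence of $\mathbb{E}f(\cdot)$ against every bounded continuous $f$) and using only soft arguments: Slutsky-type estimates, extraction of weakly convergent subsequences, and L\'evy's uniqueness theorem for characteristic functions. The recurring technical nuisance, common to parts (1) and (3), is that a bounded continuous $f$ need not be \emph{uniformly} continuous, so tightness must be invoked to localize to a compact set on which $f$ is uniformly continuous.

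For part (1), tightness of $\{\mathsf{Y}_n+\mathsf{R}_n\}$ is immediate: $\{\mathsf{Y}_n\}$ is tight by hypothesis and $\mathsf{R}_n\to 0$ in probability (since $\mathbb{P}(|\mathsf{R}_n|\le\epsilon_n)\to 1$ with $\epsilon_n\downarrow 0$), and a sum of two tight sequences is tight. Since $\mathbb{E}f(\mathsf{X}_n)-\mathbb{E}f(\mathsf{Y}_n)\to 0$ is given, it remains to show $\mathbb{E}f(\mathsf{Y}_n)-\mathbb{E}f(\mathsf{Y}_n+\mathsf{R}_n)\to 0$ for bounded continuous $f$. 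Fix $\varepsilon>0$; by tightness of $\{\mathsf{Y}_n\}$ choose $A$ with $\sup_n\mathbb{P}(|\mathsf{Y}_n|>A)<\varepsilon$, and by uniform continuity of $f$ on $[-A-1,A+1]$ choose $\rho\in(0,1)$ with $|f(x)-f(y)|<\varepsilon$ whenever $x,y\in[-A-1,A+1]$ and $|x-y|<\rho$. On the event $\{|\mathsf{Y}_n|\le A\}\cap\{|\mathsf{R}_n|\le\epsilon_n\}$, which for $n$ large forces $\epsilon_n<\rho$ so that both arguments lie in $[-A-1,A+1]$ and differ by less than $\rho$, the increment is at most $\varepsilon$; off this event, of probability at most $\varepsilon+o(1)$, the increment is at most $2\|f\|_\infty$. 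Hence $\limsup_n|\mathbb{E}f(\mathsf{Y}_n)-\mathbb{E}f(\mathsf{Y}_n+\mathsf{R}_n)|\le\varepsilon(1+2\|f\|_\infty)$, and letting $\varepsilon\downarrow 0$ finishes part (1).

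For part (2), tightness of $\{\mathsf{X}_n+\mathsf{Y}_n\}$ and $\{\mathsf{X}_n'+\mathsf{Y}_n'\}$ is again clear. The key point is that $\simeq$ records only the marginal laws, so I pass to characteristic functions: for each fixed $t\in\mathbb{R}$, independence gives $\mathbb{E}e^{\ii t(\mathsf{X}_n+\mathsf{Y}_n)}=\mathbb{E}e^{\ii t\mathsf{X}_n}\,\mathbb{E}e^{\ii t\mathsf{Y}_n}$, and likewise for the primed variables; applying $\simeq$ to the bounded continuous functions $\cos(t\,\cdot)$ and $\sin(t\,\cdot)$ yields $\mathbb{E}e^{\ii t\mathsf{X}_n}-\mathbb{E}e^{\ii t\mathsf{X}_n'}\to 0$ and $\mathbb{E}e^{\ii t\mathsf{Y}_n}-\mathbb{E}e^{\ii t\mathsf{Y}_n'}\to 0$, and since all four quantities are bounded by $1$ we get $\mathbb{E}e^{\ii t(\mathsf{X}_n+\mathsf{Y}_n)}-\mathbb{E}e^{\ii t(\mathsf{X}_n'+\mathsf{Y}_n')}\to 0$ for each $t$. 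To upgrade this to $\simeq$, suppose $\mathbb{E}f(\mathsf{X}_n+\mathsf{Y}_n)-\mathbb{E}f(\mathsf{X}_n'+\mathsf{Y}_n')\not\to 0$ for some bounded continuous $f$; along a subsequence where this difference stays bounded away from $0$, use tightness to extract a further subsequence with $\mathsf{X}_n+\mathsf{Y}_n\Rightarrow\mu$ and $\mathsf{X}_n'+\mathsf{Y}_n'\Rightarrow\mu'$. The characteristic functions of $\mu$ and $\mu'$ coincide by the previous display, so $\mu=\mu'$ by L\'evy's uniqueness theorem, whence $\mathbb{E}f(\mathsf{X}_n+\mathsf{Y}_n)$ and $\mathbb{E}f(\mathsf{X}_n'+\mathsf{Y}_n')$ share the limit $\int f\,\dd\mu$, a contradiction. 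I expect this passage from marginal asymptotic equality in distribution to the statement about sums to be the only delicate point — independence is exactly what lets the characteristic functions factor, and tightness plus L\'evy uniqueness is what converts pointwise convergence of the characteristic-function differences back into convergence of $\mathbb{E}f(\cdot)$.

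For part (3), I argue again by subsequences: it suffices to show that every subsequence of $a_n:=\mathbb{E}f(\mathsf{Z}_n\mathsf{X}_n)-\mathbb{E}f(\mathsf{Z}_n\mathsf{X})$ has a further subsequence tending to $0$. Given a subsequence, since $\{\mathsf{Z}_n\}$ is bounded extract a further one with $\mathsf{Z}_n\to z_0$. Along it, $\mathbb{E}f(\mathsf{Z}_n\mathsf{X})\to\mathbb{E}f(z_0\mathsf{X})$ by dominated convergence, while $\mathbb{E}f(\mathsf{Z}_n\mathsf{X}_n)\to\mathbb{E}f(z_0\mathsf{X})$ upon writing it as $[\mathbb{E}f(\mathsf{Z}_n\mathsf{X}_n)-\mathbb{E}f(z_0\mathsf{X}_n)]+[\mathbb{E}f(z_0\mathsf{X}_n)-\mathbb{E}f(z_0\mathsf{X})]$: the second bracket vanishes because $x\mapsto f(z_0x)$ is bounded continuous and $\mathsf{X}_n\Rightarrow\mathsf{X}$, and the first is controlled exactly as in part (1), using tightness of $\{\mathsf{X}_n\}$ (a consequence of $\mathsf{X}_n\Rightarrow\mathsf{X}$), uniform continuity of $f$ on a large compact interval, and $|\mathsf{Z}_n-z_0|\to 0$. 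Hence $a_n\to 0$ along this subsequence, which completes the proof.
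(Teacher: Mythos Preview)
Your proof is correct. Note that the paper itself does not prove this lemma; it simply cites \cite[Lemmas 7.7, 7.8 and 7.10]{KY13} for the three parts, so there is no ``paper's own proof'' to compare against beyond that reference. Your arguments are the standard ones: a Slutsky-type localization via tightness and uniform continuity on compacts for part~(1), factorization of characteristic functions through independence followed by a subsequence-plus-L\'evy-uniqueness argument for part~(2), and a Bolzano--Weierstrass extraction on the bounded deterministic sequence $\{\mathsf{Z}_n\}$ for part~(3). These are essentially the natural proofs and match what one finds in \cite{KY13}.
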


\subsection{Collection of derivatives}

In this part, we summarize some basic identities on the derivatives of $G$ and $Q$ defined in \eqref{18072501} without proof.  Recall the notation introduced in (\ref{eq_permu}). 

Using Lemma \ref{lem_greenderivative}, it is easy to check 
\begingroup
\allowdisplaybreaks
\begin{align}
&\Big( \frac{\partial^2 G}{\partial x^2_{ij}} W\Big)_{ab}=2z \sum_{\substack{l_1,\cdots,l_4 \in \{i,j' \}\\ l_1\neq l_2, l_3\neq l_4}} G_{a l_1} G_{l_2 l_3} (GW)_{l_4 b},\label{eq_secondgm}\\
&\Big( \frac{\partial^3 G }{\partial x_{ij}^3} W \Big)_{ab}=-6 z^{\frac32} \sum_{\substack{l_1,\cdots,l_6 \in \{i,j' \}\\ l_1\neq l_2, l_3\neq l_4, l_5\neq l_6}} G_{a l_1} G_{l_2 l_3} G_{l_4 l_5}(GW)_{l_6 b}, \label{eq_thirdgm}\\
&\Big(  \frac{\partial^4 G }{\partial x_{ij}^4}  W \Big)_{ab}=24 z^2 \sum_{\substack{l_1,\cdots,l_8 \in \{i,j' \}\\ l_1\neq l_2, l_3\neq l_4, l_5\neq l_6,l_7\neq l_8}} G_{a l_1} G_{l_2 l_3} G_{l_4 l_5} G_{l_6 l_7} (GW)_{l_8 b}. \label{eq_fourgm}
\end{align}
\endgroup
and also the following identities
\begingroup
\allowdisplaybreaks
\begin{align}
&\Big( \frac{\partial^2 G^2}{\partial x^2_{ij}} W\Big)_{ab}=2z \sum_{(a_1,a_2,a_3)\in \mathcal{P}(2,1,1)}\sum_{\substack{l_1,\cdots,l_4 \in \{i,j' \}\\ l_1\neq l_2, l_3\neq l_4}} G_{a l_1}^{a_1} G_{l_2 l_3}^{a_2} (G^{a_3}W)_{l_4 b},\nonumber\\
&\Big( \frac{\partial^3 G^2 }{\partial x_{ij}^3} W \Big)_{ab}=-6 z^{\frac32} \sum_{(a_1,\ldots,a_4)\in \mathcal{P}(2,1,1,1)}\sum_{\substack{l_1,\cdots,l_6 \in \{i,j' \}\\ l_1\neq l_2, l_3\neq l_4, l_5\neq l_6}} G_{a l_1}^{a_1} G_{l_2 l_3}^{a_2} G_{l_4 l_5}^{a_3} (G^{a_4}W)_{l_6 b}, \nonumber\\
&\Big(  \frac{\partial^4 G^2 }{\partial x_{ij}^4}  W \Big)_{ab}=24 z^2 \sum_{(a_1,\ldots,a_5)\in \mathcal{P}(2,1,1,1,1)}\sum_{\substack{l_1,\cdots,l_8 \in \{i,j' \}\\ l_1\neq l_2, l_3\neq l_4, l_5\neq l_6,l_7\neq l_8}} G_{a l_1}^{a_1} G_{l_2 l_3}^{a_2} G_{l_4 l_5}^{a_3} G_{l_6 l_7}^{a_4} (G^{a_5}W)_{l_8 b}. \label{eq_partig2w}
\end{align}
\endgroup

Similarly, using  Lemma \ref{lem_greenderivative} and a  discussion similar to (\ref{eq_parti_1}), we can also derive  
\begingroup
\allowdisplaybreaks
\begin{align}
&\frac{\partial^2 Q}{\partial x^2_{ij}}=2z \sqrt n  \sum_{\substack{ l_1,\cdots,l_4\in \{ i, j'\}  \\ l_1\neq l_4, l_2 \neq l_3 }}  \Big( (GAG)_{l_1 l_2} G_{l_3 l_4} -\frac{1}{2z} (GBG)_{l_1 l_2} G_{l_3 l_4} \nonumber\\
& \qquad\qquad\qquad\qquad + \frac{1}{2} \sum_{(a_1,a_2,a_3) \in \mathcal{P}(2,1,1)} (G^{a_1} B G^{a_2})_{l_1 l_2} G^{a_3}_{l_3 l_4} \Big), \label{eq_parti_2}\\
&\frac{\partial^3 Q}{\partial x^3_{ij}}=-6z^{\frac32} \sqrt n \sum_{\substack{ l_1,\cdots, l_6\in \{ i, j'\}  \\ l_1\neq l_6, l_2 \neq l_3, l_4\neq l_5 }}  \Big( (GAG)_{l_1 l_2} G_{l_3 l_4} G_{l_5 l_6} -\frac{1}{2z}(GBG)_{l_1 l_2} G_{l_3 l_4} G_{l_5 l_6} \nonumber\\
& \qquad\qquad\qquad\qquad + \frac{1}{2}  \sum_{(a_1,\ldots,a_4) \in \mathcal{P}(2,1,1,1)} (G^{a_1} B G^{a_2})_{l_1 l_2} G^{a_3}_{l_3 l_4} G^{a_4}_{l_5 l_6}  \Big), \label{eq_parti_3}\\
&\frac{\partial^4 Q}{\partial x_{ij}^4}=24z^2 \sqrt{n}  \sum_{\substack{l_1, \cdots, l_8 \in \{i,j'\} \\ l_1 \neq l_8, l_2 \neq l_3, \\ l_4 \neq l_5, l_6 \neq l_7}}  \Big( (GAG)_{l_1 l_2} G_{l_3 l_4} G_{l_5 l_6} G_{l_7 l_8} -\frac{1}{2z}(GBG)_{l_1 l_2} G_{l_3 l_4} G_{l_5 l_6} G_{l_7 l_8} \nonumber\\
&\qquad\qquad\qquad\qquad+ \frac{1}{2}  \sum_{(a_1,\ldots,a_5) \in \mathcal{P}(2,1,1,1,1)} (G^{a_1} B G^{a_2})_{l_1 l_2} G^{a_3}_{l_3 l_4} G^{a_4}_{l_5 l_6}  G^{a_5}_{l_7l_8} \Big).\label{eq_parti_4}
\end{align}
\endgroup

\section{Proof of Lemmas \ref{lem.18091410} and  \ref{lem:key}}

\begin{proof}[Proof of Lemma \ref{lem.18091410}]
We focus our discussion on the first identity (\ref{18072530}). Differentiating $z$ on both sides of the equation
\begin{equation*}
G(H-z)=I,
\end{equation*}
we can get that 
\begin{equation*}
G'(H-z)+\frac{1}{2z}G(H-2z)=0.
\end{equation*}
The proof follows by multiplying $G$ on both sides of the above equation. For $G^3$ and $G^4,$ we can compute them recursively  by differentiating the following two equations respectively
\begin{equation*}
G^2(H-z)=G, \ G^3(H-z)=G^2. 
\end{equation*}
This completes the proof.
\end{proof}

\begin{proof}[Proof of Lemma \ref{lem:key}]

To prove Lemma \ref{lem:key}, we first  need  the following result  from \cite{Ding}.
\begin{lem} [Theorem 3.3 and 3.4 of \cite{Ding}] \label{lem:Ding}Under assumptions of (\ref{eq_meanva}), (\ref{eq_moment}), (\ref{eq_ratioassumption}) and Assumption \ref{assum_main}, for $i,j\in [r]$,  we have 
\begin{equation*}
|\mu_i-p(d_i)|=O_{\prec}(n^{-\frac12}).
\end{equation*}
In addition, for the singular vectors, we have 
\begin{equation*}
|\langle \mathbf{u}_i, \widehat{\mathbf{u}}_i \rangle^2-a_1(d_i)| =O_{\prec}(n^{-\frac12}), \qquad   |\langle \mathbf{v}_i, \widehat{\mathbf{v}}_i \rangle^2-a(d_i)|=O_{\prec}(n^{-\frac12}),
\end{equation*}
and for $1 \leq i \neq j \leq r,$
\begin{equation*}
| \langle \mathbf{u}_i, \widehat{\mathbf{u}}_j \rangle|^2=O_{\prec}(\frac{1}{n}), \qquad  | \langle\mathbf{v}_i, \widehat{\mathbf{v}}_j \rangle|^2=O_{\prec}(\frac{1}{n}).
\end{equation*}
\end{lem}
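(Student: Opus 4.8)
The statement is a restatement of Theorems 3.3 and 3.4 of \cite{Ding}, so the plan is to recall the structure of that argument. The first step is to write down the secular equation for the outlying squared singular values. For $z$ outside the spectra of $XX^*$ and $X^*X$, the value $\sqrt{z}$ is a singular value of $Y=S+X$ with $S=UDV^*$ precisely when a $2r\times 2r$ determinant vanishes; using the Schur complement together with the block form (\ref{green2}) of the resolvent, I would reduce this to
\begin{equation*}
\det\big( I_{2r}-\mathcal{D}(z)\,\mathcal{U}^* G(z)\,\mathcal{U} \big)=0,
\end{equation*}
with $\mathcal{U},\mathcal{D},G$ as in (\ref{180130110})--(\ref{18072301}), and then treat the left-hand side as a small random perturbation of its deterministic counterpart.

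Next I would invoke the isotropic local law of Lemma \ref{lem_localoutside}, which gives $\mathcal{U}^*G(z)\mathcal{U}=\mathcal{U}^*\Pi_1(z)\mathcal{U}+O_{\prec}(\sqrt{\operatorname{Im}m_2(z)/(n\eta)})$ uniformly on $\mathbf{S}_o$; for $z$ real and bounded away from $\lambda_+$ this error is $O_{\prec}(n^{-1/2})$ by Lemma \ref{lem_m1m2d}. Since $\Pi_1=m_1 I_M\oplus m_2 I_n$ and $U^*U=V^*V=I_r$, a direct block computation factorizes the deterministic equation $\det(I_{2r}-\mathcal{D}(z)\mathcal{U}^*\Pi_1(z)\mathcal{U})=0$ into the scalar conditions $\mathcal{T}(z)=d_i^{-2}$, $i\in[r]$, where $\mathcal{T}(z)=zm_1(z)m_2(z)$. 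By Lemma \ref{lem_m1m2quanty}, $\mathcal{T}(p(d_i))=d_i^{-2}$ and $\mathcal{T}'(p(d_i))=(y-d_i^4)^{-1}$, which under Assumption \ref{assum_main} is bounded and bounded away from zero (this is exactly where the supercriticality $d_i>y^{1/4}$ and the separation of the $d_i$'s enter); hence the deterministic secular function has a simple zero at $p(d_i)$. A standard perturbation argument --- Rouch\'{e}'s theorem or the implicit function theorem applied to the random secular function on a small fixed disk around $p(d_i)$, where it is $O_{\prec}(n^{-1/2})$-close to the deterministic one with uniformly bounded derivatives --- then yields $|\mu_i-p(d_i)|=O_{\prec}(n^{-1/2})$, the labeling $\mu_i\leftrightarrow p(d_i)$ being unambiguous since the outliers remain at $\Omega(1)$ distance from one another and from $\lambda_+$.

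For the singular vectors I would pass to the spectral projector via a Cauchy contour integral: fixing a small contour $\Gamma_i$ enclosing $p(d_i)$ (hence, with high probability, $\mu_i$) and no other eigenvalue,
\begin{equation*}
|\langle \mathbf{v}_i,\widehat{\mathbf{v}}_j\rangle|^2=-\frac{1}{2\pi\ii}\oint_{\Gamma_i}\mathbf{v}_i^*(Y^*Y-z)^{-1}\mathbf{v}_j\,\dd z,
\end{equation*}
and analogously for the left vectors with $YY^*$. Expanding $(Y^*Y-z)^{-1}$ around $\mathcal{G}_2(z)$ by the Woodbury identity in the rank-$r$ perturbation, applying the isotropic local law to the resulting quadratic forms of $\mathcal{G}_2$ in $\mathbf{u}_k,\mathbf{v}_k$, and evaluating the residue at $p(d_i)$ via Lemma \ref{lem_m1m2quanty}, the integral equals $a(d_i)$ (resp. $a_1(d_i)$) up to an $O_{\prec}(n^{-1/2})$ fluctuation when $i=j$; when $i\neq j$ the deterministic residue vanishes, because the surviving block of $\mathcal{U}^*\Pi_1\mathcal{U}$ is diagonal in the spike index while $\mathbf{u}_i\perp\mathbf{u}_j$ and $\mathbf{v}_i\perp\mathbf{v}_j$, so $\langle\mathbf{v}_i,\widehat{\mathbf{v}}_j\rangle$ is itself $O_{\prec}(n^{-1/2})$ and squaring gives the $O_{\prec}(1/n)$ bound. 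The main obstacle is not the first-order limit, which is classical, but securing the optimal $n^{-1/2}$ rate uniformly: this rests on the sharp $n^{-1}$ bound for $m_{an}-m_a$ in (\ref{18091401}) and the optimal isotropic local law outside the spectrum --- both needed because here $z=p(d_i)$ is real --- together with the quantitative non-degeneracy of the secular equation at $p(d_i)$ coming from Assumption \ref{assum_main}, and propagating these bounds uniformly along the fixed contour $\Gamma_i$; the full details are carried out in \cite{Ding}.
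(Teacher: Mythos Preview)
Your outline is correct and is essentially the argument carried out in \cite{Ding}; the paper itself does not give a proof of this lemma but simply cites it. In fact, the machinery you describe --- the identity $\mathcal{U}^*\widehat{G}(z)\mathcal{U}=\mathcal{D}^{-1}-\mathcal{D}^{-1}(\mathcal{D}^{-1}+\mathcal{U}^*G(z)\mathcal{U})^{-1}\mathcal{D}^{-1}$, the resolvent expansion around $\mathcal{U}^*\Pi_1(z)\mathcal{U}$ controlled by the isotropic local law, and the contour integral for the projectors --- is exactly what the paper uses in its proof of Lemma~\ref{lem:key} to extract the next-order term, so your sketch is fully aligned with the paper's framework.
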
 
With Lemma \ref{lem:Ding}, we can rewrite (\ref{18012401}) as
\begin{equation} \label{eq_reduce1}
L= \sum_{i=1}^r |\langle\widehat{\mathbf{u}}_i,\mathbf{u}_i\rangle|^2+O_{\prec}(\frac{1}{n}) \quad\text{and}\quad R=\sum_{i=1}^r |\langle\widehat{\mathbf{v}}_i,\mathbf{v}_i\rangle|^2+O_{\prec}(\frac{1}{n}).
\end{equation}
We next write the above quantities in terms of the Green functions. Recall from (\ref{180130110}) $\mathcal{Y}\equiv \mathcal{Y}(z)$ and denote by $\widehat{G}(z)=(\mathcal{Y}-z)^{-1}$.  By spectral decomposition, we write 
\begin{align}\label{main_representation}
\widehat{G}(z) = \sum_{i=1}^{M \wedge n} \frac{1}{\mu_i-z} \left( {\begin{array}{*{20}c}
   { \widehat{\mathbf u}_i \widehat{\mathbf u}_i^*} &  z^{-1/2} \sqrt{\mu}_i \widehat{\mathbf u}_i \widehat{\mathbf v}_i^*\\
   z^{-1/2} \sqrt{\mu}_i { \widehat{\mathbf v}_i \widehat{\mathbf u}_i^*} & { \widehat{\mathbf v}_i \widehat{\mathbf v}_i^*}  \\
\end{array}} \right)\nonumber\\
-\frac{1}{z}\sum_{i=M \wedge n}^{M \vee n}\left( \begin{array}{*{20}c} \mathbf{1}_{M>n} \widehat{\bm{u}}_i \widehat{\bm{u}}_i^* & \bm{0} \\
\bm{0} & \mathbf{1}_{M<n} \widehat{\bm{v}}_i \widehat{\bm{v}}_i^* \\
\end{array} \right).
\end{align}

For any $i\in [r]$, denote $\Gamma_i:=\partial B_{\rho}(d_i),$ where $B_{\rho}(d_i)$ is the open disc of radius $\rho$ around $d_i.$ Here $\rho$ is chosen to be a small but fixed positive number such that different discs corresponding to different $d_i$ do not have overlaps. This is achievable due to Assumption \ref{assum_main}. We start with the right singular vectors. Denote
\begin{equation*}
\widehat{\mathcal{G}}_1(z)=(YY^*-z)^{-1},  \ \widehat{\mathcal{G}}_2(z)=(Y^*Y-z)^{-1}.
\end{equation*}
Note that on one hand, we have for $i \leq r,$
\begin{equation*}
\langle \bm{v}_i,  \widehat{\mathcal{G}}_2(z) \bm{v}_i \rangle= \langle \bm{\mathsf{v}}_i, \widehat{G}(z) \bm{\mathsf{v}}_i \rangle, \quad \bm{\mathsf{v}}_i=(\bm{0}, \bm{v}_i^*)^*. 
\end{equation*}
One the other hand, by Lemma \ref{lem:Ding} and Cauchy's integral formula, with high probability, we have 
\begin{equation*}
\widehat{\bm{v}}_i \widehat{\bm{v}}_i^*= -\frac{1}{2 \pi \ii}\oint_{p(\Gamma_i)} \widehat{\mathcal{G}}_2(z) dz.
\end{equation*}
Together with (\ref{main_representation}), with high probability, we have the following integral representation
\begin{align*}
|\langle \mathbf{v}_i, \widehat{\mathbf{v}}_i\rangle|^2= \frac{1}{2d_i^2\pi \ii}\oint_{p(\Gamma_i)} \big(\big(\mathcal{D}^{-1}+\mathcal{U}^*G(z)\mathcal{U}\big)^{-1}\big)_{ii}\frac{{\rm d} z}{z},
\end{align*}
where we used the fact that
\begin{equation*}
\mathcal{U}^* \widehat{G}(z) \mathcal{U}=\mathcal{D}^{-1}-\mathcal{D}^{-1} (\mathcal{D}^{-1}+\mathcal{U}^*G(z) \mathcal{U})^{-1} \mathcal{D}^{-1}. 
\end{equation*}
Recall (\ref{eq_piz}) and denote 
\begin{align}
 \Psi(z)= -\mathcal{U}^* (\Pi_1(z)-G(z))\mathcal{U}. \label{18012501}
\end{align}
Using Lemma \ref{lem_localoutside},  we have
\begin{align}
\|\Psi(z)\|_{\text{op}}=O_{\prec}(n^{-\frac12}), \quad z \in \mathbf{S}_o. \label{18012910}
\end{align}
We can decompose $\mathcal{D}^{-1} (\mathcal{D}^{-1}+\mathcal{U}^*G(z) \mathcal{U})^{-1} \mathcal{D}^{-1}$ as
\begin{equation*}
\mathcal{D}^{-1}+\mathcal{U}^*G(z) \mathcal{U}= \mathcal{D}^{-1}+\mathcal{U}^*\Pi_1(z) \mathcal{U}+\Psi(z).
\end{equation*}
We further employ the resolvent expansion for $(\mathcal{D}^{-1}+\mathcal{U}^*G(z) \mathcal{U})^{-1}$ to write
\begin{align*}
|\langle \mathbf{v}_i, \widehat{\mathbf{v}}_i\rangle|^2= \frac{1}{d_i^2}(S_0+S_1)+O_{\prec}(\frac{1}{n}),
\end{align*}
where
\begin{align} \label{eq_s1}
&S_0=\frac{1}{2\pi \ii}\oint_{p(\Gamma_i)} \big(\big(\mathcal{D}^{-1}+\mathcal{U}^*\Pi_1(z)\mathcal{U}\big)^{-1}\big)_{ii}\frac{{\rm d} z}{z},   \nonumber\\
&S_1=\frac{1}{2\pi \ii} \oint_{p(\Gamma_i)} \Big( \big(\mathcal{D}^{-1}+\mathcal{U}^*\Pi_1(z)\mathcal{U}\big)^{-1}\Psi(z)\big(\mathcal{D}^{-1}+\mathcal{U}^*\Pi_1(z)\mathcal{U}\big)^{-1}\Big)_{ii} \frac{{\rm d}z}{ z}.
\end{align}
Here we used a discussion similar to Eq. (5.19) and Lemma 5.5 of \cite{Ding} and omit further details.  By the residual theorem, we have $S_0= d_i^2 a(d_i).$  Recall (\ref{eq_definw}) and denote 
\begin{equation*}
f_i(z):=-\text{Tr}\big(\Xi_1(z) \mathcal{U} W_i(z) \mathcal{U}^* \big).
\end{equation*}
We can then write
\begin{align*}
S_1=\frac{1}{2\pi \ii} \oint_{p(\Gamma_i)} \frac{zf_i(z)}{(zm_{1c}(z)m_{2c}(z)-d_i^{-2})^2} {\rm d} z.
\end{align*}
As $p(d)$ is a monotone function when $d> y^{1/4}$ and by Lemma \ref{lem_m1m2quanty},   we find that 
\begin{align*}
S_1=\frac{d_i^4}{2\pi \ii} \oint_{\Gamma_i} \frac{p(\zeta)f_i(p(\zeta))\zeta^4p'(\zeta)}{(d_i-\zeta)^2(d_i+\zeta)^2} {\rm d} \zeta.
\end{align*}
Then, by residue theorem, we obtain 
\begin{align}\label{18012503}
S_1 =  d_i^4  
\Big( f_i(p(\zeta)) \frac{\zeta^4p'(\zeta) p(\zeta)}{(d_i+\zeta)^2}\Big)' \Big|_{\zeta=d_i}
=d_i^2 \Tr \big(\Xi_1(p_i) A_i^R \big)+d_i^2 \Tr \big(\Xi_1^{\prime}(p_i) B_i^R \big),
\end{align}
where we recall (\ref{defn_h}) and  the definitions of $A_i^R$ and $B_i^R$ in \eqref{eq_ab}. The conclusion for $|\langle \mathbf{v}_i, \widehat{\mathbf{v}}_i\rangle|^2$ follows immediately.

The above discussion holds for all $i\in[r].$ Rearranging the terms of (\ref{18012503}) and using  Lemma \ref{lem_m1m2quanty}, we can conclude our proof for $R$ using (\ref{eq_reduce1}). Similar discussion yields the conclusion of $|\langle \mathbf{u}_i, \widehat{\mathbf{u}}_i\rangle|^2$ for each $i\in [r]$ and $L.$ This completes the proof of Lemma \ref{lem:key}. 
\end{proof}

\section{Proof of Proposition \ref{prop_iteration}} \label{s. proof of recursive estimate}

This section is devoted to the proof of  Proposition \ref{prop_iteration}. In Proposition \ref{prop_iteration}, we choose different parameters, $z$ and $z_0$,  for $Q$ and $\Delta$, separately. However, for brevity, we will omit both two parameters for simplicity
in the sequel. 

First of all, applying (\ref{eq_localoutside}) to the definition in (\ref{defn_q1}), we have
\begin{equation}\label{eq_boundq}
\mathcal{Q}=O_{\prec}(1).
\end{equation}

 Denote $(M+n) \times (M+n)$ diagonal matrices
\begin{align} \label{eq_diagonalmatrix}
\mathtt{I}^{\mathrm{u}}:= \begin{pmatrix} I_M & \\ & 0 \end{pmatrix} \quad\text{and}\quad 
\mathtt{I}^{\mathrm{l}}:=\begin{pmatrix} 0 & \\ & I_n \end{pmatrix}.
\end{align}
 We further define  $A_1=A\mathtt{I}^{\mathrm{u}}, \ A_2=A\mathtt{I}^{\mathrm{l}}$ and define $B_1,B_2$ analogously.  In addition, we set 
\begin{align}
&f_\alpha:= -m_{\alpha } \Tr H\Xi_1A_\alpha+(1+zm_{\alpha }) \Tr GA_\alpha,\nonumber\\
&g_\alpha:=-\frac{m_{\alpha }}{2} \Tr H \Xi_2 B_\alpha + \frac{1+zm_{\alpha }}{2} \Tr G^2 B_\alpha +\frac{zm_{\alpha }-1}{2z} \Tr G B_\alpha \nonumber\\
&\qquad\qquad - m'_{\alpha } \Tr B_\alpha +m_{\alpha }' \Tr H\Pi_1 B_\alpha, \qquad \alpha=1,2. \label{18072570}
\end{align}
The  proof of Proposition \ref{prop_iteration} is based on the following two lemmas. 
\begin{lem} \label{lem.rewrite of Q} 
Recall (\ref{defn_detministic1}) and (\ref{18072410}). For $z $ defined in (\ref{eq_zdefn}), we have 
\begin{align}\label{eq:anotherQ}
  Q &= \sqrt n \big( f_1 + f_2+g_1+g_2 \big)  +  \sqrt{n z} \sum_{(i,j)\in \mathcal{S}(\nu) } c_{ij}  x_{ij}  -\Delta_d.
\end{align}
\end{lem}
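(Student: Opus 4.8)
The plan is to prove \eqref{eq:anotherQ} as an exact algebraic identity valid for every $z\in\mathbb{C}^+$ (in particular for $z$ as in \eqref{eq_zdefn}, where $G(z)=(H-z)^{-1}$ is well defined); no local law or probabilistic estimate is needed. Recalling $Q=\mathcal Q-\Delta_r-\Delta_d$ from \eqref{18072501}--\eqref{18072502}, $\mathcal Q=\sqrt n\big(\Tr(\Xi_1A)+\Tr(\Xi_1'B)\big)$ from \eqref{defn_q1}, and $\Delta_r=\sqrt{nz}\sum_{(i,j)\in\mathcal{B}(\nu)}x_{ij}c_{ij}$ from \eqref{18072410}, it is enough to establish
\[
\Tr(\Xi_1A)+\Tr(\Xi_1'B)=f_1+f_2+g_1+g_2+\sqrt z\sum_{i,j}c_{ij}x_{ij}.
\]
Once this is known, multiplying by $\sqrt n$ turns the last term into $\sqrt{nz}\sum_{i,j}c_{ij}x_{ij}=\Delta_r+\sqrt{nz}\sum_{(i,j)\in\mathcal{S}(\nu)}c_{ij}x_{ij}$; the $\Delta_r$ here cancels the $-\Delta_r$ in $Q$, and $-\Delta_d$ is simply transcribed from the definition of $Q$, so $\Delta_d$ plays no active role in this lemma.

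First I would extract the $X$-linear part of the $A$-trace. Writing $A=A_1+A_2=A\mathtt{I}^{\mathrm{u}}+A\mathtt{I}^{\mathrm{l}}$ and treating each $\alpha$ separately, the block-diagonal form of $\Pi_1$ gives $\Tr(\Pi_1A_\alpha)=m_\alpha\Tr A_\alpha$, and the resolvent identity $HG=I+zG$ (from $(H-z)G=I$) lets one regroup into exactly the combination that defines $f_\alpha$ in \eqref{18072570}:
\[
\Tr(\Xi_1A_\alpha)=(1+zm_\alpha)\Tr(GA_\alpha)-m_\alpha\Tr(H\Xi_1A_\alpha)-m_\alpha\Tr(H\Pi_1A_\alpha)=f_\alpha-m_\alpha\Tr(H\Pi_1A_\alpha).
\]
For the $B$-trace I would use $G^2=2G'+G/z$ from Lemma \ref{lem.18091410} to write $\Tr(\Xi_1'B_\alpha)=\tfrac12\Tr(G^2B_\alpha)-\tfrac1{2z}\Tr(GB_\alpha)-m_\alpha'\Tr B_\alpha$, then the identity $HG^2=G+zG^2$ (from $(H-z)G=I$ by right multiplication with $G$) together with $\Pi_2=2\Pi_1'+\Pi_1/z$ of \eqref{18072531} to peel off the linear term and obtain the combination that defines $g_\alpha$:
\[
\Tr(\Xi_1'B_\alpha)=g_\alpha-\big(m_\alpha'+\tfrac{m_\alpha}{2z}\big)\Tr(H\Pi_1B_\alpha)-m_\alpha\Tr(H\Pi_1'B_\alpha).
\]
Summing over $\alpha$ yields $\Tr(\Xi_1A)+\Tr(\Xi_1'B)=\sum_\alpha(f_\alpha+g_\alpha)+L_X$, where $L_X$ collects the remaining terms, each of the form $\Tr(HN)$ with $N$ deterministic.

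It then remains to identify $L_X$ with $\sqrt z\sum_{i,j}c_{ij}x_{ij}$. For any deterministic $(M+n)\times(M+n)$ matrix $N$, expanding $\Tr(HN)$ with $H=\sqrt z\left(\begin{smallmatrix}0&X\\ X^*&0\end{smallmatrix}\right)$ gives $\Tr(HN)=\sqrt z\sum_{i,j}x_{ij}\sum_{l_1\neq l_2\in\{i,j'\}}N_{l_1l_2}$. Applying this with $N=\sum_\alpha m_\alpha\Pi_1A_\alpha$, $N=\sum_\alpha\big(m_\alpha'+\tfrac{m_\alpha}{2z}\big)\Pi_1B_\alpha$ and $N=\sum_\alpha m_\alpha\Pi_1'B_\alpha$, and using the block identities $\sum_\alpha m_\alpha\Pi_1A_\alpha=\Pi_1A\Pi_1$, $\sum_\alpha m_\alpha\Pi_1'B_\alpha=\Pi_1'B\Pi_1$ and $\sum_\alpha\big(m_\alpha'+\tfrac{m_\alpha}{2z}\big)\Pi_1B_\alpha=\tfrac12\Pi_1B\Pi_2$ (all consequences of $m_1\mathtt{I}^{\mathrm{u}}+m_2\mathtt{I}^{\mathrm{l}}=\Pi_1$ and $\Pi_1'+\tfrac1{2z}\Pi_1=\tfrac12\Pi_2$), together with $\Pi_1'=\tfrac12\Pi_2-\tfrac1{2z}\Pi_1$ so that $\Pi_1'B\Pi_1=\tfrac12\Pi_2B\Pi_1-\tfrac1{2z}\Pi_1B\Pi_1$, one finds
\[
L_X=-\sqrt z\sum_{i,j}x_{ij}\sum_{l_1\neq l_2\in\{i,j'\}}\Big((\Pi_1A\Pi_1)_{l_1l_2}-\tfrac1{2z}(\Pi_1B\Pi_1)_{l_1l_2}+\tfrac12(\Pi_1B\Pi_2)_{l_1l_2}+\tfrac12(\Pi_2B\Pi_1)_{l_1l_2}\Big),
\]
which is precisely $\sqrt z\sum_{i,j}c_{ij}x_{ij}$ by the definition \eqref{defn_cij1} of $c_{ij}$. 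This proves \eqref{eq:anotherQ}.

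I expect no analytic obstacle, since every step is an exact identity at a fixed $z$. The only real effort is the bookkeeping: keeping the two column-blocks of $A$ and $B$ apart, using the correct resolvent relation for $G$ versus $G^2$ (recalling that $H=H(z)$ itself depends on $z$, which is why one should use $HG=I+zG$, $HG^2=G+zG^2$ and Lemma \ref{lem.18091410} rather than differentiating $G$ naively), and correctly recognizing which of $\Pi_1,\Pi_1',\Pi_2$ appears in each term.
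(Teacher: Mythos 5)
Your proposal is correct and follows essentially the same route as the paper's proof: decompose $\Tr(\Xi_1 A)+\Tr(\Xi_1'B)$ using $A=A_1+A_2$, $B=B_1+B_2$ and the resolvent identities $HG=I+zG$, $HG^2=G+zG^2$ to peel off $f_\alpha, g_\alpha$, identify the remaining $X$-linear part with $\sqrt{nz}\sum_{i,j}c_{ij}x_{ij}$, and then subtract $\Delta_r$ and $\Delta_d$. The only cosmetic difference is that you organize the leftover deterministic kernel via $\Pi_1'$ and then convert with $\Pi_1'=\tfrac12\Pi_2-\tfrac1{2z}\Pi_1$, whereas the paper works directly with $\Pi_2$ and verifies the closed form (S5.26) of $\sqrt{nz}\sum_{i,j}x_{ij}c_{ij}$; the two are the same computation.
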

To state the second crucial lemma, Lemma \ref{lem. recursive estimate for each term}.  We first introduce some notations. Recall that $\Pi_a \ (1\le a \le 4)$ in \eqref{eq_piz} and \eqref{18072531} approximates $G^a$. We introduce the following matrices to approximate the powers of $G$ interacting with block diagonal matrices $\mathtt{I}^{\mathrm{u}}$ and $\mathtt{I}^{\mathrm{l}}$. For $1\le a_1,a_2\le 2$, define
\begin{align}\label{eq_piipi}
\Pi_{a_1,a_2}^{\mathrm{u}}:=\Pi_{a_1}  \mathtt{I}^{\mathrm{u}} \Pi_{a_2}
\quad\text{and}\quad
\Pi_{a_1,a_2}^{\mathrm{l}}:=\Pi_{a_1} \mathtt{I}^{\mathrm{l}} \Pi_{a_2}.
\end{align}
Note that they approximate $G^{a_1}  \mathtt{I}^{\mathrm{u}} \Pi_{a_2}$ and $G^{a_2} \mathtt{I}^{\mathrm{l}} \Pi_{a_2}$ respectively.  We further define
\begin{align}\label{eq_pi2ul}
\Pi_{2}^{\mathrm{u}}:= m_1' I_M \oplus (m_2' + \frac{1}{z}m_2) I_n
\quad\text{and}\quad
\Pi_{2}^{\mathrm{l}}:= (m_1' + \frac{1}{z} m_1) I_M \oplus m_2' I_n,
\end{align}
which approximate $G\mathtt{I}^{\mathrm{u}} G$ and $G\mathtt{I}^{\mathrm{l}} G$.

We need to introduce more notations. The first set of notations will show up in the calculation of $\Delta_d$, which is the mean value of $Q$. We set 
\begingroup
\allowdisplaybreaks
\begin{align}
&\mathfrak{d}_1^a:= \frac{2z}{n} \sum_{i,j} (\Pi_1)_{ii} (\Pi_1)_{j' j'} (\Pi_1 A_1)_{j' i},
\quad
\mathfrak{d}_2^a:=\frac{2z}{n} \sum_{i,j} (\Pi_1)_{ii} (\Pi_1)_{j' j'} (\Pi_1 A_2)_{i j'}, \nonumber \\
&\tilde{\mathfrak{d}}_1:= \frac{2z}{n} \sum_{(a_1,a_2,a_2)\in \mathcal P(2,1,1)} \sum_{i,j} (\Pi_{a_1})_{ii} (\Pi_{a_2})_{j' j'} (\Pi_{a_3} B_1)_{j' i}, \nonumber \\
&\tilde{\mathfrak{d}}_2:= \frac{2z}{n}  \sum_{(a_1,a_2,a_2)\in \mathcal P(2,1,1)} \sum_{i,j} (\Pi_{a_1})_{ii} (\Pi_{a_2})_{j' j'} (\Pi_{a_3} B_2)_{i j'}. \label{18091910}
\end{align}
\endgroup
And $\mathfrak{d}_1^b$ (resp. $\mathfrak{d}_2^b$) is defined by replacing $A_1$ (resp. $A_2$) to $B_1$ (resp. $B_2$) in the expression of $\mathfrak{d}_1^a$ (resp. $\mathfrak{d}_2^a$).  Using  (\ref{eq_diagonalmatrix}), we further set
\begingroup
\allowdisplaybreaks
\begin{align}
&\Pi_{3}^{\mathrm{u}}:= (m_1'' + \frac{1}{z} m_1' )I_M \oplus (m_2'' + \frac{2}{z} m_2') I_n,\nonumber\\
&\Pi_3^{\mathrm{l}}:=  (m_1'' + \frac{2}{z} m_1' ) I_M \oplus (m_2'' + \frac{1}{z} m_2') I_n, \nonumber\\
&\Pi_4^{\mathrm{u}}:= (\frac{2}{3}m_1^{(3)} + \frac{2}{z} m_1'' + \frac{1}{z^2} m_1') I_M \oplus (\frac{2}{3} m_2^{(3)} + \frac{2}{z} m_2'') I_n, \nonumber\\
&\Pi_4^{\mathrm{l}}:= (\frac{2}{3}m_1^{(3)} + \frac{2}{z} m_1'' ) I_M \oplus (\frac{2}{3} m_2^{(3)} + \frac{2}{z} m_2'' + \frac{1}{z^2} m_2') I_n. \label{18091566}
\end{align}
\endgroup
The next set of notations will appear in the derivation of the variance of $Q$. We denote
\begingroup
\allowdisplaybreaks
\begin{align}\label{def:missvar}
&\mathfrak{a}_{11}: = -(k-1)\sqrt{z} \Big( 2 \Tr (\Pi_2^{\mathrm{l}} - \Pi_{1,1}^{\mathrm{l}}) A_1 \Pi_1 A - \frac{1}{z} \Tr(\Pi_2^{\mathrm{l}} - \Pi_{1,1}^{\mathrm{l}}) A_1 \Pi_1 B \nonumber\\
&\hspace{3cm}+  \Tr (\Pi_{2}^{\mathrm{l}} - \Pi_{1,1}^{\mathrm{l}}) A_1 \Pi_{2} B +  \Tr (\Pi_{3}^{\mathrm{l}} - \Pi_{2,1}^{\mathrm{l}}) A_1 \Pi_{1} B \Big),\nonumber\\
&\tilde{\mathfrak{b}}_{11}: = -(k-1)\sqrt{z} \Big( 2\Tr (\Pi_3^{\mathrm{l}} - \Pi_{1,2}^{\mathrm{l}}) B_1 \Pi_1 A - \frac{1}{z} \Tr(\Pi_3^{\mathrm{l}} - \Pi_{1,2}^{\mathrm{l}}) B_1 \Pi_1 B \nonumber\\
&\hspace{3cm}+  \Tr (\Pi_{3}^{\mathrm{l}} - \Pi_{1,2}^{\mathrm{l}}) B_1 \Pi_{2} B +  \Tr (\Pi_{4}^{\mathrm{l}} - \Pi_{2,2}^{\mathrm{l}}) B_1 \Pi_{1} B \Big).
\end{align}
\endgroup
In addition,  $\mathfrak{a}_{12}$ is defined via replacing $A_1$ with $A_2$ and $\Pi_{a}^{\mathrm{l}}, \Pi_{a_1,a_2}^{\mathrm{l}}$ with $\Pi_{a}^{\mathrm{u}}, \Pi_{a_1,a_2}^{\mathrm{u}}$ in the  definition of $\mathfrak{a}_{11}$ . We further define $\mathfrak{b}_{11}$ (resp. $\mathfrak{b}_{12}$) via replacing $A_1$ (resp. $A_2$) with $B_1$ (resp. $B_2$) in the definition of  $\mathfrak{a}_{11}$ (resp. $\mathfrak{a}_{12}$). Similarly, $\tilde{\mathfrak{b}}_{12}$ is obtained by replacing $B_1$ with $B_2$ and $\Pi_{a}^{\mathrm{l}}, \Pi_{a_1,a_2}^{\mathrm{l}}$ with $\Pi_{a}^{\mathrm{u}}, \Pi_{a_1,a_2}^{\mathrm{u}}$ in the definition of $\tilde{\mathfrak{b}}_{11}$. 

Next, recall $c_{ij}$ defined in \eqref{defn_cij1} and set
\begin{align}
&\mathfrak{a}_{21}:= -\frac{ (k-1)z}{\sqrt{n}} \sum_{(i,j)\in \mathcal S(\nu)} (\Pi_1)_{j' j'} (\Pi_1 A_1)_{ii} c_{ij},\nonumber\\
&\tilde{\mathfrak{b}}_{21}:= -\frac{ (k-1)z}{\sqrt{n}} \sum_{(i,j)\in \mathcal S(\nu)} \big( (\Pi_1)_{j' j'} (\Pi_2 B_1)_{ii} + (\Pi_2)_{j' j'} (\Pi_1 B_1)_{ii}  \big) c_{ij}. \label{18091930}
\end{align}
Further,  $\mathfrak{a}_{22}$ (resp. $\tilde{\mathfrak{b}}_{22}$) is defined  by replacing $(A_1)_{ii}$ (resp. $(B_1)_{ii}$) with $(A_2)_{j' j'}$ (resp. $(B_2)_{j' j'}$) in the definition of $\mathfrak{a}_{21}$ (resp. $\tilde{\mathfrak{b}}_{21}$). Then we recall $s_{ij}$ in \eqref{defn_sij1} and set
\begin{align}
&\mathfrak{a}_{31}:=-\frac{2(k-1)z^{3/2}}{n} \sum_{i,j}  (\Pi_1)_{j'j'} (\Pi_1 A_1)_{ii} s_{ij}, \nonumber\\  
&\tilde{\mathfrak{b}}_{31}=-\frac{2(k-1)z^{3/2}}{n} \sum_{i,j} \big(  (\Pi_1)_{j'j'} (\Pi_2 B_1)_{ii} +  (\Pi_2)_{j'j'} (\Pi_1B_1)_{ii} \big) s_{ij}. \label{18091931}
\end{align}
Further,  $\mathfrak{a}_{32}$ (resp. $\tilde{\mathfrak{b}}_{32}$) is defined  via replacing $(A_1)_{ii}$ (resp. $(B_1)_{ii}$) with $(A_2)_{j' j'}$ (resp. $(B_2)_{j' j'}$) in the definition of the $\mathfrak{a}_{31}$ (resp. $\tilde{\mathfrak{b}}_{31}$). Also,  $\mathfrak{b}_{31}$ (resp. $\mathfrak{b}_{32}$) is defined by replacing $A_1$ (resp. $A_2$) with $B_1$ (resp. $B_2$) in the definition of  $\mathfrak{a}_{31}$ (resp. $\mathfrak{a}_{32}$).

For $\alpha=1,2$, we further write 
\begin{align}
\mathfrak{a}_{0 \alpha} &:=\mathfrak{a}_{1\alpha}+\kappa_3 \mathfrak{a}_{2\alpha}+\frac{\kappa_4}{2} \mathfrak{a}_{3\alpha},\nonumber\\
\mathfrak{b}_{0\alpha} &:=\frac{m_\alpha}{2} \tilde{\mathfrak{b}}_{1\alpha}+m_{\alpha}'\mathfrak{b}_{1\alpha}+\frac{\kappa_3 m_\alpha  }{2} \tilde{\mathfrak{b}}_{2\alpha}+\kappa_3  m_\alpha'  \mathfrak{b}_{2\alpha}+\frac{ \kappa_4m_\alpha }{4} \tilde{\mathfrak{b}}_{3\alpha}+\frac{ \kappa_4 m_\alpha' }{2} \mathfrak{b}_{3\alpha}. \label{180920150}
\end{align}
For brevity, we also adopt the notation 
\begin{align*}
\mathfrak{q}^{(l)}=Q^l(z)e^{\ii t \Delta(z_0)}. 
\end{align*}

Recall the notations in (\ref{18072570}). With the above notations, we now state  the following lemma. 
\begin{lem} \label{lem. recursive estimate for each term} Under the assumptions of Theorem \ref{thm_mainthm},  we have for $\alpha=1,2$,
\begin{align}
&\sqrt n \E  f_\alpha  \mathfrak{q}^{(k-1)}
=-\sqrt{z} m_{\alpha} \mathbb{E} \Big( \frac{\kappa_3}{2} \mathfrak{d}_\alpha^a \mathfrak{q}^{(k-1)} + \mathfrak{a}_{0\alpha} \mathfrak{q}^{(k-2)} \Big) +O_{\prec}(n^{-\frac12+4\nu}), \label{eq_parta1}  \\
&\sqrt{n}\mathbb{E} g_{\alpha} \mathfrak{q}^{(k-1)}= - \sqrt{z} \mathbb{E} \Big(\frac{ \kappa_3 }{4} \Big(m_{\alpha}\tilde{\mathfrak{d}}_\alpha+2m_{\alpha}' \mathfrak{d}_\alpha^b \Big)\mathfrak{q}^{(k-1)} + \mathfrak{b}_{0\alpha}\mathfrak{q}^{(k-2)}  \Big) +O_{\prec}(n^{-\frac12+4\nu}),  \label{eq_partb1} 
\end{align}
In addition, we also have
\begin{align}
& \sqrt{n z} \sum_{(i,j)\in \mathcal{S}(\nu)}  c_{ij}  \E  x_{ij} \mathfrak{q}^{(k-1)}=(k-1) \Big( z \sum_{(i,j)\in \mathcal{S}(\nu)} c^2_{ij}+  \frac{z^{\frac32} \kappa_3 }{\sqrt{n}} \sum_{(i,j)\in \mathcal{S}(\nu)} s_{ij} c_{ij} \Big) \E \mathfrak{q}^{(k-2)}\nonumber\\
&\hspace{35ex}+O_{\prec}(n^{-\frac12+4\nu}).  \label{180920101}
\end{align}
\end{lem}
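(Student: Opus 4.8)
The plan is to prove the three identities \eqref{eq_parta1}, \eqref{eq_partb1}, \eqref{180920101} by a single device: the cumulant expansion of Lemma \ref{lem_cumu} applied to the entries $x_{ij}$, followed by the $\Pi$-substitution furnished by the isotropic local laws (Lemma \ref{lem_localoutside} and its consequences \eqref{eq_localoutside}--\eqref{eq_genrealcontrol}). Since $H$ in \eqref{18072301} is linear in the $x_{ij}$, writing $\Tr H\Xi_1 A_\alpha$, $\Tr H\Xi_2 B_\alpha$ and $\Tr H\Pi_1 B_\alpha$ as $\sqrt z\sum_{i,j}x_{ij}(\,\cdots)_{ij}$, the quantity $\sqrt n\,\E f_\alpha\mathfrak{q}^{(k-1)}$ equals $-\sqrt n\, m_\alpha\sqrt z\sum_{i,j}\E[x_{ij}W_{ij}\mathfrak{q}^{(k-1)}]$ plus $\sqrt n(1+zm_\alpha)\E[\Tr GA_\alpha\,\mathfrak{q}^{(k-1)}]$, with $W_{ij}$ an entry of $\Xi_1 A_\alpha$ (and similarly $\sqrt n\,\E g_\alpha\mathfrak{q}^{(k-1)}$, now involving entries of $\Xi_2 B_\alpha$ and of the deterministic matrix $\Pi_1 B_\alpha$, plus the pieces $\Tr GB_\alpha$, $\Tr B_\alpha$ of \eqref{18072570}); for \eqref{180920101} there is just $\sqrt{nz}\sum_{(i,j)\in\mathcal S(\nu)}c_{ij}\E[x_{ij}\mathfrak{q}^{(k-1)}]$ with $W_{ij}\equiv c_{ij}$ deterministic.

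For each such term I would apply Lemma \ref{lem_cumu} with $\ell=3$, using $\kappa_l(x_{ij})=\kappa_l n^{-l/2}$ (so $\kappa_2=1$); the remainder is $O_{\prec}(n^{-D})$ for every $D$, because the crude bound $\|G\|\le n^{C}$ on the exponentially rare bad event is legitimate thanks to the regularization $z=p(d)+\ii n^{-C}$ of \eqref{eq_zdefn}. I then Leibniz-distribute each $\partial_{x_{ij}}^{k'}$ over $W_{ij}$, over the $k-1$ copies of $Q$, and over $e^{\ii t\Delta(z_0)}$, using Lemma \ref{lem_greenderivative} and \eqref{eq_secondgm}--\eqref{eq_partig2w} for the derivatives of $W_{ij}$, the identities \eqref{eq_parti_2}--\eqref{eq_parti_4} together with $\partial_{x_{ij}}Q=\sqrt{nz}\,c_{ij}+O_{\prec}(1)$ and $\partial_{x_{ij}}^2Q=2z\sqrt n\,s_{ij}+O_{\prec}(1)$ (the last two read off from those formulas after the $\Pi$-substitution, cf.\ \eqref{defn_cij1}, \eqref{defn_sij1}), and the fact that $\partial_{x_{ij}}e^{\ii t\Delta(z_0)}=\ii t\sqrt{nz_0}\,c_{ij}(z_0)\mathbf{1}_{(i,j)\in\mathcal{B}(\nu)}e^{\ii t\Delta(z_0)}$, which vanishes on $\mathcal S(\nu)$ and, since $W_{ij}$ is a centered quantity of size $O_{\prec}(n^{-1/2})$ and $|\mathcal{B}(\nu)|\le Cn^{4\nu}$, contributes at most $O_{\prec}(n^{-1/2+4\nu})$ in total. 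Negligible contributions are then discarded by power counting, using that the entries of $A_\alpha,B_\alpha$ restricted to $\{i,j'\}$ are products of two coordinates of $\mathbf u$ or $\mathbf v$, that $\sum_i u(i)^2=\sum_j v(j)^2=1$, that every $\Pi_l$ is block diagonal while $G_{\mu\nu}-(\Pi_l)_{\mu\nu}=O_{\prec}(n^{-1/2})$, and that $|\mathcal{B}(\nu)|\le Cn^{4\nu}$; these confine all but the listed terms to $O_{\prec}(n^{-1/2+4\nu})$.

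Matching the survivors to the right-hand sides proceeds by cumulant order. For \eqref{eq_parta1}: the $k'=1$ part splits according to where $\partial_{x_{ij}}$ lands; the piece landing on the resolvent inside $\Xi_1$ has, after the $\Pi$-substitution, a deterministic part of size $O(\sqrt n)$ that is annihilated exactly by $(1+zm_\alpha)\Tr GA_\alpha$ via the self-consistent equations \eqref{eq_self1}, \eqref{eq_self2} (and \eqref{eq:m12c}, Lemma \ref{lem_m1m2quanty}), leaving only the $O(1)$ remainder; together with the piece landing on a factor of $Q$ (which supplies the $(k-1)$ and a factor $\partial_{x_{ij}}Q\sim\sqrt{nz}c_{ij}$), this regroups, after a further one-step expansion of the remaining centered resolvents, into the differences $G\mathtt{I}^{\mathrm{u}}G-\Pi_{1,1}^{\mathrm{u}}$ and $G\mathtt{I}^{\mathrm{l}}G-\Pi_{1,1}^{\mathrm{l}}$ and the higher-order approximants of \eqref{eq_piipi}--\eqref{18091566}, and yields $-\sqrt z\,m_\alpha\,\mathfrak{a}_{1\alpha}\mathfrak{q}^{(k-2)}$. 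The $k'=2$ part yields the ``mean'' term $-\frac{\sqrt z}{2}\kappa_3 m_\alpha\mathfrak{d}_\alpha^a\mathfrak{q}^{(k-1)}$ (all three $\{i,j'\}$-loops of $\partial_{x_{ij}}^2(GA_\alpha)$ kept, no derivative on $Q$) together with $-\sqrt z\,m_\alpha\kappa_3\mathfrak{a}_{2\alpha}\mathfrak{q}^{(k-2)}$ (one derivative on a $Q$); the $k'=3$ part yields $-\frac{\sqrt z}{2} m_\alpha\kappa_4\mathfrak{a}_{3\alpha}\mathfrak{q}^{(k-2)}$; the $k'\ge4$ terms are negligible, and assembling through \eqref{180920150} gives \eqref{eq_parta1}. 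Identity \eqref{eq_partb1} follows by the same scheme applied to $g_\alpha$, now carrying the extra resolvent powers from $\Tr H\Xi_2 B_\alpha$ (via \eqref{eq_partig2w}) and the auxiliary pieces $\Tr GB_\alpha$, $\Tr B_\alpha$, $\Tr H\Pi_1 B_\alpha$ of \eqref{18072570}, so that $\Pi_3^{\mathrm{u}},\Pi_3^{\mathrm{l}},\Pi_4^{\mathrm{u}},\Pi_4^{\mathrm{l}}$ of \eqref{18091566} enter. Finally, \eqref{180920101} is lightest: $W_{ij}=c_{ij}$ is deterministic and $\partial_{x_{ij}}e^{\ii t\Delta}=0$ on $\mathcal S(\nu)$, so the $k'=1$ term gives $\frac{k-1}{n}\sqrt{nz}\,c_{ij}\,\E[Q^{k-2}(\partial_{x_{ij}}Q)e^{\ii t\Delta}]=(k-1)z\,c_{ij}^2\,\E\mathfrak{q}^{(k-2)}$, and the $k'=2$ term, via $\partial_{x_{ij}}^2Q=2z\sqrt n\,s_{ij}+O_{\prec}(1)$, gives $(k-1)\frac{\kappa_3 z^{3/2}}{\sqrt n}c_{ij}s_{ij}\,\E\mathfrak{q}^{(k-2)}$; summing over $(i,j)\in\mathcal S(\nu)$ yields \eqref{180920101}.

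The hard part is twofold. First, the bookkeeping: once $\partial_{x_{ij}}^{k'}(W_{ij}\mathfrak{q}^{(k-1)})$ is fully Leibniz-expanded for $k'\le3$ and multiplied out there are a great many terms, and the subtle point is the systematic power-counting — relying on the product structure of $A_\alpha,B_\alpha$, on $|\mathcal{B}(\nu)|\le Cn^{4\nu}$, and on distinguishing diagonal from off-diagonal blocks of $G$ and the $\Pi$'s — that isolates the $O(1)$ survivors from the $O_{\prec}(n^{-1/2+4\nu})$ remainder and matches them to the dozen or so auxiliary quantities in \eqref{18091910}, \eqref{def:missvar}, \eqref{18091930}, \eqref{18091931}, \eqref{180920150}. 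Second, and more conceptual, the cancellation of the genuinely $O(\sqrt n)$-size pieces in $\sqrt n\,\E f_\alpha\mathfrak{q}^{(k-1)}$ and $\sqrt n\,\E g_\alpha\mathfrak{q}^{(k-1)}$: after the $\Pi$-substitution the $k'=1$ term produces a deterministic contribution of order $\sqrt n$ that must be exactly killed by $(1+zm_\alpha)\Tr GA_\alpha$ (respectively the $\Tr G^2B_\alpha$, $\Tr GB_\alpha$, $\Tr B_\alpha$, $\Tr H\Pi_1 B_\alpha$ pieces) through the self-consistent equations \eqref{eq_self1}--\eqref{eq_self2} — which is precisely why $f_\alpha$, $g_\alpha$ are defined as they are, and why Lemma \ref{lem.rewrite of Q} is the right preparatory step — and verifying this requires the careful trace algebra built on \eqref{eq:m12c} and the explicit values in Lemma \ref{lem_m1m2quanty}.
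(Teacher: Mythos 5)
Your plan matches the paper's proof step for step: apply the cumulant expansion of Lemma \ref{lem_cumu} to order $\ell=3$ in each $x_{ij}$, Leibniz-distribute the derivatives over the factor $(\Xi_1A_\alpha)_{j'i}$ (resp.\ $(\Xi_2B_\alpha)_{j'i}$, $c_{ij}$), the powers of $Q$, and $e^{\ii t\Delta}$, substitute $\Pi$ for $G$ via the isotropic law, and cancel the $O(\sqrt n)$ piece of the $\kappa_2$ term against $F_\alpha\Tr GA_\alpha$ (and the corresponding pieces in $g_\alpha$) through \eqref{eq_f1mm2} and \eqref{180920100}, with the surviving terms matched to \eqref{18091910}--\eqref{180920150} by the power counting you describe; this is exactly what the paper does, delegating the bookkeeping to Lemmas \ref{lem_estimateh1h2h3}--\ref{lem_a2b2estimatehigh}. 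The one imprecision is your claim that the cumulant remainder is $O_{\prec}(n^{-D})$ for every $D$: only the contribution from the rare event $|x_{ij}|>n^{-1/2+\epsilon}$ decays arbitrarily fast, while the contribution from the complementary good event is controlled by the bound on the fourth derivatives and is $O_{\prec}(n^{-1/2+4\nu})$ — which is already the error carried through the rest of the argument, so the plan is unaffected.
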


With Lemmas \ref{lem.rewrite of Q} and \ref{lem. recursive estimate for each term}, we can now prove Proposition \ref{prop_iteration}.  

\begin{proof}[Proof of Proposition \ref{prop_iteration}]
By simply combining Lemma \ref{lem.rewrite of Q} and \ref{lem. recursive estimate for each term}, we can write 
\begin{align*}
\mathbb{E} \mathfrak{q}^{(k)}= \mathfrak{c}_1 \E \mathfrak{q}^{(k-1)}+ \mathfrak{c}_2 \E \mathfrak{q}^{(k-2)}-\Delta_d \E \mathfrak{q}^{(k-1)}+O_{\prec}(n^{-\frac12+4\nu}),
\end{align*}
where
\begin{align*}
\mathfrak{c}_1 &=- \sqrt{z} \kappa_3 \sum_{\alpha=1,2} \Big( \frac{1}{2}
m_\alpha \mathfrak{d}_\alpha^a+\frac{1}{4} m_\alpha \tilde{\mathfrak{d}}_\alpha+\frac{1}{2} m_\alpha'  \mathfrak{d}_\alpha^b \Big),\nonumber\\
\mathfrak{c}_2 &=-\sqrt{z} \sum_{\alpha=1,2} \Big( m_\alpha \mathfrak{a}_{1\alpha}+ \kappa_3 m_\alpha  \mathfrak{a}_{2\alpha}+\frac{\kappa_4m_\alpha }{2} \mathfrak{a}_{3\alpha}+\frac{m_\alpha}{2} \tilde{\mathfrak{b}}_{1\alpha}+m_\alpha' \mathfrak{b}_{1\alpha} \\
&\qquad\qquad+\frac{\kappa_3m_\alpha }{2} \tilde{\mathfrak{b}}_{2\alpha}+\kappa_3 m_\alpha' \mathfrak{b}_{2\alpha}+ \frac{\kappa_4 m_\alpha}{4} \tilde{\mathfrak{b}}_{3\alpha}+\frac{\kappa_4m_\alpha'  }{2}\mathfrak{b}_{3\alpha}  \Big) \\
& + z \sum_{(i,j)\in \mathcal{S}(\nu)} c^2_{ij}+  \frac{z^{\frac32} \kappa_3 }{\sqrt{n}} \sum_{(i,j)\in \mathcal{S}(\nu)} s_{ij} c_{ij}.
\end{align*}
Also recall $\Delta_d$ from  \eqref{defn_detministic1} and $V$ from (\ref{def:variance}).  By substituting the definitions of the notations in (\ref{18091910}), (\ref{def:missvar}), (\ref{18091930}), (\ref{18091931}), and also their analogues, it is elementary to check
\begin{align}
\mathfrak{c}_1=\Delta_d, \qquad \mathfrak{c}_2=V.  \label{18091950}
\end{align}
This completes the proof of (\ref{eq_qdeltarecusie}). Further we can regard (\ref{18092701}) as a degenerate case of (\ref{eq_qdeltarecusie}). The proof can be done in the same way. We thus conclude the proof of Proposition \ref{prop_iteration}. 
\end{proof}

Therefore, what remains is to prove Lemmas \ref{lem.rewrite of Q} and \ref{lem. recursive estimate for each term}. We prove Lemma \ref{lem.rewrite of Q} in the rest of this section, and state the proof of Lemma \ref{lem. recursive estimate for each term} in Section \ref{s. recursive estimate for each term}.

\begin{proof}[Proof of Lemma \ref{lem.rewrite of Q}]

Recall from (\ref{18072502}) and (\ref{18072501}) that
\begin{align}
Q=\mathcal{Q}-\Delta_r-\Delta_d. \label{180727100}
\end{align}
For brevity, we also write  
\begin{equation}\label{defn_f1f2}
F_{1}=1+zm_{1}, \ F_{2}=1+zm_{2}.
\end{equation}
By (\ref{eq_self1}) and (\ref{eq_self2}), it is easy to check that
\begin{equation} \label{eq_f1mm2}
F_1=-zm_1m_2,  \ F_2=-zym_1m_2.
\end{equation}

Note that by definition $\Tr G A = \Tr G A_1 + \Tr G A_2$ and $\Tr \Pi_1 A = m_{1} \Tr A_1 + m_{2} \Tr A_2$. Thus using \eqref{defn_f1f2}, we have
\begin{align}
\Tr \Xi_1 A &=  \Tr G A_1 + \Tr G A_2 - m_{1} \Tr A_1 - m_{2} \Tr A_2 \nonumber \\
&=-m_{1} \Tr H G A_1 - m_{2} \Tr H G A_2 + F_1 \Tr G A_1 + F_2 \Tr G A_2, \label{18072560}
\end{align}
where in the last step, we used  the fact $zG=HG-I$. 

Using (\ref{18072530}) and (\ref{18072531}), one can write
\begin{align*}
\Tr \Xi_1' B &= \frac{1}{2} \Tr G^2 B_1 + \frac{1}{2} \Tr G^2 B_2 - \frac{1}{2z} \Tr G B_1- \frac{1}{2z} \Tr G B_2- m_{1}' \Tr B_1 -m_{2}' \Tr B_2.
\end{align*}
By further using the identity $zG^2 = H G^2 -G$, it is not difficult to check
\begin{align}
\Tr \Xi_1' B
&= -\frac{m_{1}}{2} \Tr H G^2 B_1 + \frac{F_1}{2} \Tr G^2 B_1 +\frac{1}{2}(m_{1}-\frac{1}{z}) \Tr GB_1 -  m'_{1} \Tr B_1 \nonumber \\
&\quad -\frac{m_{2}}{2} \Tr H G^2 B_2 + \frac{F_2}{2} \Tr G^2 B_2 +\frac{1}{2}(m_{2}-\frac{1}{z}) \Tr GB_2 -  m'_{2} \Tr B_2.  \label{18072561}
\end{align}
Recall the definition (\ref{defn_q1}). Putting (\ref{18072560}) and (\ref{18072561}) together, we get
\begin{align} \label{eq_q1expansion}
\mathcal{Q}&=\sqrt{n} \Big( -m_{1} \Tr HGA_1+F_{1} \Tr GA_1-m_{2}\Tr HGA_2+F_{2} \Tr GA_2 \nonumber  \\
&\quad -\frac{m_{1}}{2} \Tr H G^2 B_1 + \frac{F_1}{2} \Tr G^2 B_1 +\frac{1}{2}(m_{1}-\frac{1}{z}) \Tr GB_1 -  m'_{1} \Tr B_1 \nonumber\\
&\quad -\frac{m_{2}}{2} \Tr H G^2 B_2 + \frac{F_2}{2} \Tr G^2 B_2 +\frac{1}{2}(m_{2}-\frac{1}{z}) \Tr GB_2 -  m'_{2} \Tr B_2 \Big).
\end{align}
Recall the definition of $\Delta_r$ from (\ref{18072410}). We write 
\begin{align*}
\Delta_r  =\sqrt{n z}\sum_{i,j} x_{ij} c_{ij}-\sqrt{n z}\sum_{(i,j)\in \mathcal{S}(\nu) } x_{ij} c_{ij}.
\end{align*}
Further recall the definition of $c_{ij} $ from \eqref{defn_cij1}. 
It is elementary to check that
\begin{align}
\sqrt{nz} \sum_{i,j} x_{ij} c_{ij} = -\sqrt n &\Big( m_{1} \text{Tr} H\Pi_1 A_1 + m_{2} \text{Tr} H\Pi_1 A_2 + \frac{m_{1}}{2} \Tr H \Pi_2 B_1 \nonumber\\
&\quad + \frac{m_{2} }{2}  \Tr H \Pi_2 B_2  + m_{1}' \Tr H\Pi_1 B_1 + m_{2}' \Tr H\Pi_1 B_2 \Big). \label{18072571}
\end{align}
Using (\ref{eq_q1expansion}) and (\ref{18072571}), with the notations defined in  (\ref{18072570}), we can write
\begin{align}
\mathcal{Q} - \sqrt{nz} \sum_{i,j} x_{ij} c_{ij}=\sqrt n \big(f_1 + f_2 + g_1 + g_2 \big). \label{18091960}
\end{align}
Combining (\ref{18072410}), (\ref{180727100}) and (\ref{18091960}) we can conclude the proof. 
\end{proof}

\section{Proof of Lemma \ref{lem. recursive estimate for each term}}\label{s. recursive estimate for each term}

To prove Lemma \ref{lem. recursive estimate for each term}, we need the 
 following lemma summarizing  some estimates on the derivative of $Q$ w.r.t $x_{ij}$'s, which will be frequently used in the subsequent discussion.  We first write $\frac{\partial Q}{\partial x_{ij}}$ in terms of Green functions.  Recall the definition of $Q$ in (\ref{18072501}) that
 \begin{align*}
 Q= \sqrt{n} \Big( \Tr \big(\Xi_1A\big)+ \Tr \big(\Xi'_1B\big) \Big) - \sqrt{nz}\sum_{(i,j)\in \mathcal{B}(\nu) } x_{ij} c_{ij} - \Delta_d,
 \end{align*}
where $\Xi_1=G-\Pi_1$ and $\Delta_d$ is a deterministic quantity in \eqref{defn_detministic1}. 
Using $G'=\frac{1}{2}(G^2 - z^{-1} G)$ in Lemma \ref{lem.18091410}, 
we find that  
\begin{equation*}
\frac{\partial Q}{\partial x_{ij}}=\sqrt{n} \left( \Tr \frac{\partial G}{\partial x_{ij}} A+\frac{1}{2} \Tr \Big( \frac{\partial G^2}{\partial x_{ij}}B-z^{-1}\frac{\partial G}{\partial x_{ij}}B \Big) \right)-\mathbf{1}\Big( (i,j) \in \mathcal{B}(\nu) \Big)\sqrt{nz} c_{ij}. 
\end{equation*}
By Lemma \ref{lem_greenderivative}, it can be further seen that
\begin{align}\label{eq_parti_1}
\frac{\partial Q}{\partial x_{ij}}&=-\sqrt{nz} \sum_{\substack{ l_1,l_2\in \{ i, j'\} \\ l_1\neq l_2 }}\Big( (GAG)_{l_1 l_2} -\frac{1}{2z} (GBG)_{l_1 l_2}  + \frac{1}{2}  (G B G^{2})_{l_1 l_2}+  \frac{1}{2}  (G^2 B G)_{l_1 l_2}\Big)\nonumber\\
&\quad -\mathbf{1}\Big( (i,j) \in \mathcal{B}(\nu) \Big)\sqrt{nz} c_{ij}.
\end{align}

\begin{lem} \label{lem. deri of Q} Under the assumptions of Proposition \ref{lem:main}, we have 
\begin{align}\label{eq:derivativeQ1}
\frac{\partial Q}{\partial x_{ij} }& =\sqrt{n z} \mathbf{1} \Big((i,j) \in \mathcal{S}(\nu) \Big)c_{ij} + O_{\prec}(1).
\end{align}
Consequently, we have the bounds
\begin{align}
\frac{\partial Q}{\partial x_{ij}}=
\left\{
\begin{array}{lll}
O_{\prec}(1), & \forall \ (i,j) \in \mathcal{B}(\nu)   \\\\
O_{\prec}(n^{\frac12-\nu}), & \forall  (i,j) \in \mathcal{S}(\nu).
\end{array}
\right. \label{18072549} 
\end{align}
\end{lem}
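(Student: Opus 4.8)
The plan is to prove Lemma \ref{lem. deri of Q} by starting from the exact expression for $\partial Q/\partial x_{ij}$ in \eqref{eq_parti_1}. Observe that this expression is a finite sum (over $l_1,l_2\in\{i,j'\}$, $l_1\neq l_2$) of entries of the matrices $GAG$, $GBG$, $GBG^2$ and $G^2BG$, times $\sqrt{nz}$, plus the subtracted term $-\mathbf 1((i,j)\in\mathcal B(\nu))\sqrt{nz}\,c_{ij}$. So the first step is to understand the size of entries like $(GAG)_{l_1 l_2}$, $(G^2BG)_{l_1l_2}$, etc., when $l_1,l_2$ are among the two indices $i,j'$. Recall that $A$ and $B$ are fixed-rank bounded matrices with the block structure in \eqref{eq:new-ab}: $A=A_i^R$ has blocks $\omega_{i1}\mathbf u\mathbf u^T$, $\omega_{i2}\mathbf u\mathbf v^T$, etc. Hence $(GAG)_{l_1l_2}$ is a finite linear combination of terms $(G\mathbf u)_{l_1}(\mathbf u^*G)_{l_2}$, $(G\mathbf u)_{l_1}(\mathbf v^*G)_{l_2}$, and so on. By the isotropic local law \eqref{eq_localoutside} (and its consequences \eqref{eq_genrealcontrol}, applied to $G$ and $G^2$), for any fixed index $\mu$ we have $(G\mathbf u)_\mu=(\Pi_1\mathbf u)_\mu+O_\prec(n^{-1/2})$, and $(\Pi_1\mathbf u)_\mu=m_1(z)\mathbf u(\mu)$ or $m_2(z)\mathbf v(\mu)$ depending on the block; similarly for $G^2$ via $\Pi_2$.

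The second step is to combine these pointwise estimates. When $l_1=i$ and $l_2=j'$ (and the symmetric case), the $l_1$-component picks up a factor $\mathbf u(i)$ (from $\Pi_1\mathbf u$ or $\Pi_2\mathbf u$ evaluated at row $i$) and the $l_2$-component picks up a factor $\mathbf v(j)$, up to the deterministic scalars $m_\alpha$, $m_\alpha'$ and the $\omega,\varpi$ coefficients. Therefore the deterministic (leading) part of $\sqrt{nz}\sum_{l_1\neq l_2}(\cdots)$ is precisely $\sqrt{nz}$ times a linear combination of products $\mathbf u(i)\mathbf v(j)$ with coefficients built from $m_\alpha,m_\alpha',\omega,\varpi$ — and this is, by construction, exactly $\sqrt{nz}\,c_{ij}$, where $c_{ij}$ is defined in \eqref{defn_cij1} via the $\Pi$-matrices. (One should just check termwise that the definition \eqref{defn_cij1} of $c_{ij}$ — with its $(\Pi_1A\Pi_1)_{l_1l_2}$, $(\Pi_1B\Pi_1)_{l_1l_2}$, $(\Pi_1B\Pi_2)_{l_1l_2}$, $(\Pi_2B\Pi_1)_{l_1l_2}$ — matches \eqref{eq_parti_1} with each $G$ replaced by its deterministic approximant $\Pi_a$, using $G^2\to\Pi_2$, and the $-1/(2z)$ coefficient accounted for.) The error from each replacement of a $G$-entry by its $\Pi$-entry is $O_\prec(n^{-1/2})$ by \eqref{eq_genrealcontrol}, and since each summand is a product of two such entries (one of them bounded), the total error after multiplying by $\sqrt{nz}=O(1)\cdot\sqrt n$... wait — one must be careful: each entry $(G\mathbf u)_{l_1}$ is itself $O_\prec(1)$ (since $\mathbf u(l_1)$ can be order one), so $(GAG)_{l_1l_2}=O_\prec(1)$, and $(GAG)_{l_1l_2}-(\Pi_1A\Pi_1)_{l_1l_2}=O_\prec(n^{-1/2})$; multiplying by $\sqrt{nz}$ gives a leading term of size $O_\prec(\sqrt n)$ and an error of size $O_\prec(1)$. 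For $(i,j)\in\mathcal B(\nu)$ the subtracted term $-\sqrt{nz}\,c_{ij}$ exactly cancels the leading term, leaving $O_\prec(1)$; for $(i,j)\in\mathcal S(\nu)$ there is no subtracted term, so we get $\sqrt{nz}\,c_{ij}+O_\prec(1)$, which is \eqref{eq:derivativeQ1}. The bounds in \eqref{18072549} then follow because on $\mathcal B(\nu)$ the derivative is $O_\prec(1)$ by the cancellation, while on $\mathcal S(\nu)$ at least one of $|\mathbf u(i)|,|\mathbf v(j)|$ is $\le n^{-\nu}$, so $|c_{ij}|=O(n^{-\nu})$ — since $c_{ij}$ is a bounded-coefficient linear combination of products $\mathbf u(i)\mathbf v(j)$ with one factor of each — whence $\sqrt{nz}|c_{ij}|=O(n^{1/2-\nu})$, dominating the $O_\prec(1)$ error.

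The main obstacle, and the only place requiring genuine care rather than bookkeeping, is the termwise identification in the second step: verifying that the deterministic parts produced by \eqref{eq_parti_1} under $G\to\Pi_1$, $G^2\to\Pi_2$ really do assemble into the specific closed form \eqref{defn_cij1}, including getting every scalar coefficient (the $1/(2z)$, the $1/2$'s, the $\mathcal P(2,1,1)$ permutation sum) and the block-structure bookkeeping of $A=A_i^R$, $B=B_i^R$ right. This is elementary but tedious; I would organize it by splitting $A=A_1+A_2$, $B=B_1+B_2$ according to \eqref{eq_diagonalmatrix}, use that $\Pi_1,\Pi_2$ are block-scalar (so $(\Pi_aA_\alpha)$ just rescales a block of $A$), and reduce each $(\Pi_aX\Pi_b)_{l_1l_2}$ with $l_1,l_2\in\{i,j'\}$, $l_1\neq l_2$, to $m$-scalars times $\mathbf u(i)\mathbf v(j)$. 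A minor subtlety worth flagging: one uses the modified spectral parameter $z=p(d)+\ii n^{-C}$ from \eqref{eq_zdefn} so that a crude deterministic bound $\|G\|\le n^C$ is available on the low-probability complement of the good event, but here we only need $O_\prec$ statements, so the isotropic law alone suffices and no such subtlety actually enters the proof of this lemma.
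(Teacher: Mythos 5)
Your proof is correct and follows essentially the same route as the paper: replace $G$, $G^2$ by $\Pi_1$, $\Pi_2$ in \eqref{eq_parti_1} using the isotropic law, identify the deterministic leading part with $\sqrt{nz}\,c_{ij}$ so that the subtracted term cancels it on $\mathcal B(\nu)$, and on $\mathcal S(\nu)$ bound $c_{ij}$ by noting that all its contributions are $O(\mathbf u(i)\mathbf v(j))$ with one factor necessarily $\le n^{-\nu}$. The only cosmetic difference is that the paper phrases the $\mathcal S(\nu)$ bound via $|A_{ij'}|,|B_{ij'}|\le Cn^{-\nu}$ rather than via $c_{ij}$ directly, but these are equivalent.
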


\begin{proof}[Proof of Lemma \ref{lem. deri of Q}] First, recall the definitions in (\ref{18072531}) and (\ref{18072530}). By (\ref{eq_localoutside}),
we have that for $a_1, a_2=1,2$,
\begin{equation*}
(G^{a_1}AG^{a_2})_{l_1 l_2}=(\Pi_{a_1} A \Pi_{a_2})_{l_1 l_2}+O_{\prec}(n^{-\frac12}). 
\end{equation*}
Applying the above estimates to (\ref{eq_parti_1}), we find that 
\begin{align}
\frac{\partial Q}{\partial x_{ij}}
&= -\sqrt{z n }  \sum_{\substack{ l_1,l_2\in \{ i, j'\} \\ l_1\neq l_2 }}\Big( (\Pi_1 A \Pi_1)_{l_1 l_2} -\frac{1}{2z} (\Pi_1 B \Pi_1)_{l_1 l_2}+ \frac{1}{2}   (\Pi_{1} B \Pi_{2})_{l_1 l_2}+\frac{1}{2}   (\Pi_{2} B \Pi_{1})_{l_1 l_2} \Big) \nonumber\\
& \quad -\mathbf{1} \Big((i,j) \in \mathcal{B}(\nu) \Big)\sqrt{n z} c_{ij}+O_\prec(1). \label{18091570}
\end{align}
Comparing \eqref{18091570} with the definition of $c_{ij}$ in \eqref{defn_cij1}, we prove  (\ref{eq:derivativeQ1}) and the first case of (\ref{18072549}).

Next, by the definitions of $A,B$ in \eqref{eq:new-ab} and the set $\mathcal S(\nu)$ in (\ref{18072542}), it follows immediately that there exists some constant $C>0,$ such that 
\begin{equation*}
 |A_{ij^{\prime}}| \leq C n^{-\nu},\quad |B_{ij^{\prime}}| \leq C n^{-\nu}, \qquad \forall  (i,j) \in \mathcal{S}(\nu). 
\end{equation*}
By the estimates in (\ref{eq_boundm1m2}),  we get the second case of (\ref{18072549}). This concludes the proof of Lemma \ref{lem. deri of Q}. 
 \end{proof}
The remaining of the section is devoted to the proof of Lemma \ref{lem. recursive estimate for each term}. 
\begin{proof}[Proof of Lemma \ref{lem. recursive estimate for each term}] We will focus on  the proof of (\ref{eq_parta1}). Since the proof of (\ref{eq_partb1}) is analogous, we shall only outline the main steps.   Recall from the definition in (\ref{18072570}) and (\ref{defn_f1f2}) that
\begin{align}\label{eq_defnfa1}
\sqrt{n}  \E f_1 \mathfrak{q}^{(k-1)}=\E \Big(-m_{1} \sqrt{zn} \sum_{i,j} x_{ij} (\Xi_1A_1)_{j^{\prime}i}+\sqrt{n} F_1 \text{Tr}GA_1 \Big) \mathfrak{q}^{(k-1)}.
\end{align}  
 For brevity, we use the notations
\begin{equation}\label{eq_defnfunction}
h_1=\big(\Xi_1 A_1 \big)_{j^{\prime}i}, \quad h_2=Q^{k-1}, \quad h_3=e^{\ii t \Delta}.
\end{equation}
Note that $h_1$ actually depends on the index $(j^{\prime},i)$. However, we drop this dependence from notation for brevity. 
By Lemma \ref{lem_cumu}, one has 
\begin{align}\label{eq:A1main}
&\sqrt{n} \sum_{i,j} \E  x_{ij} (\Xi_1A_1)_{j^{\prime}i} \mathfrak{q}^{(k-1)} =\sqrt{n} \sum_{i,j} \E  x_{ij} (h_1 h_2 h_3)\nonumber\\
&\quad  = \sum_{l=1}^3  \frac{\kappa_{l+1}}{l! n^{l/2}} \sum_{i,j}\E \Big( \frac{\partial^l}{\partial x_{ij}^l} (h_1 h_2 h_3) \Big)+\mathbb{E}\mathcal{R}_1, 
\end{align}
where $\mathcal{R}_1$ satisfies that, for any sufficiently small $\epsilon>0$ and sufficiently large $K>0$, 
\begin{equation}\label{eq:errorA1}
|\E \mathcal{R}_1| \le  \sum_{i,j} \E \left( n^{-2} \sup_{|x_{ij}|\leq n^{-\frac12+\epsilon} } \bigg| \frac{\partial^4}{\partial x_{ij}^4} (h_1 h_2 h_3) \bigg|+ n^{-K}\sup_{x_{ij} \in \R}  \bigg| \frac{\partial^4}{\partial x_{ij}^4} (h_1 h_2 h_3) \bigg| \right). 
\end{equation}

Here we used the assumption that $\mathbb{E}|\sqrt{n} x_{ij}|^p \leq C_p$ for all $p\ge 3$. Therefore, the main technical estimates are the first four  derivatives of $h_1 h_2 h_3.$  By product rule, for each $l \in \N$, we have
\begin{align}\label{eq_chainruleproduct}
 \frac{\partial^l}{\partial x_{ij}^l} (h_1 h_2 h_3) = \sum_{l_1+l_2+l_3=l} {l \choose l_1, l_2, l_3} \frac{\partial^{l_1} h_1}{\partial x_{ij}^{l_1}} \frac{\partial^{l_2} h_2}{\partial x_{ij}^{l_2}} \frac{\partial^{l_3} h_3}{\partial x_{ij}^{l_3}}.  
\end{align}
First, it is elementary to verify 
\begin{equation}\label{eq:derivativeh3}
\frac{\partial^l h_3}{\partial x_{ij}^l}= \mathbf{1}((i,j) \in \mathcal{B}\left(\nu)\right) \Big( \ii t\sqrt{n z} c_{ij} \Big)^l e^{\ii t \Delta},
\end{equation}
and 
\begin{equation*}
\frac{\partial^l h_1 }{\partial x^l_{ij}}= \Big( \frac{\partial^l G}{\partial x_{ij}^l} A_1 \Big)_{j^{\prime} i}.
\end{equation*}
The derivatives of $h_2$ can be computed using Fa\`a di Bruno's formulas. For the reader's convenience, we list them here.   The first derivative of $h_2$ is 
\begin{equation*}
\frac{\partial h_2}{\partial x_{ij}}=(k-1) \frac{\partial Q}{\partial x_{ij}} Q^{k-2}.
\end{equation*}
The second derivative of $h_2$ is 
\begin{align*}
\frac{\partial^2 h_2}{\partial x_{ij}^2}=\frac{(k-1)!}{(k-3)!} Q^{k-3} \Big(\frac{\partial Q}{\partial x_{ij}} \Big)^2+(k-1) Q^{k-2} \frac{\partial^2 Q}{\partial x_{ij}^2}.
\end{align*}
The third derivative of $h_2$ is 
\begin{equation*}
\frac{\partial^3 h_2}{\partial x_{ij}^3}=\frac{(k-1)!}{(k-4)!} Q^{k-4} \Big(\frac{\partial Q}{\partial x_{ij}} \Big)^3+3\frac{(k-1)!}{(k-3)!}Q^{k-3} \frac{\partial Q}{\partial x_{ij}} \frac{\partial^2 Q}{\partial x_{ij}^2}+(k-1)Q^{k-2} \frac{\partial^3 Q}{\partial x_{ij}^3}.
\end{equation*}
The fourth derivative of $h_2$ is 
\begin{align*}
\frac{\partial^4 h_2}{\partial x_{ij}^4}&=\frac{(k-1)!}{(k-5)!} Q^{k-5} \Big(\frac{\partial Q}{\partial x_{ij}} \Big)^4+6\frac{(k-1)!}{(k-4)!} Q^{k-4} \Big(\frac{\partial Q}{\partial x_{ij}} \Big)^2 \frac{\partial^2 Q}{\partial x_{ij}^2} \\
&\quad+\frac{(k-1)!}{(k-3)!} Q^{k-3} \Big(4 \frac{\partial Q}{\partial x_{ij}} \frac{\partial^3 Q}{\partial x_{ij}^3}+3 \Big( \frac{\partial^2 Q}{\partial x_{ij}^2} \Big)^2 \Big)+(k-1)Q^{k-2} \frac{\partial Q^4}{\partial x_{ij}^4}.
\end{align*}
 As we can see from the above identities, the key ingredients are the partial derivatives of $Q$ and $GA_1.$ 

For brevity, we introduce the notation
\begin{align}
\hslash(l_1,l_2,l_3):=n^{-\frac{l_1+l_2+l_3}{2}}\sum_{i,j}\frac{\partial^{l_1} h_1}{\partial x_{ij}^{l_1}} \frac{\partial^{l_2} h_2}{\partial x_{ij}^{l_2}} \frac{\partial^{l_3} h_3}{\partial x_{ij}^{l_3}}.  \label{18091411} 
\end{align}
In the following two lemmas, we summarize the estimates of $\hslash(l_1,l_2,l_3)$ for $l_1+l_2+l_3\leq 4$. The proofs of the two lemmas will be given in Sections  \ref{s.18092001} and \ref{s.18092002}. 
\begin{lem}\label{lem_estimateh1h2h3} 
For the first derivative of $h_1h_2h_3$,  we have that 
\begingroup
\allowdisplaybreaks
\begin{align}
&\hslash(1,0,0)=-\sqrt{nz} m_2 \Tr (GA_1) \mathfrak{q}^{(k-1)} +O_{\prec}(n^{-\frac12}), \label{eq_parth1h2h3}\\
&\hslash(0,1,0)=\mathfrak{a}_{11} \mathfrak{q}^{(k-2)} +O_{\prec}(n^{-\frac12+4\nu}),\label{eq_h1partih2h3}\\
&\hslash(0,0,1) =O_{\prec}(n^{-\frac12+4\nu}).\label{eq_h1h2parih3}
\end{align}
\endgroup
\end{lem}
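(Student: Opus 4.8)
All three identities will come from differentiating the explicit expressions for $h_1,h_2,h_3$ and then feeding the results into the isotropic local laws (\ref{eq_localoutside}), (\ref{eq_genrealcontrol}) and the bound (\ref{18091401}). Since neither $\Pi_1$ nor $A_1$ depends on $x_{ij}$, Lemma \ref{lem_greenderivative} (with $k=1$) gives the key starting point $\partial_{x_{ij}} h_1=(\partial_{x_{ij}}G\, A_1)_{j'i}=-\sqrt z\bigl(G_{j'i}(GA_1)_{j'i}+G_{j'j'}(GA_1)_{ii}\bigr)$; for $h_3$ we use (\ref{eq:derivativeh3}); and for $h_2=Q^{k-1}$ we use $\partial_{x_{ij}}h_2=(k-1)Q^{k-2}\partial_{x_{ij}}Q$ together with (\ref{eq_parti_1}). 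Throughout I would use two structural facts from (\ref{eq_ab})--(\ref{eq:new-ab}): first, that $A=A_i^R$ and $B=B_i^R$ are bounded symmetric matrices of rank $\le 2$, supported on $\mathrm{span}\{\mathbf u_i,\mathbf v_i\}$ in the linearized space; second, that consequently $A_1=A\mathtt{I}^{\mathrm u}$ is the rank-one matrix $\mathbf p\,\mathbf w^{*}$ with $\mathbf p,\mathbf w\in\R^{M+n}$ bounded and $\mathbf w$ supported on $[M]$ (and similarly $B_1$). I also record that $\mathcal Q=O_\prec(1)$ by (\ref{eq_boundq}), that $\Delta=O_\prec(1)$ (its deterministic part $\Delta_d$ is $O(1)$ and $\Delta_r$ has bounded second moment), and hence $\mathfrak q^{(l)}=O_\prec(1)$ for fixed $l$.

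\textbf{The two routine estimates.} For (\ref{eq_parth1h2h3}), summing $\partial_{x_{ij}}h_1$ over $i,j$ and pulling out $h_2h_3=\mathfrak q^{(k-1)}$, the term $\sum_{i,j}G_{j'j'}(GA_1)_{ii}$ factors as $\bigl(\sum_j G_{j'j'}\bigr)\bigl(\sum_{i\in[M]}(GA_1)_{ii}\bigr)=n\,m_{2n}\,\Tr(GA_1)$, where $\sum_j G_{j'j'}=n m_{2n}$ by (\ref{18072401}) and $\sum_{i\in[M]}(GA_1)_{ii}=\Tr(GA_1)$ because the columns of $A_1$ indexed beyond $M$ vanish; replacing $m_{2n}$ by $m_2$ costs $O_\prec(1/n)$ by (\ref{18091401}), which with the $n^{-1/2}$ prefactor yields the stated main term up to $O_\prec(n^{-1/2})$. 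The remaining term $\sum_{i,j}G_{j'i}(GA_1)_{j'i}$ equals, using the factorization $A_1=\mathbf p\mathbf w^{*}$, the single bilinear form $\mathbf w^{*}G\mathtt{I}^{\mathrm l}G\mathbf p=O_\prec(1)$, hence is $O_\prec(n^{-1/2})$ after the prefactor; this gives (\ref{eq_parth1h2h3}). For (\ref{eq_h1h2parih3}), (\ref{eq:derivativeh3}) gives $\hslash(0,0,1)=\ii t\sqrt z\,\mathfrak q^{(k-1)}\sum_{(i,j)\in\mathcal B(\nu)}(\Xi_1A_1)_{j'i}c_{ij}$; each summand is $O_\prec(n^{-1/2})$ by (\ref{eq_localoutside}) (as $A_1$ is fixed and bounded, $c_{ij}=O(1)$), and $|\mathcal B(\nu)|\le Cn^{4\nu}$, so $\hslash(0,0,1)=O_\prec(n^{-1/2+4\nu})$.

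\textbf{The main step: (\ref{eq_h1partih2h3}).} Here $\partial_{x_{ij}}h_2=(k-1)Q^{k-2}\partial_{x_{ij}}Q$ and $Q^{k-2}$ factors out of $\sum_{i,j}$, so it suffices to prove $n^{-1/2}\sum_{i,j}(\Xi_1A_1)_{j'i}\,\partial_{x_{ij}}Q=\mathfrak a_{11}/(k-1)+O_\prec(n^{-1/2+4\nu})$. Inserting (\ref{eq_parti_1}), the $\mathbf 1(\,\cdot\in\mathcal B(\nu))$-term contributes $O_\prec(n^{-1/2+4\nu})$ exactly as above. For the four remaining families of terms ($GAG,\,GBG,\,GBG^2,\,G^2BG$, each at the orderings $(i,j')$ and $(j',i)$), I would use the rank-one factorization of $A_1$ and the rank-$\le 2$ factorizations of $A$ and $B$ to rewrite $\sum_{i,j}(\Xi_1A_1)_{j'i}(\,\cdot\,)_{l_1l_2}$ as a product of $O(1)$-many bilinear forms; the sum over $j\in[n]$ converts one Green-function factor into the pattern $\Xi_1\mathtt{I}^{\mathrm l}G^{b}=(G-\Pi_1)\mathtt{I}^{\mathrm l}G^{b}$, which I would replace by $\Pi^{\mathrm l}_{1+b}-\Pi_1\mathtt{I}^{\mathrm l}\Pi_{b}=\Pi^{\mathrm l}_{1+b}-\Pi^{\mathrm l}_{1,b}$, and a second factor into $G^{a}$, replaced by $\Pi_a$. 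The required approximations $G^{a}\mathtt{I}^{\mathrm l}G^{b}\approx\Pi^{\mathrm l}_{a+b}$ against fixed vectors follow from (\ref{eq_genrealcontrol}) via the block representation (\ref{green2}) and the push-through identity $X(X^{*}X-z)^{-1}X^{*}=I+z\mathcal G_1$ (differentiated in $z$ as needed). Each replacement costs $O_\prec(n^{-1/2})$, and since there are $O(1)$ factors, each $O_\prec(1)$, the total error is $O_\prec(n^{-1/2})$. Reassembling the $O(1)$ deterministic survivors as traces, using the symmetry of $A$ and $B$ (which identifies the two orderings $(i,j')$ and $(j',i)$ and thus produces the factor $2$ in front of $\Tr(\Pi_2^{\mathrm l}-\Pi_{1,1}^{\mathrm l})A_1\Pi_1 A$) and the commutativity of the diagonal matrices $\Pi_a$ (so $\Pi^{\mathrm l}_{1,2}=\Pi^{\mathrm l}_{2,1}$), one matches the result with the definitions (\ref{eq_pi2ul})--(\ref{def:missvar}) and recovers exactly $\mathfrak a_{11}$.

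\textbf{Where the difficulty lies.} The statements (\ref{eq_parth1h2h3}) and (\ref{eq_h1h2parih3}) are short once the derivative formulas and local laws are in hand. The substance is (\ref{eq_h1partih2h3}): it requires, first, the deterministic approximations for the mixed objects $G^{a}\mathtt{I}^{\mathrm l}G^{b}$ tested against fixed vectors — these are not immediate from the fixed-rank local laws as stated and must be derived from (\ref{green2}) and the push-through identity — and, second, the careful bookkeeping needed to check that the deterministic residue, assembled from the four term-families and both index orderings with all their $\Pi$-type factors, coincides with the rather intricate formula for $\mathfrak a_{11}$ in (\ref{def:missvar}).
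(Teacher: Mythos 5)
Your proposal is correct and follows essentially the same route as the paper: use (\ref{eq_firstgm}), (\ref{eq:derivativeh3}), and (\ref{eq_parti_1}), combine the two index orderings by symmetry of $G,A,B$, peel off the $\mathcal B(\nu)$ contribution (giving $O_\prec(n^{-1/2+4\nu})$), and replace the surviving Green-function products by their deterministic limits. The one organizational difference is that you package the key deterministic step as an abstract approximation ``$G^a\mathtt I^{\mathrm l}G^b\approx\Pi^{\mathrm l}_{a+b}$ against fixed vectors,'' derived from the block form (\ref{green2}) and the push-through identity, whereas the paper does precisely these computations term-by-term in (\ref{180920201}), (\ref{eq_piag2bg}), (\ref{eq_piagbg2ij}): it expands $(\Xi_1A_1)_{j'i}$ and $(G^aB G^b)_{j'i}$ via (\ref{eq_g-pia}), (\ref{eq_gag}), (\ref{eq_g2bg}), computes the contributing bilinear forms through identities such as $\bar{\mathbf u}^*G^2\bar{\mathbf u}=\mathbf u^*\mathcal G_1^2\mathbf u+z^{-1}\mathbf u^*\mathcal G_1 XX^*\mathcal G_1\mathbf u$ and $\mathbf u^*\mathcal G_1^2\mathbf u=\bar{\mathbf u}^*G'\bar{\mathbf u}$, and discards cross terms like $\mathbf v^*\mathcal G_2^2 X^*\mathbf u$ as $O_\prec(n^{-1/2})$ using the vanishing off-diagonal blocks of $\Pi_1$ and its derivatives. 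Your abstraction is exactly equivalent in content — to make it rigorous you would reproduce those same block/push-through computations to verify that the off-diagonal blocks of $G^a\mathtt I^{\mathrm l}G^b$ are negligible and that the diagonal blocks reproduce (\ref{eq_pi2ul}) and (\ref{18091566}) — and your proposal correctly flags that this is where the bulk of the work resides. Two small quibbles: the relevant estimate $m_{2n}-m_2=O_\prec(n^{-1})$ is (\ref{18091401}) not a generic local-law bound (you cite it correctly), and the step ``$\Delta=O_\prec(1)$ from bounded second moment'' should invoke bounded moments of all orders (from (\ref{eq_moment})) rather than just the second; in any case all that is needed is $|e^{\ii t\Delta(z_0)}|=1$ (real $z_0$) together with $\mathcal Q=O_\prec(1)$.
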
 
\begin{lem}\label{lem_24h1h2h3} On higher order derivatives of $h_1h_2h_3$, we have the following estimates.

\noindent(1). For the second  derivative, we have  
\begingroup
\allowdisplaybreaks
\begin{align}
&\hslash(2,0,0)=\mathfrak{d}_1^a \mathfrak{q}^{(k-1)}+O_{\prec}(n^{-\frac12}), \label{eq_h200}\\
&\hslash(1,1,0)=\mathfrak{a}_{21} \mathfrak{q}^{(k-2)}+O_{\prec}(n^{-\frac12+4\nu}), \label{eq_h110}\\
&\hslash(1,0,1)=O_{\prec}(n^{-\frac12+4\nu}),
\quad 
\hslash(0,2,0)=O_{\prec}(n^{-\frac12}), \nonumber \\
&\hslash(0,1,1)=O_{\prec}(n^{-\frac12+4\nu}), \quad \hslash(0,0,2)=O_{\prec}(n^{-1+4\nu}). \nonumber 
\end{align}
\endgroup
(2). For the third derivative, we have 
\begingroup
\allowdisplaybreaks
\begin{align}
&\hslash(1,2,0)=\mathfrak{a}_{31}\mathfrak{q}^{(k-2)}+O_{\prec}(n^{-\frac12}), \label{eq_120}\\
&\hslash(3,0,0)=O_{\prec}(n^{-\frac12}), 
\qquad\quad \;
\hslash(0,3,0)=O_{\prec}(n^{-\frac12}),\nonumber \\
&\hslash(2,1,0)=O_{\prec}(n^{-1}),
\qquad\qquad
\hslash(2,0,1) =O_{\prec}(n^{-\frac32+4\nu}), \nonumber \\
&\hslash(1,1,1)=O_{\prec}(n^{-1+4\nu}), 
\qquad \; \; 
\hslash(1,0,2)=O_{\prec}(n^{-\frac32+4\nu}), \nonumber \\
&\hslash(0,2,1)=O_{\prec}(n^{-\frac12+4\nu}). \nonumber
\end{align}
\endgroup 
(3). For the fourth  derivative, all the terms in the RHS of (\ref{eq_chainruleproduct}) can be bounded by $O_{\prec}(n^{-\frac12+4\nu}).$
\end{lem}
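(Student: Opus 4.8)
The plan is to prove Lemma~\ref{lem_24h1h2h3} by the same term-by-term expansion that underlies Lemma~\ref{lem_estimateh1h2h3}, now carrying one or two more $x_{ij}$-derivatives. First I would expand each $\hslash(l_1,l_2,l_3)$ by the product rule \eqref{eq_chainruleproduct} and substitute the three ingredients: $\partial_{x_{ij}}^{l_1}h_1=(\partial_{x_{ij}}^{l_1}G\,A_1)_{j'i}$, rewritten through Lemma~\ref{lem_greenderivative} and the contracted identities \eqref{eq_secondgm}--\eqref{eq_fourgm}; the Fa\`a di Bruno expansion of $\partial_{x_{ij}}^{l_2}h_2=\partial_{x_{ij}}^{l_2}Q^{k-1}$, with the derivatives of $Q$ read off from \eqref{eq_parti_1}--\eqref{eq_parti_4}; and $\partial_{x_{ij}}^{l_3}h_3=\mathbf 1((i,j)\in\mathcal B(\nu))\,(\ii t\sqrt{nz}\,c_{ij})^{l_3}e^{\ii t\Delta}$ from \eqref{eq:derivativeh3}. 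After this substitution every summand is a product of: entries of $G$ and of $G^{a}$; one entry of $(GA_1)$ or of $(G^{a}BG^{b})$; a power $Q^{k-1-p}$; a possibly empty power of $\sqrt{nz}\,c_{ij}$; and $e^{\ii t\Delta}$ --- all summed over the restricted index patterns $l_m\in\{i,j'\}$ with the inequality constraints present in \eqref{eq_secondgm}--\eqref{eq_fourgm} and \eqref{eq_parti_1}--\eqref{eq_parti_4}.

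The analysis then splits according to whether $l_3\ge1$ or $l_3=0$. When $l_3\ge1$ the summand carries the indicator $\mathbf 1((i,j)\in\mathcal B(\nu))$, so the $(i,j)$-sum has at most $Cn^{4\nu}$ terms; on $\mathcal B(\nu)$ one has $\partial_{x_{ij}}Q=O_\prec(1)$ by \eqref{18072549}, hence $\partial_{x_{ij}}^{l_2}Q^{k-1}=O_\prec(1)$, while $\partial_{x_{ij}}^{l_1}h_1=O_\prec(1)$ by the isotropic law \eqref{eq_localoutside} (in fact $O_\prec(n^{-1/2})$ when $l_1=0$, since $(\Xi_1A_1)_{j'i}$ is $\mathbf u(i)$ times an $O_\prec(n^{-1/2})$ isotropic error), and $\partial_{x_{ij}}^{l_3}h_3=O_\prec(n^{l_3/2})$ because $c_{ij}=\mathbf u(i)\mathbf v(j)\,O(1)=O(1)$; together with the prefactor $n^{-(l_1+l_2+l_3)/2}$ this gives $O_\prec(n^{4\nu-(l_1+l_2)/2})$, and using also $\sum_{i,j}\mathbf u(i)^2\mathbf v(j)^2=1$ and $\|\mathbf u\|_\infty,\|\mathbf v\|_\infty\le1$ in the $l_1=l_2=0$ case yields all the $l_3\ge1$ bounds in the statement. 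When $l_3=0$ the $(i,j)$-sum is unrestricted; I would replace every Green function by its deterministic counterpart via \eqref{eq_localoutside}, using that each $\Pi_a$ is diagonal, so off-diagonal $\Pi$-entries vanish, off-diagonal $G$-entries are themselves $O_\prec(n^{-1/2})$, and each diagonal $\Xi$-entry is $O_\prec(n^{-1/2})$. The unique ``all-$\Pi$'' term is deterministic; comparing it with \eqref{18091910}, \eqref{18091930}, \eqref{18091931} identifies it with $\mathfrak d_1^a\,\mathfrak q^{(k-1)}$ in \eqref{eq_h200}, with $\mathfrak a_{21}\,\mathfrak q^{(k-2)}$ in \eqref{eq_h110}, and with $\mathfrak a_{31}\,\mathfrak q^{(k-2)}$ in \eqref{eq_120}; every remaining term contains at least one $\Xi$ factor and is $O_\prec(n^{-1/2})$ after a Cauchy--Schwarz step against the Frobenius bound $\|A_1\|_F^2+\|B_1\|_F^2+\|B_2\|_F^2=O(1)$, valid because $A,B$ are rank-$r$ bounded matrices built from the unit vectors $\mathbf u,\mathbf v$.

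The main obstacle is the bookkeeping for the mixed third-derivative terms $\hslash(1,2,0)$, $\hslash(2,1,0)$, $\hslash(1,1,1)$, where the Fa\`a di Bruno expansion of $\partial_{x_{ij}}^{l_2}Q^{k-1}$ produces products of several factors $\partial_{x_{ij}}Q$, each only $O_\prec(n^{1/2-\nu})$ on $\mathcal S(\nu)$ by \eqref{18072549}; a naive bound would lose powers of $n$ there. The resolution is the compensation already visible in \eqref{eq:derivativeQ1}: on $\mathcal S(\nu)$ the ``large'' part $\sqrt{nz}\,c_{ij}$ of $\partial_{x_{ij}}Q$ carries the factor $\mathbf u(i)\mathbf v(j)$, and the accompanying factor $\partial_{x_{ij}}^{l_1}h_1=(\partial_{x_{ij}}^{l_1}G\,A_1)_{j'i}$ likewise carries a factor $\mathbf u(i)$ pulled out of the column of $A_1$; so each $\partial_{x_{ij}}Q$ beyond the first, and the $h_1$-factor, each supply at least one power of these $\ell^2$-summable, $\le1$ entries, and after summing in $(i,j)$ against $\sum_{i,j}\mathbf u(i)^2\mathbf v(j)^2=1$ and multiplying by $n^{-(l_1+l_2)/2}$ the estimate closes at $O_\prec(n^{-1/2})$, with the single genuinely deterministic term again isolated by taking all Green functions to $\Pi$ and matched against $\mathfrak a_{31}$. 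Part~(3) is then immediate: with four derivative slots distributed among $h_1,h_2,h_3$ there is no leading term to extract, and the same per-factor bounds with the same case split over $l_3$ show that every term on the right of \eqref{eq_chainruleproduct} with $l_1+l_2+l_3=4$ is $O_\prec(n^{-1/2+4\nu})$. The only analytic inputs used throughout are the local laws \eqref{eq_localoutside}, the derivative bound \eqref{18072549}, the counting bound $|\mathcal B(\nu)|\le Cn^{4\nu}$, and the elementary Frobenius and $\ell^2$ identities for $A,B,\mathbf u,\mathbf v$.
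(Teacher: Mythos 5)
Your overall architecture is the same as the paper's: expand via \eqref{eq_chainruleproduct}, separate the $l_3\geq1$ cases (where the $\mathcal{B}(\nu)$ indicator together with $|\mathcal{B}(\nu)|\leq Cn^{4\nu}$ and the first case of \eqref{18072549} does the work) from the $l_3=0$ cases (where the isotropic law replaces $G$ by $\Pi$ and the unique all-$\Pi$ product is the main term), and match the survivors with $\mathfrak d_1^a$, $\mathfrak a_{21}$, $\mathfrak a_{31}$. That identification, and the counting on $\mathcal B(\nu)$, are exactly what the paper does.

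The one place where your write-up would not survive being made precise is the claim that the $\Xi$-containing remainder terms (in $\hslash(0,2,0)$, $\hslash(2,1,0)$, the off-diagonal pieces of $\hslash(1,1,0)$, etc.) are controlled ``after a Cauchy--Schwarz step against the Frobenius bound $\|A_1\|_F^2+\|B_1\|_F^2+\|B_2\|_F^2=O(1)$.'' That step fails: for $X_{ij}:=(\Xi_1 A_1)_{j'i}=O_\prec(n^{-1/2})\mathbf u(i)$ one has $\|X\|_F^2\approx n^{-1}\sum_{i,j}\mathbf u(i)^2=O(1)$, so Cauchy--Schwarz against any $O(1)$-Frobenius factor only yields $O(1)$, not the required $O_\prec(n^{-1/2})$. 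What actually closes these estimates in the paper is the entry-wise factorization: each relevant Green-function entry contributes an $O_\prec(n^{-1/2})$ scalar while the $A$, $B$ entries contribute products of $\mathbf u(i),\mathbf v(j)$ that sum in $\ell^2$ (and occasionally in $\ell^3$, producing factors like $\sum_i\mathbf u(i)^3\leq\|\mathbf u\|_\infty$), together with the non-obvious step of recognizing $\sum_j\mathbf f_j^*X^*\mathcal G_1\mathbf u=\sqrt n\,\mathbf f^*X^*\mathcal G_1\mathbf u=O_\prec(1)$ as in \eqref{eq_hardest}. You do in fact state this entry-wise mechanism later, so the idea is there, but the Cauchy--Schwarz framing should be dropped.

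The other thing your sketch omits, but which you would need in order to actually produce the main terms in \eqref{eq_h110} and \eqref{eq_120}, is the three-way decomposition behind \eqref{eq_trianglediscussion}. Extracting $\mathfrak a_{21}$ requires separating $G_{j'j'}(GA_1)_{ii}\,\partial_{x_{ij}}Q$ into a piece with $G_{j'j'}-m_2$, a piece with $(\Xi_1A_1)_{ii}$, and a piece $m_1m_2(A_1)_{ii}(\partial_{x_{ij}}Q-\sqrt{nz}\,c_{ij})$; the last piece is not bounded by the crude derivative bound \eqref{18072549} but by the sharper $(GAG)_{j'i}-m_1m_2A_{j'i}=O_\prec(\mathbf u(i)/\sqrt n)$. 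Your compensation heuristic points in the right direction, but it skips the fact that the error is proportional to $\mathbf u(i)$ (not merely $O_\prec(1)$), which is what makes the $(i,j)$-sum summable. With those two refinements your proof plan matches the paper's.

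Finally, a minor observation unrelated to your argument: the bound $\hslash(0,0,2)=O_\prec(n^{-1+4\nu})$ in the lemma statement is stated in the paper's proof only as $O_\prec(n^{-1/2+4\nu})$; both suffice for the recursive estimate, so this is an inconsequential discrepancy.
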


By Lemma \ref{lem_estimateh1h2h3} and Lemma \ref{lem_24h1h2h3},  the first term in \eqref{eq:A1main} is estimated by 
\begin{align*}
&\sum_{l=1}^3  \frac{\kappa_{l+1}}{l! n^{l/2}} \sum_{i,j}\E \Big( \frac{\partial^l}{\partial x_{ij}^l} (h_1 h_2 h_3) \Big) = \sum_{l=1}^3  \sum_{i,j} \sum_{l_1 + l_2 +l_3=l} \frac{\kappa_{l+1}}{l_1! l_2! l_3!} \hslash(l_1,l_2,l_3)\\
&=-\sqrt{nz} m_2 \Tr (G A_1) \mathfrak{q}^{(k-1)} + \frac{\kappa_3}{2}\mathfrak{d}_1^a \mathfrak{q}^{(k-1)}+ \Big(\mathfrak{a}_{11}+\kappa_3 \mathfrak{a}_{21}+\frac{\kappa_4}{2} \mathfrak{a}_{31}\Big) \mathfrak{q}^{(k-2)}.
\end{align*}
For the second term in \eqref{eq:A1main}, we claim that
\begin{equation}\label{eq_a1final}
|\mathbb{E} \mathcal{R}_1| \leq n^{-1/2+4\nu}.
\end{equation}
To prove \eqref{eq_a1final}, it is enough to bound the two terms on the right hand side of \eqref{eq:errorA1}. We apply Lemma \ref{lem_24h1h2h3} to the first term on the right hand side of \eqref{eq:errorA1} to get
\begin{align*}
 \sum_{i,j} \E n^{-2} \sup_{|x_{ij}|\leq n^{-\frac12+\epsilon} } \bigg| \frac{\partial^4}{\partial x_{ij}^4} (h_1 h_2 h_3) \bigg| \le n^{-1/2+4\nu}.
\end{align*}
A minor issue with the above step is that Lemma  \ref{lem_24h1h2h3} is proved for the matrix $X$ with all entries random variables. In our application of Lemma \ref{lem_24h1h2h3}, for each pair of fixed indices $(i,j)$, we actually consider a random matrix $X$ whose $(i,j)$th entry is a deterministic number with small magnitude and all the others random variables. However, this can be justified by a perturbation argument with the aid of resolvent expansion. Indeed, replacing one random entry $x_{ij}$ by any deterministic number bounded by $n^{-1/2+\varepsilon}$ and keeping the other $X$ entries random will not change the isotropic local law. Thus Lemma \ref{lem_24h1h2h3} holds for such random matrix $X$. 

For the second term on the right hand side of \eqref{eq:errorA1}, we use the trivial bounds for $G$ and its derivatives to obtain 
$$\sum_{i,j} \E n^{-K}\sup_{x_{ij} \in \R}  \bigg| \frac{\partial^4}{\partial x_{ij}^4} (h_1 h_2 h_3) \bigg| \le n^{-K+2+C}$$ for a positive constant $C$. By taking $K$ sufficiently large, we conclude \eqref{eq_a1final}. 

Plugging \eqref{eq:A1main} into \eqref{eq_defnfa1}, we finally get 
\begin{align*}
\sqrt{n} \mathbb{E} f_1 \mathfrak{q}^{(k-1)}
=-m_1\sqrt{z} \mathbb{E} \Big(  \frac{\kappa_3}{2}\mathfrak{d}_1^a \mathfrak{q}^{(k-1)}+ \Big(\mathfrak{a}_{11}+\kappa_3 \mathfrak{a}_{21}+\frac{\kappa_4}{2} \mathfrak{a}_{31}\Big) \mathfrak{q}^{(k-2)} \Big) 
 +O_{\prec}(n^{-\frac12+4\nu}).
\end{align*}
Note that by (\ref{eq_f1mm2}), the term $\sqrt{n} z m_1 m_2 \Tr G A_1 \mathfrak{q}^{(k-1)}$ is cancelled with $F_1\text{Tr} GA_1\mathfrak{q}^{(k-1)}$ in (\ref{eq_defnfa1}). This verifies (\ref{eq_parta1}) in case of $\alpha=1$ by recalling the definition in (\ref{180920150}).

Next, we turn to \eqref{eq_partb1} for $\alpha=1$.  Recall the definition of $g_1$ in (\ref{18072570}). We have
\begin{align}\label{eq_defnb1}
\sqrt{n} \mathbb{E} g_1 \mathfrak{q}^{(k-1)}&=\sqrt{n}\mathbb{E} \Big(-\frac{m_1}{2} \sqrt{z} \sum_{i,j} x_{ij} (\Xi_2 B_1)_{j'i} +\frac{F_1}{2} \text{Tr} G^2 B_1 \nonumber \\
&\quad+\frac{zm_1-1}{2z} \text{Tr}GB_1-m_1' \text{Tr} B_1+m_1' \text{Tr} H\Pi_1 B_1  \Big) \mathfrak{q}^{(k-1)}.
\end{align}
The main task is to estimate the cumulant expansion of the term 
$$\sqrt{n}\sum_{i,j} \mathbb{E} x_{ij} (\Xi_2B_1)_{j'i} \mathfrak{q}^{(k-1)},$$ which is analogous to \eqref{eq:A1main}. Recall $h_2$ and $h_3$ in (\ref{eq_defnfunction}) and denote  
 \begin{equation}\label{eq_htitle}
\tilde h_1=\big(\Xi_2 B_1 \big)_{j^{\prime}i}.
\end{equation}
Note that $\tilde h_1$ depends on the indices $i,j$. However, we drop these dependence from the notation for brevity.  Similarly to (\ref{18091411}), we introduce the notation 
\begin{align}
\tilde{\hslash}(l_1,l_2,l_3):=n^{-\frac{l_1+l_2+l_3}{2}}\sum_{i,j}\frac{\partial^{l_1} \tilde{h}_1}{\partial x_{ij}^{l_1}} \frac{\partial^{l_2} h_2}{\partial x_{ij}^{l_2}} \frac{\partial^{l_3} h_3}{\partial x_{ij}^{l_3}}. \label{18091420}
\end{align}
We collect the estimates of $\tilde{\hslash}(l_1,l_2,l_3)$ for $l_1+l_2+l_3 \le 4$ in the following two lemmas, whose proofs are postponed to Section \ref{subsec:proofofotherlemma}.
\begin{lem}\label{lem_bestimate} 
For the first derivative of $\tilde h_1h_2h_3$, we have 
\begingroup
\allowdisplaybreaks
\begin{align*}
&\tilde{\hslash}(1,0,0)=-\sqrt{nz} \big((2m_2'+\frac{m_2}{z})\operatorname{Tr}(GB_1)+m_2 \operatorname{Tr}(G^2B_1) \big) \mathfrak{q}^{(k-1)}+O_{\prec}(n^{-\frac12}),\\
&\tilde{\hslash}(0,1,0)= \tilde{\mathfrak{b}}_{11} \mathfrak{q}^{(k-2)}+O_{\prec}(n^{-\frac12+4\nu}),\\
&\tilde{\hslash}(0,0,1) =O_{\prec}(n^{-\frac12+4\nu}).
\end{align*}
\endgroup
\end{lem}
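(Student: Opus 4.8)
\textbf{Proof proposal for Lemma \ref{lem_bestimate}.}

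The plan is to mimic, for the matrix $B_1 = B\mathtt{I}^{\mathrm{u}}$ and the resolvent power $\Xi_2 = G^2 - \Pi_2$, the exact same cumulant-expansion scheme that was just carried out for $h_1 = (\Xi_1 A_1)_{j'i}$ in the proof of \eqref{eq_parta1}. The three identities to be established correspond to the three allocations $(l_1,l_2,l_3) \in \{(1,0,0),(0,1,0),(0,0,1)\}$ of a single derivative among the factors $\tilde h_1 = (\Xi_2 B_1)_{j'i}$, $h_2 = Q^{k-1}$ and $h_3 = e^{\ii t\Delta}$, summed against $n^{-1/2}\sum_{i,j}$.

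First I would treat $\tilde{\hslash}(1,0,0) = n^{-1/2}\sum_{i,j} \frac{\partial \tilde h_1}{\partial x_{ij}} h_2 h_3$. Using Lemma \ref{lem_greenderivative} for $\partial (G^2)/\partial x_{ij}$ — that is, the second identity displayed there, $\partial(G^2)/\partial x_{ij} = -z^{1/2}\sum_{s=0}^{1}(G\mathcal E_{ij})^s G (G\mathcal E_{ij})^{1-s}G$ — one expands $\partial \tilde h_1/\partial x_{ij}$ into terms of the schematic form $-\sqrt z\,(G^{a}B_1)_{l_1 l_2}(G^{b})_{l_3 l_4}$ with $l_\bullet \in \{i,j'\}$, $l_1\neq l_4$, and $a+b=3$ distributed as $(2,1)$ or $(1,2)$. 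Replacing every Green-function entry of the form $(G^{a_1}B_1 G^{a_2})_{\mu\nu}$ and $G^{a}_{\mu\nu}$ by its deterministic approximant $(\Pi_{a_1}B_1\Pi_{a_2})_{\mu\nu}$ resp.\ $(\Pi_a)_{\mu\nu}$ via \eqref{eq_localoutside} costs $O_\prec(n^{-1/2})$ per entry, and the diagonal structure of $\Pi_a$ collapses the sum over $(i,j)$ into traces. Carrying out the bookkeeping — keeping track of which of the two indices is $i$ and which is $j'$, and of the block structure of $B_1$ through the diagonal matrix $\mathtt{I}^{\mathrm{u}}$ — produces precisely the combination $-\sqrt{nz}\bigl((2m_2' + m_2/z)\Tr(GB_1) + m_2\Tr(G^2B_1)\bigr)\mathfrak q^{(k-1)}$; the coefficients $2m_2' + m_2/z$ and $m_2$ are read off from \eqref{18072530}–\eqref{18072531} and Lemma \ref{lem_m1m2quanty}, exactly as the coefficient $m_2$ arose for $\hslash(1,0,0)$ in \eqref{eq_parth1h2h3}. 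Here I would reuse the identities \eqref{eq_secondgm}–\eqref{eq_fourgm} and \eqref{eq_partig2w} for the expansions of $\partial^l(GW)/\partial x_{ij}^l$ and $\partial^l(G^2W)/\partial x_{ij}^l$.

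For $\tilde{\hslash}(0,1,0) = n^{-1/2}\sum_{i,j}\tilde h_1\,\partial h_2/\partial x_{ij}\,h_3$, I would substitute $\partial h_2/\partial x_{ij} = (k-1)Q^{k-2}\partial Q/\partial x_{ij}$ and use Lemma \ref{lem. deri of Q} together with \eqref{eq_parti_1} to write $\partial Q/\partial x_{ij}$ as the same $\Pi$-combination appearing in $c_{ij}$ plus $O_\prec(1)$; then $\tilde h_1 = (\Xi_2 B_1)_{j'i} = (G^2 B_1)_{j'i} - (\Pi_2 B_1)_{j'i}$, of which the leading part is $O_\prec(n^{-1/2})$ by \eqref{eq_genrealcontrol} but the product $\sum_{i,j}(\text{two }\Xi\text{-type factors})$ survives after summation precisely as a deterministic quantity. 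Matching the result against the definition of $\tilde{\mathfrak b}_{11}$ in \eqref{def:missvar} — which is manifestly built from traces $\Tr(\Pi_3^{\mathrm l} - \Pi_{1,2}^{\mathrm l})B_1\Pi_1 A$ etc.\ — is then a matter of organizing the index sums using $\Pi_{a_1,a_2}^{\mathrm u/\mathrm l}$ and $\Pi_a^{\mathrm u/\mathrm l}$ from \eqref{eq_piipi}–\eqref{eq_pi2ul} and \eqref{18091566}; the factor $(k-1)$ and the two combinatorial doublings (from $\partial(G^2)$ having two summands, and from the two choices $l_1\neq l_4$) account for the coefficient structure. Finally $\tilde{\hslash}(0,0,1)$: here $\partial h_3/\partial x_{ij} = \mathbf 1((i,j)\in\mathcal B(\nu))\,\ii t\sqrt{nz}\,c_{ij}\,e^{\ii t\Delta}$ by \eqref{eq:derivativeh3}, so the sum is restricted to $|\mathcal B(\nu)| \le Cn^{4\nu}$ terms, each of which is $O_\prec(n^{-1/2}\cdot n^{1/2}\cdot 1)=O_\prec(1)$ after the $n^{-1/2}$ prefactor is counted together with the $\sqrt n$ inside $\sqrt{nz}$ and the $n^{-1/2}$ smallness of $\tilde h_1$; hence the total is $O_\prec(n^{-1/2+4\nu})$.

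The main obstacle I anticipate is purely organizational rather than conceptual: because $\Xi_2 = G^2 - \Pi_2$ involves a resolvent \emph{square}, the derivative $\partial(G^2 B_1)/\partial x_{ij}$ generates roughly twice as many terms as in the $A_1$ case, each carrying its own pattern of index restrictions $l_1\neq l_4$, $l_2\neq l_3$, and the deterministic replacement must be tracked through powers $\Pi_1,\Pi_2,\Pi_3,\Pi_4$ and their block-diagonal-interaction analogues $\Pi^{\mathrm u}_\bullet,\Pi^{\mathrm l}_\bullet$. Ensuring that the surviving $O(1)$ contributions assemble exactly into the stated combinations — and that everything else is genuinely $O_\prec(n^{-1/2+4\nu})$, using $|A_{ij'}|,|B_{ij'}| \le Cn^{-\nu}$ on $\mathcal S(\nu)$ and $|\mathcal B(\nu)|\le Cn^{4\nu}$ on the complement — is the delicate accounting step; it parallels the $A_1$ computation line by line, so no new technique is required, only care. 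The error term $\mathbb E\tilde{\mathcal R}_1$ from the cumulant expansion is controlled exactly as $\mathbb E\mathcal R_1$ in \eqref{eq:errorA1}–\eqref{eq_a1final}, using the moment assumption \eqref{eq_moment}, the fourth-derivative bounds that will be supplied by the analogue of Lemma \ref{lem_24h1h2h3}, and the crude deterministic bound $\|G\|\le n^C$ on the exceptional event afforded by the choice \eqref{eq_zdefn} of $z$.
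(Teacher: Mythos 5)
Your proposal follows essentially the same route as the paper's proof: apply the first‑derivative identity for $G^2W$ (the paper's \eqref{eq_g2w1st}, which you extract from Lemma~\ref{lem_greenderivative}), replace Green‑function entries by their deterministic $\Pi$‑approximants via the isotropic law, collapse the $(i,j)$‑sums into traces using the block structure of $B_1$, and match the surviving $O(1)$ combination for $\tilde{\hslash}(0,1,0)$ against the definition of $\tilde{\mathfrak b}_{11}$, while $\tilde{\hslash}(0,0,1)$ is handled by restricting the sum to $\mathcal{B}(\nu)$ and invoking $|\mathcal B(\nu)|\le Cn^{4\nu}$. The only real difference is presentational — the paper makes the $\mathcal G_1,\mathcal G_2$ block decomposition of $(\Xi_2 B)_{j'i}$ explicit before reading off the four trace identities, and it defers the error‑term $\mathcal R$ analysis (which you fold in here) to the surrounding proof of \eqref{eq_partb1} — but the underlying mechanics are identical.
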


\begin{lem}\label{lem_bestimate2} For higher order derivatives of $\tilde h_1h_2h_3$, we have the following estimates. 

\noindent(1). For the second  derivative, we have  
\begingroup
\allowdisplaybreaks
\begin{align*}
&\tilde{\hslash}(2,0,0)
=\tilde{\mathfrak{d}}_1 \mathfrak{q}^{(k-1)} +O_{\prec}(n^{-\frac12}),\\
&\tilde{\hslash}(1,1,0)=\tilde{\mathfrak{b}}_{21} \mathfrak{q}^{(k-2)}+O_{\prec}(n^{-\frac12}),\\
&\tilde{\hslash}(0,2,0)=O_{\prec}(n^{-\frac12}).
\end{align*}
\endgroup
All the other terms with $l_3\geq 1$ can be bounded by $O_{\prec}(n^{-\frac12+4\nu}).$

\noindent(2). For the third  derivative, we have 
\begingroup
\allowdisplaybreaks
\begin{align*}
&\tilde{\hslash}(1,2,0) = \tilde{\mathfrak{b}}_{31} \mathfrak{q}^{(k-2)}+O_{\prec}(n^{-\frac12}),\\
& \tilde{\hslash}(3,0,0)=O_{\prec}(n^{-\frac12}), 
\quad 
\tilde{\hslash}(0,3,0)=O_{\prec}(n^{-\frac12}), \\
&\tilde{\hslash}(2,1,0)=O_{\prec}(n^{-1}).
\end{align*}
\endgroup
All the other terms with $l_3\geq 1$ can be bounded by  $O_{\prec}(n^{-\frac12+4\nu}).$

\noindent(3).  For the fourth  derivative, all the terms can be bounded by $O_{\prec}(n^{-\frac12+4\nu}).$
\end{lem}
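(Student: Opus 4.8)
The plan is to mimic the proof of Lemma~\ref{lem_24h1h2h3} almost verbatim, the only change being that the factor $h_1=(\Xi_1A_1)_{j'i}$ is replaced by $\tilde h_1=(\Xi_2B_1)_{j'i}$. Since $\Xi_2=G^2-\Pi_2$ with $\Pi_2$ deterministic, every $x_{ij}$-derivative of $\tilde h_1$ is supplied by the second identity of Lemma~\ref{lem_greenderivative},
\[
\frac{\partial^{l_1}\tilde h_1}{\partial x_{ij}^{l_1}}=(-1)^{l_1}l_1!\,z^{l_1/2}\sum_{s=0}^{l_1}\bigl((G\mathcal E_{ij})^sG(G\mathcal E_{ij})^{l_1-s}GB_1\bigr)_{j'i},
\]
which is a sum of products of $l_1+2$ entries of $G$ with all intermediate indices confined to $\{i,j'\}$, weighted by $z^{l_1/2}$ and by one entry of $B_1$ carrying the indices $(j',i)$ in some order (compare \eqref{eq_secondgm}--\eqref{eq_partig2w}). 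The derivatives of $h_2=Q^{k-1}$ and of $h_3=e^{\ii t\Delta}$ are the same ones already used for the $A_1$-part: the Fa\`a di Bruno expansion of $\partial^{l_2}h_2$ in terms of $\partial^aQ$ together with \eqref{eq_parti_1}--\eqref{eq_parti_4}, and \eqref{eq:derivativeh3} for $\partial^{l_3}h_3$. Thus the proof is a term-by-term bookkeeping exercise on the output of the product rule applied to $\partial^{l_1}\tilde h_1\cdot\partial^{l_2}h_2\cdot\partial^{l_3}h_3$, combined with the isotropic local law \eqref{eq_localoutside}.

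The bookkeeping rests on three scalar counts. First, a power count in $n$: each derivative $\partial^aQ$ with $a\ge1$ contributes a factor $\sqrt n$ (times a bounded power of $z$, which is itself $\sim1$), each $\partial^{l_3}h_3$ with $l_3\ge1$ a factor $(\sqrt{nz}\,c_{ij})^{l_3}$ via \eqref{eq:derivativeh3}, while the prefactor of $\tilde\hslash$ supplies $n^{-(l_1+l_2+l_3)/2}$; this is what generates the $O_\prec(n^{-1/2})$ and $O_\prec(n^{-1})$ gains appearing in parts~(1) and (2). Second, a smallness count: every entry of $A$, $B$, $c_{ij}$ or $s_{ij}$ that enters along the way carries at least one component $\mathbf u(i)$ or $\mathbf v(j)$, which is $O(n^{-\nu})$ whenever $(i,j)\in\mathcal S(\nu)$, so any non-leading configuration acquires extra negative powers of $n$ (or is shrunk by the large-deviation error $O_\prec(n^{-1/2})$ coming from replacing a $G^a$ by $\Pi_a$ in \eqref{eq_localoutside}). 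Third, on any surviving term that still contains a derivative $\partial^{l_3}h_3$ with $l_3\ge1$, identity \eqref{eq:derivativeh3} confines the $(i,j)$-sum to $\mathcal B(\nu)$; since $|\mathcal B(\nu)|\le Cn^{4\nu}$ while the remaining factors are $O_\prec(1)$ after the power count, such a contribution is $O_\prec(n^{-1/2+4\nu})$, which is exactly the error recorded in the lemma. After discarding everything that is $O_\prec(n^{-1/2+4\nu})$ by these counts, the only order-$1$ survivors sit in $\tilde\hslash(2,0,0)$, $\tilde\hslash(1,1,0)$ and $\tilde\hslash(1,2,0)$, each reducing to a single configuration; replacing every remaining $G^a$ by $\Pi_a$ and every $G\mathtt I^{\mathrm u}G$, $G\mathtt I^{\mathrm l}G$ by the matrices $\Pi_2^{\mathrm u},\Pi_2^{\mathrm l}$ of \eqref{eq_pi2ul} (and their higher-order analogues $\Pi_3^{\mathrm u},\Pi_4^{\mathrm u}$, etc.), and summing over $(i,j)$, turns those configurations into $\tilde{\mathfrak d}_1\,\mathfrak q^{(k-1)}$, $\tilde{\mathfrak b}_{21}\,\mathfrak q^{(k-2)}$ and $\tilde{\mathfrak b}_{31}\,\mathfrak q^{(k-2)}$ of \eqref{18091910}, \eqref{18091930}, \eqref{18091931}. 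The remaining second- and third-order terms $\tilde\hslash(3,0,0)$, $\tilde\hslash(0,3,0)$, $\tilde\hslash(0,2,0)$, $\tilde\hslash(2,1,0)$ and all fourth-order terms are killed by the power and smallness counts alone, proving (1)--(3).

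I expect the genuinely laborious step to be the extraction of the leading terms of $\tilde\hslash(1,1,0)$ and $\tilde\hslash(1,2,0)$. There the one (resp. two) derivatives of $Q$ that appear inside $\partial h_2$ and $\partial^2h_2$ each unfold, through \eqref{eq_parti_1}--\eqref{eq_parti_2}, into a sum of several Green-function products with indices in $\{i,j'\}$, and one must check that after the $(i,j)$-summation and the $\Pi$-substitution only the one combination encoded by $\tilde{\mathfrak b}_{21}$ (resp. $\tilde{\mathfrak b}_{31}$), expressed through the $c_{ij}$ and $s_{ij}$ of \eqref{defn_cij1}, \eqref{defn_sij1}, is of order $1$, every cross term being smaller either by the $n^{-\nu}$ mechanism or by an additional $n^{-1/2}$. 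This is exactly the obstacle already faced in the proof of Lemma~\ref{lem_24h1h2h3}; the argument here is structurally identical, only the explicit coefficients changing because $G$ is replaced by $G^2$ and $\Pi_1$ by $\Pi_2$, so the proof proceeds along the lines of Sections~\ref{s.18092001} and \ref{s.18092002}.
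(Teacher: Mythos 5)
Your proposal mirrors the paper's own argument: the proof of Lemma~\ref{lem_bestimate2} is obtained by carrying over the bookkeeping of Lemma~\ref{lem_24h1h2h3} with $h_1$ replaced by $\tilde h_1=(\Xi_2 B_1)_{j'i}$, using the $G^2$-derivative identities of Lemma~\ref{lem_greenderivative} (equivalently \eqref{eq_g2w1st} and \eqref{eq_partig2w}) in place of the $G$-derivative ones, combined with \eqref{eq_parti_1}--\eqref{eq_parti_2}, \eqref{eq:derivativeh3}, the isotropic local law \eqref{eq_localoutside}, and the $|\mathcal B(\nu)|\lesssim n^{4\nu}$ bound for terms involving $\partial_{x_{ij}}h_3$. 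The identification of $\tilde\hslash(2,0,0)$, $\tilde\hslash(1,1,0)$, $\tilde\hslash(1,2,0)$ as the sole leading contributions and of the remaining terms as error is exactly what the paper records.
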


With these preparations, using arguments similar to those of \eqref{eq_defnfa1}, we find that 
\begin{align*}
 \sqrt{n}\mathbb{E} g_1 \mathfrak{q}^{(k-1)}
&=-\frac{m_1 \sqrt{z}}{2} \mathbb{E} \Big(  \frac{\kappa_3}{2}\tilde{\mathfrak{d}}_1 \mathfrak{q}^{(k-1)}+\Big( \tilde{\mathfrak{b}}_{11}+\kappa_3\tilde{\mathfrak{b}}_{21}+\frac{\kappa_4}{2}\tilde{\mathfrak{b}}_{31} \Big) \mathfrak{q}^{(k-2)}  \Big)  \\
&\quad+\sqrt{n} \mathbb{E} \Big( \frac{m_1'}{m_1} \text{Tr} GB_1-m_1' \text{Tr} B_1 +m_1' \text{Tr} H\Pi_1 B_1 \Big) \mathfrak{q}^{(k-1)}+O_{\prec}(n^{-\frac12+4\nu}).
\end{align*}
In the above, we use  (\ref{eq_f1mm2}) and an identity
\begin{align}
\frac{m_1}{2} \Big(zm_1 (2m_2'+\frac{m_2}{z})+m_1-\frac{1}{z} \Big)=m_1', \label{180920100}
\end{align}
which can be checked from  (\ref{eq_self1}) and (\ref{eq_self2}). 
Next, observe that
\begin{align*}
& \sqrt{n} \mathbb{E} \Big( \frac{m_1'}{m_1} \text{Tr} GB_1+m_1' \text{Tr} H\Pi_1 B_1-m_1' \text{Tr} B_1 \Big) \mathfrak{q}^{(k-1)} \\
&= \sqrt{n} \mathbb{E} \Big(-zm_1' \text{Tr} GB_1+\frac{m_1' F_1}{m_1} \text{Tr} GB_1 +m_1' \text{Tr} H \Pi_1 B_1-m_1' \text{Tr} B_1 \Big) \mathfrak{q}^{(k-1)} , \\
&=\frac{m_1'}{m_1} \sqrt{n} \mathbb{E} \Big( -m_1 \text{Tr} H\Xi_1B_1+F_1 \text{Tr} GB_1 \Big) \mathfrak{q}^{(k-1)}.
\end{align*}
In the first step above, we simply use the definition of $F_1$ in (\ref{defn_f1f2}). In the second step, we use the fact $zG=HG-I$. Note that the remaining derivation can be done via replacing $A_1$ with $B_1$ (mutatis mutandis)  in the counterpart for $f_1$. Therefore, we finally get 
\begin{align*}
 \sqrt{n}\mathbb{E} g_1\mathfrak{q}^{(k-1)}
&=- \sqrt{z} \mathbb{E} \Big( \frac{m_1 \kappa_3 }{4} \Big(\tilde{\mathfrak{d}}_1+2\frac{m_1'}{m_1} \mathfrak{d}_1^b \Big)\mathfrak{q}^{(k-1)} +\Big(\frac{m_1}{2} \tilde{\mathfrak{b}}_{11} + m_1'\mathfrak{b}_{11}\Big) \mathfrak{q}^{(k-2)}\\
&\qquad+ \Big(\frac{m_1  \kappa_3}{2} \tilde{\mathfrak{b}}_{21}+ \kappa_3  m_1'  \mathfrak{b}_{21} + \frac{m_1 \kappa_4}{4} \tilde{\mathfrak{b}}_{31} + \frac{\kappa_4 m_1' }{2} \mathfrak{b}_{31} \Big) \mathfrak{q}^{(k-2)}  \Big)+O_{\prec}(n^{-\frac12+4\nu}).  
\end{align*}
This verifies (\ref{eq_partb1}) in case of $\alpha=1$ by recalling the definition in (\ref{180920150}).

The proofs of (\ref{eq_parta1}) and (\ref{eq_partb1}) in case of $\alpha=2$  are analogous to those of (\ref{eq_defnfa1}) and  (\ref{eq_defnb1}). We outline the main steps. 
First observe that 
\begin{align*}
\sqrt{n} \mathbb{E} f_2\mathfrak{q}^{(k-1)} &=\mathbb{E} \Big(-m_2 \sqrt{nz} \sum_{i,j} x_{ij} (\Xi_1A_2)_{ij'}+F_2 \text{Tr} GA_2\Big) \mathfrak{q}^{(k-1)}, \nonumber\\
\sqrt{n} \mathbb{E} g_2 \mathfrak{q}^{(k-1)}&=\mathbb{E} \Big(-\frac{m_2}{2} \sqrt{nz} \sum_{i,j} x_{ij} (\Xi_2 B_2)_{ij'} +\frac{F_2}{2} \text{Tr} G^2 B_2  \nonumber\\
&\qquad+\frac{zm_2-1}{2z} \text{Tr}GB_2-m_1' \text{Tr} B_2+m_1' \text{Tr} H\Pi_1 B_2  \Big) \mathfrak{q}^{(k-1)}.
\end{align*}
Recall $h_2$ and $h_3$ in (\ref{eq_defnfunction}) and denote 
\begin{equation*}
\mathbbm{h}_1=(\Xi_1A_2)_{ij'}, \quad \tilde{\mathbbm{h}}_1=(\Xi_2B_2)_{ij'}. 
\end{equation*} 
 Analogously to (\ref{18091411}) and (\ref{18091420}), we introduce the notations
\begin{align*}
&\mathfrak{h}(l_1,l_2,l_3):=n^{-\frac{l_1+l_2+l_3}{2}}\sum_{i,j}\frac{\partial^{l_1} \mathbbm{h}_1}{\partial x_{ij}^{l_1}} \frac{\partial^{l_2} h_2}{\partial x_{ij}^{l_2}} \frac{\partial^{l_3} h_3}{\partial x_{ij}^{l_3}},
\end{align*}
and $\tilde{\mathfrak{h}}(l_1,l_2,l_3)$ which is defined via replacing $\mathbbm{h}_1$ by $\tilde{\mathbbm{h}}_1$ in the above definition. 

Then we have the estimates for the first order derivatives involving $\mathbbm{h}_1$ and $\tilde{\mathbbm{h}}_1.$
\begin{lem} \label{lem_a2b2estimate} For ${\mathfrak{h}}$,  we have  
\begingroup
\allowdisplaybreaks
\begin{align}
&\mathfrak{h}(1,0,0)= -\sqrt{nz} y m_1 (\operatorname{Tr} GA_2) \mathfrak{q}^{(k-1)} +O_{\prec}(n^{-\frac12}),\nonumber\\
&\mathfrak{h}(0,1,0)=	\mathfrak{a}_{12} \mathfrak{q}^{(k-2)}+O_{\prec}(n^{-\frac12+4\nu}), \label{eq_partifrakh1h2h3}\\
&\mathfrak{h}(0,0,1) =O_{\prec}(n^{-\frac12+4\nu}). \nonumber
\end{align}
\endgroup
Similarly, for $\tilde{\mathfrak{h}}$, we have 
\begin{align*}
&\tilde{\mathfrak{h}}(1,0,0)=-\sqrt{nz} y \Big((2m_1'+\frac{m_1}{z}) \Tr GB_2+m_1 \operatorname{Tr} G^2 B_2 \Big) \mathfrak{q}^{(k-1)}+O_{\prec}(n^{-\frac12}),\nonumber\\
&\tilde{\mathfrak{h}}(0,1,0)=\tilde{\mathfrak{b}}_{12}\mathfrak{q}^{(k-2)} +O_{\prec}(n^{-\frac12+4\nu}),\\
&\tilde{\mathfrak{h}}(0,0,1) =O_{\prec}(n^{-\frac12+4\nu}).
\end{align*}
\end{lem}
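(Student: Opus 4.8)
\textbf{Proof strategy for Lemma \ref{lem_a2b2estimate}.}

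The plan is to treat the four identities in Lemma \ref{lem_a2b2estimate} exactly as the mirror images of Lemmas \ref{lem_estimateh1h2h3}, \ref{lem_bestimate} and their companions for the second block coordinate, with $\mathbbm{h}_1=(\Xi_1 A_2)_{ij'}$ replacing $h_1=(\Xi_1 A_1)_{j'i}$ and $\tilde{\mathbbm{h}}_1=(\Xi_2 B_2)_{ij'}$ replacing $\tilde h_1=(\Xi_2 B_1)_{j'i}$. The only structural change is that now the index $i$ runs over the ``$\mathrm{u}$''-block $[M]$ while $j'$ runs over the ``$\mathrm{l}$''-block $M+[n]$, so the diagonal matrix $\mathtt I^{\mathrm{l}}$ in \eqref{eq_diagonalmatrix} is the relevant projector (rather than $\mathtt I^{\mathrm{u}}$), and the Stieltjes transform attached to the summed diagonal of $\Pi_1$ over the $j$-index is $m_1$ with an extra factor $y=M/n$ coming from $n^{-1}\sum_{i=1}^{M}(\Pi_1)_{ii}=y\,m_1+O(n^{-1})$. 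This is precisely the reason the factor $y m_1$ appears in $\mathfrak h(1,0,0)$ and $\tilde{\mathfrak h}(1,0,0)$ where $m_2$ appeared in Lemmas \ref{lem_estimateh1h2h3} and \ref{lem_bestimate}.

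Concretely, for $\mathfrak h(1,0,0)$ I would start from $\frac{\partial \mathbbm h_1}{\partial x_{ij}}=\big(\frac{\partial G}{\partial x_{ij}}A_2\big)_{ij'}=-\sqrt z\sum_{l_1\ne l_2\in\{i,j'\}}(G\mathcal E_{ij})\ldots$ via Lemma \ref{lem_greenderivative}, substitute the isotropic local law \eqref{eq_localoutside} to replace each $G$ by the corresponding $\Pi_1$, and perform the double sum $n^{-1}\sum_{i,j}$. Using \eqref{eq_piz}, $(\Pi_1)_{ii}=m_1$ for $i\in[M]$ and $(\Pi_1)_{j'j'}=m_2$ for $j'\in M+[n]$, the leading contribution is $-\sqrt{nz}\,\big(n^{-1}\sum_{i}(\Pi_1)_{ii}\big)\big(n^{-1}\sum_{j}(\Pi_1)_{j'j'}\big)^{-1}\cdot(\text{stuff})$; collecting terms and using $n^{-1}\sum_i(\Pi_1)_{ii}=y m_1+O(n^{-1})$ together with the structure $A_2=A\mathtt I^{\mathrm l}$ yields $-\sqrt{nz}\,y m_1\,(\operatorname{Tr}GA_2)\mathfrak q^{(k-1)}+O_\prec(n^{-1/2})$, where the surviving trace is still written in terms of $G$ (not $\Pi_1$) because it will later cancel against $F_2\operatorname{Tr}GA_2$ via \eqref{eq_f1mm2}. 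For $\mathfrak h(0,1,0)$ and $\mathfrak h(0,0,1)$ I would copy verbatim the arguments for $\hslash(0,1,0)$ and $\hslash(0,0,1)$ in Lemma \ref{lem_estimateh1h2h3}: differentiating $h_2=Q^{k-1}$ brings down $(k-1)Q^{k-2}\partial_{x_{ij}}Q$, then one inserts \eqref{eq:derivativeQ1} and sums over $(i,j)$, landing on the quantity $\mathfrak a_{12}$ defined (via the $A_1\to A_2$, $\Pi^{\mathrm l}\to\Pi^{\mathrm u}$ substitution) right after \eqref{def:missvar}; the $\partial h_3$ term is $O_\prec(n^{-1/2+4\nu})$ because it is supported on $\mathcal B(\nu)$ which has cardinality $O(n^{4\nu})$ and each summand carries a compensating $n^{-1/2}$. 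The three identities for $\tilde{\mathfrak h}$ are handled the same way, using the $G^2$-derivative formula in Lemma \ref{lem_greenderivative} and $\Pi_2,\Pi_2^{\mathrm l},\Pi_2^{\mathrm u}$ in place of the first powers, exactly paralleling Lemma \ref{lem_bestimate}; the appearance of $(2m_1'+m_1/z)\operatorname{Tr}GB_2+m_1\operatorname{Tr}G^2B_2$ comes from $\Pi_2 = 2\Pi_1'+z^{-1}\Pi_1$ restricted to the $[M]$-diagonal, whose entries are $m_1'$ on $[M]$ and $m_2'+z^{-1}m_2$ on $M+[n]$ by \eqref{18072531}, again with the extra $y$ from summing the $M$-block diagonal.

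The main obstacle is purely bookkeeping rather than conceptual: one must track, through the double sum over $(i,j)$, which of the indices $i,j'$ lands in the upper block and which in the lower, so that each diagonal entry of $\Pi_1$ (or $\Pi_2$, $\Pi_3$, $\Pi_4$) is evaluated as the correct Stieltjes transform, and so that the off-diagonal matrix entries $(A_2)_{ij'}$, $(B_2)_{ij'}$ (which are $\omega_{i2}\mathbf u(i)\mathbf v(j)$-type, cf.\ \eqref{eq:new-ab}) are paired correctly with the rank-one structure; any mismatch changes an $m_2$ into $ym_1$ or vice versa and corrupts the final variance formula $\mathcal V^E$. Once the block structure is correctly accounted for, every estimate reduces to a term already proved in Lemmas \ref{lem_estimateh1h2h3}, \ref{lem_24h1h2h3}, \ref{lem_bestimate} and \ref{lem_bestimate2} up to the substitution $(\mathtt I^{\mathrm u},m_2)\leftrightarrow(\mathtt I^{\mathrm l},ym_1)$, so no new analytic input beyond the isotropic local law \eqref{eq_localoutside}, the cumulant expansion (Lemma \ref{lem_cumu}), and the derivative bounds in Lemma \ref{lem. deri of Q} is needed. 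I would therefore present the proof by stating the substitution dictionary explicitly and then saying ``the remaining estimates follow mutatis mutandis from the proofs of Lemmas \ref{lem_estimateh1h2h3}--\ref{lem_bestimate2}''.
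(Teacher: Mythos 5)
Your proposal is correct and follows the paper's route essentially verbatim: the paper computes $\mathfrak h(1,0,0)$ and $\tilde{\mathfrak h}(1,0,0)$ directly from the Green-function derivative identities, picking up the factor $y\,m_1$ from $n^{-1}\sum_{i=1}^M G_{ii}=y\,m_{1n}$, and for $\mathfrak h(0,1,0)$, $\tilde{\mathfrak h}(0,1,0)$ rewrites the double sum as $\operatorname{Tr}\big(G\mathtt I^{\mathrm u}G-G\mathtt I^{\mathrm u}\Pi_1\big)A_2\,GA$ and its companions, yielding the $\Pi^{\mathrm u}$-type approximants that assemble into $\mathfrak a_{12}$ and $\tilde{\mathfrak b}_{12}$. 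One bookkeeping slip worth flagging: in justifying the coefficient $(2m_1'+m_1/z)$ in $\tilde{\mathfrak h}(1,0,0)$ you say that $\Pi_2$ has entry $m_1'$ on the $[M]$-diagonal; in fact $m_1'$ is the $[M]$-entry of $\Pi_2^{\mathrm u}$ (the approximant of $G\mathtt I^{\mathrm u}G$), whereas the object in the derivative formula is $(G^2)_{ii}\approx(\Pi_2)_{ii}=2m_1'+z^{-1}m_1$, which is what your displayed formula in fact uses; similarly, the projector surviving in the $\mathfrak a_{12}$-type trace identities is $\mathtt I^{\mathrm u}$ rather than $\mathtt I^{\mathrm l}$ (the latter enters only through $A_2=A\mathtt I^{\mathrm l}$), so the cleanest form of the substitution dictionary is $(\Pi^{\mathrm l},m_2)\to(\Pi^{\mathrm u},y\,m_1)$.
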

For the higher order derivatives, we have the following lemma. 
\begin{lem} \label{lem_a2b2estimatehigh} We have the following estimates in case $l_1+l_2+l_3\geq 2$. 

\noindent(1). For ${\mathfrak{h}}(l_1,l_2,l_3)$,  we have 
\begingroup
\allowdisplaybreaks
\begin{align*}
&{\mathfrak{h}}(2,0,0)=\mathfrak{d}_2^a \mathfrak{q}^{(k-1)} +O_{\prec}(n^{-\frac12}),\\
&{\mathfrak{h}}(1,1,0)=\mathfrak{a}_{22} \mathfrak{q}^{(k-2)}+O_{\prec}(n^{-\frac12+4\nu}),\\
&{\mathfrak{h}}(1,2,0)=\mathfrak{a}_{32} \mathfrak{q}^{(k-2)}+O_{\prec}(n^{-\frac12}).
\end{align*}
\endgroup
All the other terms with $l_1+l_2+l_3\geq 2$ can be bounded by $O_{\prec}(n^{-\frac12+4\nu}).$ 

\noindent(2). For  $\tilde{\mathfrak{h}}(l_1,l_2,l_3)$ we have 
\begingroup
\allowdisplaybreaks
\begin{align*}
& \tilde{\mathfrak{h}}(2,0,0)=\tilde{\mathfrak{d}}_2 \mathfrak{q}^{(k-1)}+O_{\prec}(n^{-\frac12}),\\
&\tilde{\mathfrak{h}}(1,1,0)=\tilde{\mathfrak{b}}_{22} \mathfrak{q}^{(k-2)}+O_{\prec}(n^{-\frac12}),\\
&\tilde{\mathfrak{h}}(1,2,0) =\tilde{\mathfrak{b}}_{32} \mathfrak{q}^{(k-2)}+O_{\prec}(n^{-\frac12}).
\end{align*}
\endgroup
All the other terms with $l_1+l_2+l_3\geq 2$ can be bounded by $O_{\prec}(n^{-\frac12+4\nu}).$
\end{lem}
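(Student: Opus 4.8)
The plan is to mirror, term by term, the arguments already carried out for the $\alpha=1$ counterparts in Lemmas \ref{lem_24h1h2h3} and \ref{lem_bestimate2}. The only structural change is that the left factor $h_1=(\Xi_1A_1)_{j'i}$ is replaced by $\mathbbm{h}_1=(\Xi_1A_2)_{ij'}$ (respectively $\tilde h_1=(\Xi_2B_1)_{j'i}$ by $\tilde{\mathbbm{h}}_1=(\Xi_2B_2)_{ij'}$): the matrix entry position flips from $(j',i)$ to $(i,j')$ and the block projector $\mathtt{I}^{\mathrm{u}}$ inside $A_1$ is replaced by $\mathtt{I}^{\mathrm{l}}$ inside $A_2$. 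Consequently, when a derivative $\partial/\partial x_{ij}$ (computed via Lemma \ref{lem_greenderivative}) lands on the Green functions attached to $\mathbbm{h}_1$, the resulting chains of resolvents now close on the lower block at the index $j'$ rather than on the upper block at $i$; after applying the local laws \eqref{eq_localoutside} this turns the approximants $\Pi_a^{\mathrm{l}},\Pi_{a_1,a_2}^{\mathrm{l}}$ of the $\alpha=1$ computation into $\Pi_a^{\mathrm{u}},\Pi_{a_1,a_2}^{\mathrm{u}}$ (see \eqref{eq_pi2ul}, \eqref{18091566}), and exchanges the roles of the summation indices $i\in[M]$ and $j\in[n]$. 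This is precisely the substitution by which $\mathfrak{d}_2^a,\mathfrak{a}_{22},\mathfrak{a}_{32}$ (resp.\ $\tilde{\mathfrak{d}}_2,\tilde{\mathfrak{b}}_{22},\tilde{\mathfrak{b}}_{32}$) were defined from their $\alpha=1$ versions in \eqref{18091910}, \eqref{18091930}, \eqref{18091931}.

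Concretely, for each triple $(l_1,l_2,l_3)$ with $l_1+l_2+l_3\ge 2$ I would proceed in three steps. First, expand $\partial^{l_1}\mathbbm{h}_1/\partial x_{ij}^{l_1}$ by Lemma \ref{lem_greenderivative}, expand $\partial^{l_2}h_2/\partial x_{ij}^{l_2}$ by the Fa\`a di Bruno formulas already recorded (invoking \eqref{eq_parti_2}--\eqref{eq_parti_4} for the derivatives of $Q$), and expand $\partial^{l_3}h_3/\partial x_{ij}^{l_3}$ via \eqref{eq:derivativeh3}; each factor becomes a sum over auxiliary indices $l_\bullet\in\{i,j'\}$, subject to the off-diagonal constraints $l_{2m-1}\ne l_{2m}$ forced by the derivative identities, of products of entries of $G$, $G^2$, $GA_2$, $G^2B_2$ times explicit powers of $\sqrt n$, $\sqrt{nz}$ and, from $h_3$, the indicator $\mathbf{1}((i,j)\in\mathcal{B}(\nu))$ together with $c_{ij}$. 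Second, replace every Green function entry by its deterministic $\Pi$-approximation using \eqref{eq_localoutside}, at a cost $O_\prec(n^{-1/2})$ per replacement; since the $\Pi_a$ are block-scalar, a surviving term must have all the $\Pi$-indices matched in pairs, so the resolvent chains must close and only $O(1)$ index combinations contribute to each term. Third, carry out the sum over $(i,j)$: a sum over $i\in[M]$ through a diagonal entry of $\Pi_1$ or $\Pi_1A_2$ produces a factor $\sim n$ or an $\mathbf{s}_l(\mathbf{u})$-type quantity, a sum over $j\in[n]$ produces $\sim n$ or an $\mathbf{s}_l(\mathbf{v})$-type quantity, any factor $\partial Q/\partial x_{ij}$ is bounded by Lemma \ref{lem. deri of Q} ($O_\prec(1)$ on $\mathcal{B}(\nu)$, $O_\prec(n^{1/2-\nu})$ on $\mathcal{S}(\nu)$), $Q$ itself is $O_\prec(1)$ (cf.\ \eqref{eq_boundq}), and any restriction to $\mathcal{B}(\nu)$ coming from $l_3\ge 1$ contributes at most $|\mathcal{B}(\nu)|\le Cn^{4\nu}$. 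Balancing the prefactor $n^{-(l_1+l_2+l_3)/2}$ against these gains, one verifies that only $(l_1,l_2,l_3)\in\{(2,0,0),(1,1,0),(1,2,0)\}$ leave a nonvanishing main term, namely $\mathfrak{d}_2^a\mathfrak{q}^{(k-1)}$, $\mathfrak{a}_{22}\mathfrak{q}^{(k-2)}$, $\mathfrak{a}_{32}\mathfrak{q}^{(k-2)}$, while every other combination (in particular all $l_1+l_2+l_3\ge 2$ terms with $l_3\ge 1$) is $O_\prec(n^{-1/2+4\nu})$. The estimates for $\tilde{\mathfrak{h}}(l_1,l_2,l_3)$ are obtained identically, now expanding $\partial^{l_1}\tilde{\mathbbm{h}}_1/\partial x_{ij}^{l_1}$ with the second identity of Lemma \ref{lem_greenderivative} (the formula for $\partial^k(G^2)/\partial x_{ij}^k$) and carrying along the extra power of $G$; this reproduces $\tilde{\mathfrak{d}}_2,\tilde{\mathfrak{b}}_{22},\tilde{\mathfrak{b}}_{32}$ in the places where $\tilde{\mathfrak{d}}_1,\tilde{\mathfrak{b}}_{21},\tilde{\mathfrak{b}}_{31}$ occurred in Lemma \ref{lem_bestimate2}.

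The main obstacle is purely combinatorial: for each $(l_1,l_2,l_3)$ one must enumerate exactly which assignments of the auxiliary indices to $\{i,j'\}$ survive once the off-diagonal constraints from the resolvent derivative formulas and the diagonal (block-scalar) constraint from the $\Pi$-approximation are imposed simultaneously, and then check that the surviving terms assemble into precisely the stated $\Pi$-traces and index sums. Because $\mathbbm{h}_1$ and $\tilde{\mathbbm{h}}_1$ carry their free index on the lower block, the upper-block approximants $\Pi_a^{\mathrm{u}},\Pi_{a_1,a_2}^{\mathrm{u}},\Pi_2^{\mathrm{u}},\Pi_3^{\mathrm{u}},\Pi_4^{\mathrm{u}}$ must be substituted consistently for their lower-block versions; confirming that the derivative calculus indeed yields these particular combinations is the delicate step, though it is entirely parallel to the corresponding verification underlying Lemmas \ref{lem_24h1h2h3} and \ref{lem_bestimate2}. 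Once this matching is established, the power counting is routine.
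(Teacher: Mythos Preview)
Your approach is correct and matches the paper's own proof, which is equally terse: it simply outlines the dominating terms $\mathfrak{h}(2,0,0)$, $\mathfrak{h}(1,1,0)$, $\mathfrak{h}(1,2,0)$ (and their tilde versions) by referring back to the parallel computations \eqref{eq_h200}, \eqref{eq_h110v2}, \eqref{eq_thirdnonvaish} with the index flip $(j',i)\to(i,j')$ and $A_1\to A_2$, and declares the negligible terms handled ``similar to those in Lemmas \ref{lem_24h1h2h3} and \ref{lem_bestimate2}.'' One small over-statement in your write-up: the approximants $\Pi_a^{\mathrm{u}},\Pi_{a_1,a_2}^{\mathrm{u}}$ actually enter in the first-order Lemma \ref{lem_a2b2estimate} (via $\mathfrak{a}_{12},\tilde{\mathfrak{b}}_{12}$), not in the higher-order terms of the present lemma, whose main contributions $\mathfrak{d}_2^a,\mathfrak{a}_{22},\mathfrak{a}_{32},\tilde{\mathfrak{d}}_2,\tilde{\mathfrak{b}}_{22},\tilde{\mathfrak{b}}_{32}$ are built only from $(\Pi_1)_{ii},(\Pi_1)_{j'j'},(\Pi_a A_2)_{j'j'}$ and $c_{ij},s_{ij}$; so the ``delicate step'' you flag is not in fact needed here.
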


The proofs of the above lemmas will be given in Section \ref{subsec:proofofotherlemma}. The remaining estimates for $\sqrt{n} \mathbb{E} f_2 \mathfrak{q}^{(k-1)}$ and $\sqrt{n} \mathbb{E} g_2 \mathfrak{q}^{(k-1)}$ follow the same arguments as those of (\ref{eq_defnfa1}) and  (\ref{eq_defnb1}), and are therefore omitted. As a side note, we mention an identity (comparable to (\ref{180920100})) 
\begin{equation*}
\frac{m_2}{2} \Big(z y {m_2}(2m_1'+\frac{m_1}{z})+m_2-\frac{1}{z} \Big)=m_2'
\end{equation*}
used in the derivation of the $g_2$ term.

Lastly, we prove (\ref{180920101}).  Recall $h_2=Q^{k-1}$ and $h_3=e^{\ii t \Delta}$. By Lemma \ref{lem_cumu}, we have 
\begin{align}
\sqrt{nz} \sum_{(i,j) \in \mathcal{S}(\nu)} c_{ij} \mathbb{E} x_{ij} \mathfrak{q}^{(k-1)}=\sqrt{z} \sum_{(i,j) \in \mathcal{S}(\nu)} c_{ij}  \mathbb{E} \Big( \frac{1}{\sqrt{n}}\frac{\partial (h_2 h_3)}{\partial x_{ij}}+\frac{\kappa_3}{2n} \frac{\partial^2(h_2 h_3) }{\partial x_{ij}^2} \Big)+\mathbb{E} \mathcal{R}, \label{180920110}
\end{align}
where $\mathcal{R}$ satisfies that, for any sufficiently small $\epsilon>0$ and sufficiently large $K>0$,  
\begin{equation*}
|\mathbb{E} \mathcal{R}| \leq  \sum_{i,j} \E  \Big( n^{-\frac32} \sup_{|x_{ij}| \leq n^{-\frac32+\epsilon}} \Big| c_{ij} \frac{\partial^3 (h_2 h_3)}{\partial x_{ij}^3} \Big|+n^{-K}  \sup_{|x_{ij}| \in \mathbb{R}} \Big| c_{ij} \frac{\partial^3 (h_2 h_3)}{\partial x_{ij}^3} \Big| \Big). 
\end{equation*}
We first show that 
\begin{align}
|\mathbb{E}\mathcal{R}|=O_{\prec}(n^{-\frac12+4\nu}). \label{180920111}
\end{align}
Similar to the discussion of \eqref{eq_a1final}, the proof boils down to estimate the third order derivative of $h_2 h_3$. Using the same proof as \eqref{eq_h1h2parih3} in Lemma \ref{lem_24h1h2h3} (given in Section \ref{s.18092001}), we observe that in the derivatives of $h_2 h_3$, any term containing the derivatives of $h_3$ can bounded by $O_{\prec}(n^{-\frac12+4\nu}).$ Thus, by product rule,
\begin{align*}
\frac{\partial ^3 (h_2 h_3)}{\partial x_{ij}^3 }= \frac{\partial ^3 h_2 }{\partial x_{ij}^3 } h_3 + O_{\prec}(n^{-\frac12+4\nu})=O_{\prec}\Big( \mathbf{u}(i)\mathbf{v}(j)+n^{-\frac12+4\nu} \Big).
\end{align*}
The last step is obtained analogously to \eqref{eq_120}. We omit the details. To conclude \eqref{180920111}, we also use $c_{ij} = O_{\prec} (\mathbf{u}(i)\mathbf{v}(j))$ by recalling its definition \eqref{defn_cij1} and the fact that $\mathbf u$, $\mathbf v$ are both unit vectors. 

Next, using arguments similar to \eqref{eq_h110} and \eqref{eq_120}, we get
\begin{align}
\frac{1}{\sqrt{n}} \frac{\partial (h_2 h_3)}{\partial x_{ij}}=\frac{1}{\sqrt{n}} \frac{\partial h_2 }{\partial x_{ij}}h_3 +O_{\prec}(n^{-\frac12+4\nu})=(k-1)\sqrt{z} c_{ij} \mathfrak{q}^{(k-2)}+O_{\prec}(n^{-\frac12+4\nu}), \label{180920112}
\end{align}
and  
\begin{align}\label{180920113}
\frac{1}{n} \frac{\partial^2 (h_2 h_3)}{\partial x^2_{ij}} = \frac{1}{n} \frac{\partial^2 h_2 }{\partial x^2_{ij}} h_3+O_{\prec}(n^{-\frac12+4\nu}) = 2\frac{(k-1)z}{\sqrt{n}} s_{ij}\mathfrak{q}^{(k-2)} +O_{\prec}(n^{-\frac12+4\nu}).
\end{align}
Plugging  (\ref{180920111})-(\ref{180920113}) into (\ref{180920110}), we obtain (\ref{180920101}). The proof of  Lemma \ref{lem. recursive estimate for each term} is now complete.
\end{proof}

\subsection{Proof of Lemma \ref{lem_estimateh1h2h3}}\label{s.18092001} We start with a simple identity which will be frequently referred to later.   
For any deterministic matrix $W \in \mathbb{R}^{(M+n) \times (M+n)},$ it is elementary to check that  
\begin{equation}\label{eq_firstgm}
\Big( \frac{\partial G}{\partial x_{ij}} W\Big)_{ab}=-\sqrt{z} \Big( G_{aj^{\prime}}(GW)_{ib}+G_{ai}(GW)_{j^{\prime}b} \Big).
\end{equation}
We emphasize that both \eqref{eq_boundm1m2} and a basic fact (as a consequence of \eqref{eq_boundq})  
\begin{align*}
\mathfrak{q}^{(l)}=Q^{l} e^{\ii t \Delta}=O_{\prec}(1) \quad \text{for } l\ge 1 
\end{align*}
will be applied to bound the error terms throughout the proofs of Lemma \ref{lem_estimateh1h2h3}-Lemma \ref{lem_a2b2estimatehigh}.

For convenience, we denote the blocks of $A$ and $B$ (c.f. \eqref{eq:new-ab} ) by $\mathcal A_k$'s and $\mathcal B_k$'s, i.e., 
\begin{align}\label{eq_abmatrix}
A= \begin{pmatrix} \omega_1 \mathbf{u} \mathbf{u}^* & \omega_2 \mathbf{u} \mathbf{v}^* \\ \omega_3 \mathbf{v} \mathbf{u}^* & \omega_4 \mathbf{v} \mathbf{v}^* \end{pmatrix}:= \begin{pmatrix} \mathcal{A}_1 & \mathcal{A}_2 \\ \mathcal{A}_3 & \mathcal{A}_4 \end{pmatrix},
\quad
B= \begin{pmatrix} \varpi_1 \mathbf{u} \mathbf{u}^* & \varpi_2 \mathbf{u} \mathbf{v}^* \\ \varpi_3 \mathbf{v} \mathbf{u}^* & \varpi_4 \mathbf{v} \mathbf{v}^* \end{pmatrix}:= \begin{pmatrix} \mathcal{B}_1 & \mathcal{B}_2 \\ \mathcal{B}_3 & \mathcal{B}_4 \end{pmatrix}.
\end{align}

With the above preparation, we now prove Lemma \ref{lem_estimateh1h2h3}.
\begin{proof}[Proof of Lemma \ref{lem_estimateh1h2h3}] First, by recalling the notations in (\ref{eq_defnfunction}) and (\ref{18091411}), and using \eqref{eq_firstgm}, we have
\begin{align*}
\hslash(1,0,0)&= \frac{1}{\sqrt n} \sum_{i,j} \Big( \frac{\partial G}{\partial x_{ij}} A_1 \Big)_{j'i}\mathfrak{q}^{(k-1)}\\
&=-\sqrt{nz} \frac{1}{n} \sum_{i,j} \Big( G_{j'j^{\prime}}(GA_1)_{ii}+G_{j'i}(GA_1)_{j'i} \Big) \mathfrak{q}^{(k-1)}.
\end{align*}
Moreover, by \eqref{18072401} and \eqref{18091401}, we further get 
\begin{align}\label{eq_parh1h2h3}
\hslash(1,0,0)
&=-\sqrt{nz} m_{2n}(\text{Tr} GA_1) \mathfrak{q}^{(k-1)}-\sqrt{\frac{z}{n}} (\text{Tr} GA_1G) \mathfrak{q}^{(k-1)}\nonumber \\
&= -\sqrt{nz} m_2 (\text{Tr} GA_1) \mathfrak{q}^{(k-1)} +O_{\prec}(n^{-\frac12}),
\end{align}
where the last step follows from the property of trace and (\ref{eq_genrealcontrol}).

Next, using the fact $|\mathcal{B}(\nu)| \leq Cn^{4\nu}$ together with the definition of $c_{ij}$ in \eqref{defn_cij1} and \eqref{eq_localoutside}, we obtain 
\begin{align}\label{eq:proof001}
\hslash(0,0,1) &=\sqrt{z} \sum_{(i,j) \in \mathcal{B}(\nu)} \ii t c_{ij} (\Xi_1)_{j'i} \mathfrak{q}^{(k-1)}=O_{\prec}(n^{-\frac12+4\nu}).
\end{align} 
The main task is the estimate of 
\begin{align*}
\hslash(0,1,0)=\frac{k-1}{\sqrt{n}} \sum_{i,j} (\Xi_1A_1)_{j'i}  \frac{\partial Q}{\partial x_{ij}} \mathfrak{q}^{(k-2)}. 
\end{align*}
In light  of the expression of $\partial Q/\partial x_{ij}$ in \eqref{eq_parti_1}, by symmetry, we get
\begin{align}\label{180920200}
\hslash(0,1,0)=&-(k-1)\sqrt{z} \sum_{i,j}(\Xi_1 A_1)_{j' i} \big[ 2(GAG)_{j'i} -\frac{1}{z}(GBG)_{j' i} + (GBG^2)_{j'i} + (G^2BG)_{j' i} \big] \mathfrak{q}^{(k-2)} \nonumber\\
&-(k-1)\sqrt{z} \sum_{(i,j) \in \mathcal{B}(\nu)}(\Xi_1 A_1)_{j' i} c_{ij} \mathfrak{q}^{(k-2)}.
\end{align}
The last term on the right hand side of \eqref{180920200} is bounded by $O_{\prec}(n^{-\frac12+4\nu})$, by exactly the same estimate of \eqref{eq:proof001}. Now we turn towards the first term on the right hand side of \eqref{180920200}. 
We first claim that
\begin{align}
\sum_{i,j} (\Xi_1A_1)_{j'i} (GAG)_{j'i}=\Tr(\Pi_2^{\mathrm{l}}-\Pi_{1,1}^{\mathrm{l}})A_1 \Pi_1 A +O_{\prec}(n^{-\frac12}).  \label{180920201}
\end{align}
To derive the above statement, a key observation is that the summation on the left hand side of \eqref{180920201} can be written in terms of a trace, with the aid of the block diagonal matrices $\mathtt{I}^{\mathrm u}$ and $\mathtt{I}^{\mathrm l}$ in \eqref{eq_diagonalmatrix}. Indeed, we find
\begin{align*}
\sum_{i,j} (\Xi_1A_1)_{j'i} (GAG)_{j'i} = \Tr( \mathtt{I}^{\mathrm l} \Xi_1 A_1 \mathtt{I}^{\mathrm u} G AG \mathtt{I}^{\mathrm l}) = \Tr (G \mathtt{I}^{\mathrm l} G - G \mathtt{I}^{\mathrm l} \Pi_1) A_1 G A.  
\end{align*}
Thus  $\Pi_{1,1}^{\mathrm{l}}$ and $\Pi_2^{\mathrm{l}}$ (c.f. \eqref{eq_piipi} and \eqref{eq_pi2ul}) appear naturally in \eqref{180920201}.

To prove \eqref{180920201}, using the expressions of $G$ in (\ref{green2}) and $A$ in \eqref{eq_abmatrix}, we have that
\begin{align}
(\Xi_1A)_{j'i} &=(\frac{1}{\sqrt{z}} X^* \mathcal{G}_1 \mathcal{A}_1+(\mathcal{G}_2-m_2)\mathcal{A}_3)_{ji},  \label{eq_g-pia}\\
(GAG)_{j'i} &=(\frac{1}{\sqrt{z}} X^* \mathcal{G}_1 \mathcal{A}_1 \mathcal{G}_1+\mathcal{G}_2 \mathcal{A}_3 \mathcal{G}_1+\frac{1}{z} X^* \mathcal{G}_1 \mathcal{A}_2 X^* \mathcal{G}_1+\frac{1}{\sqrt{z}} \mathcal{G}_2 \mathcal{A}_4 X^* \mathcal{G}_1)_{ji}.  \label{eq_gag}
\end{align}
Expanding the left hand side of \eqref{180920201} with the above expressions, we shall show that there are two main terms and all others are negligible. 

The first contributing term is
\begin{align*}
&\sum_{i,j} ((\mathcal{G}_2-m_2)\mathcal{A}_3)_{j i} (\mathcal{G}_2 \mathcal{A}_3 \mathcal{G}_1)_{j i}= \omega^2_3 \text{Tr} ((\mathcal{G}_2-m_2) \mathbf{v} \mathbf{u}^* \mathcal{G}_1 \mathbf{u} \mathbf{v}^* \mathcal{G}_2 ) \\
&=\omega_3^2 (\mathbf{u}^* \mathcal{G}_1 \mathbf{u})(\mathbf{v}^* \mathcal{G}_2 (\mathcal{G}_2-m_2) \mathbf{v})=\sum_{i, j} ((\Pi_2^{\mathrm{l}}-\Pi_{1,1}^{\mathrm{l}})A_1 \Pi_1)_{j'i} A_{ij'}+  O_{\prec}(n^{-\frac12}), 
\end{align*} 
where in the last step we use $\mathcal{G}_2^2=\mathcal{G}_2'$ and the definition of $A$ in \eqref{eq_abmatrix}, followed by (\ref{eq_localoutside}) and (\ref{eq_genrealcontrol}). 

The second contributing term is
\begin{align*}
\frac{1}{z}\sum_{i,j} (X^* \mathcal{G}_1 \mathcal{A}_1)_{j i} (X^* \mathcal{G}_1 \mathcal{A}_1 \mathcal{G}_1)_{j i}&={\omega_1^2} (\mathbf{u}^* \mathcal{G}_1 \mathbf{u})\big( \frac{1}{z} \mathbf{u}^* \mathcal{G}_1 XX^* \mathcal{G}_1 \mathbf{u} \big).
\end{align*}
Let $\bar{\mathbf{v}}=(\mathbf{0}, \mathbf{v})^*$ and $\bar{\mathbf{u}}=(\mathbf{u}, \mathbf{0})^*$ denote the augmented vectors in $\mathbb{R}^{M+n}.$ Note that by \eqref{eq_genrealcontrol}, we first have
\begin{align*}
\bar{\mathbf{u}}^* G^2 \bar{\mathbf{u}}=\mathbf{u}^* \mathcal{G}_1^2 \mathbf{u}+\frac{1}{z} \mathbf{u}^* \mathcal{G}_1 XX^* \mathcal{G}_1 \mathbf{u}=2m_1'+\frac{m_1}{z}+O_{\prec}(n^{-\frac12}).
\end{align*}
Further observe that 
\begin{equation*}
\mathbf{u}^* \mathcal{G}_1^2 \mathbf{u}=\bar{\mathbf{u}}^* G' \bar{\mathbf{u}}=m_1'+O_{\prec}(n^{-\frac12}),
\end{equation*}
where the last equation follows from  (\ref{eq_consequenceoflcoal}). Putting them together, we conclude that 
\begin{equation*}
\frac{1}{z} \mathbf{u}^* \mathcal{G}_1 XX^* \mathcal{G}_1 \mathbf{u}=m_1'+\frac{m_1}{z}+O_{\prec}(n^{-\frac12}).
\end{equation*}
As a consequence, 
\begin{equation*}
\frac{1}{z}\sum_{i,j} (X^* \mathcal{G}_1 \mathcal{A}_1)_{j i} (X^* \mathcal{G}_1 \mathcal{A}_1 \mathcal{G}_1)_{j i}=\sum_{i, j} ((\Pi_2^{\mathrm{l}}-\Pi_{1,1}^{\mathrm{l}})A_1 \Pi_1)_{ij'} A_{j' i}+O_{\prec}(n^{-\frac12}).
\end{equation*}
Note that 
\begin{align*}
\sum_{i, j} ((\Pi_2^{\mathrm{l}}-\Pi_{1,1}^{\mathrm{l}})A_1 \Pi_1)_{ij'} A_{j' i} + \sum_{i, j} ((\Pi_2^{\mathrm{l}}-\Pi_{1,1}^{\mathrm{l}})A_1 \Pi_1)_{j'i} A_{ij'} = \Tr(\Pi_2^{\mathrm{l}}-\Pi_{1,1}^{\mathrm{l}})A_1 \Pi_1 A.
\end{align*}
What remains is to show all other terms in the expansion of the left hand side of \eqref{180920201} with \eqref{eq_g-pia} and \eqref{eq_gag} are negligible. Let us concentrate on the following term. All other remaining terms are estimated similarly; we omit the details.
\begin{align*}
&\frac{1}{\sqrt{z}}\sum_{i,j} (X^* \mathcal{G}_1 \mathcal{A}_1)_{j'i} (\mathcal{G}_2 \mathcal{A}_3 \mathcal{G}_1)_{j'i} =\frac{\omega_1 \omega_3}{\sqrt{z}} \text{Tr} (X^*\mathcal{G}_1 \mathbf{u}\mathbf{u}^* \mathcal{G}_1 \mathbf{u} \mathbf{v}^* \mathcal{G}_2) \\ 
 &= \frac{\omega_1 \omega_3}{\sqrt{z}} \text{Tr} (\mathbf{v}^*\mathcal{G}_2^2 X^* \mathbf{u} \mathbf{u}^* \mathcal{G}_1 \mathbf{u})= \frac{\omega_1 \omega_3}{\sqrt{z}} (\mathbf{u}^* \mathcal{G}_1 \mathbf{u}) (\mathbf{v}^* \mathcal{G}_2^2X^* \mathbf{u} ).
\end{align*} 
In the second step above, we use the fact $X^* \mathcal{G}_1=\mathcal{G}_2X^*$ which can be checked easily via the singular value decomposition. Therefore, using $\mathcal{G}_2^2 =\mathcal{G}_2'$ and $G' = (G^2 - z^{-1}G)/2$, together with (\ref{eq_orginallocal}) and (\ref{eq_consequenceoflcoal}),  we get that
\begin{equation*}
\mathbf{v}^* \mathcal{G}_2^2 X^* \mathbf{u}=( \bar{\mathbf{v}}^* \sqrt{z} G \bar{\mathbf{u}})'=\frac{1}{2\sqrt{z}} \bar{\mathbf{v}}^* G \bar{\mathbf{u}}+\sqrt{z} \bar{\mathbf{v}}^* G' \bar{\mathbf{u}}=O_{\prec}(n^{-\frac12}). 
\end{equation*}
Hence, we conclude that
\begin{equation*}
\frac{1}{\sqrt{z}}\sum_{i,j} (X^* \mathcal{G}_1 \mathcal{A}_1)_{j'i} (\mathcal{G}_2 \mathcal{A}_3 \mathcal{G}_1)_{j'i} =O_{\prec}(n^{-\frac12}).
\end{equation*}
The proof of \eqref{180920201} is complete. 

 Next, analogously, we shall show that 
\begin{align}\label{eq_piag2bg}
\sum_{i,j} (\Xi_1A_1)_{j'i} (G^2 BG)_{j'i}=\Tr (\Pi_3^{\mathrm{l}}-\Pi_{2,1}^{\mathrm{l}})A_1 \Pi_1 B+O_{\prec}(n^{-\frac12}).
\end{align}
A simple calculation using \eqref{green2} and \eqref{eq:new-ab} yields  
\begin{align} \label{eq_g2bg}
(G^2BG)_{j'i} 
=&(\frac{1}{\sqrt{z}} X^* \mathcal{G}_1^2 \mathcal{B}_1 \mathcal{G}_1+\frac{1}{\sqrt{z}}\mathcal{G}_2 X^* \mathcal{G}_1 \mathcal{B}_1 \mathcal{G}_1+\frac{1}{z}X^* \mathcal{G}_1^2 X \mathcal{B}_3 \mathcal{G}_1+\mathcal{G}_2^2 \mathcal{B}_3 \mathcal{G}_1+\frac{1}{z} X^* \mathcal{G}_1^2 \mathcal{B}_2 X^* \mathcal{G}_1 \nonumber \\
&+\frac{1}{z}\mathcal{G}_2X^* \mathcal{G}_1 \mathcal{B}_2 X^* \mathcal{G}_1+\frac{1}{z^{\frac32}} X^*\mathcal{G}_1^2X \mathcal{B}_4 X^* \mathcal{G}_1+\frac{1}{\sqrt{z}}\mathcal{G}_2^2 \mathcal{B}_4 X^* \mathcal{G}_1)_{ji}.
\end{align}
In a similar way to the discussion of (\ref{180920201}), we expand $(\Xi_1A)_{j'i}(G^2BG)_{j'i} $ using (\ref{eq_g-pia}) and \eqref{eq_g2bg}. There are only four non-negligible terms in the expansion. 

Recall $\mathcal{A}_1$ and $\mathcal{B}_1$ in \eqref{eq:new-ab}. The first non-negligible term is
\begin{align*}
\frac{1}{z} \sum_{i,j} (X^* \mathcal{G}_1 \mathcal{A}_1)_{j'i} (X^* \mathcal{G}_1^2 \mathcal{B}_1 \mathcal{G}_1)_{j'i}=\frac{\omega_1 \varpi_1 }{z} (\mathbf{u}^* \mathcal{G}_1 \mathbf{u}) (\mathbf{u}^* \mathcal{G}_1^2 XX^* \mathcal{G}_1 \mathbf{u}).
\end{align*}
To estimate $\mathbf{u}^* \mathcal{G}_1^2 XX^* \mathcal{G}_1 \mathbf{u}$ in the above,  we observe that (via elementary calculations and the fact $\mathcal{G}_2 X^*=X^* \mathcal{G}_1$) 
\begin{align*}
\bar{\mathbf{u}}^* G^3 \bar{\mathbf{u}}=\mathbf{u}^* \mathcal{G}_1^3 \mathbf{u}+\frac{3}{z} (\mathbf{u}^* \mathcal{G}_1^2 XX^* \mathcal{G}_1 \mathbf{u}).
\end{align*} 
Moreover, by $\mathcal{G}_1^3=\frac{1}{2}\mathcal{G}_1^{\prime \prime}$, (\ref{eq_consequenceoflcoal}) and (\ref{eq_genrealcontrol}), we find 
\begin{align*}
&\mathbf{u}^* \mathcal{G}_1^3 \mathbf{u}=\frac{1}{2}\bar{\mathbf{u}}^* G^{\prime \prime} \bar{\mathbf{u}}=\frac{1}{2}m_1^{\prime \prime}+O_{\prec}(n^{-\frac12}),\\
&\bar{\mathbf{u}}^* G^3 \bar{\mathbf{u}}= 2 m_1'' + \frac{3}{z} m_1' + O_{\prec}(n^{-\frac12}).
\end{align*}
Hence,
\begin{equation}\label{eq_g2xxg}
\mathbf{u}^* \mathcal{G}_1^2 XX^* \mathcal{G}_1 \mathbf{u}=\frac{z}{2} m_{1}^{\prime \prime}+m_1'+O_{\prec}(n^{-\frac12}).
\end{equation}
We conclude that
\begin{equation*}
\frac{1}{z} \sum_{i,j} (X^* \mathcal{G}_1 \mathcal{A}_1)_{j i} (X^* \mathcal{G}_1^2 \mathcal{B}_1 \mathcal{G}_1)_{j i}=\frac{1}{2}\sum_{i,j} ((\Pi_3^{\mathrm{l}}-\Pi_{2,1}^{\mathrm{l}})A_1 \Pi_1)_{ij'} B_{ij'}+O_{\prec}(n^{-\frac12}).
\end{equation*}
Using the fact $X\mathcal{G}_2= \mathcal{G}_1X$ and the same arguments as above, we can show the second non-negligible term is  
\begin{equation*}
\frac{1}{z} \sum_{i,j} (X^* \mathcal{G}_1 \mathcal{A}_1)_{j i} (\mathcal{G}_2 X^* \mathcal{G}_1 \mathcal{B}_1 \mathcal{G}_1)_{j i} =\frac{1}{2}\sum_{i,j} ((\Pi_3^{\mathrm{l}}-\Pi_{2,1}^{\mathrm{l}})A_1 \Pi_1)_{ij'} B_{ij'}+O_{\prec}(n^{-\frac12}),
\end{equation*}
The third  non-negligible term is
\begin{align*}
&\frac{1}{z}\sum_{i,j} ((\mathcal{G}_2-m_2)\mathcal{A}_3)_{j'i} (X^* \mathcal{G}_1^2 X \mathcal{B}_3 \mathcal{G}_1)_{j'i}=\frac{1}{z} (\mathbf{u}^* \mathcal{G}_1 \mathbf{u}) (\mathbf{v}^* X^*\mathcal{G}_1^2 X (\mathcal{G}_2-m_2) \mathbf{v}) \\
&=\omega_3 \varpi_3m_1(\frac{m_2^{\prime \prime}}{2}+\frac{m_2'}{z}-\frac{m_2^2+zm_2m_2'}{z})  +O_{\prec}(n^{-\frac12}),
\end{align*}
where we used the facts $X^* \mathcal{G}_1^2 X \mathcal{G}_2=X^* \mathcal{G}_1^3 X$ and $\mathcal{G}_2^3=\frac{1}{2} \mathcal{G}_2^{\prime \prime}$, as well as
\begin{equation*}
\bar{\mathbf{v}}^* G^3 \bar{\mathbf{v}}=\frac{3}{z}\mathbf{v}^*X^* \mathcal{G}_1^3 X \mathbf{v}+\mathbf{v}^* \mathcal{G}_2^3 \mathbf{v}=2 m_2^{\prime \prime}+\frac{3 m_2'}{z}+O_{\prec}(n^{-\frac12}).
\end{equation*}
The last  non-negligible term can be estimated similarly as
\begin{equation*}
\sum_{i,j} ((\mathcal{G}_2-m_2)\mathcal{A}_3)_{j i} (\mathcal{G}_2^2 \mathcal{B}_3 \mathcal{G}_1)_{j i}=\omega_3 \varpi_3 m_1(\frac{m_2^{\prime \prime}}{2}-m_2 m_2')+O_{\prec}(n^{-\frac12}).
\end{equation*}
Consequently, we have 
\begin{align*}
& \frac{1}{z}\sum_{i,j} ((\mathcal{G}_2-m_2)\mathcal{A}_3)_{j i}  (X^* \mathcal{G}_1^2 X \mathcal{B}_3 \mathcal{G}_1)_{j i}+\sum_{i,j} ((\mathcal{G}_2-m_2)\mathcal{A}_3)_{j i} (\mathcal{G}_2^2 \mathcal{B}_3 \mathcal{G}_1)_{j i} \\
& =\sum_{i,j} ((\Pi_3^{\mathrm{l}}-\Pi_{2,1}^{\mathrm{l}})A_1 \Pi_1)_{j'i} B_{j'i}+O_{\prec}(n^{-1/2}).
\end{align*}
Note that the sum of the four contributing terms is extactly
\begin{align*}
\Tr (\Pi_3^{\mathrm{l}}-\Pi_{2,1}^{\mathrm{l}})A_1 \Pi_1 B+O_{\prec}(n^{-\frac12}).
\end{align*}
To wrap up the proof of \eqref{eq_piag2bg}, it suffices to show all the other terms in the expansion of $\sum_{i,j}(\Xi_1A)_{j'i} (G^2 BG)_{j'i}$ can be bounded by $O_\prec(n^{-\frac12})$.  To see that, for instance, we focus on
\begin{equation*}
z^{-3/2} \sum_{i,j} (X^* \mathcal{G}_1 \mathcal{A}_1)_{j'i} (X^* \mathcal{G}_1^2 X \mathcal{B}_3 \mathcal{G}_1)_{j'i}= \omega_1 \varpi_1 (z^{-3/2} \mathbf{v}^* X^* \mathcal{G}_1^2 XX^* \mathcal{G}_1 \mathbf{u}) (\mathbf{u}^* \mathcal{G}_1 \mathbf{u}).
\end{equation*}
Note that
\begin{align*}
& z^{-3/2}\mathbf{v}^* X^* \mathcal{G}_1^2 XX^* \mathcal{G}_1 \mathbf{u}= \bar{\mathbf{u}}^*G^3 \bar{\mathbf{v}}-\mathbf{u}^*(\frac{1}{\sqrt{z}}\mathcal{G}_1^3 X+\frac{1}{\sqrt{z}} \mathcal{G}^2_1 X \mathcal{G}_2+\frac{1}{\sqrt{z}} \mathcal{G}_1 X \mathcal{G}_2^2)\mathbf{v}, \\
& =\bar{\mathbf{u}}G^3 \bar{\mathbf{v}}-\frac{3}{\sqrt{z}} \mathbf{u}^* \mathcal{G}_1^3X \mathbf{v}= \bar{\mathbf{u}}G^3 \bar{\mathbf{v}}-\frac{3}{2\sqrt{z}} \bar{\mathbf{u}}^* (\sqrt{z}G)^{\prime \prime} \bar{{\mathbf{v}}}=O_{\prec}(n^{-\frac12}),
\end{align*}
where in the third step we use $\mathcal{G}_1^3=\frac{1}{2}\mathcal{G}_1^{\prime \prime}$ and in the last step we use (\ref{eq_genrealcontrol}). Consequently, 
\begin{align*}
z^{-3/2} \sum_{i,j} (X^* \mathcal{G}_1 \mathcal{A}_1)_{j'i} (X^* \mathcal{G}_1^2 X \mathcal{B}_3 \mathcal{G}_1)_{j'i} =O_{\prec}(n^{-\frac12}).
\end{align*}
All the rest terms can be bounded by $O_{\prec}(n^{-\frac12})$ analogously; we omit the details. The proof of \eqref{eq_piag2bg} is now complete.

The remaining two terms in \eqref{180920200} can be estimated the same way as \eqref{180920201} and \eqref{eq_piag2bg}; the details are omitted. We get 
\begin{align}\label{eq_piagbg2ij}
\sum_{i,j} (\Xi_1A_1)_{j'i} (G BG^2 )_{j'i} &=\text{Tr} (\Pi_2^{\mathrm{l}}-\Pi_{1,1}^{\mathrm{l}})A_1 \Pi_2 B+O_{\prec}(n^{-\frac12}), \nonumber\\
\sum_{i,j} (\Xi_1 A_1)_{j'i} (GBG)_{ij'} &=\text{Tr}(\Pi_2^{\mathrm{l}}-\Pi_{1,1}^{\mathrm{l}})A_1 \Pi_1 B+O_{\prec}(n^{-\frac12}).
\end{align}

Plugging \eqref{180920201}, (\ref{eq_piag2bg}) and \eqref{eq_piagbg2ij} into \eqref{180920200},  recalling the definition of $\mathfrak{a}_{11}$ in \eqref{def:missvar}, we conclude that 
\begin{align}
\hslash(0,1,0) & =\mathfrak{a}_{11} \mathfrak{q}^{(k-2)} + O_{\prec}(n^{-\frac12+4\nu}). \label{18091560}
\end{align}
This completes the proof.
\end{proof}

\subsection{Proof of Lemma \ref{lem_24h1h2h3}} \label{s.18092002} We use this subsection to prove Lemma \ref{lem_24h1h2h3}.  
\begin{proof}[Proof of Lemma \ref{lem_24h1h2h3}]  We first study the second derivatives.  By \eqref{18091411} and (\ref{eq_secondgm}), we have 
\begin{align*}
&\hslash(2,0,0)= \frac1n \sum_{i,j} \Big( \frac{\partial^2 G}{\partial x_{ij}^2} A_1 \Big)_{j' i} \mathfrak{q}^{(k-1)}\nonumber\\
&=\frac{2z}{n} \sum_{i,j} \Big( \big(G_{j'j^{\prime}} G_{ij^{\prime}}+G_{j'i} G_{j^{\prime} j^{\prime}} \big)(GA_1)_{ii}+\big(G_{j'j^{\prime}} G_{ii}+G_{j'i} G_{j^{\prime} i} \big)(GA_1)_{j^{\prime}i} \Big) \mathfrak{q}^{(k-1)}.
\end{align*} 
First of all,  by (\ref{eq_localoutside}) and (\ref{eq_genrealcontrol}), we find that 
\begin{equation*}
\frac{1}{n} \sum_{i,j} G_{j'j'} G_{ii} (GA_1)_{j'i} =\frac{1}{n} \sum_{i,j} (\Pi_1)_{ii} (\Pi_1)_{j'j'}  (\Pi_1 A_1)_{j'i}+O_{\prec}(n^{-\frac12}).
\end{equation*}
It is simple to check that  $(GA)_{ii}=(\mathcal{G}_1 \mathcal{A}_1+z^{-1/2} \mathcal{G}_1X \mathcal{A}_3)_{ii}$.  By (\ref{eq_boundm1m2}) and (\ref{eq_genrealcontrol}), we get 
\begin{align}\label{eq_controlbyitem}
\frac{1}{n} \sum_{i,j} G_{j'j'} G_{ij'} (GA_1)_{ii}= O_{\prec}\Big( n^{-\frac32} \sum_{i,j} (\mathcal{G}_1 \mathbf{u} \mathbf{u}^*+\mathcal{G}_1 X \mathbf{v} \mathbf{u}^*)_{ii} \Big)=O_{\prec}(n^{-\frac12}).
\end{align}
Similarly, we also have
\begin{align*}
&\frac{1}{n} \sum_{i,j} G_{j'j'} G_{j'i} (GA_1)_{ii} =O_{\prec}(n^{-\frac12}),  \nonumber\\
&\frac{1}{n} \sum_{i,j} G_{j'i} G_{j'i} (GA_1)_{j'i} =O_{\prec}(n^{-1}).
\end{align*}
Putting the above estimates together, and  recalling $\mathfrak{d}_1^a$ in (\ref{18091910}), we conclude that 
\begin{align*}
\hslash(2,0,0)= \mathfrak{d}_1^a \mathfrak{q}^{(k-1)}+O_{\prec}(n^{-\frac12}).
\end{align*}
Next, the estimation of
\begin{align*}
\hslash(1,1,0)&=\frac{k-1}{n} \sum_{i,j} \Big( \frac{\partial G}{\partial x_{ij}}A_1 \Big)_{j'i} \frac{\partial Q}{\partial x_{ij}} \mathfrak{q}^{(k-2)}
\end{align*}
follows closely the same steps as the derivation of (\ref{eq_h1partih2h3}). By \eqref{eq_firstgm}, 
\begin{align}\label{eq_h110v2}
\hslash(1,1,0)&=-\frac{(k-1)\sqrt z}{n} \sum_{i,j} \Big( G_{j'j^{\prime}}(GA_1)_{ii}+G_{j'i}(GA_1)_{j'i} \Big)\frac{\partial Q}{\partial x_{ij}} \mathfrak{q}^{(k-2)}.
\end{align}

We shall prove that 
\begin{align} \label{eq_trianglediscussion}
\frac{1}{n} \sum_{i,j} G_{j' j'} (GA_1)_{ii} \frac{\partial Q}{\partial x_{ij}}= \sqrt{\frac{z}{n}} \sum_{\sum_{(i,j)\in \mathcal S(\nu)}} (\Pi_1A)_{ii}(\Pi_1)_{j'j'} c_{ij}+O_{\prec}(n^{-\frac12+4\nu}),
\end{align}
which will be used several times later. We postpone the proof of \eqref{eq_trianglediscussion} till the end of this subsection. 

Again by (\ref{eq_genrealcontrol}), recalling the definitions of $c_{ij}$ in (\ref{defn_cij1}) and $A$ in \eqref{eq:new-ab}, we have that
\begin{align}
\frac{1}{n} \sum_{i,j} G_{j'i} (GA_1)_{j'i} \frac{\partial Q}{\partial x_{ij}}=O_{\prec}(n^{-3/2} \sum_{i,j} A_{ij'} c_{ij} )=O_{\prec}(n^{-\frac12}). \label{180920301}
\end{align}
Inserting  (\ref{eq_trianglediscussion}) and (\ref{180920301}) back into (\ref{eq_h110v2}), by recalling $\mathfrak{a}_{21}$ in (\ref{18091930}), we conclude that 
\begin{align*}
\hslash(1,1,0)
=\mathfrak{a}_{21} \mathfrak{q}^{(k-2)}+O_{\prec}(n^{-\frac12+4\nu}).
\end{align*} 

Using a discussion similar to (\ref{eq_h1h2parih3}), we also have 
\begin{align*}
\hslash(1,0,1)=\sqrt{\frac{z}{n}} \sum_{(i,j) \in \mathcal{B}(\nu)} \ii tc_{ij}\Big( \frac{\partial G}{\partial x_{ij}}A_1 \Big)_{j'i} \mathfrak{q}^{(k-1)}=O_{\prec}(n^{-\frac12+4\nu}). 
\end{align*}
Actually, all the terms containing the derivatives of $h_3$ can be estimated in the same way. Thus both $\hslash(0,1,1)$ and $\hslash(0,0,2)$ are also bounded by $O_{\prec}(n^{-\frac12+4\nu})$. We omit the details.

It remains to estimate
\begin{multline} \label{eq_partilq2}
\hslash(0,2,0)=\frac{1}{n} \sum_{i,j} (\Xi_1A_1)_{j'i}\Big((k-1) \frac{\partial^2 Q}{\partial x^2_{ij}} \mathfrak{q}^{(k-2)} 
+(k-1)(k-2)\Big(\frac{\partial Q}{\partial x_{ij}}\Big)^2 \mathfrak{q}^{(k-3)}\Big).
\end{multline}
The calculation of \eqref{eq_partilq2}  is similar to that of (\ref{180920200}) and due to an extra factor $n^{-1/2}$ in front, we shall show that $\hslash(0,2,0)$ can be bounded by $O_{\prec}(n^{-1/2})$. We only list the main differences here. We expand the product on the right hand side of \eqref{eq_partilq2} using the expressions of $(\Xi_1A_1)_{j'i}$ in \eqref{eq_g-pia}, $\partial Q/\partial x_{ij}$ in \eqref{eq_parti_1} and $\partial^2 Q/\partial x_{ij}^2$ in (\ref{eq_parti_2}).  

Most derivations of the items in (\ref{180920200}) can be directly applied to those in \eqref{eq_partilq2} except three items, which are discussed below. 
Denote $\mathbf{e}_i$ with $i\in [M]$ as the standard basis in $\mathbb{R}^M$  and $\mathbf{f}_j$ with $j\in [N]$ as those in $\mathbb{R}^N$,

First, by (\ref{eq_localoutside}) and (\ref{eq_genrealcontrol}),    
\begin{align}
 \sum_{i,j}(X^* \mathcal{G}_1 \mathcal{A}_1)_{j'i} (X^* \mathcal{G}_1 \mathcal{A}_1 \mathcal{G}_1)_{j'i} (X^* \mathcal{G}_1 \mathcal{A}_1 \mathcal{G}_1)_{j'i}&=\sum_{i,j} (\mathbf{e}_{j} ^* X^* \mathcal{G}_1 \mathcal{A}_1 \mathbf{e}_i)(\mathbf{e}_j^*X^* \mathcal{G}_1 \mathcal{A}_1 \mathcal{G}_1 \mathbf{e}_i)^2 \nonumber\\ 
&=O_{\prec}\Big(n^{-\frac32} \sum_{i,j} \mathbf{u}^3(i)  \Big)=O_{\prec}(n^{-\frac12}). \label{180920405}
\end{align}
Second,  using (\ref{eq_localoutside}) and the fact that $\mathbf{u}, \mathbf{v}$ are unit vectors, we get
\begin{align*}
\frac{1}{\sqrt{n}} \sum_{i,j} (X^* \mathcal{G}_1 \mathcal{A}_1)_{ji} (GAG)_{j'j'} G_{ii}&=\frac{m_1 m_2^2}{\sqrt{n}} \sum_{i,j} (X^* \mathcal{G}_1 \mathcal{A}_1)_{ji} A_{j'j'}+O_{\prec}(n^{-\frac12}), \\
& = \frac{m_1 m_2^2 \omega_1^2 }{\sqrt{n}} \sum_{i,j} \mathbf{f}_j^* X^* \mathcal{G}_1  \mathbf{u} \mathbf{u}(i) \mathbf{v}^2(j)+O_{\prec}(n^{-\frac12}) \\
& =\frac{m_1 m_2^2 \omega_1^2}{n} \sum_{i,j} \mathbf{u}(i) \mathbf{v}^2(j)+O_{\prec}(n^{-\frac12}) \\
&=O_{\prec}(n^{-\frac12}).
\end{align*}
Third, we invoke (\ref{eq_localoutside}) to get that $\sum_{j} \mathbf{f}_j^* X^* \mathcal{G}_1 \mathbf{u}=\sqrt{n} \mathbf{f} X^* \mathcal{G}_1 \mathbf{u}=O_{\prec}(1)$, where $\mathbf{f}=\frac{1}{\sqrt{n}} \mathbf{1}$.  Then it follows that
\begin{align}\label{eq_hardest}
\frac{1}{\sqrt{n}} \sum_{i,j} (X^* \mathcal{G}_1 \mathcal{A}_1)_{ji} (GAG)_{ii} G_{j'j'}&=\frac{m^2_1 m_2}{\sqrt{n}} \sum_{i,j} (X^* \mathcal{G}_1 \mathcal{A}_1)_{ji} A_{ii}+O_{\prec}(n^{-\frac12}), \nonumber \\
& = \frac{m_1^2 m_2 \omega_1^2 }{\sqrt{n}} \sum_{i,j} \mathbf{f}_j^* X^* \mathcal{G}_1  \mathbf{u} \mathbf{u}^3(i)+O_{\prec}(n^{-\frac12}) \nonumber \\
&=\frac{m_1^2 m_2 \omega_1^2 }{\sqrt{n}} \Big( \sum_i \mathbf{u}^3(i) \Big) \Big( \sum_{j} \mathbf{f}_j^* X^* \mathcal{G}_1 \mathbf{u} \Big)+O_{\prec}(n^{-\frac12}) \nonumber \\
& =O_{\prec}(n^{-\frac12}).
\end{align}
Finally,  we can conclude that
\begin{equation*}
\hslash(0,2,0)=O_{\prec}(n^{-\frac12}).
\end{equation*}
This finishes the discussion of the second order derivatives.  We continue with the third derivatives.  We start with 
\begin{multline}\label{eq:proof120}
\hslash(1,2,0)=n^{-\frac32} \sum_{i,j}  (\frac{\partial G}{\partial x_{ij}}A_1)_{j'i}\Big((k-1) \frac{\partial^2 Q}{\partial x^2_{ij}} \mathfrak{q}^{(k-2)}
+(k-1)(k-2)\Big(\frac{\partial Q}{\partial x_{ij}}\Big)^2 \mathfrak{q}^{(k-3)}\Big).
\end{multline} 
Recalling (\ref{eq_firstgm}) and (\ref{eq_parti_2}), by (\ref{eq_localoutside}) and (\ref{eq_genrealcontrol}), the first term on the right hand side of \eqref{eq:proof120} is estimated by
\begin{align}\label{eq_thirdnonvaish}
&n^{-\frac32} \sum_{i,j} (\frac{\partial G}{\partial x_{ij}}A_1)_{j'i} \frac{\partial^2 Q}{\partial x^2_{ij}}\nonumber\\
&=\frac{-2z^{\frac32}}{n} \sum_{i,j}  G_{j'j'} (GA_1)_{ii} \Big((GAG)_{ii} G_{j'j'}-\frac{1}{2z} (GBG)_{ii} G_{j'j'}+\frac{1}{2}(GBG)_{ii} G^2_{j'j'} \nonumber \\
&\qquad\qquad\qquad\qquad+\frac{1}{2}(G^2 B G)_{ii} G_{j'j'}+\frac{1}{2}(GBG^2)_{ii} G_{j'j'} \Big)+O_{\prec}(n^{-\frac12}) \nonumber \\
&=-\frac{2z^{\frac32}}{n} \sum_{i,j} (\Pi_1 A)_{ii} (\Pi_1)_{j'j'} s_{ij}+O_{\prec}(n^{-1/2}). 
\end{align}
In the last equation above,  we recall the definition of $s_{ij}$ in \eqref{defn_sij1}. Furthermore, recalling  (\ref{eq:derivativeQ1}) and (\ref{eq_firstgm}), by  (\ref{eq_genrealcontrol}),  it is easy to see that the second term on the right hand side of \eqref{eq:proof120} is
\begin{equation*}
n^{-3/2} \sum_{i,j} \Big( \frac{\partial G}{\partial x_{ij}} A_1 \Big)_{j'i} \Big( \frac{\partial Q}{\partial x_{ij}} \Big)^2 \mathfrak{q}^{(k-3)}= O_{\prec}\Big( n^{-3/2} \sum_{i,j} (A_1)_{ii} c^2_{ij} \Big)=O_{\prec}(n^{-1}).
\end{equation*}
For the last equation above, we refer to the definition of $c_{ij}$ in \eqref{defn_cij1}. 
Using $\mathfrak{a}_{31}$ defined in (\ref{18091931}), we hence conclude that 
\begin{equation*}
\hslash(1,2,0)=\mathfrak{a}_{31} \mathfrak{q}^{(k-2)}+O_{\prec}(n^{-1/2}).
\end{equation*}
Next we study  
\begin{align*}
\hslash(0,3,0)=n^{-\frac32} \sum_{ij} (\Xi_1 A_1)_{j'i} \Big( & (k-1) \frac{\partial^3 Q}{\partial x_{ij}^3} \mathfrak{q}^{(k-2)}+3\frac{(k-1)!}{(k-3)!} \frac{\partial^2 Q}{\partial x_{ij}^2} \frac{\partial Q}{\partial x_{ij}} \mathfrak{q}^{(k-3)}\\
&+\frac{(k-1)!}{(k-4)!} \Big(\frac{\partial Q}{\partial x_{ij}} \Big)^3 \mathfrak{q}^{(k-4)} \Big).
\end{align*}
We briefly argue that $\hslash(0,3,0)$ is bounded by $O_{\prec}(n^{-\frac12})$, using a  discussion similar to those of $\hslash(0,1,0)$ in \eqref{180920200} and $\hslash(0,2,0)$ in  (\ref{eq_partilq2}). 

Recalling (\ref{eq_parti_1}) and (\ref{eq_parti_2}), it is easy to see that
\begin{align*}
n^{-\frac32} \sum_{i,j} (\Xi_1 A_1)_{j'i}  \frac{\partial^2 Q}{\partial x_{ij}^2} \frac{\partial Q}{\partial x_{ij}}=O_{\prec}(n^{-1} \sum_{i,j} \mathbf{u}(i) \mathbf{v}^3(j))=O_{\prec}(n^{-\frac12}).
\end{align*}
Similarly, by \eqref{eq_parti_1} and \eqref{eq_parti_3}, it can also be shown that 
\begin{align*}
n^{-\frac32} \sum_{i,j} (\Xi_1 A_1)_{j'i} \Big(\frac{\partial Q}{\partial x_{ij}} \Big)^3&=O_{\prec}(n^{-\frac12} \sum_{i,j} \mathbf{u}^3(i) \mathbf{v}^3(j))=O_{\prec}(n^{-\frac12}),\\
n^{-\frac32} \sum_{i,j} (\Xi_1A_1)_{j'i} \frac{\partial^3 Q}{\partial x_{ij}^3}&=O_{\prec}(n^{-\frac12}).
\end{align*}
This completes the discussion of $\hslash(0,3,0).$ The same arguments can be applied to show that
\begin{align*}
\hslash(2,1,0)=n^{-\frac32} \sum_{i,j} \Big( \frac{\partial^2 G}{\partial x_{ij}^2} A_1 \Big)_{j'i} \frac{\partial Q}{\partial x_{ij}} \mathfrak{q}^{(k-2)}=O_{\prec}(n^{-1} \sum_{i,j} \mathbf{u}^2(i) \mathbf{v}^2(j))=O_{\prec}(n^{-1}).
\end{align*}
and
\begin{align*}
\hslash(3,0,0)=n^{-\frac32} \sum_{i,j} \Big(\frac{\partial^3 G}{\partial x^3_{ij}} A_1 \Big)_{j'i}\mathfrak{q}^{(k-1)}=O_{\prec}(n^{-3/2} \sum_{i,j} (A_1)_{j'i} )=O_{\prec}(n^{-\frac12})
\end{align*}
by using the expressions \eqref{eq_secondgm} and \eqref{eq_thirdgm} respectively. 

For all the rest of the items containing the derivatives of $h_3,$  they can be easily estimated using a discussion similar to (\ref{eq_h1h2parih3}). 

Finally, using (\ref{eq_secondgm})-(\ref{eq_parti_4}), (\ref{eq_parti_1}),  (\ref{eq_firstgm}) and (\ref{eq_localoutside}), all the fourth order derivatives can bounded by $O_{\prec}(n^{-\frac12})$. The discussion is similar to that of (\ref{eq_partilq2}); we omit further details here.   
This concludes our proof. 
\end{proof}

\begin{proof}[Proof of (\ref{eq_trianglediscussion})]  
We split the left hand side of \eqref{eq_trianglediscussion} as the sum of the following three items
\begin{align*}
\frac{1}{n} \sum_{i,j} (G_{j' j'}-m_2) (GA_1)_{ii} \frac{\partial Q}{\partial x_{ij}},\ \frac{1}{n} \sum_{i,j} m_2 (\Xi_1 A_1)_{ii} \frac{\partial Q}{\partial x_{ij}}, \ \frac{1}{n} \sum_{i,j} m_1 m_2 (A_1)_{ii} \Big( \frac{\partial Q}{\partial x_{ij}}-\sqrt{nz} c_{ij} \Big).
\end{align*}

First of all, by (\ref{eq_genrealcontrol}) and (\ref{eq_parti_1}), we have 
\begin{align*}
\frac{1}{n} \sum_{i,j} (G_{j' j'}-m_2) (GA_1)_{ii} \frac{\partial Q}{\partial x_{ij}}=O_{\prec} \Big( n^{-1} \sum_{(i,j) \in \mathcal{S}(\nu)} \mathbf{u}^3(i) \mathbf{v}(j)  \Big)=O_{\prec}(n^{-\frac12}).
\end{align*}  
Similarly, we also have
\begin{align*}
\frac{1}{n} \sum_{i,j} m_2 (\Xi_1 A_1)_{ii} \frac{\partial Q}{\partial x_{ij}}&=\frac{m_2 \omega_1 }{n} \sum_{i,j} \mathbf{e}_i^* \Xi_1\mathbf{u} \mathbf{u}(i) \frac{\partial Q}{\partial x_{ij}} \\
&=O_{\prec}(n^{-1} \sum_{i,j} \mathbf{u}(i)^2 \mathbf{v}(j))=O_{\prec}(n^{-\frac12}).
\end{align*}
Furthermore, using a discussion similar to that of (\ref{eq_hardest}), we get 
\begin{align*}
\frac{1}{n} \sum_{i,j} m_1 m_2 (A_1)_{ii} \Big( \frac{\partial Q}{\partial x_{ij}}-\sqrt{nz} c_{ij} \Big)=O_{\prec}(n^{-\frac12} \sum_{i}  \mathbf{u}^3(i) )=O_{\prec}(n^{-\frac12}),
\end{align*}
where we apply the fact that 
\begin{align*}
(GAG)_{j'i}-m_1m_2 A_{j'i}=O_{\prec}\Big( \frac{\mathbf{u}(i)}{\sqrt{n}}   \Big).
\end{align*}
Summing up the above three estimates, we can conclude the proof of \eqref{eq_trianglediscussion}. 
\end{proof}

\subsection{Proofs of Lemmas \ref{lem_bestimate}-\ref{lem_a2b2estimatehigh}}\label{subsec:proofofotherlemma}
In this subsection, we will prove Lemmas \ref{lem_bestimate}-\ref{lem_a2b2estimatehigh}. The proofs are analogous to those of Lemma \ref{lem_estimateh1h2h3} and Lemma \ref{lem_24h1h2h3}; we only outline the main steps.

We record a basic identity for later estimates.  For any deterministic matrix $W \in \mathbb{R}^{(M+n) \times (M+n)},$ it is elementary to check that
\begin{equation}\label{eq_g2w1st}
\Big( \frac{\partial G^2}{\partial x_{ij}}W\Big)_{ab}=-\sqrt{z} (G_{aj'}^2 (GW)_{ib}+G_{ai}^2 (GM)_{j'b}+G_{aj'} (G^2W)_{ib}+G_{ai}(G^2W)_{j'b}).
\end{equation}

\begin{proof}[Proof of Lemma \ref{lem_bestimate}] 
Recalling $\tilde h_1$ in (\ref{eq_htitle}), by a discussion similar to (\ref{eq_parh1h2h3}),  we get 
\begin{align*}
\tilde{\hslash}(1,0,0)&=\frac{1}{\sqrt{n}}\sum_{i,j} \frac{\partial \tilde{h}_1}{\partial x_{ij}}h_2 h_3 =\frac{1}{\sqrt{n}} \sum_{i,j} \Big( \frac{\partial G^2}{\partial x_{ij}} B_1 \Big)_{j'i} \mathfrak{q}^{(k-1)} \\
&=-\sqrt{nz} \big((2m_2'+\frac{m_2}{z})\text{Tr}(GB_1)+m_2 \text{Tr}(G^2B_1) \big) \mathfrak{q}^{(k-1)}+O_{\prec}(n^{-\frac12}).
\end{align*}
In the last step above, we use (\ref{eq_g2w1st}), (\ref{eq_localoutside}) and  (\ref{eq_genrealcontrol}). 
Next, we turn towards to the term
\begin{align}\label{eq:prooft010}
\tilde{\hslash}(0,1,0)=\frac{1}{\sqrt{n}} \sum_{i,j} \tilde{h}_1 \frac{\partial h_2}{\partial x_{ij}} h_3=\frac{(k-1)}{n}\sum_{i,j} (\Xi_2B_1)_{j'i} \frac{\partial Q}{\partial x_{ij}} \mathfrak{q}^{(k-2)},
\end{align}
which will be estimated following exactly the same steps as those of \eqref{eq_h1partih2h3}.
Observe that 
\begin{equation*}
(\Xi_2B)_{j'i}=\Big((\frac{1}{\sqrt{z}} X^* \mathcal{G}_1^2+\frac{1}{\sqrt{z}} \mathcal{G}_2 X^* \mathcal{G}_1) \mathcal{B}_1+(\frac{1}{z}X^* \mathcal{G}_1^2 X+\mathcal{G}_2^2-2m_2'-\frac{m_2}{z}) \mathcal{B}_3 \Big)_{j'i}. 
\end{equation*}
By \eqref{eq_parti_1}, after expanding the product on the right hand side of \eqref{eq:prooft010}, we find the following estimates. 
\begingroup 
\allowdisplaybreaks
\begin{align*}
\frac{1}{\sqrt{n}} \sum_{i,j} (\Xi_2B_1)_{j'i} (GAG)_{j'i} =\text{Tr} (\Pi_3^{\mathrm{l}}-\Pi_{1,2}^{\mathrm{l}})B_1 \Pi_1 A+O_{\prec}(n^{-\frac12}), \\
\frac{1}{\sqrt{n}} \sum_{i,j} (\Xi_2B_1)_{j'i} (GBG)_{j' i} =\text{Tr} (\Pi_3^{\mathrm{l}}-\Pi_{1,2}^{\mathrm{l}})B_1 \Pi_1 B+O_{\prec}(n^{-\frac12}), \\
 \frac{1}{\sqrt{n}} \sum_{i,j}  (\Xi_2 B_1)_{j'i}   (G^2 BG)_{j'i}=\text{Tr}(\Pi_4^{\mathrm{l}}-\Pi_{2,2}^{\mathrm{l}})B_1 \Pi_1 B+O_{\prec}(n^{-\frac12}), \\
\frac{1}{\sqrt{n}} \sum_{i,j}  (\Xi_2 B_1)_{j'i}   (G BG^2)_{j'i}=\text{Tr}(\Pi_3^{\mathrm{l}}-\Pi_{1,2}^{\mathrm{l}}) B_1 \Pi_2 B+O_{\prec}(n^{-\frac12}).
\end{align*}
\endgroup
Putting these estimates together and invoking $\tilde{\mathfrak{b}}_{11} $ in \eqref{def:missvar}, we have 
\begin{align*}
&\tilde{\hslash}(0,1,0)=\tilde{\mathfrak{b}}_{11} \mathfrak{q}^{(k-2)}+O_{\prec}(n^{-\frac12 + 4 \nu}).
\end{align*}
Lastly, $\tilde{\hslash}(0,0,1)$ can be estimated using a discussion  similar to (\ref{eq_h1h2parih3}). We can therefore conclude our proof. 
\end{proof} 

The proof of Lemma \ref{lem_bestimate2} follows along the exact lines of Lemma \ref{lem_24h1h2h3} with minor changes. We only sketch it below. 
\begin{proof}[Proof of Lemma \ref{lem_bestimate2}]  First of all,  by (\ref{eq_genrealcontrol}) and (\ref{eq_partig2w}), using a discussion similar to (\ref{eq_h200}), we have that
\begin{align*}
\tilde{\hslash}(2,0,0)=\frac{1}{n} \sum_{i,j}  \frac{\partial^2 \tilde{h}_1 }{\partial x_{ij}^2} h_2h_3
=\frac{1}{n} \sum_{i,j} \Big( \frac{\partial^2 G^2}{\partial x_{ij}^2} B_1 \Big)_{j'i} \mathfrak{q}^{(k-1)}=\tilde{\mathfrak{d}}_1 \mathfrak{q}^{(k-1)}+O_{\prec}(n^{-\frac12}),
\end{align*} 

Second, following the same steps in (\ref{eq_h110v2}), together with (\ref{eq_g2w1st}) and (\ref{eq_parti_1}), we find that 
\begin{align*}
\tilde{\hslash}(1,1,0)=\frac{1}{n} \sum_{i,j}  \frac{\partial \tilde{h}_1 }{\partial x_{ij}} \frac{\partial h_2}{\partial x_{ij}} h_3=\frac{k-1}{n} \sum_{i,j} \Big( \frac{\partial^2 G}{\partial x_{ij}^2}B_1 \Big)_{j'i} \frac{\partial Q}{\partial x_{ij}} \mathfrak{q}^{(k-2)}=\tilde{\mathfrak{b}}_{21} \mathfrak{q}^{(k-2)}+O_{\prec}(n^{-\frac12}).
\end{align*}

Third,  the same arguments of (\ref{eq_120}) using (\ref{eq_g2w1st}) and (\ref{eq_parti_2})  yield  
\begin{align*}
\tilde{\hslash}(1,2,0)=n^{-\frac32}  \sum_{i,j} \frac{\partial \tilde{h}_1}{\partial x_{ij}} \frac{\partial^2 h_2}{\partial x^2_{ij}} h_3 = \tilde{\mathfrak{b}}_{31} \mathfrak{q}^{k-2}+O_{\prec}(n^{-1/2}).
\end{align*}
For the rest of the items, we can apply discussions similar to those of the corresponding items from Lemma \ref{lem_24h1h2h3}. We omit the details here. 

\end{proof}
Lemma \ref{lem_a2b2estimate} is an analogue of Lemma \ref{lem_estimateh1h2h3} and \ref{lem_bestimate} for the matrices $A_2$ and $B_2$; the proof is analogous.   
\begin{proof}[Proof of Lemma \ref{lem_a2b2estimate}] Recall (\ref{eq_firstgm}). Using a  discussion similar to that of (\ref{eq_parh1h2h3}), by  (\ref{eq_localoutside}) and (\ref{eq_genrealcontrol}), we find that 
\begin{align*}
\mathfrak{h}(1,0,0)&=\frac{1}{\sqrt{n}} \sum_{i,j} \frac{\partial \mathbbm{h}_1}{\partial x_{ij}} h_2 h_3= \frac{1}{\sqrt{n}} \sum_{i,j} \Big( \frac{\partial G}{\partial x_{ij}} A_2 \Big)_{ij^{\prime} } \mathfrak{q}^{(k-1)}\nonumber \\
& =-\sqrt{nz} \frac{1}{n} \sum_{i,j} \Big( G_{ii}(GA_2)_{j'j'}+G_{ij'}(GA_2)_{ij'} \Big) \mathfrak{q}^{(k-1)} \nonumber \\
&=-\sqrt{nz} y m_{1n}(\text{Tr} GA_2) \mathfrak{q}^{(k-1)}-\frac{\sqrt{z}}{\sqrt{n}} (\text{Tr} GA_2^*G) \mathfrak{q}^{(k-1)}\nonumber \\
&= -\sqrt{nz} y m_1 (\text{Tr} GA_2) \mathfrak{q}^{(k-1)}+O_{\prec}(n^{-\frac12}),
\end{align*}
where we recall that $y=\frac{M}{n}.$ Similarly, using (\ref{eq_g2w1st}),  we also have
\begin{align*}
\tilde{\mathfrak{h}}(1,0,0)&=\frac{1}{\sqrt{n}} \sum_{i,j} \frac{\partial \tilde{\mathbbm{h}}_1}{\partial x_{ij}} h_2 h_3  =\frac{1}{\sqrt{n}} \sum_{i,j} \Big( \frac{\partial G^2}{\partial x_{ij}} B_2 \Big)_{ij'} \mathfrak{q}^{(k-1)} \\
& =-\sqrt{nz} \frac{1}{n} \sum_{i,j} \Big( G_{ii}^2 (GB_2)_{j'j'}+G_{ii} (G^2 B_2)_{j'j'} \Big) \mathfrak{q}^{(k-1)}+O_{\prec}(n^{-\frac12}) \\
&=-\sqrt{nz} y\Big((2m_1'+\frac{m_1}{z}) (\text{Tr} GB_2)+m_1 \text{Tr} G^2 B_2 \Big) \mathfrak{q}^{(k-1)}+O_{\prec}(n^{-\frac12}),
\end{align*}
Next, we estimate $$\mathfrak{h}(0,1,0)=\frac{k-1}{\sqrt{n}} \sum_{i,j} (\Xi_1A_2)_{i j'}  \frac{\partial Q}{\partial x_{ij}} \mathfrak{q}^{(k-2)}.$$ Recall the expression of $\partial Q/\partial x_{ij}$ in \eqref{eq_parti_1}. As seen in the discussion below  \eqref{180920200}, the key observation is that 
$$\sum_{i,j} (\Xi_1A_2)_{i j'} (GAG)_{i j'}  = \Tr (G \mathtt{I}^{\mathrm u} G - G \mathtt{I}^{\mathrm u} \Pi_1) A_2 G A$$
Thus we shall prove
\begin{align*}
\sum_{i,j} (\Xi_1A_2)_{i j'} (GAG)_{i j'}=\Tr(\Pi_2^{\mathrm{u}}-\Pi_{1,1}^{\mathrm{u}})A_2 \Pi_1 A +O_{\prec}(n^{-\frac12}). 
\end{align*}
The proof follows from 
\begin{equation*}
(\Xi_1A_2)_{ij'}=((\mathcal{G}_1-m_1) \mathcal{A}_2+z^{-1/2} \mathcal{G}_1X \mathcal{A}_4)_{ij}
\end{equation*}
and exactly the same arguments as \eqref{180920201}.  Likewise, we also get
\begingroup
\allowdisplaybreaks
\begin{align*}
&\frac{1}{\sqrt{n}} \sum_{i,j} (\Xi_1A_2)_{ij'} (GBG)_{ij'}=\text{Tr}(\Pi_2^{\mathrm{u}}-\Pi_{1,1}^{\mathrm{u}})A_2 \Pi_1B+O_{\prec}(n^{-\frac12}),\\
&\frac{1}{\sqrt{n}}\sum_{i,j} (\Xi_1A_2)_{ij'} (G^2BG)_{ij'}=\text{Tr} (\Pi_3^{\mathrm{u}}-\Pi_{2,1}^{\mathrm{u}})A_2 \Pi_1 B+O_{\prec}(n^{-\frac12}),\\
&\frac{1}{\sqrt{n}} \sum_{i,j} (\Xi_1A_2)_{ij'} (GBG^2)_{ij'}=\text{Tr} (\Pi_2^{\mathrm{u}}-\Pi_{1,1}^{\mathrm{u}})A_2 \Pi_2 B+O_{\prec}(n^{-\frac12}). 
\end{align*}
\endgroup
Putting the above estimates together and recalling $\mathfrak{a}_{12}$ below \eqref{def:missvar}, we finish the computation for (\ref{eq_partifrakh1h2h3}). The other term $\tilde{\mathfrak{h}}(0,1,0)$ can be estimated analogously  by noting 
\begin{equation*}
(\Xi_2B_2)_{ij'}=((\mathcal{G}_1^2+\frac{1}{z} \mathcal{G}_1 XX^* \mathcal{G}_1-2m_1'-\frac{m_1}{z}) \mathcal{B}_2+(\frac{1}{\sqrt{z}} \mathcal{G}_1^2 X+\frac{1}{\sqrt{z}} \mathcal{G}_1 X\mathcal{G}_2)\mathcal{B}_4)_{ij}.
\end{equation*}
Finally, $\mathfrak{h}(0,0,1)$ and $\tilde{\mathfrak{h}}(0,0,1)$ can be estimated using a discussion similar to that of (\ref{eq_h1h2parih3}). The details are omitted.
\end{proof}

The remaining part of this section is the proof of Lemma \ref{lem_a2b2estimatehigh}, which is an analogue of Lemma \ref{lem_24h1h2h3} and \ref{lem_bestimate2} for the matrices $A_2$ and $B_2.$ 
\begin{proof}[Proof of Lemma \ref{lem_a2b2estimatehigh}]
We shall  outline our computation on the dominating terms.   The discussions of  the negligible terms are similar to those in Lemma \ref{lem_24h1h2h3} and \ref{lem_bestimate2}, and are therefore omitted.  

Recall (\ref{eq_secondgm}). Using the same proof of (\ref{eq_h200}),  we first get
\begin{align*}
\mathfrak{h}(2,0,0)&=\frac{1}{n} \sum_{i,j} \frac{\partial^2 \mathbbm{h}_1}{\partial x_{ij}^2} h_2 h_3 =\frac{1}{n} \sum_{i,j}  \Big(\frac{\partial^2 G}{\partial x_{ij}^2} A_2\Big)_{ij'} \mathfrak{q}^{(k-1)} \\
& =\frac{2z }{n} \sum_{i,j} G_{ii} G_{j'j'} (GA_2)_{ij'} \mathfrak{q}^{(k-1)}+O_{\prec}(n^{-\frac12}) \\
&=\frac{2z}{n} \sum_{i,j} (\Pi_1)_{ii} (\Pi_1)_{j'j'} (\Pi_1 A_2)_{ij'} \mathfrak{q}^{(k-1)}+O_{\prec}(n^{-\frac12}) \\
&=\mathfrak{d}_2^a \mathfrak{q}^{(k-1)}+O_{\prec}(n^{-\frac12}).
\end{align*}
Likewise,  applying (\ref{eq_partig2w}), we find that
\begin{align*}
\tilde{\mathfrak{h}}(2,0,0)&=\frac{1}{n}  \sum_{i,j} \frac{\partial^2 \tilde{\mathbbm{h}}_1 }{\partial x_{ij}^2} h_2 h_3  =\frac{1}{n} \sum_{i,j} \Big( \frac{\partial^2 G^2}{\partial x_{ij}^2} B_2 \Big)_{ij'} \mathfrak{q}^{(k-1)}=\tilde{\mathfrak{d}}_2 \mathfrak{q}^{(k-1)}+O_{\prec}(n^{-\frac12}).
\end{align*}

Next, recall (\ref{eq_firstgm}) and (\ref{eq_parti_1}). By a  discussion similar to that of (\ref{eq_h110v2}),  we conclude that 
\begin{align*}
\mathfrak{h}(1,1,0)&=\frac{1}{n} \sum_{i,j} \frac{\partial \mathbbm{h}_1}{\partial x_{ij}} \frac{\partial h_2}{\partial x_{ij}} h_3\\
&=-\frac{(k-1) z }{\sqrt{n}} \sum_{i,j} (\Pi_1)_{ii} (\Pi_1 A_2)_{j'j'} c_{ij} \mathfrak{q}^{(k-2)}+O_{\prec}(n^{-\frac12+4\nu})\\
&=\mathfrak{a}_{22} \mathfrak{q}^{(k-2)}+O_{\prec}(n^{-\frac12+4\nu}).
\end{align*}
Similarly, recalling (\ref{eq_g2w1st}), we have 
\begin{align*}
\tilde{\mathfrak{h}}(1,1,0)=\frac{1}{n} \sum_{i,j} \frac{\partial \tilde{\mathbbm{h}}_1}{\partial x_{ij}} \frac{\partial h_2}{\partial x_{ij}} h_3=\tilde{\mathfrak{b}}_{22} \mathfrak{q}^{(k-2)}+O_{\prec}(n^{-\frac12}).
\end{align*}
Finally, recall (\ref{eq_fourgm}) and (\ref{eq_parti_2}). The same arguments as (\ref{eq_thirdnonvaish}) yield
\begingroup
\allowdisplaybreaks
\begin{align*}
&\mathfrak{h}(1,2,0)=n^{-\frac32} \sum_{i,j} \frac{\partial \mathbbm{h}_1}{\partial x_{ij}} \frac{\partial^2 h_2}{\partial x^2_{ij}} h_3=\mathfrak{a}_{32}\mathfrak{q}^{(k-2)}+O_{\prec}(n^{-\frac12}), \\
&\tilde{\mathfrak{h}}(1,2,0)=n^{-\frac32}  \sum_{i,j} \frac{\partial \tilde{\mathbbm{h}}_1}{\partial x_{ij}} \frac{\partial^2 h_2}{\partial x_{ij}^2} h_3=\tilde{\mathfrak{b}}_{32} \mathfrak{q}^{(k-2)}+O_{\prec}(n^{-\frac12}).
\end{align*}
\endgroup
This concludes our proof. 
\end{proof}

\section{Proof of Theorems \ref{thm. right subspace}} \label{s. 18092610}
 In this section, we prove Theorem \ref{thm. right subspace}.  The proof follows along the same lines of the proof of Theorem \ref{thm_mainthm}, and is summarized as follows. First, by Lemma \ref{lem:key}, we reduce the problem to study the quantity $\mathcal Q$ defined below. After necessary notations are introduced, as done in the beginning of Section \ref{sec_mainthm}, it suffices to prove Proposition \ref{lem:subspace}, which is an analogue of Proposition \ref{lem:main}. The proof of Proposition \ref{lem:subspace} essentially relies on a recursive  estimate presented in Proposition \ref{prop_landr}. Thus the main goal of this section is to prove Proposition \ref{prop_landr}. 

Let $\mathbf{z}=(z_1,\cdots,z_r)$ denote a vector with all the entries $z_\beta \in \mathbf{S}_o$. Following the discussion in the beginning of Section \ref{sec_mainthm}, with a slight abuse of notation, we introduce a few definitions. Let 
\begin{equation*}
\mathcal{Q} \equiv \mathcal{Q}\mathcal(\mathbf{z}):=\sqrt{n} \sum_{\beta=1}^r \Big( \text{Tr}(\Xi_1(z_\beta) )A_\beta^R +\text{Tr}(\Xi_1^{\prime}(z_\beta))B_\beta^R \Big).
\end{equation*}
Denote the index set as 
\begin{equation*}
\mathcal{B}(\nu):=\bigcup_{\beta=1}^r\mathcal{B}_\beta(\nu),
\end{equation*}
where $\mathcal{B}_\beta(\nu)$ is defined as 
\begin{equation*}
\mathcal{B}_\beta(\nu):=\Big\{ (i,j)\in [M]\times[n] : |\mathbf{u}_\beta(i)|>n^{-\nu}, \ |\mathbf{v}_\beta(j)|>n^{-\nu} \Big\}.
\end{equation*}
Since $r$ is fixed and all the vectors $\mathbf{u}_\beta$ and $\mathbf{v}_\beta$ for  $\beta \in [r]$ are unit vectors,  it is easy to conclude that $|\mathcal{B}(\nu)| \leq C n^{4 \nu}$ for some constant $C>0$. 

For $\beta \in [r]$,  invoke $\Delta_d(z_\beta)$ by plugging $z_\beta$ in (\ref{defn_detministic1}). We also introduce the random variable 
\begin{equation*}
\Delta_r(z_\beta):=\sqrt{n z_\beta} \sum_{(i,j) \in \mathcal{B}(\nu)} x_{ij} (c_\beta)_{ij},
\end{equation*}
where $(c_\beta)_{ij}\equiv (c_\beta (z_\beta))_{ij}$ is defined by inserting $z_{\beta}$ into $c_{ij}$ in (\ref{defn_cij1}).
Similarly, we denote  $(s_\beta)_{ij} \equiv (s_\beta(z_\beta))_{ij}$ by plugging $z_\beta$ into $s_{ij}$ in (\ref{defn_sij1}).
Let $C_\beta$ and $S_\beta$ be $M\times n$ matrices with entries $(c_\beta)_{ij}$ and $(s_\beta)_{ij}$ respectively.   
 Denote 
\begin{equation*}
\Delta_r \equiv \Delta_r(\mathbf{z}):=\sum_{\beta=1}^r \Delta_r(z_\beta), \quad \Delta_d \equiv \Delta_d(\mathbf{z}):=\sum_{\beta=1}^r \Delta_d(z_\beta),
\end{equation*}
and $\Delta=\Delta_d+\Delta_r.$ Furthermore, we denote 
\begin{equation} \label{delta_new}
Q=\mathcal{Q}-\Delta.
\end{equation}
Note $\mathcal{V}^E(z_\beta)$ is defined in (\ref{eq_indivialve}) by plugging $z_{\beta}$. Set
\begin{equation*}
\mathcal{V}^E(\mathbf{z})=\sum_{\beta=1}^r  \mathcal{V}^E(z_\beta).
\end{equation*}
Then we  define the  function 
\begin{align*}
V &\equiv V(\mathbf{z}) \\
&=\mathcal{V}^E(\mathbf{z})+2\frac{\kappa_3}{\sqrt{n}} \text{Tr} \Big( ( \sum_{\beta=1}^rz_{\beta} S_\beta) (\sum_{\beta=1}^r  \sqrt{z_{\beta}} C_\beta)^* \Big)+\frac{\kappa_4}{n}\text{Tr} \Big(( \sum_{\beta=1}^rz_{\beta} S_\beta) ( \sum_{\beta=1}^rz_{\beta} S_\beta)^* \Big)\\
&\quad+\sum_{(i,j)\in \mathcal{S}(\nu)} \Big(\sum_{\beta=1}^r \sqrt{z_{\beta}} (c_{\beta})_{ij} \Big)^2.
\end{align*}
Recall $p_\beta=p(d_\beta)$ in \eqref{18012901}. Let $\mathbf{z}_0:=(p_1,\ldots,p_r).$
\begin{pro}\label{lem:subspace}
Under  the assumptions of Theorem \ref{thm. right subspace}, we have that $Q(\mathbf{z}_0)$ and $\Delta(\mathbf{z}_0)$ are asymptotically independent. Furthermore, 
\begin{equation*}
Q(\mathbf{z}_0) \simeq \mathcal{N}\left(0, V(\mathbf{z}_0) \right).
\end{equation*}
\end{pro}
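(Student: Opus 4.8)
\textbf{Proof strategy for Proposition \ref{lem:subspace}.}
The plan is to mirror the proof of Proposition \ref{lem:main}, with the single-point spectral parameter $p(d)$ replaced by the vector $\mathbf{z}_0=(p_1,\ldots,p_r)$. First, as in the beginning of Section \ref{sec_mainthm}, I would introduce the perturbed parameters $z_\beta=p_\beta+\ii n^{-C}$ for a large constant $C$, collected in a vector $\mathbf{z}$, so that the naive bound $\|G(z_\beta)\|\le n^{C}$ is available on the tiny-probability bad event and can be traded against the high-probability isotropic estimates of Lemma \ref{lem_localoutside} when passing from stochastic domination to bounds on expectations. The quantity $Q(\mathbf z)=\mathcal Q(\mathbf z)-\Delta(\mathbf z)$ is a finite sum over $\beta\in[r]$ of exactly the same shape as the single-$\beta$ object in Proposition \ref{lem:main}, the only new feature being that the ``large-component'' index set $\mathcal B(\nu)=\bigcup_\beta \mathcal B_\beta(\nu)$ is a fixed finite union, hence still of size $O(n^{4\nu})$, and that the matrices $A^R_\beta,B^R_\beta$ are evaluated at different spectral points.

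The key step is the analogue of Proposition \ref{prop_iteration}: I would prove that for $\mathbf z_0=(p_1,\ldots,p_r)$ and $\mathbf z$ as above,
\begin{align}
&\E Q(\mathbf z)e^{\ii t\Delta(\mathbf z_0)}=O_\prec(n^{-\frac12+4\nu}),\nonumber\\
&\E Q^k(\mathbf z)e^{\ii t\Delta(\mathbf z_0)}=(k-1)V(\mathbf z)\,\E Q^{k-2}(\mathbf z)e^{\ii t\Delta(\mathbf z_0)}+O_\prec(n^{-\frac12+4\nu})\nonumber
\end{align}
for every fixed $k\ge2$; this is Proposition \ref{prop_landr} referred to in the excerpt. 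Its proof proceeds exactly as that of Proposition \ref{prop_iteration}: one writes $Q(\mathbf z)=\sqrt n\sum_{\beta,\alpha}(f_{\beta\alpha}+g_{\beta\alpha})+\sqrt n\sum_{(i,j)\in\mathcal S(\nu)}(\sum_\beta\sqrt{z_\beta}(c_\beta)_{ij})x_{ij}-\Delta_d$ by linearity of the decomposition in Lemma \ref{lem.rewrite of Q}, then applies the cumulant expansion Lemma \ref{lem_cumu} to each term $\E x_{ij}(\Xi_1(z_\beta)A^R_\beta)_{j'i}\mathfrak q^{(k-1)}$ and its $g$-counterpart, and reuses the derivative estimates of Lemmas \ref{lem. deri of Q}--\ref{lem_a2b2estimatehigh}. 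The only genuinely new bookkeeping is that the derivative $\partial Q(\mathbf z)/\partial x_{ij}$ now produces $\sum_\beta\sqrt{z_\beta}(c_\beta)_{ij}$ on the set $\mathcal S(\nu)$, so cross terms $\beta\ne\beta'$ appear in the variance; these are controlled by the same isotropic-law arguments since $G(z_\beta)$ and $G(z_{\beta'})$ are both well approximated by $\Pi_1(z_\beta),\Pi_1(z_{\beta'})$ with $O_\prec(n^{-1/2})$ errors, and one verifies, as in \eqref{18091950}, the two algebraic identities $\mathfrak c_1=\Delta_d(\mathbf z)$ and $\mathfrak c_2=V(\mathbf z)$ by the same tedious but elementary substitution, now summed over $\beta,\beta'$.

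Granting Proposition \ref{prop_landr}, the proof of Proposition \ref{lem:subspace} is identical to the proof of Proposition \ref{lem:main}: split $Q=Q_R+\ii Q_I$, use the choice of $\mathbf z$ to get the deterministic bound on $Q_I$ and the high-probability bound $|Q_I|\le n^{-C'}$, derive the recursion for the real part, expand $\E e^{\ii sQ_R(\mathbf z)+\ii t\Delta(\mathbf z_0)}$ via \eqref{eq_charbound}, use the recursion at $t=0$ to bound the remainder, resum the even moments to $e^{-\frac12 V s^2}$, and conclude both the asymptotic independence of $Q(\mathbf z_0)$ and $\Delta(\mathbf z_0)$ (taking $\epsilon\to0$) and $Q(\mathbf z_0)\simeq\mathcal N(0,V(\mathbf z_0))$ (taking $s=0$); finally continuity of $G$ off the support of the MP law transfers the statement from $\mathbf z$ back to $\mathbf z_0$. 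The main obstacle, as in the one-point case, is proving the recursive moment estimate uniformly in the auxiliary parameter $t$ and controlling the error term of the cumulant expansion — all the work is in the derivative lemmas, and here the extra difficulty over Proposition \ref{lem:main} is purely notational: one must track the finitely many spectral points $z_\beta$ and the mixed products $\Pi_a(z_\beta)\mathtt I^{\mathrm u}\Pi_{a'}(z_{\beta'})$ without any new analytic input.
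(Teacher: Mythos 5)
Your strategy coincides with the paper's: decompose $Q(\mathbf z)=\sum_\beta Q_\beta$ as in Lemma \ref{lem_rewriteqr}, prove the recursive moment estimate of Proposition \ref{prop_landr} by cumulant expansion and the derivative lemmas, and then resum the characteristic function exactly as in the proof of Proposition \ref{lem:main}. However, one remark about the cross terms is worth sharpening, because as stated it slightly misidentifies where the new input sits. You say the cross terms $\beta\neq\gamma$ are ``controlled by the same isotropic-law arguments'' and that one must track ``mixed products $\Pi_a(z_\beta)\mathtt I^{\mathrm u}\Pi_{a'}(z_{\gamma})$.'' In fact those Green-function cross products never materialize. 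The crucial algebraic fact is that the orthogonality $\mathbf u_{\beta}^*\mathbf u_{\gamma}=\mathbf v_{\beta}^*\mathbf v_{\gamma}=0$ for $\beta\neq\gamma$ forces $A_{\beta}^R A_{\gamma}^R=B_{\beta}^R B_{\gamma}^R=A_{\beta}^R B_{\gamma}^R=0$, and more to the point it kills the \emph{leading} term in the isotropic approximation: $\mathbf u_{\beta}^*G(z)\mathbf u_{\gamma}=m_1(z)\mathbf u_{\beta}^*\mathbf u_{\gamma}+O_\prec(n^{-1/2})=O_\prec(n^{-1/2})$, whereas with nonorthogonal vectors the isotropic law alone would give $O(1)$. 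This is precisely what makes $\frac{1}{\sqrt n}\sum_{i,j}(\Xi_1(z_\beta)A_{\beta,1})_{j'i}\,\partial Q_\gamma/\partial x_{ij}=O_\prec(n^{-1/2})$ for $\gamma\neq\beta$ (the analogue of \eqref{180920201}), so the ``mixed $\Pi$ products'' all drop out. The genuine cross contributions to $V(\mathbf z)$ come only from the scalar sums $\sum_{(i,j)\in\mathcal S(\nu)}(c_\beta)_{ij}(c_\gamma)_{ij}$ and $\sum_{i,j}(s_\beta)_{ij}(c_\gamma)_{ij}$, which is how the $\mathbf s_{2,2}(\mathbf u_i,\mathbf u_j)$ and $\mathbf s_{2,1}$ terms in the statement of Theorem \ref{thm. right subspace} arise. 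Once you make the orthogonality observation explicit, the rest of your argument is the paper's.
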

Theorem \ref{thm. right subspace} follows from Proposition \ref{lem:subspace}. The arguments are the same as the proof of Theorem \ref{thm_mainthm} in Section \ref{sec_mainthm}. Again, the final presentation of the results in Theorem \ref{thm. right subspace} are obtained by plugging the values $p_\beta$ for $1\le \beta\le r$ using the continuity of Green functions and performing tedious calculations. We omit the details. 

Similar to the discussion of Proposition \ref{lem:main}, to prove Proposition \ref{lem:subspace}, it suffices to establish the following recursive estimates. It is an analogue of Proposition \ref{prop_iteration}. 
\begin{pro} \label{prop_landr} Suppose the assumptions of Theorem \ref{thm. right subspace} hold. Let $z_\beta=p_\beta+\ii n^{-C}$ and $z_{0\beta}=p_\beta$ for all $\beta \in [r]$. We have  
\begin{align*}
\mathbb{E} Q(z_\beta) e^{\ii t \Delta(z_{0\beta})} =  O_{\prec}(n^{-1/2+\nu}),
\end{align*}
and for any fixed integer $k \geq 2,$
\begin{equation*}
\mathbb{E} Q^k(z_\beta) e^{\ii t \Delta(z_{0\beta})}=(k-1) V \mathbb{E}Q^{k-2}(z_\beta) e^{\ii t \Delta(z_{0\beta})}+ O_{\prec}(n^{-1/2+\nu}) . 
\end{equation*}
\end{pro}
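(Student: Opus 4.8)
\textbf{Proof proposal for Proposition \ref{prop_landr}.}
The plan is to reduce the multi-point recursive estimate to a repeated application of the single-point machinery of Section \ref{sec_mainthm}, which culminates in Proposition \ref{prop_iteration} and the supporting Lemmas \ref{lem.rewrite of Q} and \ref{lem. recursive estimate for each term}. First I would record the analogue of Lemma \ref{lem.rewrite of Q}: since $Q(\mathbf z)=\sum_{\beta=1}^r\big(\mathcal Q(z_\beta)-\Delta(z_\beta)\big)$ with each summand of the form studied before, one writes
\begin{equation*}
Q(\mathbf z)=\sqrt n\sum_{\beta=1}^r\big(f_1^{(\beta)}+f_2^{(\beta)}+g_1^{(\beta)}+g_2^{(\beta)}\big)+\sqrt n\sum_{(i,j)\in\mathcal S(\nu)}\Big(\sum_{\beta=1}^r\sqrt{z_\beta}(c_\beta)_{ij}\Big)x_{ij}-\Delta_d(\mathbf z),
\end{equation*}
where $f_\alpha^{(\beta)},g_\alpha^{(\beta)}$ are the quantities in \eqref{18072570} built from $A_\beta^R,B_\beta^R$ and evaluated at $z_\beta$. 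The key point here is that the set $\mathcal B(\nu)$ is now the union over $\beta$, but still $|\mathcal B(\nu)|\le Cn^{4\nu}$, so all error bookkeeping goes through unchanged.

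Next, for a fixed integer $k\ge2$ I would run the cumulant expansion exactly as in the proof of Lemma \ref{lem. recursive estimate for each term}, but now applied to $\mathfrak q^{(k-1)}=Q^{k-1}(\mathbf z)e^{\ii t\Delta(\mathbf z_0)}$. The derivatives $\partial Q(\mathbf z)/\partial x_{ij}$ are sums over $\beta$ of the single-point derivatives, each controlled by \eqref{eq:derivativeQ1}; likewise $\partial\Delta(\mathbf z_0)/\partial x_{ij}$ is a sum over $\beta$ of terms of the form $\mathbf 1((i,j)\in\mathcal B(\nu))\ii t\sqrt{nz_{0\beta}}(c_\beta)_{ij}$. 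Because all the resolvents involved are evaluated at distinct points $z_\beta$ but satisfy the \emph{same} isotropic local law \eqref{eq_localoutside} uniformly in $z\in\mathbf S_o$, each product that arises in $\partial^l(h_1h_2h_3)/\partial x_{ij}^l$ is estimated by the very same lemmas (Lemmas \ref{lem_estimateh1h2h3}--\ref{lem_a2b2estimatehigh}), with the only change being that a factor $G^a(z_\beta)$ is replaced by its deterministic approximant $\Pi_a(z_\beta)$. Summing the contributions of $f_\alpha^{(\beta)},g_\alpha^{(\beta)}$ over $\alpha\in\{1,2\}$ and $\beta\in[r]$ reproduces $\Delta_d(\mathbf z)$ as the coefficient of $\mathbb E\mathfrak q^{(k-1)}$ (the $\kappa_3$ mean-shift terms) and, after collecting the $\mathbb E\mathfrak q^{(k-2)}$ coefficients, the quantity $\sum_\beta\mathcal V^E(z_\beta)$ together with the cross terms $\tfrac{\kappa_3}{\sqrt n}$, $\tfrac{\kappa_4}{n}$ and the Gaussian-tail term $\sum_{(i,j)\in\mathcal S(\nu)}(\sum_\beta\sqrt{z_\beta}(c_\beta)_{ij})^2$. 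The new feature relative to the single-point case is precisely the appearance of genuine cross terms $(c_\beta)_{ij}(s_{\beta'})_{ij}$ and $(s_\beta)_{ij}(s_{\beta'})_{ij}$ with $\beta\ne\beta'$, which come from expanding the squares $(\partial Q/\partial x_{ij})^2$ and $(\sum_\beta\sqrt{z_\beta}(c_\beta)_{ij})^2$; these reorganize into the matrix traces $\Tr((\sum_\beta z_\beta S_\beta)(\sum_\beta\sqrt{z_\beta}C_\beta)^*)$ and $\Tr((\sum_\beta z_\beta S_\beta)(\sum_\beta z_\beta S_\beta)^*)$ appearing in $V(\mathbf z)$, and one verifies $\mathfrak c_1=\Delta_d$, $\mathfrak c_2=V$ by an elementary substitution as in \eqref{18091950}.

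The remaining points are mostly formal. As in Proposition \ref{prop_iteration}, I would work with the regularized spectral parameters $z_\beta=p_\beta+\ii n^{-C}$ so that the trivial bound $\|G(z_\beta)\|\le n^C$ on the exceptional event converts the high-probability estimates into expectation estimates; this is harmless since the $z_\beta$ lie a fixed distance from the support of the MP law. The degenerate case $k=1$ (the first displayed identity) is handled by the same argument with $\mathfrak q^{(k-2)}$ terms absent. The \textbf{main obstacle} I anticipate is purely organizational rather than conceptual: one must check that no ``diagonal'' contribution is double-counted and that the cross terms indexed by $\beta\ne\beta'$ are assembled with the correct coefficients — in particular that the contributions of $f_\alpha^{(\beta)}$ acting against $(\partial/\partial x_{ij})^{l_2}Q^{k-1}(\mathbf z)$, which itself expands into $\binom{k-1}{\cdots}\prod_\gamma(\partial Q(z_\gamma)/\partial x_{ij})$, combine to give exactly the symmetric bilinear forms in $V(\mathbf z)$. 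This requires being careful with the combinatorial multinomial factors, but no new analytic input beyond the uniform isotropic local law and the derivative bound \eqref{18072549}. With $\mathfrak c_1=\Delta_d(\mathbf z)$ and $\mathfrak c_2=V(\mathbf z)$ established, the recursive estimate follows, and Proposition \ref{lem:subspace} is then deduced from Proposition \ref{prop_landr} by the identical power-series argument used to pass from Proposition \ref{prop_iteration} to Proposition \ref{lem:main}.
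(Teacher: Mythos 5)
Your proposal follows essentially the paper's strategy: decompose $Q(\mathbf z)=\sum_\beta Q_\beta$ with each $Q_\beta$ of the rank-one form, run the cumulant expansion as in Lemma~\ref{lem. recursive estimate for each term}, and match coefficients of $\mathbb E\mathfrak q^{(k-1)}$ and $\mathbb E\mathfrak q^{(k-2)}$. There is, however, a genuine gap in exactly the part you dismiss as ``purely organizational rather than conceptual.'' The decisive analytic input in the rank-$r$ case is the bound \eqref{eq_finalor}: for $\gamma\neq\beta$, the pairing of $\big(\Xi_1(z_\beta)A_{\beta,1}\big)_{j'i}$ with $\partial Q_\gamma/\partial x_{ij}$ is $O_\prec(n^{-1/2})$. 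The mechanism is the orthogonality of the singular vectors, $\langle\mathbf u_\beta,\mathbf u_\gamma\rangle=\langle\mathbf v_\beta,\mathbf v_\gamma\rangle=0$ for $\beta\neq\gamma$, which via the isotropic local law gives $\mathbf u_\beta^*\mathcal G_1(z)\mathbf u_\gamma=m_1(z)\langle\mathbf u_\beta,\mathbf u_\gamma\rangle+O_\prec(n^{-1/2})=O_\prec(n^{-1/2})$, and more structurally $A_\beta^R A_\gamma^R=B_\beta^R B_\gamma^R=A_\beta^R B_\gamma^R=0$ for $\beta\neq\gamma$. This is precisely why $\mathcal V^E(\mathbf z)=\sum_\beta\mathcal V^E(z_\beta)$ is a pure sum with no $\beta\neq\gamma$ contribution, while the $\kappa_3$, $\kappa_4$ and Gaussian-tail pieces of $V(\mathbf z)$ do carry cross terms. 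Your claim that the $\beta\neq\beta'$ contributions of $f_\alpha^{(\beta)}$ against the derivatives of $Q^{k-1}$ ``combine to give exactly the symmetric bilinear forms in $V(\mathbf z)$'' is therefore misleading: those resolvent cross-contractions do not assemble into a bilinear form, they vanish. The surviving cross terms come only from the explicit $c_{ij}$ coefficients in \eqref{eq:derivativeQ1} and from derivatives of $h_3=e^{\ii t\Delta}$, which is why they appear as products $(\sum_\beta\sqrt{z_\beta}(c_\beta)_{ij})^2$ and $(\sum_\beta z_\beta(s_\beta)_{ij})(\sum_\gamma\sqrt{z_\gamma}(c_\gamma)_{ij})$ and reorganize into the stated matrix traces. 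A secondary slip: $\partial Q^{k-1}/\partial x_{ij}=(k-1)Q^{k-2}\sum_\gamma\partial Q_\gamma/\partial x_{ij}$ is a sum over $\gamma$, not the product $\prod_\gamma(\partial Q(z_\gamma)/\partial x_{ij})$ your proposal invokes, so the combinatorics are actually more rigid than you suggest. Once the orthogonality input and \eqref{eq_finalor} are in place, the rest of your argument — the regularization $z_\beta=p_\beta+\ii n^{-C}$, the degenerate $k=1$ case, and the power-series passage to Proposition~\ref{lem:subspace} — proceeds exactly as in Proposition~\ref{prop_iteration}.
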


\section{Proof of Proposition \ref{prop_landr}}

Proposition \ref{prop_landr} can be proved in a  way similar to Proposition \ref{prop_iteration}. Recall from Section \ref{s. proof of recursive estimate} that the proof of Proposition \ref{prop_iteration} is based on Lemma \ref{lem.rewrite of Q} and Lemma \ref{lem. recursive estimate for each term}. We present the analogues of these two lemmas and their proofs in the following two steps. We shall only outline the key estimates and focus on discussing the differences.   

\vspace{1ex}

\noindent{\bf Step 1.} In the first step, we will rewrite $Q$ in (\ref{delta_new}).  Recall (\ref{eq_diagonalmatrix}) and for each $\beta\in [r]$, denote 
\begin{align*}
&A_{\beta,1}:=A_\beta^R \mathtt{I}^{\mathrm{u}}, \quad A_{\beta,1}:=A_\beta^R \mathtt{I}^{\mathrm{l}}, \\
&B_{\beta,1}:=B_\beta^R \mathtt{I}^{\mathrm{u}}, \quad B_{\beta,1}:=B_\beta^R \mathtt{I}^{\mathrm{l}}.
\end{align*}
 Furthermore, for $\alpha=1,2,$ we define  
\begin{align*}
f_{\beta,\alpha}:= -m_{\alpha }(z_\beta) \Tr [H(z_\beta)\Xi_1(z_\beta) A_{\beta,\alpha} ]+F_{\beta,\alpha} \Tr [ G(z_\beta) A_{\beta,\alpha} ],
\end{align*}
and 
\begin{align*}
g_{\beta,\alpha}&:=-\frac{1}{2} m_{\alpha}(z_\beta) \Tr [ H(z_\beta) \Xi_2(z_\beta) B_{\beta,\alpha}] + \frac{F_{\beta,\alpha}}{2} \Tr [ G^2(z_\beta) B_{\beta,\alpha}] \\ 
&\quad+\frac{1}{2} (m_{\alpha}(z_\beta) - \frac{1}{z_\beta}) \Tr[ G(z_\beta) B_{\beta,\alpha} ]
 - m'_{\alpha}(z_\beta) \Tr [  B_{\beta,\alpha}(z_\beta) ] \\
& \quad +m_{\alpha}' (z_\beta)\Tr [ H (z_\beta)\Pi_1(z_\beta) B_{\beta,\alpha}(z_\beta) ],
\end{align*}
where $F_{\beta,\alpha}$ is defined in \eqref{defn_f1f2} with $z=z_\beta$.
Finally, for $\beta \in [r],$ we denote 
\begin{equation}\label{defn_qbeta}
Q_{\beta}:=\sqrt{n} \sum_{\alpha=1,2}(f_{\beta, \alpha}+g_{\beta, \alpha})+\sqrt{n z_\beta} \sum_{(i,j) \in \mathcal{S}(\nu)} x_{ij} (c_\beta)_{ij}-\Delta_d(z_\beta).
\end{equation}

\begin{lem}\label{lem_rewriteqr} Under the assumptions of Proposition \ref{lem:subspace}, we have 
\begin{align} \label{eq_newq}
Q= \sum_{\beta=1}^r Q_\beta.
\end{align}
\end{lem}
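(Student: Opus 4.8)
\textbf{Proof proposal for Lemma \ref{lem_rewriteqr}.}

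The plan is to mimic verbatim the proof of Lemma \ref{lem.rewrite of Q}, carried out termwise in the index $\beta\in[r]$, and then to sum over $\beta$. Recall that $Q=\mathcal{Q}-\Delta=\mathcal{Q}-\Delta_d-\Delta_r$, and that by definition $\mathcal{Q}=\mathcal{Q}(\mathbf{z})=\sqrt{n}\sum_{\beta=1}^r\big(\Tr(\Xi_1(z_\beta)A_\beta^R)+\Tr(\Xi_1'(z_\beta)B_\beta^R)\big)$, $\Delta_d=\sum_{\beta=1}^r\Delta_d(z_\beta)$, and $\Delta_r=\sum_{\beta=1}^r\Delta_r(z_\beta)$ with $\Delta_r(z_\beta)=\sqrt{nz_\beta}\sum_{(i,j)\in\mathcal{B}(\nu)}x_{ij}(c_\beta)_{ij}$. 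Since everything in sight is additive over $\beta$, it suffices to show, for each fixed $\beta\in[r]$,
\begin{equation*}
\sqrt{n}\Big(\Tr(\Xi_1(z_\beta)A_\beta^R)+\Tr(\Xi_1'(z_\beta)B_\beta^R)\Big)-\Delta_d(z_\beta)-\Delta_r(z_\beta)=Q_\beta,
\end{equation*}
with $Q_\beta$ as defined in \eqref{defn_qbeta}, and then add these $r$ identities. Note that the index set $\mathcal{B}(\nu)$ (hence also $\mathcal{S}(\nu)$) here is the union $\bigcup_{\beta}\mathcal{B}_\beta(\nu)$ rather than the single-$\beta$ set, but this causes no difficulty: the proof of Lemma \ref{lem.rewrite of Q} only used that $\mathcal{B}(\nu)$ contains the indices where the coefficients of the relevant quadratic form are large, and enlarging it merely moves finitely many more $x_{ij}$ from $\Delta_r$ into the $\mathcal{S}(\nu)$-sum; the bookkeeping identity \eqref{18091960} is unaffected.

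The key steps, for each fixed $\beta$, are exactly those of the proof of Lemma \ref{lem.rewrite of Q} with $z$ replaced by $z_\beta$, $A$ by $A_\beta^R$, $B$ by $B_\beta^R$, $c_{ij}$ by $(c_\beta)_{ij}$, $m_{1,2}$ by $m_{1,2}(z_\beta)$, and $F_{1,2}$ by $F_{\beta,1},F_{\beta,2}$: (i) split $\Tr\Xi_1(z_\beta)A_\beta^R=\Tr G A_{\beta,1}+\Tr G A_{\beta,2}-m_1(z_\beta)\Tr A_{\beta,1}-m_2(z_\beta)\Tr A_{\beta,2}$ and use $z_\beta G=HG-I$ to arrive at the analogue of \eqref{18072560}; (ii) using Lemma \ref{lem.18091410} and \eqref{18072531}, together with $z_\beta G^2=HG^2-G$, obtain the analogue of \eqref{18072561} for $\Tr\Xi_1'(z_\beta)B_\beta^R$; (iii) combine to get the analogue of \eqref{eq_q1expansion} for $\mathcal{Q}_\beta:=\sqrt n(\Tr\Xi_1(z_\beta)A_\beta^R+\Tr\Xi_1'(z_\beta)B_\beta^R)$; (iv) expand the full sum $\sqrt{nz_\beta}\sum_{i,j}x_{ij}(c_\beta)_{ij}$ using the definition \eqref{defn_cij1} of $(c_\beta)_{ij}$ to produce the analogue of \eqref{18072571}; (v) subtract to obtain $\mathcal{Q}_\beta-\sqrt{nz_\beta}\sum_{i,j}x_{ij}(c_\beta)_{ij}=\sqrt{n}\sum_{\alpha=1,2}(f_{\beta,\alpha}+g_{\beta,\alpha})$, the analogue of \eqref{18091960}; (vi) write $\sqrt{nz_\beta}\sum_{i,j}=\sqrt{nz_\beta}\sum_{(i,j)\in\mathcal{S}(\nu)}+\sqrt{nz_\beta}\sum_{(i,j)\in\mathcal{B}(\nu)}$ and recognize the second piece as $\Delta_r(z_\beta)$; (vii) rearrange to read off $Q_\beta$ as in \eqref{defn_qbeta}. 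All the algebraic identities used — \eqref{eq_self1}, \eqref{eq_self2}, \eqref{eq_f1mm2}, and Lemma \ref{lem.18091410} — hold verbatim at the complex point $z_\beta\in\mathbf{S}_o$, so no new input is needed. Finally summing over $\beta\in[r]$ yields $\sum_\beta Q_\beta=\mathcal{Q}-\Delta_r-\Delta_d=Q$, which is \eqref{eq_newq}.

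There is essentially no obstacle here: the only thing to be careful about is the uniform choice of the common index set $\mathcal{B}(\nu)=\bigcup_\beta\mathcal{B}_\beta(\nu)$, which makes $\Delta_r(z_\beta)$ use a slightly larger set than it would if defined from $\mathcal{B}_\beta(\nu)$ alone; but since the coefficients $(c_\beta)_{ij}$ for $(i,j)\in\mathcal{B}(\nu)\setminus\mathcal{B}_\beta(\nu)$ are still $O_\prec(\mathbf{u}_\beta(i)\mathbf{v}_\beta(j))$ and $|\mathcal{B}(\nu)|\le Cn^{4\nu}$, the decomposition into $\mathcal{S}(\nu)$- and $\mathcal{B}(\nu)$-parts is still exact as an algebraic identity (no error terms are introduced at this stage — the splitting in step (vi) is an exact partition of the sum over all $(i,j)$). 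Hence the lemma is a purely formal rearrangement, and the real work is deferred to the subsequent analogue of Lemma \ref{lem. recursive estimate for each term}, where the estimates on $\E f_{\beta,\alpha}\mathfrak{q}^{(k-1)}$, $\E g_{\beta,\alpha}\mathfrak{q}^{(k-1)}$ and the $\mathcal{S}(\nu)$-sum will be established.
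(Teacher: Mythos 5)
Your proposal is correct and follows exactly the route the paper intends: the paper gives no details for Lemma \ref{lem_rewriteqr}, stating only that it is the straightforward $\beta$-summed analogue of Lemma \ref{lem.rewrite of Q}, and your termwise repetition of steps \eqref{18072560}--\eqref{18091960} with $(z,A,B,c_{ij})\mapsto(z_\beta,A_\beta^R,B_\beta^R,(c_\beta)_{ij})$, followed by summing over $\beta$, is precisely that extension. Your remark that the enlarged union set $\mathcal{B}(\nu)=\bigcup_\beta\mathcal{B}_\beta(\nu)$ causes no issue because the $\mathcal{B}(\nu)$/$\mathcal{S}(\nu)$ split is an exact partition is a correct and worthwhile observation that the paper leaves implicit.
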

Indeed, Lemma \ref{lem_rewriteqr} is the analogue of Lemma \ref{lem.rewrite of Q}, and its proof is also a straightforward extension of the rank one case. We omit the details here.

As a consequence,  to prove Proposition \ref{prop_landr}, it suffices to study the following  
\begin{align} \label{eq_newexpanr}
\mathbb{E} Q^k e^{it \Delta}
& = \sqrt{n} \sum_{\beta=1}^r \sum_{\alpha=1}^2  \mathbb{E} \Big( f_{\beta, \alpha}+g_{\beta, \alpha} \Big) Q^{k-1} e^{it \Delta} \nonumber \\
&\quad+ \sqrt{n} \sum_{(i,j)  \in \mathcal{S}(\nu)} (\sum_{\beta=1}^r \sqrt{z_\beta} (c_\beta)_{ij}) \mathbb{E} x_{ij}  Q^{k-1} e^{it \Delta}- \Delta_d \mathbb{E} Q^{k-1} e^{it \Delta}.
\end{align}

\noindent{\bf Step 2. } In the second step, we will use the cumulant expansion to estimate the items on the right hand side of (\ref{eq_newexpanr}) and prove the analogue of Lemma \ref{lem. recursive estimate for each term}.  

Observe that for the rank $r$ case, we have 
\begin{align}\label{eq_partiq}
\frac{\partial Q}{\partial x_{ij}}=\sum_{\beta=1}^r \frac{\partial Q_\beta}{\partial x_{ij}}.
\end{align} 
The estimates of the cumulant expansion for the terms in (\ref{eq_newexpanr}) follow along the exact lines of Lemma \ref{lem_estimateh1h2h3}-\ref{lem_a2b2estimatehigh}, together with linearity of expectation. The main difference is that we will have cross terms from $A_{\beta_1}^R A_{\beta_2}^R$, $B_{\beta_1}^R B_{\beta_2}^R$ and $A_{\beta_1}^R B_{\beta_2}^R$ for $\beta_1, \beta_2 \in [r]$. However, by the orthogonality of the singular vectors, it is easy to check  (via the definitions of $A_\beta^R$ and $B_\beta^R$ in \eqref{eq:new-ab}) that $$A_{\beta_1}^R A_{\beta_2}^R=B_{\beta_1}^R B_{\beta_2}^R=A_{\beta_1}^R B_{\beta_2}^R=0$$ if $\beta_1 \neq \beta_2$. Consequently, these cross terms essentially make no contribution. We specify one example here. In the proof of the analogue of (\ref{eq_h1partih2h3}), we shall encounter an term of the following form
\begin{equation*}
\frac{1}{\sqrt{n}} \sum_{i,j} \Big(\Xi_1(z_\beta) A_{\beta,1} \Big)_{j'i} \frac{\partial Q}{\partial x_{ij}} Q^{k-2} e^{\ii t \Delta}=\frac{1}{\sqrt{n}}\sum_{\gamma=1}^r \sum_{i,j} \Big(\Xi_1(z_\beta) A_{\beta,1} \Big)_{j'i} \frac{\partial Q_{\gamma}}{\partial x_{ij}} Q^{k-2} e^{\ii t \Delta}.
\end{equation*}
Applying (\ref{eq_parti_1}) for each $\partial Q_\gamma/\partial x_{ij}$, by  (\ref{eq_localoutside}) and orthogonality of the singular vectors, we find the only contributing part is  
\begin{align*}
\frac{1}{\sqrt{n}} \sum_{i,j} \Big(\Xi_1(z_\beta) A_{\beta,1} \Big)_{j'i} \frac{\partial Q_{\beta}}{\partial x_{ij}} Q^{k-2} e^{\ii t \Delta}
\end{align*}
and what remains is exactly the same as the proof of (\ref{eq_h1partih2h3}). This explains why most quantities appearing in Theorem \ref{thm. right subspace} and its proof are similar to, and most of time are simply the sum of those in the proof of Theorem \ref{thm_mainthm}. In the following discussion, we shall concentrate on these cross terms from different singular values and vectors, and show they are actually negligible due to the orthogonality of singular vectors.

We first introduce some notations. Recall (\ref{18091910}). For $\beta \in [r]$, we denote  $\mathfrak{d}_{\beta,\alpha}^{a}, \mathfrak{d}_{\beta,\alpha}^{b}, \tilde{\mathfrak{d}}_{\beta,\alpha}$ by replacing $z$ with $z_\beta$ and $A_{\alpha}, B_{\alpha}$ with $A_{\beta, \alpha}, B_{\beta, \alpha}$ ($\alpha=1,2$) correspondingly. We also define $\mathfrak{a}_{\beta,1\alpha}, \mathfrak{b}_{\beta,1\alpha},\tilde{\mathfrak{b}}_{\beta,1\alpha}$ for $\alpha =1 ,2$ in the same fashion using (\ref{def:missvar}). Next, we denote  
\begin{align*}
&\mathfrak{a}_{\beta,21}:= -\frac{ (k-1)z_\beta}{\sqrt{n}} \sum_{(i,j)\in \mathcal S(\nu)} \big(\Pi_1(z_\beta) \big)_{j' j'} \big( \Pi_1(z_\beta) A_{\beta,1} \big)_{ii} (\sum_{\gamma=1}^r \sqrt{z_{\gamma}} C_{\gamma})_{ij},\nonumber\\
&\tilde{\mathfrak{b}}_{\beta,21}:= -\frac{ (k-1)z_\beta}{\sqrt{n}} \sum_{(i,j)\in \mathcal S(\nu)} \Big[ \big(\Pi_1 (z_\beta) \big)_{j' j'} \big(\Pi_2(z_\beta) B_{\beta,1} \big)_{ii} + \big(\Pi_2(z_\beta) \big)_{j' j'} \big(\Pi_1(z_\beta) B_{\beta,1} \big)_{ii}  \Big]  \\
&\hspace{4.5cm}\times \big(\sum_{\gamma=1}^r \sqrt{z_{\gamma}} C_{\gamma} \big)_{ij},
\end{align*}
and define $\mathfrak{a}_{\beta,22}, \tilde{\mathfrak{b}}_{\beta,22}$ analogously.
Further,  we denote 
\begin{align*}
&\mathfrak{a}_{\beta,31}:=-\frac{2(k-1)z_\beta^{3/2}}{n} \sum_{i,j}  \big(\Pi_1(z_\beta) \big)_{j'j'} \big(\Pi_1(z_\beta) A_{\beta,1} \big)_{ii} (\sum_{\gamma=1}^r z_\gamma S_\gamma)_{ij}, \nonumber\\  
&\tilde{\mathfrak{b}}_{\beta,31}=-\frac{2(k-1)z_\beta^{3/2}}{n} \sum_{i,j} \Big[ \big(\Pi_1(z_\beta) \big)_{j'j'} \big(\Pi_2(z_\beta) B_{\beta,1} \big)_{ii} +  \big(\Pi_2(z_\beta) \big)_{j'j'} \big(\Pi_1(z_\beta) B_{\beta,1} \big)_{ii} \Big]\\
&\hspace{4cm}\times \big(\sum_{\gamma=1}^r z_\gamma S_\gamma \big)_{ij}, 
\end{align*}
and define $\mathfrak{a}_{\beta,32}, \tilde{\mathfrak{b}}_{\beta,32}$ analogously.
Finally, we denote 
\begin{align*}
\mathfrak{a}_{\beta,0\alpha} &:=\mathfrak{a}_{\beta,1\alpha}+\kappa_3 \mathfrak{a}_{\beta,2\alpha}+\frac{\kappa_4}{2} \mathfrak{a}_{\beta,3\alpha},\nonumber\\
\mathfrak{b}_{\beta,0\alpha} &:=\frac{m_\alpha(z_\beta)}{2} \tilde{\mathfrak{b}}_{\beta,1\alpha}+m_{\alpha}'(z_\beta) \mathfrak{b}_{\beta,1\alpha}+\frac{\kappa_3 m_\alpha(z_\beta)  }{2} \tilde{\mathfrak{b}}_{\beta,2\alpha}\\
&\quad+\kappa_3  m_\alpha'(z_\beta)  \mathfrak{b}_{\beta,2\alpha}+\frac{ \kappa_4m_\alpha(z_\beta) }{4} \tilde{\mathfrak{b}}_{\beta,3\alpha}+\frac{ \kappa_4 m_\alpha'(z_\beta) }{2} \mathfrak{b}_{\beta,3\alpha}. 
\end{align*}
 We adopt the notation
\begin{equation*}
\mathfrak{q}^{(l)}=Q^l e^{\ii t \Delta}.
\end{equation*}
With these preparations, we present the following analogue of Lemma \ref{lem. recursive estimate for each term}.
\begin{lem} \label{lem_propkey} Under the assumptions of Proposition \ref{prop_landr}, for each $\beta\in [r]$ and $\alpha=1,2,$  we have
\begin{align}
&\sqrt n \E  f_{\beta, \alpha}  \mathfrak{q}^{(k-1)}=-\sqrt{z_\beta} m_{\alpha} (z_\beta)\mathbb{E} \big( \frac{\kappa_3}{2} \mathfrak{d}_{\beta,\alpha}^a \mathfrak{q}^{(k-1)} +\mathfrak{a}_{\beta,0\alpha} \mathfrak{q}^{(k-2)} \big)  +O_{\prec}(n^{-\frac12+4\nu}), \label{eq_rankrpart1} \\
&\sqrt{n}\mathbb{E} g_{\beta,\alpha} \mathfrak{q}^{(k-1)}= - \sqrt{z_\beta}\, \mathbb{E} \Big(\frac{ \kappa_3 }{4} \big(m_{\alpha}(z_\beta)\tilde{\mathfrak{d}}_{\beta,\alpha}+2m_{\alpha}'(z_\beta) \mathfrak{d}^b_{\beta,\alpha} \big)\mathfrak{q}^{(k-1)} +\mathfrak{d}_{\beta,0\alpha} \mathfrak{q}^{(k-2)}  \Big)  \nonumber\\
&\hspace{3cm}+O_{\prec}(n^{-\frac12+4\nu}). \nonumber 
\end{align}
In addition, we have 
\begin{align*}
&\sqrt{n } \sum_{(i,j)\in \mathcal{S}(\nu)}  ( \sum_{\beta=1}^r \sqrt{z_{\beta}} (c_{\beta})_{ij}  \E  x_{ij} \mathfrak{q}^{(k-1)}=(k-1) \Big[ \sum_{(i,j)\in \mathcal{S}(\nu)} \big(\sum_{\beta=1}^r \sqrt{z_{\beta}} (c_{\beta})_{ij} \big)^2 \\
&\qquad\qquad\quad  +  \frac{ \kappa_3 }{\sqrt{n}} \sum_{(i,j)\in \mathcal{S}(\nu)} \big( \sum_{\beta=1}^r(z_{\beta} s_\beta)_{ij} \big) \big(\sum_{\beta=1}^r  \sqrt{z_{\beta}} (c_\beta)_{ij}\big) \Big] \E \mathfrak{q}^{(k-2)}+O_{\prec}(n^{-\frac12+4\nu}). 
\end{align*}
\end{lem}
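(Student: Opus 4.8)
\textbf{Proof plan for Lemma \ref{lem_propkey}.}
The plan is to reduce everything to the rank-one case treated in Lemma \ref{lem. recursive estimate for each term} and to control the new features --- the sum over $\beta\in[r]$ and, more importantly, the cross terms between distinct singular indices. First I would apply the cumulant expansion Lemma \ref{lem_cumu} to $\sqrt n\,\mathbb{E} f_{\beta,\alpha}\mathfrak q^{(k-1)}$, exactly as in \eqref{eq:A1main}, writing the expansion in terms of derivatives of $h_1=\bigl(\Xi_1(z_\beta)A_{\beta,\alpha}\bigr)_{j'i}$, $h_2=Q^{k-1}$, $h_3=e^{\ii t\Delta}$, where now $Q=\sum_{\gamma=1}^r Q_\gamma$ by Lemma \ref{lem_rewriteqr} and hence $\partial Q/\partial x_{ij}=\sum_\gamma \partial Q_\gamma/\partial x_{ij}$ by \eqref{eq_partiq}. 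The error term is handled verbatim as in the proof of \eqref{eq_a1final}: the high-probability bounds from the isotropic local law together with the trivial deterministic bound $\|G\|\le n^C$ on the complementary event (which is why $z_\beta$ is taken with an imaginary part $\ii n^{-C}$) give $|\mathbb{E}\mathcal R|=O_\prec(n^{-1/2+4\nu})$, and then by continuity of $G$ away from the support of the MP law one may replace $z_\beta$ by $p_\beta$ at the end.

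Next I would run through the analogues of Lemmas \ref{lem_estimateh1h2h3}--\ref{lem_a2b2estimatehigh} term by term. For each derivative $\hslash(l_1,l_2,l_3)$, expanding $\partial^{l_2}h_2$ via the Fa\`a di Bruno formulas produces a sum over $\gamma_1,\dots$ of products $\prod \partial^{\cdot}Q_{\gamma_\cdot}/\partial x_{ij}^{\cdot}$, and then $\partial Q_\gamma/\partial x_{ij}$ is expanded by \eqref{eq_parti_1} and its higher analogues \eqref{eq_parti_2}--\eqref{eq_parti_4}. Here the key structural input is that, by orthogonality of the singular vectors, $A_{\beta_1}^RA_{\beta_2}^R=B_{\beta_1}^RB_{\beta_2}^R=A_{\beta_1}^RB_{\beta_2}^R=0$ whenever $\beta_1\ne\beta_2$ (immediate from \eqref{eq:new-ab}), so that after applying the isotropic local law \eqref{eq_localoutside} any term in which $A_{\beta,\alpha}$ meets $A_{\gamma}^R$ or $B_\gamma^R$ with $\gamma\ne\beta$ is $O_\prec(n^{-1/2})$ or smaller. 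Consequently only the diagonal term $\gamma=\beta$ survives in the leading order, and for that term the computation is word-for-word the rank-one computation from Sections \ref{s.18092001}--\ref{subsec:proofofotherlemma}, yielding $\mathfrak d_{\beta,\alpha}^a$, $\mathfrak a_{\beta,1\alpha}$, $\mathfrak a_{\beta,2\alpha}$, $\mathfrak a_{\beta,3\alpha}$ and their tilde/$b$-counterparts. The only genuinely new bookkeeping is that the quantities $c_{ij}$, $s_{ij}$ entering through $\partial Q_\gamma/\partial x_{ij}$ and through $h_3$ (via $\Delta=\sum_\gamma\Delta(z_\gamma)$) get replaced by the \emph{sums} $\sum_{\gamma}\sqrt{z_\gamma}(c_\gamma)_{ij}$ and $\sum_\gamma z_\gamma (s_\gamma)_{ij}$, which is precisely how $\mathfrak a_{\beta,21}$, $\mathfrak a_{\beta,31}$ and the last displayed identity in the lemma are defined; these sums are still $O_\prec(\mathbf u_\beta(i)\mathbf v_\beta(j)+\cdots)$ after summing the $r$ (fixed number of) pieces, so all error estimates carry over unchanged.

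For the third displayed identity (the pure-$x_{ij}$ cumulant expansion over $\mathcal S(\nu)$), I would repeat the proof of \eqref{180920101}: apply Lemma \ref{lem_cumu} with $h_2h_3$, note as in \eqref{180920111} that every term containing a derivative of $h_3$ is $O_\prec(n^{-1/2+4\nu})$, and use the rank-$r$ analogues of \eqref{180920112}--\eqref{180920113}, where $\partial h_2/\partial x_{ij}=(k-1)(\sum_\gamma\partial Q_\gamma/\partial x_{ij})Q^{k-2}$ contributes $(k-1)\sqrt{z}\bigl(\sum_\gamma\sqrt{z_\gamma}(c_\gamma)_{ij}\bigr)\mathfrak q^{(k-2)}$ and the second derivative contributes $2(k-1)z\,n^{-1/2}\bigl(\sum_\gamma z_\gamma(s_\gamma)_{ij}\bigr)\mathfrak q^{(k-2)}$ modulo the same negligible errors; the cross terms vanish again by the orthogonality relations above. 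I expect the main obstacle to be purely organizational rather than conceptual: keeping track, in the fourth-derivative error bounds and in the mixed $l_2\ge 2$ terms, of which combinations of $A_{\beta,\alpha}$, $B_{\gamma}^R$ and the vectors $\mathbf u_\gamma$, $\mathbf v_\gamma$ actually appear, and verifying in each case that either orthogonality forces a vanishing block or the extra powers of $n^{-1/2}$ from $G-\Pi$ and from off-diagonal Green-function entries bring the term below $n^{-1/2+4\nu}$ --- exactly as in Lemmas \ref{lem_24h1h2h3}, \ref{lem_bestimate2} and \ref{lem_a2b2estimatehigh}, so no new idea is needed, only careful enumeration. Finally, combining these estimates with \eqref{eq_newexpanr} and using $zG^2=HG^2-G$, $zG=HG-I$ and the self-consistent equations \eqref{eq_self1}--\eqref{eq_self2} (together with the identities \eqref{180920100} and its $g_2$-analogue) cancels the $F_{\beta,\alpha}\Tr G(\cdot)$ pieces against $-\sqrt n z_\beta m_1(z_\beta)m_2(z_\beta)\Tr G(\cdot)$, leaving precisely the asserted expressions.
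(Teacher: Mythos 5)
Your proposal is correct and follows essentially the same route as the paper: reduce to the rank-one cumulant expansion of Lemma~\ref{lem. recursive estimate for each term}, use the orthogonality of the singular vectors (so that $A_{\beta_1}^R A_{\beta_2}^R=B_{\beta_1}^R B_{\beta_2}^R=A_{\beta_1}^R B_{\beta_2}^R=0$ and, after applying the isotropic local law, quantities such as $\mathbf u_\beta^*\mathcal G_1 \mathbf u_\gamma=O_\prec(n^{-1/2})$ for $\gamma\ne\beta$) to bound the trace-type cross terms by $O_\prec(n^{-1/2})$ as in \eqref{eq_finalor}, while retaining the non-trace cross contributions that produce the sums $\sum_\gamma\sqrt{z_\gamma}(c_\gamma)_{ij}$ and $\sum_\gamma z_\gamma(s_\gamma)_{ij}$ appearing in $\mathfrak a_{\beta,2\alpha}$, $\mathfrak a_{\beta,3\alpha}$ and the third displayed identity. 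The concluding algebraic cancellations (via $zG=HG-I$, the self-consistent equations and \eqref{180920100}) are likewise exactly what the paper does.
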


Similar to the proof of Proposition \ref{prop_iteration}, Proposition \ref{prop_landr} follows immediately from Lemma \ref{lem_rewriteqr} and \ref{lem_propkey}. We omit the details here. 

Next, we turn to the proof of Lemma \ref{lem_propkey}. 
We will only  focus our discussion on the term $\sqrt{n} \mathbb{E} f_{\beta,1} \mathfrak{q}^{(k-1)}$ and the other terms can be estimated likewise. Using a discussion similar to (\ref{eq_defnfa1}), for each fixed $\beta\in [r],$ we have  
\begin{equation*}
\sqrt{n} \mathbb{E} f_{\beta,1} \mathfrak{q}^{(k-1)}=\mathbb{E} \Big(-m_1 \sqrt{nz_{\beta}} \sum_{i,j} x_{ij} \big(\Xi_1(z_\beta) A_{\beta,1} \big)_{j'i} +\sqrt{n} F_{1} \text{Tr} \big( G(z_\beta) A_{\beta,1} \big) \Big) \mathfrak{q}^{(k-1)}.
\end{equation*}
As seen in the proof of (\ref{eq_parta1}), we need the following estimates which are analogues of those in Lemma \ref{lem_estimateh1h2h3} and \ref{lem_24h1h2h3}. We adopt the notations in (\ref{18091411}) by denoting $$h_1=\big(\Xi_1(z_\beta) A_{\beta,1} \big)_{j'i},\quad h_2=Q^{k-1}, \quad h_3=e^{\ii t \Delta}.$$
\begin{lem} \label{lem_rankrestimate} For the derivatives of $h_1 h_2 h_3,$ we have 
\begin{align}
&\hslash(1,0,0)=-\sqrt{n z_{\beta}} m_2(z_\beta) \operatorname{Tr}\big( G(z_\beta)A_{\beta,1} \big) \mathfrak{q}^{(k-1)}+O_{\prec}(n^{-1/2}), \label{eq_rpath1h2h3}\\
& \hslash(0,1,0)=\mathfrak{a}_{\beta,11} \mathfrak{q}^{(k-2)}+O_{\prec}(n^{-1/2}), \label{eq_rh1parth2h3} \\
& \hslash(2,0,0)=\mathfrak{d}^a_{\beta,1} \mathfrak{q}^{(k-1)}+O_{\prec}(n^{-1/2}),  \nonumber \\
& \hslash(1,1,0)=\mathfrak{a}_{\beta,21} \mathfrak{q}^{(k-2)}+O_{\prec}(n^{-1/2}),  \nonumber \\
& \hslash(1,2,0)=\mathfrak{a}_{\beta,31} \mathfrak{q}^{(k-2)}+O_{\prec}(n^{-1/2}).  \nonumber
\end{align}
Furthermore, all the other terms $\hslash(l_1,l_2,l_3)$ for $l_1+l_2+l_3\le 4$ can be bounded by $O_{\prec}(n^{-1/4+4\nu}).$
\end{lem}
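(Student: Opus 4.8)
\textbf{Proof proposal for Lemma \ref{lem_rankrestimate}.}

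The plan is to reduce each estimate to its rank-one counterpart established in Lemma \ref{lem_estimateh1h2h3} and Lemma \ref{lem_24h1h2h3}, exploiting the fact that the only genuinely new feature in the rank $r$ setting is the appearance of cross terms indexed by distinct singular values, which vanish by orthogonality of the singular vectors. First I would record the structural identity \eqref{eq_partiq}, $\partial Q/\partial x_{ij} = \sum_{\gamma=1}^r \partial Q_\gamma/\partial x_{ij}$, and its higher-order analogues obtained by further differentiation; likewise $\partial^l h_3 / \partial x_{ij}^l$ is computed exactly as in \eqref{eq:derivativeh3} but with $\Delta = \sum_\gamma \Delta(z_\gamma)$, so that $c_{ij}$ is replaced by $\sum_\gamma \sqrt{z_\gamma}(c_\gamma)_{ij}$. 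Since $h_1 = (\Xi_1(z_\beta) A_{\beta,1})_{j'i}$ depends only on the single spectral parameter $z_\beta$ and the single matrix $A_\beta^R$, its derivatives are literally those computed in Section \ref{s.18092001}-\ref{s.18092002} with $z \mapsto z_\beta$ and $A \mapsto A_\beta^R$. The product rule \eqref{eq_chainruleproduct} then expresses each $\hslash(l_1,l_2,l_3)$ as a sum over $\gamma_1,\dots \in [r]$ of terms in which $h_1$ carries index $\beta$ while the various $\partial Q_{\gamma_s}/\partial x_{ij}$ factors carry indices $\gamma_s$.

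The key step is the orthogonality reduction. Expanding $\partial Q_\gamma/\partial x_{ij}$ via \eqref{eq_parti_1}, every non-error term that survives the isotropic local law \eqref{eq_localoutside} contains a factor of the form $(\Pi_{a_1}(z_\beta) A_{\beta,\cdot} \Pi_{a_2}(z_\gamma) \,\cdot\, \Pi_{\cdot}(z_\gamma))$ contracted against blocks of $A_\gamma^R$ or $B_\gamma^R$, and by \eqref{eq:new-ab} such contractions involve inner products $\langle \mathbf u_\beta, \mathbf u_\gamma\rangle$ or $\langle \mathbf v_\beta, \mathbf v_\gamma\rangle$ (up to scalar weights and deterministic $m$-functions). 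For $\gamma \neq \beta$ these vanish identically, since $A_{\beta_1}^R A_{\beta_2}^R = B_{\beta_1}^R B_{\beta_2}^R = A_{\beta_1}^R B_{\beta_2}^R = 0$. Hence only the diagonal term $\gamma = \beta$ contributes a leading-order quantity; all off-diagonal terms are either exactly zero or absorbed into the $O_\prec(n^{-1/2})$ (resp.\ $O_\prec(n^{-1/2+4\nu})$, or in the weakest case $O_\prec(n^{-1/4+4\nu})$) error, using $|\mathcal B(\nu)| \le C n^{4\nu}$ and the unit-norm bounds on the singular vectors exactly as in the rank-one proofs. After this reduction, \eqref{eq_rpath1h2h3}, \eqref{eq_rh1parth2h3} and the three second/third-derivative identities follow verbatim from \eqref{eq_parh1h2h3}, \eqref{18091560}, \eqref{eq_h200}, \eqref{eq_h110} and \eqref{eq_120} with the substitution $z \mapsto z_\beta$, $A \mapsto A_\beta^R$, $B \mapsto B_\beta^R$, and with $c_{ij}$, $s_{ij}$ replaced by $(c_\beta)_{ij}$, $(s_\beta)_{ij}$ inside the diagonal contribution but by $\sum_\gamma \sqrt{z_\gamma}(c_\gamma)_{ij}$, $\sum_\gamma z_\gamma (s_\gamma)_{ij}$ inside the $\mathfrak a_{\beta,21}$, $\mathfrak a_{\beta,31}$ type quantities (these being where the derivatives of $h_2$ and $h_3$, and hence all of $Q$, enter). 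The fourth-derivative terms are handled by the crude bound argument of Lemma \ref{lem_24h1h2h3}(3) together with the same error-translation via \eqref{eq:errorA1}.

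The main obstacle — really a bookkeeping obstacle rather than a conceptual one — is verifying that the cross terms with $\gamma \neq \beta$ are in fact negligible \emph{in all} the derivative configurations, including those where two or three factors $\partial Q_{\gamma_1}/\partial x_{ij}$, $\partial Q_{\gamma_2}/\partial x_{ij}$ appear with possibly distinct indices and where $h_1$'s own derivatives $\partial^{l_1} G/\partial x_{ij}^{l_1}$ introduce extra Green-function matrix elements between coordinates $i$ and $j'$. One must check that even when the orthogonality does not force an exact zero (e.g.\ a term like $(\Xi_1(z_\beta)A_{\beta,1})_{j'i}(G(z_{\gamma_1})A_{\gamma_1}G(z_{\gamma_1}))_{j'i}$ with $\gamma_1 \neq \beta$, where the two matrices sit in different factors), the local law still collapses it to a product of traces whose weight contains $\langle \mathbf u_\beta,\mathbf u_{\gamma_1}\rangle = 0$. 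I would organize this as a short lemma-internal case analysis paralleling the enumeration in Lemma \ref{lem_24h1h2h3}, noting in each line which inner product appears and hence why the off-diagonal index drops out. Once this is in place the proof is complete; I would therefore state the orthogonality observation explicitly at the start of the proof and then write ``the remaining estimates are identical to those of Lemma \ref{lem_estimateh1h2h3} and Lemma \ref{lem_24h1h2h3} upon replacing $z$ by $z_\beta$ and $A,B$ by $A_\beta^R, B_\beta^R$'' for the diagonal contributions, giving full detail only for one representative cross term.
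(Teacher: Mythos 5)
Your proposal follows essentially the same route as the paper: reduce each $\hslash(l_1,l_2,l_3)$ to its rank-one counterpart by writing $\partial Q/\partial x_{ij}=\sum_\gamma \partial Q_\gamma/\partial x_{ij}$, observe that the $\gamma=\beta$ term reproduces the rank-one estimate with $z\mapsto z_\beta$, $A,B\mapsto A_\beta^R, B_\beta^R$, and show the $\gamma\neq\beta$ cross terms are $O_\prec(n^{-1/2})$ because the isotropic local law collapses them to quantities whose deterministic leading term carries a factor $\langle\mathbf u_\beta,\mathbf u_\gamma\rangle$ or $\langle\mathbf v_\beta,\mathbf v_\gamma\rangle=0$. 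One small caution: your early phrase that the cross terms ``vanish identically'' via $A_{\beta_1}^RA_{\beta_2}^R=0$ is not quite what happens — in a typical cross term a Green function sits between the two rank-one factors, so the cancellation is not exact but only to leading order (e.g.\ $\mathbf u_\beta^*\mathcal G_1\mathbf u_\gamma=m_1\langle\mathbf u_\beta,\mathbf u_\gamma\rangle+O_\prec(n^{-1/2})=O_\prec(n^{-1/2})$); you correctly diagnose this in your ``main obstacle'' paragraph, which matches the paper's treatment exactly.
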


It is easy to see that (\ref{eq_rankrpart1}) follows directly from Lemma \ref{lem_rankrestimate}. Thus the final task is to prove Lemma \ref{lem_rankrestimate}.  In the proof,  we will  use the orthogonality of the singular vectors, that is, for $\beta_1 \neq \beta_2,$
\begin{equation}\label{eq_lineaindependece}
\langle \mathbf{u}_{\beta_1}, \mathbf{u}_{\beta_2} \rangle=0, \quad  \langle \mathbf{v}_{\beta_1}, \mathbf{v}_{\beta_2} \rangle=0.
\end{equation} 

\begin{proof}[Proof of Lemma \ref{lem_rankrestimate}] First of all, (\ref{eq_rpath1h2h3}) can be estimated similarly as (\ref{eq_parh1h2h3}). The other four dominating terms can be analyzed analogously and we shall only focus on the estimate of (\ref{eq_rh1parth2h3}). Observe that
\begin{equation} \label{eq_h1partih2h3k}
\hslash(0,1,0)=\frac{1}{\sqrt{n}} \sum_{i,j} h_1 \frac{\partial h_2}{\partial x_{ij}} h_3=\frac{(k-1)}{\sqrt{n}} \sum_{i,j} \big(\Xi_1(z_\beta) A_{\beta,1} \big)_{j'i} \frac{\partial Q}{\partial x_{ij}} \mathfrak{q}^{k-2}.
\end{equation}
Plugging in (\ref{eq_partiq}), we have 
\begin{align*}
\hslash(0,1,0)&=\frac{(k-1)}{\sqrt{n}} \sum_{\gamma=1}^r \sum_{i,j} \big(\Xi_1(z_\beta) A_{\beta,1} \big)_{j'i} \frac{\partial Q_\gamma}{\partial x_{ij}} Q^{k-2} e^{\ii t \Delta},
\end{align*}
where by (\ref{eq_parti_1}),
\begin{align}\label{eq_parti_k1}
\frac{\partial Q_{\gamma}}{\partial x_{ij}}&=-\sqrt{nz_\gamma} \sum_{\substack{ l_1,l_2\in \{ i, j'\} \\ l_1\neq l_2 }}\Big[ \big(G(z_\beta)A_\gamma^R G(z_\beta) \big)_{l_1 l_2} -\frac{1}{2z_\gamma} \big(G(z_\beta) B_\gamma^R G(z_\beta) \big)_{l_1 l_2} \nonumber \\
&\qquad + \frac{1}{2}  \sum_{(a_1,a_2) \in \mathcal{P}(2,1)} \big(G^{a_1}(z_\beta) B_\gamma^R G^{a_2}(z_\beta) \big)_{l_1 l_2} \Big]-\mathbf{1}\Big( (i,j) \in \mathcal{B}(\nu) \Big)\sqrt{nz_\gamma} (C_\gamma)_{ij}.
\end{align}
Using a discussion similar to (\ref{eq_h1partih2h3}), we have that 
\begin{align*}
\frac{(k-1)}{\sqrt{n}} \sum_{i,j} \big(\Xi_1(z_\beta) A_{\beta,1} \big)_{j'i} \frac{\partial Q_\beta}{\partial x_{ij}} \mathfrak{q}^{(k-2)}=\mathfrak{a}_{\beta,11} \mathfrak{q}^{(k-2)}+O_{\prec}(n^{-1/2}).
\end{align*}
Therefore, it suffices to show that for $\gamma \neq \beta,$ 
\begin{equation}\label{eq_finalor}
\frac{1}{\sqrt{n}} \sum_{i,j} \big(\Xi_1(z_\beta) A_{\beta,1} \big)_{j'i}  \frac{\partial Q_{\gamma}}{\partial x_{ij}} \mathfrak{q}^{(k-2)}=O_{\prec}(n^{-\frac12}).
\end{equation} 
To prove this, we shall argue in a similar way to (\ref{eq_h1partih2h3}) by expanding the product above using \eqref{eq_parti_k1}.  We start with $$\sum_{i,j} \big(\Xi_1 (z_\beta)A_{\beta,1} \big)_{j'i} \big(G(z_\beta)A_{\gamma}^R G(z_\beta) \big)_{j'i}.$$ Recall (\ref{eq_g-pia}) and (\ref{eq_gag}). By (\ref{eq_genrealcontrol}) and (\ref{eq_lineaindependece}), we have 
\begin{align*}
&\sum_{i,j} \big((\mathcal{G}_2(z_\beta)-m_2(z_\beta) \big) \mathcal{A}_{\beta,3})_{j'i} (\mathcal{G}_2 \mathcal{A}_{\gamma,3} \mathcal{G}_1)_{j'i} \\
& =\omega_{\beta,3} \omega_{\gamma,3}  \text{Tr} \Big( \big(\mathcal{G}_2(z_\beta)-m_2(z_\beta) \big) \mathbf{v}_{\beta} \mathbf{u}^*_{\beta} \mathcal{G}_1(z_\beta) \mathbf{u}_{\gamma} \mathbf{v}_{\gamma}^* \mathcal{G}_2(z_\beta) \Big) \\
& =\omega_{\beta,3} \omega_{\gamma,3} \Big( \mathbf{u}^*_{\beta} \mathcal{G}_1(z_\beta) \mathbf{u}_{\gamma} \Big) \Big(\mathbf{v}_{\gamma}^* \mathcal{G}_2 \big(\mathcal{G}_2(z_\beta)-m_2(z_\beta) \big) \mathbf{v}_{\beta} \Big) = O_{\prec}(n^{-\frac12}),
\end{align*}
where the coefficients $\omega_{\beta,3}$ are defined using the block decomposition of $A_{\beta}^R$ as in (\ref{eq_abmatrix}).  We can estimate the other terms in the expansion (in light of (\ref{eq_g-pia}) and (\ref{eq_gag})) using similar discussions. Hence, we  conclude that 
\begin{equation*}
\sum_{i,j} \big(\Xi_1 (z_\beta)A_{\beta,1} \big)_{j'i} \big(G(z_\beta)A_{\gamma}^RG(z_\beta) \big)_{j'i}=O_{\prec}(n^{-1/2}).
\end{equation*}
Likewise, we can show that each of the following terms
\begin{align*}
&\sum_{i,j}\big( \Xi_1(z_\beta) A_{\beta,1} \big)_{j'i} \big( G(z_\beta) A_{\gamma}^R G(z_\beta) \big)_{ij'},  \quad \sum_{i,j} \big( \Xi_1(z_\beta) A_{\beta,1} \big)_{j'i}\big( G(z_\beta)^2 B_{\gamma}^R G(z_\beta) \big)_{j'i}, \\
& \sum_{i,j} \big(\Xi_1(z_\beta) A_{\beta,1} \big)_{j'i} \big(G^2(z_\beta) B_{\gamma}^R G(z_\beta) \big)_{ij'}, \quad \sum_{i,j} \big(\Xi_1(z_\beta) A_{\beta,1} \big)_{j'i}\big( G(z_\beta) B_{\gamma}^R G^2(z_\beta) \big)_{j'i}, \\ &\sum_{i,j} \big(\Xi_1(z_\beta) A_{\beta,1} \big)_{j'i} \big(G(z_\beta)B_{\gamma}^R G^2(z_\beta) \big)_{ij'}
\end{align*}
 can be bounded by $O_{\prec}(n^{-1/2}).$ In view of (\ref{eq_parti_k1}), we conclude the proof of (\ref{eq_finalor}). This completes our proof.   
\end{proof}


\begin{thebibliography}{10}

\bibitem{AT}
T.~W. Anderson.
\newblock Asymptotic theory for principal component analysis.
\newblock {\em Ann. Math. Statist.}, 34(1):122--148, 1963.

\bibitem{bsbook}
Z.~Bai and J.~Silverstein. Spectral Analysis of Large Dimensional Random Matrices. 2nd Edition, Springer Series in Statistics, Springer, 2010.   

\bibitem{bai2008}
Z.~Bai and J.~Yao.
\newblock Central limit theorems for eigenvalues in a spiked population model.
\newblock {\em Ann. Inst. H. Poincar\'{e}, Probab. Statist.}, 44(3):447--474, 2008.

\bibitem{BaiY}
Z.~Bai and J.~Yao.  
On sample eigenvalues in a generalized spiked population model. 
\newblock {\em Journal of Multivariate Analysis}, 106: 167--177, 2012.

\bibitem{BBP}
J.~Baik, G.~Ben~Arous, and S.~P{\'e}ch{\'e}.
\newblock Phase transition of the largest eigenvalue for nonnull complex sample
  covariance matrices.
\newblock {\em Ann. Probab.}, 33(5):1643--1697, 2005.

\bibitem{BaiS}
J. Baik, and J.W.  Silverstein.  Eigenvalues of large sample covariance matrices of spiked population models. 
\newblock{\em Journal of multivariate analysis}, 97(6): 1382--1408,  2006.

\bibitem{BDW_suppl}
Z.~Bao, X.~Ding, and K.~Wang.
\newblock Supplementary material to ``Singular vector and singular subspace distribution for the matrix denoising model", 2019. 


\bibitem{BDWW}
Z.~Bao, X.~Ding, J.~Wang, and K.~Wang.
\newblock Principal components of spiked covariance matrices in the supercritical regime.
\newblock {\em arXiv: 1907.12251}, 2019. 

\bibitem{bao2017}
Z.~{Bao}, J.~{Hu}, G.~{Pan}, and W.~{Zhou}.
\newblock {Canonical correlation coefficients of high-dimensional Gaussian
  vectors: finite rank case}.
\newblock {\em Ann. Statist. }, 47(1): 612-640, 2019.

\bibitem{bao2015}
Z.~Bao, G.~Pan, and W.~Zhou.
\newblock Universality for the largest eigenvalue of sample covariance matrices
  with general population.
\newblock {\em Ann. Statist.}, 43(1):382--421, 2015.

\bibitem{benaych-georges2011}
F.~Benaych-Georges, A.~Guionnet, and M.~Maida.
\newblock Fluctuations of the extreme eigenvalues of finite rank deformations
  of random matrices.
\newblock {\em Electron. J. Probab.}, 16:1621--1662, 2011.

\bibitem{BGN}
F.~Benaych-Georges and R.~R. Nadakuditi.
\newblock The eigenvalues and eigenvectors of finite, low rank perturbations of
  large random matrices.
\newblock {\em Advances in Mathematics}, 227(1):494 -- 521, 2011.

\bibitem{BGN1}
F.~Benaych-Georges and R.~R. Nadakuditi.
\newblock The singular values and vectors of low rank perturbations of large
  rectangular random matrices.
\newblock {\em Journal of Multivariate Analysis}, 111:120 -- 135, 2012.

\bibitem{BEKYY2014}
A.~Bloemendal, L.~Erd{\H o}s, A.~Knowles, H.-T. Yau, and J.~Yin.
\newblock Isotropic local laws for sample covariance and generalized {W}igner
  matrices.
\newblock {\em Electron. J. Probab.}, 19:no. 33, 53, 2014.

\bibitem{BEKYY2016}
A.~Bloemendal, A.~Knowles, H.-T. Yau, and J.~Yin.
\newblock On the principal components of sample covariance matrices.
\newblock {\em Probab. Theory Related Fields}, 164(1-2): 459--552, 2016.

\bibitem{BV131}
A. Bloemendal,  and B. Vir\'{a}g. Limits of spiked random matrices I. 
\newblock {\em Probability Theory and Related Fields}, 156(3-4): 795--825, 2013.

\bibitem{BV132}
A. Bloemendal,  and B. Vir\'{a}g. 
 Limits of spiked random matrices II. 
 \newblock {\em The Annals of Probability}, 44(4): 2726--2769, 2016.

\bibitem{BP2008}
E. Bura, and R. Pfeiffer. 
On the distribution of the left singular vectors of a random matrix and its applications.
\newblock{\em Statistics $\&$ Probability Letters}, 78(15): 2275--2280 (2008). 



\bibitem{CZ2018}
T.~T. Cai and A.~Zhang.
\newblock Rate-optimal perturbation bounds for singular subspaces with
  applications to high-dimensional statistics.
\newblock {\em Ann. Statist.}, 46(1):60--89, 2018.

\bibitem{2018arXiv180200381C}
J.~{Cape}, M.~{Tang}, and C.~E. {Priebe}.
\newblock {Signal-plus-noise matrix models: eigenvector deviations and
  fluctuations}.
\newblock {\em arXiv:1802.00381}, 2018.

\bibitem{CTP}
J.~{Cape}, M.~{Tang}, and C.~E. {Priebe}.
\newblock{The two-to-infinity norm and singular subspace geometry with
        applications to high-dimensional statistics}.
        \newblock{\em Ann. Statist (to appear)}, 2018. 

\bibitem{CM2017}
M.~{Capitaine}.
\newblock {Limiting eigenvectors of outliers for Spiked Information-Plus-Noise
  type matrices}.
\newblock {\em arXiv:1701.08069}, 2017.

\bibitem{CDM2016}
M.~{Capitaine} and C.~{Donati-Martin}.
\newblock {Spectrum of deformed random matrices and free probability}.
\newblock {\em arXiv:1607.05560}, 2016.

\bibitem{CDM2018}
M.~{Capitaine} and C.~{Donati-Martin}.
\newblock {Non universality of fluctuations of outlier eigenvectors for block
  diagonal deformations of Wigner matrices}.
\newblock {\em arXiv:1807.07773}, 2018.

\bibitem{CDF}
M.~Capitaine, C.~Donati-Martin, and D.~F{\'e}ral.
\newblock The largest eigenvalues of finite rank deformation of large {W}igner
  matrices: {C}onvergence and nonuniversality of the fluctuations.
\newblock {\em Ann. Probab.}, 37(1):1--47, 2009.

\bibitem{CDF12}
M.~Capitaine, C.~Donati-Martin, and D.~F{\'e}ral.
Central limit theorems for eigenvalues of deformations of Wigner matrices. 
\newblock {\em Ann. Inst. H. Poincar\'{e}, Probab. Statist.},  48(1): 107--133, 2012.

\bibitem{Ding}
X.~Ding.
\newblock High dimensional deformed rectangular matrices with applications in
  matrix denoising.
\newblock {\em Bernoulli (in press)}, 2019.

\bibitem{DY}
X. ~Ding, and F. ~Yang.
\newblock Spiked separable covariance matrices and principal components.
\newblock{\em arXiv: 1905.13060}, 2019. 


\bibitem{donoho2014}
D.~Donoho and M.~Gavish.
\newblock Minimax risk of matrix denoising by singular value thresholding.
\newblock {\em Ann. Statist.}, 42(6):2413--2440, 2014.


\bibitem{elkaroui2007}
N.~El~Karoui.
\newblock {T}racy-{W}idom limit for the largest eigenvalue of a large class of
  complex sample covariance matrices.
\newblock {\em Ann. Probab.}, 35(2):663--714, 2007.

\bibitem{EKY2013}
L.~Erd{\H o}s, A.~Knowles, and H.-T. Yau.
\newblock Averaging fluctuations in resolvents of random band matrices.
\newblock {\em Ann. Henri Poincar{\'e}}, 14(8):1837--1926, 2013.

\bibitem{FSZZ2018}
J.~Fan, Q.~Sun, W.-X. Zhou, and Z.~Zhu.
\newblock Principal component analysis for big data.
\newblock {\em arXiv:1801.01602}, 2018.

\bibitem{FWZ2018}
J.~Fan, W.~Wang, and Y.~Zhong.
\newblock An $\ell\_{\infty}$ eigenvector perturbation bound and its
  application to robust covariance estimation.
\newblock {\em Journal of Machine Learning Research}, 18(207):1--42, 2018.

\bibitem{FZ2018}
J.~{Fan} and Y.~{Zhong}.
\newblock {Optimal Subspace Estimation Using Overidentifying Vectors via
  Generalized Method of Moments}.
\newblock {\em arXiv:1805.02826}, 2018.

\bibitem{FJS}
Z. ~{Fan}, I.~Johnstone and Y. ~ Sun. 
\newblock{Spiked covariances and principal components analysis in high-dimensional random effects models}.
\newblock{\em arXiv: 1806.09529}, 2018. 

\bibitem{FP07}
D. F\'{e}ral, and S. P\'{e}ch\'{e}.  The largest eigenvalue of rank one deformation of large Wigner matrices. 
\newblock{\em Communications in mathematical physics}, 272(1): 185--228 ,2007.

\bibitem{GVL12}
G.~H. Golub and C.~F. Van~Loan.
\newblock {\em Matrix computations}, volume~3.
\newblock JHU Press, 2012.

\bibitem{HACHEM2013427}
W.~Hachem, P.~Loubaton, X.~Mestre, J.~Najim, and P.~Vallet.
\newblock A subspace estimator for fixed rank perturbations of large random
  matrices.
\newblock {\em Journal of Multivariate Analysis}, 114:427 -- 447, 2013.


\bibitem{HTB}
R.~ Heckel, M.~ Tschannen,  and H.~Bolcskei. 
\newblock{Dimensionality-reduced subspace clustering}, 
\newblock{\em Information and Inference: A Journal of the IMA},
6:246-283, 2017. 

\bibitem{HQC}
J.~Huang, Q.~Qiu, and R.~Calderbank.
\newblock The Role of Principal Angles in Subspace Classification. 
\newblock{\em IEEE Trans. Signal Process}, 64(8):1933 - 1945, 2016.  




\bibitem{KSM1998}
S.~M. Kay.
\newblock {\em Fundamentals of Statistical Signal Processing, Volume 2:
  Detection Theory}.
\newblock Prentice-Hall, 1998.



\bibitem{KY13}
A.~Knowles and J.~Yin.
\newblock The isotropic semicircle law and deformation of {W}igner matrices.
\newblock {\em Comm. Pure Appl. Math.}, 66(11): 1663--1750, 2013.

\bibitem{KY14}
A.~Knowles and J.~Yin.
\newblock Anisotropic local laws for random matrices.
\newblock {\em Probab. Theory Related Fields}, 169(1-2): 257--352, 2017.

\bibitem{KY14AOP}
A.~Knowles and J.~Yin.
 The outliers of a deformed Wigner matrix. 
 \newblock{\em The Annals of Probability}, 42(5): 1980--2031, 2014.

\bibitem{KLN}
V.~Koltchinskii, M.~L{\"o}ffler, and R.~Nickl.
\newblock Efficient estimation of linear functionals of principal components.
\newblock {\em arXiv:1708.07642}, 2017.

\bibitem{KL}
V.~Koltchinskii and K.~Lounici.
\newblock Asymptotics and concentration bounds for bilinear forms of spectral
  projectors of sample covariance.
\newblock {\em Ann. Inst. H. Poincar{\'e} Probab. Statist.}, 52(4):1976--2013,
  2016.

\bibitem{KLAOS}
V.~Koltchinskii and K.~Lounici.
\newblock Normal approximation and concentration of spectral projectors of
  sample covariance.
\newblock {\em Ann. Statist.}, 45(1):121--157, 2017.



\bibitem{2016arXiv160508767O}
J.~{Lee} and K.~{Schnelli}.
\newblock {Local law and {T}racy-{W}idom limit for sparse random matrices}.
\newblock {\em arXiv:1605.08767}, 2016.





\bibitem{LSHM}
M.~Lee, H.~Shen, J.~Huang, and J.~Marron. 
\newblock Biclustering via Sparse Singular Value Decomposition. 
\newblock{\em Biometrics}, 66: 1087--1095, 2010. 


\bibitem{levin2011}
A.~Levin and B.~Nadler.
\newblock Natural image denoising: Optimality and inherent bounds.
\newblock In {\em 2011 {IEEE} Conference On Computer Vision And Pattern
  Recognition ({CVPR}). 40}. 2011.
  



\bibitem{MP1967}
V.~A. Mar{\v{c}}enko and L.~A. Pastur.
\newblock Distribution of eigenvalues for some sets of random matrices.
\newblock {\em Mathematics of the USSR-Sbornik}, 1(4):457, 1967.

\bibitem{bjohn2011}
B.~Nadler and I.~M. Johnstone.
\newblock {On the distribution of Roy's Largest Root Test in {MANOVA} and in
  signal detection in noise.}
\newblock Technical report, Department of Statistics, Stanford University,
  2011.

\bibitem{OVW}
S.~O'Rourke, V.~Vu, and K.~Wang.
\newblock Random perturbation of low rank matrices: Improving classical bounds.
\newblock {\em Linear Algebra and its Applications}, 540:26 -- 59, 2018.

\bibitem{DP2007}
D.~Paul.
\newblock Asymptotics of sample eigenstructure for a large dimensional spiked
  covariance model.
\newblock {\em Statistica Sinica}, 17(4):1617--1642, 2007.

\bibitem{P2006}
S. P\'{e}ch\'{e}. The largest eigenvalue of small rank perturbations of Hermitian random matrices. 
\newblock{\em Probability Theory and Related Fields}, 134(1):127--173, 2006.

\bibitem{ds2018}
E.~{Peterfreund} and M.~{Gavish}.
\newblock {Multidimensional Scaling of Noisy High Dimensional Data}.
\newblock {\em  arXiv:1801.10229}, 2018.

\bibitem{PY14}
N.~S. Pillai and J.~Yin.
\newblock Universality of covariance matrices.
\newblock {\em Ann. Appl. Probab.}, 24(3):935--1001, 2014.

\bibitem{PRS13}
A. Pizzo, D. Renfrew,  and A. Soshnikov.  On finite rank deformations of Wigner matrices. 
\newblock {\em Ann. Inst. H. Poincar{\'e} Probab. Statist.}, 49(1): 64--94, 2013.




\bibitem{RYC}
G. Raskutti, M. Yuan, and H. Chen. 
Convex Regularization for High-Dimensional Multi-Response Tensor Regression.
\newblock{\em Ann. Statist. (to appear)}, 2018.

\bibitem{js1991}
J.~R. Schott. Some tests for common principal component subspaces in several groups.
\newblock{\em Biometrika}, 78(4):771--777, 1991. 





%




\bibitem{wang2011}
W.~Wang, A.~C.-P. Miguel, and Z.~Lu.
\newblock A denoising view of matrix completion.
\newblock In {\em Advances in Neural Information Processing Systems 24}, pages
  334--342. 2011.

\bibitem{Xia}
D.~Xia.
\newblock Confidence interval of singular vectors for high-dimensional and
  low-rank matrix regression.
\newblock {\em  arXiv:1805.09871}, 2018.

\bibitem{YMB}
\newblock A Sparse Singular Value Decomposition Method for High-Dimensional Data.
\newblock{\em Journal of Computational and Graphical Statistics}, 23(4):923-942, 2014.

\bibitem{yang2016}
D.~Yang, Z.~Ma, and A.~Buja.
\newblock Rate optimal denoising of simultaneously sparse and low rank
  matrices.
\newblock {\em Journal of Machine Learning Research}, 17(92):1--27, 2016.


 
 

\bibitem{6095424}
A.~Zahernia, M.~J. Dehghani, and R.~Javidan.
\newblock {MUSIC} algorithm for {DOA} estimation using {MIMO} arrays.
\newblock In {\em 2011 6th International Conference on Telecommunication
  Systems, Services, and Applications (TSSA)}, pages 149--153, 2011.

\bibitem{ZB2017}
Y.~Zhong and N.~Boumal.
\newblock Near-optimal bounds for phase synchronization.
\newblock {\em SIAM Journal on Optimization}, 28(2):989--1016, 2018.


\end{thebibliography}

\begin{thebibliography}{10}

\bibitem{BDW}
Z.~Bao, X.~Ding, and K.~Wang. 
\newblock Singular vector and singular subspace distribution for the matrix denoising model.
\newblock {\em arXiv:1809.10476}, 2018.

\bibitem{BDWW}
Z.~Bao, X.~Ding, J.~Wang, and K.~Wang.
\newblock Principal components of spiked covariance matrices in the supercritical regime.
\newblock {\em arXiv: 1907.12251}, 2019. 



\bibitem{Ding}
X.~Ding.
\newblock High dimensional deformed rectangular matrices with applications in
  matrix denoising.
\newblock {\em Bernoulli (in press)}, 2019.

\bibitem{DM}
M.~Gavish and D.~Donoho.
\newblock Optimal Shrinkage of Singular Values.
\newblock{\em IEEE Trans. Inform. Theory}, 63(4): 2137--2152, 2017. 


\bibitem{JW}
J.~Josse and S.~Wager.
\newblock Bootstrap-Based Regularization for
Low-Rank Matrix Estimation.
\newblock{\em J. Mach. Learn. Res.}, 17:1-29,2016.   


\bibitem{KKP96}
A.~M. Khorunzhy, B.~A. Khoruzhenko, and L.~A. Pastur.
\newblock Asymptotic properties of large random matrices with independent
  entries.
\newblock {\em Journal of Mathematical Physics}, 37(10):5033--5060, 1996.

\bibitem{KY13}
A.~Knowles and J.~Yin.
\newblock The isotropic semicircle law and deformation of {W}igner matrices.
\newblock {\em Comm. Pure Appl. Math.}, 66(11): 1663--1750, 2013.


 


\bibitem{LL}
Y. ~Li and H. ~Li. 
\newblock Two-sample Test of Community Memberships of Weighted Stochastic Block Models.
\newblock{\em arXiv preprint arXiv 1811.12593}, 2018. 

\bibitem{LP09}
A.~Lytova and L.~Pastur.
\newblock Central limit theorem for linear eigenvalue statistics of random
  matrices with independent entries.
\newblock {\em Ann. Probab.}, 37(5):1778--1840, 2009.

\bibitem{N}
R.~Nadakuditi.
\newblock OptShrink: An Algorithm for Improved Low-Rank
Signal Matrix Denoising by Optimal, Data-Driven
Singular Value Shrinkage.
\newblock{\em IEEE Trans. Inform. Theory}, 60(5):3002--3017, 2014. 


 

\bibitem{RSV}
E.~Richard, P.~Savalle and N.~Vayatis.
\newblock Estimation of Simultaneously Sparse and Low Rank Matrices.
\newblock{\em ICML'12 Proceedings of the 29th International Coference on International Conference on Machine Learning}, 51--58, 2012. 


\bibitem{Stein1981}
C.~M. Stein.
\newblock Estimation of the mean of a multivariate normal distribution.
\newblock {\em Ann. Statist.}, 9(6):1135--1151, 1981.





\end{thebibliography}
\end{document}